\theoremstyle{definition}
\newtheorem{theorem}{Theorem}[section]
\newtheorem{prop}{Proposition}[section]
\newtheorem{lem}{Lemma}[section]
\newtheorem{corol}{Corollary}[section]
\theoremstyle{remark}
\newtheorem{remark}[theorem]{Remark}
\numberwithin{equation}{section}
\def\my_c{c_\infty}
\def \z{{\mathbf{z}}}
\newcommand{\mynewtheorem}[2]{
  \newaliascnt{#1}{dummy}
  \newtheorem{#1}[#1]{#2}
  \aliascntresetthe{#1}
  \expandafter\def\csname #1autorefname\endcsname{#2}
}
\newcommand{\be}{\begin{equation}}
\newcommand{\ee}{\end{equation}}
\newcommand{\bde}{\begin{displaymath}}
\newcommand{\ede}{\end{displaymath}}
\newcommand{\beq}{\begin{eqnarray*}}
\newcommand{\eeq}{\end{eqnarray*}}
\newcommand{\beqa}{\begin{eqnarray}}
\newcommand{\eeqa}{\end{eqnarray}}
\newcommand{\bel }{\left\{\begin{array}{ll}}
\newcommand{\eel}{\cr \end{array} \right.}
\newcommand{\seq}[1]{{\lbrace #1 \rbrace}}
\newcommand{\dcb}{\begin{array}{lll}}
\newcommand{\dce}{\end{array}}
\newcommand{\ebe}{\begin{enumerate}\setlength{\baselineskip}{13pt}\setlength{\parskip}{0pt}}
\newcommand{\dbe}{\end{enumerate}}
\def\s{\bold{s}}
\def\t{\bold{t}}
\def\z{\bold{z}}
\newcommand{\E}{\mathcal{E}}
\def\wt{\widetilde}
\def\wh{\widehat }
\def\F{{\cal F}}
\def\rr{{\mathbb R}}
\def\P{{\mathbb P}}
\def\I{\mathsf{1}}
\newcommand \A[1]{{\bf (#1)}}
\def\F{{\mathcal F}}
\def\R{{\mathbb{R}} }
\def\N{{\mathbb{N}} }
\def\E{{\mathbb{E}}  }
\def\P{{\mathbb{P}}  }
\def\I{{\mathbf{1}}}
\def\bint#1^#2{\displaystyle{\int_{#1}^{#2}}}
\def\bsum#1^#2{\displaystyle{\sum_{#1}^{#2}}}
\def\xdt_#1{X_#1(\Delta t)}
\def\0{{\mathbf{0}}}
\begin{document}

\title{On the first hitting times for one-dimensional elliptic diffusions}

\author{Noufel Frikha}
\address{Noufel Frikha, Laboratoire de Probabilités et Modèles Aléatoires, UMR 7599, Universit\'e Paris Diderot, Paris VII,  
Bâtiment Sophie Germain, 5 rue Thomas Mann, 75205 Paris CEDEX 13}
\email{frikha@math.univ-paris-diderot.fr}


\author{Arturo Kohatsu-Higa}
\address{Arturo Kohatsu-Higa,
Department of Mathematical Sciences
Ritsumeikan University
1-1-1 Nojihigashi, Kusatsu, Shiga, 525-8577, Japan }
\email{khts00@fc.ritsumei.ac.jp}

\author{Libo Li}
\address{Libo Li, Department of Mathematics and Statistics, University of New South Wales, Sydney, Australia}
\email{libo.li@unsw.edu.au }

\subjclass[2000]{Primary 60H10, 60G46; Secondary 60H30, 35K65}

\date{\today}

\keywords{Killed diffusion; parametrix expansion; first hitting time; density estimates}

\begin{abstract}
In this article, we obtain properties of the law associated to the first hitting time of a threshold by a one-dimensional uniformly elliptic diffusion process and to the associated process stopped at the threshold. Our methodology relies on the parametrix method that we apply to the associated Markov semigroup. It allows to obtain explicit expressions for the corresponding transition densities and to study its regularity properties up to the boundary under mild assumptions on the coefficients. As a by product, we also provide Gaussian upper estimates for these laws and derive a probabilistic representation that may be useful for the construction of an unbiased Monte Carlo path simulation method, among other applications.  
\end{abstract}

\maketitle

\section{Introduction}
 
In this article, we consider the following one-dimensional stochastic differential equation (SDE in short)
\begin{equation}
\label{sde:dynamics}
X^{u,x}_t = x + \int_u^{t} b(X^{u,x}_s) ds + \int_{u}^t \sigma(X^{u,x}_s) dW_s, \ t\geq u \geq 0, \ x\in \rr
\end{equation}

\noindent where $(W_t)_{t \geq 0}$ stands for a one-dimensional Brownian motion on a given filtered probability space $(\Omega,\F, (\F_t)_{t\ge 0},\P) $. 
Our main interest is to study the law of the first hitting time of the level $L$ (or equivalently the exit time of the open set $(-\infty, L)$) by the one-dimensional process $X$ defined by
$$
\tau^{u,x} = \inf\seq{v \geq 0 ,  X^{u,x}_{u+v} \geq L} 
$$

\noindent and the associated killed diffusion process $(X^x_{\tau^{u,x}_t})_{t\geq0}$. Here, we write $\tau^{u,x}_{t} := \tau^{u,x} \wedge (t-u)$. In various applications, such as ruin probability, mathematical finance \cite{musiela:rutkowski} or neurosciences \cite{Delarue:inglis:rubenthaler:tanre:1}, one is interested in results related to the existence of a density for $\tau^{u,x}_t$ or the vector $(\tau^{u,x}_t, X^x_{u+\tau^{u,x}_t})$ and if it exists its regularity properties as well as sharp upper-bounds for the density and its derivatives.

Several results concerning existence and smoothness properties (as well as some Gaussian bounds) for the densities of the exit time of one-dimensional diffusion processes and the associated killed diffusions have been established in the literature. When the coefficients are smooth, we refer e.g. to \cite{Pauwels:87} for the existence and smoothness of a first-passage density using a Lamperti transformation technique combined with Girsanov theorem. We also refer to the recent unpublished note \cite{Delarue:inglis:rubenthaler:tanre:2} for some Gaussian upper-bounds in the case of a non-homogeneous smooth drift coefficient and a constant diffusion coefficient using a PDE point of view of the parametrix method. 

On the other hand, in a multi-dimensional setting and for a domain $D$ such that $\partial D$ is smooth and noncharacteristic, Cattiaux \cite{cattiaux:91} developed a Malliavin's calculus approach to prove that the semigroup associated to a process killed when it hits the boundary $\partial D$ admits an infinitely differentiable kernel under a restricted H\"ormander condition on the vector fields. Gaussian bounds on this kernel are also established in small time. We also refer the reader to Ladyzenskaja and al. \cite{lady:solo}, Friedman \cite{frie:64} and Garroni and Menaldi \cite{garroni:menaldi} for constructions of Green functions related to a class of Cauchy-Dirichlet value problems in a uniformly elliptic setting using a partial differential equation framework.


  
In order to study this problem, one is naturally led to define the collection of linear maps $(P_t)_{t\geq0}$, acting on $ \mathcal{B}_b (\mathbb{R})$, 
  as follows
\begin{equation}
\label{linear:map}
\forall (u,x)\in \mathbb{R}_+ \times \mathbb{R}, \quad P_t h(u,x) = \mathbb{E}\left[ h(u + \tau^{u,x}_{u+t}, X^{u,x}_{u+\tau^{u,x}_{u+t}} )\right].
\end{equation}

 From the above definition, one realizes that the main problem to analyse the above quantity is that the probability measure generated by the couple $(\tau^{u,x}_{u+t},X^{u,x}_{u+\tau^{u,x}_{u+t}})$ is singular. Indeed, if the process $X$ does not reach the boundary in the interval $ [u,u+t] $, the law of $\tau^{u,x}_{u+t}$ is concentrated on time $t$. Conversely, if the process exits the domain before time $u+t$, the law of $X^{u,x}_{u+\tau^{u,x}_{u+t}}$ will have a point mass at $L$.

To investigate this problem, we rely on a perturbation technique, known as the parametrix method, that we apply to the process $( u + \tau^{u,x}_{u+t}, X^{u,x}_{u+\tau^{u,x}_{u+t}} )_{t\geq0}$ in order to obtain an expansion of $P_t h(u,x)$ as an infinite series. The main interest for introducing the linear maps $(P_t)_{t\geq0}$ is that it allows to include the first exit time and the related killed diffusion in the same analysis. From this representation, we obtain the existence of a transition density for $(P_t)_{t\geq0}$ on $\rr_+\times (-\infty,L)$ under mild smoothness assumption on the coefficients. As a by product, we study its regularity properties and derive Gaussian upper-bounds.

The parametrix method is a classical perturbation technique used in partial differential equation (PDE in short) theory that allows to give an expansion in infinite series of iterated kernels of the fundamental solution of an elliptic or parabolic PDE as exposed e.g. in Friedman \cite{frie:64} or McKean and Singer \cite{mcke:sing:67}. Its success is due to its robustness and flexibility as it can be invoked for a wide variety of PDEs both for theoretical goals such as density estimates, see e.g. Delarue and Menozzi \cite{dela:meno:10}, Kohatsu-Higa \& al. \cite{Kohatsu-Higa2016} and for numerical approximations see e.g. Konakov and Mammen \cite{kona:mamm:00} and Frikha and Huang \cite{frikha:huang} among others. Though its application seems to be restricted to Markov processes, it notably allows for coefficients to be less regular than in the Malliavin calculus approach for the study of transition densities. 

Recently, Bally and Kohatsu-Higa \cite{Bally:Kohatsu} used a semigroup approach to the parametrix method in order to obtain a probabilistic representation for the transition density of the solution to elliptic diffusion processes and some Lévy driven SDEs. Let us note that the case of stable-like driven SDE with H\"older continuous coefficients has been handled in \cite{Kohatsu-Higa:Li}. Although a difficult aspect of the problem lays in the singular behavior of the joint law of $( u + \tau^{u,x}_{u+t}, X^{u,x}_{u+\tau^{u,x}_{u+t}} )_{t\geq0}$, we try to follow the approach initiated in \cite{Bally:Kohatsu} by considering two kinds of techniques, namely the \emph{forward parametrix method} and the \emph{backward parametrix method} which require different smoothness assumptions on the coefficients and provide different properties on the underlying density.

 Roughly speaking, the forward parametrix method consists in approximating the process $( u + \tau^{u,x}_{u+t}, X^{u,x}_{u+\tau^{u,x}_{u+t}} )_{t\geq0}$ by the \emph{proxy process} $( u + \bar{\tau}^{u,x}_{u+t}, \bar{X}^{u,x}_{u+\bar{\tau}^{u,x}_{u+t}} )_{t\geq0}$ where $(\bar{X}^{u,x}_{u+t})_{t\geq0}$ has dynamics given by \eqref{sde:dynamics} with diffusion coefficient \emph{frozen at the initial point} $x$ and with zero drift, $\bar{\tau}^{u,x}$ being its associated exit time. In order to make the argument of the forward parametrix approach works properly, one has to assume that the drift coefficient\footnote{For exact definitions of these spaces, see Section \ref{sec:not}.} is $\mathcal{C}^{1}_b(\rr)$ and that the diffusion coefficient is bounded, uniformly elliptic and $\mathcal{C}^{2}_b(\rr)$. Then conclusions on the regularity of the density with respect to the terminal point are obtained. 
 
 On the other hand, a backward parametrix expansion, usually uses an Euler scheme with coefficients frozen at the terminal point of the density as proxy process. For this reason, the method is called backward. This method can be applied if the drift coefficient is measurable and bounded and the diffusion coefficient is bounded, uniformly elliptic and H\"{o}lder-continuous. Regularity properties with respect to the starting point $ x $ can be established. Under the mild smoothness assumptions of the backward parametrix framework (see assumption \A{H2} in Section \ref{backward:section}),  we were unable to find references on the existence, regularity properties and Gaussian estimates for the density of the couple $(\tau^{u,x}_{u+t}, X^{u,x}_{\tau^{u,x}_{u+t}} )$ up to the boundary value $L$. Although it may be possible to link $(t,u,x) \mapsto P_t h(u,x)$ to the unique classical solution of a Cauchy-Dirichlet PDE thanks to a Feynman-Kac representation formula, this will require additional smoothness on the coefficients that we do not want to impose here.

As explained before, in our case, the situation is more challenging than in the standard diffusion or Lévy driven setting studied in \cite{Bally:Kohatsu}, since we have to deal with two processes which have a singular behavior with respect to each other. Another technical difficulty (compared to the standard diffusion setting investigated in \cite{Bally:Kohatsu}) that appears in the backward setting lies in the proof of the convergence of the parametrix series corresponding to the first hitting time since the singularity in time induced by the exit time distribution of the frozen process is of higher order. As it will become clear later on, when dealing with the part corresponding to the exit time, the key idea is to use an Euler scheme with coefficients frozen at the barrier $L$ whereas, when using this approach for the stopped process, as in the case studied in \cite{Bally:Kohatsu}, one has to use a standard Euler scheme with coefficients frozen at the terminal point of the density.

 In conclusion, the main advantage of the parametrix expansion is that it allows to prove the existence of the density for the couple $(u + \tau^{u,x}_{u+t}, X^{u,x}_{u+\tau^{u,x}_{u+t}})$ and to study its regularity properties under such rather mild assumptions on the coefficients.  We believe that the methodology and the collection of results established here may be extended to certain type of multi-dimensional smooth domains. This will be taken up in future works.

One of the main advantages for considering the forward parametrix expansion and not only the backward method is that it allows to obtain regularity of the transition density with respect to the terminal point and also it leads to a more natural probabilistic representation for the density of the process in consideration and therefore also provides a representation for $P_t h(u,x)$ that can be used for an \emph{unbiased} Monte Carlo numerical simulation or as an alternative to Malliavin calculus. We refer to \cite{Andersson:Kohatsu} for a comprehensive insight on unbiased simulation of SDEs. As far as numerical approximations of \eqref{linear:map} are concerned, the standard methodology to evaluate such expectation is to discretize the dynamics \eqref{sde:dynamics} using an Euler scheme and to consider its discrete hitting time. A more sophisticated procedure consists in interpolating the standard approximation scheme into a continuous Euler scheme and then using the law of some Brownian bridge in order to take into account the probability that the process has left the domain between two discretization times or not. For a rigorous treatment of the weak discretization error for the evaluation of $\E[f(X^x_T) \I_\seq{\tau^x>T}]$ and some implementable simulation schemes, we refer to \cite{Gobet:2000} and \cite{gobe:meno:spa:04} and the references therein. The two probabilistic representations obtained in the paper notably allow to remove the discretization error that appears in the numerical evaluation of $P_T h(u,x)$.

This article is divided as follows: in Section 2, we will discuss some properties of the process in consideration. Notably, we will see that the collection of positive linear maps given by \eqref{linear:map} defines a Markov semigroup and characterize its infinitesimal generator. In Section 3, we introduce the forward parametrix method and its application to the current problem. The main results of this section are given in Theorem \ref{forward:expansion:semigroup} where an expansion is obtained for the semigroup, which is then used to prove in Theorem \ref{theorem:expansion:density:forward} the existence of the transition density function and also to obtain Gaussian upper estimates. Some regularity properties are also studied in Theorem \ref{differentiability:density:forward}, namely we consider the differentiability of kernel related to killed diffusion with respect to its terminal point and also the H\"older regularity in time of the kernel related to the exit time. We then conclude the section by providing some applications such as the probabilistic presentation (see Theorem \ref{theorem:probabilistic:representation:forward}) that leads to an unbiased Monte Carlo path simulation for $\E[h(\tau^x_t,X^x_{\tau^x_t})]$, an integration by parts formula with respect to the killed process or bounds on $\E[h(\tau^x_t,X^x_{\tau^x_t})]$ and $\E[\partial_2 h(\tau^x_t,X^x_{\tau^x_t})]$ under weak conditions on the test function $h$.

 In Section 4, we introduce and establish the backward parametrix expansion for the semigroup under mild regularity assumptions on the coefficients, namely, the drift coefficient is measurable and bounded and the diffusion coefficient is bounded, uniformly elliptic and H\"{o}lder-continuous.  Similarly to the forward parametrix method, an expansion of the semigroup is obtained in Theorem \ref{pexp}, then the existence of the transition density and its regularity properties with respect to the initial point are discussed in Theorem \ref{backmain} and Theorem \ref{backmaindiff} respectively. Finally, we discuss some applications such as a probabilistic representation for the semigroup or the transition density (see Theorem \ref{backprob}), a Bismut type formula with respect to the killed process or bounds on $\E[h(\tau^x_t,X^x_{\tau^x_t})]$ and $\partial_x\E[h(\tau^x_t,X^x_{\tau^x_t})]$ under weak conditions on the test function $h$ and the coefficients. Finally, in a short appendix, we provide some useful key estimates in order to construct our parametrix expansions. 

\section{Preliminaries}

\subsection{Notations}\rule[-10pt]{0pt}{10pt}\\
\label{sec:not}We first give some basic notations and definitions used throughout this paper. For a sequence of linear operators $(S_i)_{1\leq i \leq n}$, we define $\prod_{i=1}^{n} S_i = S_1 \cdots S_n$ and $\prod_{i=n}^{1} S_i = S_n \cdots S_1$. We will often use the convention $\prod_{\emptyset} = 1$ which appears when we have for example $ \prod_{i=0}^{-1} $. Furthermore we will use the following notation for time and space variables $\s_p=(s_1,\cdots,s_p)$, $\z_p=(z_1,\cdots,z_p)$, the differentials $d\s_p = ds_1 \cdots ds_p$, $d\z_p = dz_1 \cdots dz_p$ and for a fixed time $t \geq 0$, we denote by $\Delta_p(t)  = \left\{ \s_p\in [0,t]^p: s_{p+1}:=0 \leq s_p\leq s_{p-1} \leq \cdots \leq s_1 \leq t=:s_0 \right\}$ and $\Delta^*_p(t)  = \left\{ \s_p\in [0,t]^p: s_0:=0 \leq s_1\leq s_{2} \leq \cdots \leq s_p \leq t=:s_{p+1} \right\}$. For a multi-index $\alpha=(\alpha_1, \cdots, \alpha_\ell)$ of length $\ell$, we sometimes write $\partial_\alpha f(x) = \partial_{x_{\alpha_1}}\cdots \partial_{x_{\alpha_\ell}}f(x)$, for a vector $ x $. For a real valued function $f$ defined on $\R$, we will also use the notation $|f|_{\infty,L}:=\sup_{x\in (-\infty,L]}|f(x)|$ whenever this quantity is finite.  

We denote by $y \mapsto g(ct,y)$ the transition density function of the standard Brownian motion with variance $c$, i.e. $g(ct,y) = (2\pi t c)^{-1/2}\exp(-y^2/(2tc))$, $y\in \R$. The associated Hermite polynomials are defined respectively as $H_i(ct,y)=g(ct,y)^{-1} \partial^{i}_y g(ct,y)$ for $i\in \N$. We write $\Phi(x)=\int_{-\infty}^{x} g(1,y) dy$ for the cumulative distribution function of the standard normal law. Sometimes, we also use the alternative notation $ \mathrm{erfc}(x)=2(1-\Phi(\sqrt{2}x)) $. For a fixed given point $z\in \rr$, the Dirac measure is denoted by $\delta_z(dx)$. 

For any function $h$ with domain $D\subseteq\mathbb{R}$, we denote its support by $\mathrm{supp}(h)\subseteq D$. We follow the common practice of denoting by $\mathcal{C}^{k}_b(E)$ the collection of all real-valued bounded continuous functions defined on $E$ which have continuous and bounded derivatives of every order up to $k$. The set $\mathcal{B}_b(E)$ is the collection of real-valued bounded measurable maps defined on $E$. If $(u,x) \mapsto h(u,x)\in \mathcal{B}_b(\rr_+ \times \rr)$ is a continuous function on $\rr_+ \times (-\infty,L]$ with partial derivatives $\partial_1 h(u,x)$, $\partial_2 h(u,x)$ and $\partial_{2,2} h(u,x)\equiv \partial_2^2h(u,x)$, being continuous and bounded on $\rr_+ \times (-\infty,L]$ (continuity and derivatives at $x=L$ are always understood as left-continuity and left-derivatives at $ x=L $), we write $h\in \mathcal{C}^{1,2}_b(\rr_+ \times (-\infty,L])$ and similar notation will be used when the domain is a general product space. The reader is warned that the latter space is not standard but is introduced here in order to reduce the amount of notation. We finally introduce the space $\mathcal{C}_0(\rr_+ \times \rr)$ of continuous function defined on $\rr_+ \times \rr$ that vanishes at infinity.

\subsection{Markov semigroup, It\^o's formula and related infinitesimal generator} \rule[-10pt]{0pt}{10pt}\\

The aim of this section is to study the collection of positive linear maps defined by \eqref{linear:map}. We assume that there exists a unique weak solution to \eqref{sde:dynamics} that satisfies the strong Markov property. We first emphasize that since the process $(X^{u,x}_{u+t})_{t\geq0}$  given in \eqref{sde:dynamics} is time-homogeneous it may be understood as a shifted version of $(X^{0,x}_{t})_{t\geq0}$. Specifically, we can choose the canonical Wiener space for $(\Omega, (\mathcal{F}_t)_{t\geq0}, \mathbb{P})$ and thus introduce the shift operator $(\theta_u: w \mapsto \theta_u(w)=w(u+.)-w(u))_{u\geq0}$. Then, $(X^{u,x}_{u+t})_{t\geq0} = (X^{0,x}_{t} \circ \theta_u)_{t\geq0}$, or we will simply write $(X^{x}_{t} \circ \theta_u)_{t\geq0}$, with the convention $X^{x}=X^{0,x}$. Notably, one has
$$
\forall (u,x)\in \mathbb{R}_+ \times \mathbb{R}, \ P_t h(u,x) = \mathbb{E}[h(u+ \tau^{x}_{t},X^{x}_{\tau^{x}_{t}})].
$$

To apply the parametrix method, we claim that the process $(u+\tau^x_t, X^x_{\tau^x_t})_{t\geq 0}$ is a Markov process, and for the readers convenience, the proof of this fact is provided in Proposition \ref{semigroup:property} whose statement and proof is postponed in Appendix \ref{Markov:semigroup:appendix}. Under additional smoothness assumptions on the coefficients, namely that $b$ and $\sigma$ are bounded Lipschitz continuous functions and that $\P(\tau^x = t)=0$, $t>0$ and $x<L$, we prove that $(P_t)_{t\geq0}$ is a strongly continuous Feller semigroup but we will not need this property for the analysis developed below. Note that the property $\P(\tau^x = t)=0$ has been proven in \cite{hayashi} under enough regularity of the coefficients $b$ and $\sigma$. If one is interested in establishing the strong Feller property, one may assume for the moment that coefficients here satisfy the assumptions in \cite{hayashi} which guarantee $\P(\tau^x = t)=0$. Later we will see that the absolute continuity property of the law of $\tau^x$ only depends on \A{H1} or \A{H2} (see Section \ref{forward:section} and Section \ref{backward:section} below). Therefore, a limit procedure will finish the argument.
Also note that the following relation is satisfied: $ \P(\max_{ 0 \leq s \leq t} X^x_s < L) = \P(\tau^x > t) $, therefore showing the duality between stopped process and its associated exit time.

We also consider the following \emph{proxy} process $\bar{X}^{y,u,x}_{u+t}$ with coefficients frozen at a fixed point $y\in \rr$ and with dynamics given by
$$
\bar{X}^{y,u,x}_{u+t} = x + \sigma(y) (W_{t+u}-W_u)
$$

\noindent and its corresponding exit time $\bar{\tau}^{y,u,x}:= \inf\seq{v \geq 0, \bar X^{y,u,x}_{u+v} \geq L} $, $\bar{\tau}^{y,u,x}_t:= \bar{\tau}^{y,u,x} \wedge (t-u)$. From now on, $(u+\bar{\tau}^{y,u,x}_t, \bar{X}^{y,u,x}_{u+\bar \tau^{u,x}_{t}})_{t\geq0}$ and $(\bar{P}^{y}_t)_{t\geq0}$ will be referred as the frozen process and its associated semigroup defined for $h\in \mathcal{B}_b(\rr_+ \times \rr)$ by $\bar{P}^y_t h(u,x)= \E[h(u+\bar{\tau}^{y,u,x}_t,\bar{X}^{y,u,x}_{u+\bar{\tau}^{y,u,x}_t})] = \E[h(u+\bar{\tau}^{y,x}_t,\bar{X}^{y,x}_{\bar{\tau}^{y,x}_t})]$ with $\bar{\tau}^{y,x}_t:= \bar{\tau}^{y,0,x} \wedge t$. Note that we removed the drift part in the dynamics of $\bar{X}^{y,u,x}$ since it plays no role in the analysis below. In order to simplify the notations, we will remove the superscript $y$ and write $(\bar{P}_t)_{t\geq0}$ and $(\bar{\tau}^{u,x}_t, \bar{X}^{u,x}_{u,u+\bar \tau^{u,x}_{t}})_{t\geq0}$ when there is no confusion. Similarly, by time-homogeneity, we work with the process $(u+\bar{\tau}^{x}_t, \bar{X}^{x}_{\bar \tau^{x}_{t}})_{t\geq0}$ and follow the same notation as for the original process.


We now characterize the infinitesimal generators $\mathcal{L}$ and $\bar{\mathcal{L}}^{y}$ of respectively $(P_t)_{t\geq0}$ and $(\bar{P}_t)_{t\geq0}$. 
\begin{lem}\label{lemma:ito}
Let $h \in \mathcal{C}^{1,2}_b( \rr_+\times (-\infty, L]  )$. Assume that the coefficients $b,\sigma$ are continuous on $(-\infty,L]$. Then the infinitesimal generator of $(P_t)_{t\geq0}$ is
\bde
\mathcal{L}h(u,x) = \I_\seq{x<L}\left(b(x)\partial_2 h(u,x) + \frac{1}{2}a(x)\partial^{2}_{2}h(u,x) + \partial_1 h(u,x)\right).
\ede 
Similarly, the infinitesimal generator of $(\bar{P}_t)_{t\geq0}$ writes
\bde
\bar{\mathcal{L}}^y h(u,x) = \I_\seq{x<L}\left(\frac{1}{2}a(y)\partial^{2}_{2}h(u,x) + \partial_1 h(u,x)\right).
\ede 
\end{lem}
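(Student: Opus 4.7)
The plan is to compute the infinitesimal generator directly from its definition
\[
\mathcal{L}h(u,x)=\lim_{t\to 0^+}\frac{P_t h(u,x)-h(u,x)}{t},
\]
splitting cases according to the indicator $\I_{\seq{x<L}}$ that appears in the claimed formula. When $x=L$, continuity of the sample paths of $X^x$ forces $\tau^x=0$ almost surely, so $(u+\tau^x_t,X^x_{\tau^x_t})\equiv(u,L)$ and $P_t h(u,L)=h(u,L)$ for every $t\ge 0$; in particular $\mathcal{L}h(u,L)=0$, matching the prefactor.

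For $x<L$, the main tool is It\^o's formula applied to the map $(s,y)\mapsto h(u+s,y)$ along the semimartingale $(s,X^x_s)$, stopped at $\tau^x\wedge t$. Because $X^x_s\le L$ on $[0,\tau^x\wedge t]$ and $h\in\mathcal{C}^{1,2}_b(\rr_+\times(-\infty,L])$ (derivatives at $L$ being interpreted as left-derivatives), the formula is valid up to the boundary:
\[
h(u+\tau^x_t,X^x_{\tau^x_t})-h(u,x)=\int_{0}^{\tau^x_t}\mathcal{L}_0 h(u+s,X^x_s)\,ds+\int_{0}^{\tau^x_t}\sigma(X^x_s)\,\partial_2 h(u+s,X^x_s)\,dW_s,
\]
where $\mathcal{L}_0 h:=\partial_1 h+b\,\partial_2 h+\tfrac{1}{2}a\,\partial^2_2 h$. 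To take expectations under the mere continuity of $b,\sigma$, I would first localize with $\tau_M:=\inf\{s\ge 0:|X^x_s|\ge M\}$, which turns the stochastic integral into a zero-mean martingale; sending $M\to\infty$ using boundedness of $h$ together with $\tau_M\uparrow\infty$ a.s.\ then yields
\[
P_t h(u,x)-h(u,x)=\E\!\left[\int_{0}^{\tau^x_t}\mathcal{L}_0 h(u+s,X^x_s)\,ds\right].
\]

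Dividing by $t$ and letting $t\to 0^+$ completes the argument. Since $x<L$ and $X^x$ has continuous paths, $\tau^x>0$ almost surely, so $\tau^x_t=t$ for $t$ small enough (depending on $\omega$); combined with continuity of $s\mapsto\mathcal{L}_0 h(u+s,X^x_s)$ at $s=0$, this gives
\[
\frac{1}{t}\int_{0}^{\tau^x_t}\mathcal{L}_0 h(u+s,X^x_s)\,ds\xrightarrow[t\to 0^+]{\mathrm{a.s.}}\mathcal{L}_0 h(u,x),
\]
and one further localization-plus-bounded-convergence step (controlling the residual $P_t h(u,x)-\E[h(u+\tau^x_t\wedge\tau_M,X^x_{\tau^x_t\wedge\tau_M})]$ by $2|h|_{\infty}\P(\tau_M\le t)$, which is $o(t)$ for $M$ large) legitimates passing the limit inside the expectation and produces $\mathcal{L}h(u,x)=\mathcal{L}_0 h(u,x)$ on $\{x<L\}$. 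The formula for $\bar{\mathcal{L}}^y$ is obtained by the identical argument applied to the proxy $\bar X^{y,x}$, whose coefficients are the constants $0$ and $\sigma(y)$; here no localization is needed and one directly recovers the stated expression. The principal obstacle is precisely this exchange of limit and expectation under the bare continuity of $b,\sigma$ on $(-\infty,L]$, which is resolved by the localization sketched above together with a.s.\ boundedness of continuous paths on compact time intervals and boundedness of $h$ and its first three partial derivatives.
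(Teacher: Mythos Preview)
Your proof follows the same strategy as the paper: apply It\^o's formula to $h(u+s,X^x_s)$ on $[0,\tau^x\wedge t]$, take expectations, and let $t\downarrow 0$. The only differences are cosmetic: you localize with $\tau_M$ before taking expectations (the paper simply drops the stochastic integral without comment) and then pass to the limit via almost-sure convergence plus dominated convergence, whereas the paper bounds $\tfrac{P_th-h}{t}-\mathcal{L}h$ directly term by term using path continuity of $(s,X^x_{s\wedge\tau^x})$ and $\P(\tau^x\le t)\to 0$.
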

\begin{proof}For $x\geq L$, one has $P_t h(u,x) = \bar P_t h(u,x) = h(u,x)$ so that $\mathcal{L} h(u,x) =  \bar{\mathcal{L}}^y h(u,x) = 0$. Now, assume $x<L$, the It\^o formula yields
\begin{align*}
h(u + \tau^{x} \wedge t, X^{x}_{\tau^{x} \wedge t }) - h(u,x) & = \left(h(u + \tau^{x} \wedge t, X^{x}_{\tau^{x} \wedge t }) - h(u,x)\right) \I_\seq{x < L } \\
&= \I_\seq{x < L } \left[\int_0^t \I_\seq{s \leq \tau^{x} }\partial_{2} h(u + s, X^{x}_{s})b(X^{x}_{s})ds  + \int_0^t  \I_\seq{s \leq \tau^{x} }\partial_{2} h(u + s, X^{x}_{s})\sigma(X^{x}_{s}) \,dW_s \right]\\
& \quad + \I_\seq{x < L } \left[ \int_0^t \I_\seq{s \leq \tau^{x}}\partial_{1} h(u + s, X^{x}_{s}) ds + \frac{1}{2}\int_0^t \I_\seq{s \leq \tau^{x}} \partial^2_{2}h( u + s, X^{x}_{s}) a(X^{x}_{s}) ds \right].
\end{align*}
Hence, we get
\begin{align*}
 \frac{\mathbb{E}[ h(u + \tau^{x} \wedge t,  X^{x}_{\tau^{x} \wedge t })] - h( u,x) }{t} & = \frac1 t \int_0^t \I_\seq{x < L }  (\E[b(X^x_{s\wedge \tau^x})\partial_2 h(u+s\wedge \tau^x,X^x_{s\wedge\tau^x}) + \frac{1}{2}a(X^x_{s\wedge \tau^x})\partial^2_{2}h(u+s\wedge\tau^x,X^x_{s\wedge \tau^x})] \\
 & \quad + \E[ \partial_1 h(u+s\wedge \tau^x,X^{x}_{s\wedge\tau^x})\ ] ) ds   \\
 & \quad - \I_\seq{x < L } \frac1 t \int_0^t \mathbb{E}[ (b(L)\partial_2 h(u+\tau^x,L) + \frac{1}{2}a(L)\partial^2_{2}h(u+\tau^x,L) + \partial_1 h(u+\tau^x,L))\I_\seq{\tau^x < s }] ds
\end{align*}

\noindent which in turn implies 
\begin{align*}
\left| \frac{P_t h(u,x) - h(u,x)}{t} - \mathcal{L} h(u,x) \right| & \leq \I_\seq{x < L } \frac 1 t \int_0^t | \E[b(X^x_{s\wedge \tau^x}) \partial_2 h(u+s\wedge\tau^x,X^x_{s\wedge \tau^x}) - b(x) \partial_2 h(u,x)] | ds \\
& \quad + \I_\seq{x < L } \frac 1 t \int_0^t |\E[\partial_1 h(u+s\wedge \tau^x,X^x_{s\wedge\tau^x}) - \partial_1 h(u,x)] | ds \\
& \quad + \I_\seq{x < L } \frac 1 t \int_0^t |\E[a(X^x_{s\wedge \tau^x}) \partial^2_{2}h(u+s\wedge \tau^x,X^x_{s\wedge\tau^x}) - a(x) \partial^2_{2} h(u,x)]| ds  \\
& \quad + \I_\seq{x < L } C \P(\tau^x \leq t)
\end{align*}

\noindent where $C$ is a positive constant depending on $|\partial_2 h(.,L)|_{\infty}, \, |\partial^2_2 h(.,L)|_{\infty}, \, |\partial_1 h(.,L)|_{\infty}$ and the coefficients $b,\, \sigma$. Now using that $\tau^x>0$ $a.s.$ for $x<L$ and the right-continuity of $t\mapsto \P(\tau^x\leq t)$, one gets $\P(\tau^x\leq t) \rightarrow 0$ as $t\downarrow 0$. Now the continuity of the paths of the process $(u+ \tau^{x}_{t}, X^{x}_{\tau^{x}_{t}})_{t\geq0}$ and the continuity of the coefficients $b,\sigma$ on $(-\infty,L]$ finally yield
$$
 \frac{P_t h(u,x) - h(u,x)}{t} \rightarrow \mathcal{L} h(u,x) \, \, \mbox{ as } \, \, t\downarrow0.
$$


The same line of reasoning gives the result for the infinitesimal generator $\bar{\mathcal{L}}^{y}$ of the proxy process so that we omit its proof.  
\end{proof}
\begin{remark}From the proof of Lemma \ref{lemma:ito}, we also get that $P_t h \in$ dom$(\mathcal{L})$ for $h\in \mathcal{C}^{1,2}_b( \rr_+\times (-\infty, L])$. Indeed, since 
$$
\frac{1}{\varepsilon}(P_{t+\varepsilon} h - P_t h) = \frac{(P_\varepsilon - I)}{\varepsilon} P_t h = P_t \frac{(P_\varepsilon - I)}{\varepsilon} h 
$$

\noindent for all $\varepsilon>0$ and that $\varepsilon^{-1}(P_\varepsilon - I) h(u,x) \rightarrow \mathcal{L}h(u,x)$ as $\varepsilon \rightarrow 0$, by dominated convergence theorem, one gets $P_t h \in$ dom$(\mathcal{L})$ and also $\varepsilon^{-1}(P_{t+\varepsilon} h - P_t h)\rightarrow \mathcal{L} P_t h = P_t \mathcal{L} h$ as $\varepsilon \rightarrow 0$. For other properties on semigroups and related results on their infinitesimal generators, we refer to Ethier and Kurtz \cite{Ethier:Kurtz}.

\end{remark}
\vskip10pt
\section{Forward parametrix expansion} \label{forward:section}
In this section we apply the forward parametrix expansion using a semigroup approach. Section \ref{expansion:semigroup:forward} is devoted to the expansion of the semigroup $(P_t)_{t\geq0}$. In Section \ref{expansion:density:forward}, the existence and an expansion of the transition density function are derived as a by product of the semigroup expansion. Some regularity estimates and Gaussian upper-bounds are also obtained. Finally, in Section \ref{applications:forward}, several applications are discussed. In particular, a probabilistic representation is provided.


 Through this section, we will make the following assumptions on the coefficients $b, \sigma:\mathbb{R} \rightarrow \mathbb{R}$:

\subsection*{Assumptions (H1)}
\begin{itemize}

\item[(i)]  $\sigma$ is bounded. Moreover, $a=\sigma^2$ is uniformly elliptic, that is there exist $\underline{a}, \overline{a}>0$ s.t. for any $x\in \rr$, $\underline{a} \leq a(x) \leq \overline{a}$.

\item[(ii)] $b\in \mathcal{C}^{1}_b(\rr)$ and $ a \in \mathcal{C}^{2}_b(\rr)$. 

\end{itemize}

The constants $C$ and $c$ may change from line to line. The constant $C$ depends on the coefficients $b,\sigma$ through their norms whereas $c$ depends only on $\underline{a}, \overline{a}$. When the constant $C$ depends on the time horizon $T$, we use the notation $C_T$.

\subsection{Expansion for the semigroup}\label{expansion:semigroup:forward}\rule[-10pt]{0pt}{10pt}\\
Before performing the forward parametrix expansion, we first need to study the transition density of the proxy semigroup and to obtain some key estimates.

\begin{lem}\label{two:kernels:expression} \rule[-10pt]{0pt}{10pt}
Assuming that $a = \sigma^2$ is strictly positive on $(-\infty,L]$ and let $y \in (-\infty,L]$. The kernel $\P(\bar X^{x}_{t}  \in dz, \max_{v\in [0,t]} \bar X^{x}_{v} < L) := \bar{q}^{y}_t(x,z) dz$ is given by 
$$
  \bar{q}^{y}_t(x,z) = \left(g(a(y) t, z-x) - g(a(y)t, z+x-2L)\right) \I_\seq{x \leq L} \I_\seq{z \leq L},
$$
\noindent and
$$
f_{\bar{\tau}}^{y}(x,s) = \partial_s \mathbb{P}(\bar{\tau}^{x}\leq s) =  \frac{L-x}{\sqrt{2\pi a(y)}s^{3/2}} \exp\left(- \frac{(L-x)^2}{2a(y)s} \right) \I_{\seq{x\leq L}}.
$$
Furthermore, the following boundary conditions are satisfied: $ \bar{q}^{y}_t(L,z)=\bar{q}^{y}_t(x,L)=f_{\bar{\tau}}^{y}(L,s)=0 $ together with $ \partial_x f_{\bar{\tau}}^{y}(x,s)=2\partial_sg(a(y)s,L-x)$ and $ f_{\bar{\tau}}^{y}(x,s)=\partial_xg(a(y)s,L-x) $ for $ x\leq L $.
\end{lem}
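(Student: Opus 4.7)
The proxy process $\bar{X}^x_{\cdot} = x + \sigma(y) W_{\cdot}$ is a Brownian motion with variance $a(y)$ starting at $x$, so everything reduces to classical reflection principle computations. The plan is as follows.

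First, to derive the joint law $\bar{q}^y_t(x,z)\,dz$, I would fix $x\leq L$, $z\leq L$ and apply the strong Markov property at the stopping time $\bar{\tau}^x$. By the reflection principle for Brownian motion across the level $L$, paths that hit $L$ before time $t$ and end at $z$ are in measure-preserving bijection (via reflection of the post-$\bar{\tau}^x$ trajectory about $L$) with all paths ending at $2L-z$. This gives
\[
\P\bigl(\bar{X}^x_t \in dz,\ \max_{v\in[0,t]} \bar{X}^x_v \geq L\bigr) = g(a(y)t,\, 2L - z - x)\,dz.
\]
Subtracting this from $\P(\bar{X}^x_t\in dz)=g(a(y)t,z-x)\,dz$ and using the evenness of $g$ in its second slot to rewrite $g(a(y)t,2L-z-x)$ as $g(a(y)t,z+x-2L)$ yields the claimed formula. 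The indicators $\I_{\{x\leq L\}}\I_{\{z\leq L\}}$ are just to make the representation valid everywhere.

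Next, for the first passage density $f^y_{\bar{\tau}}$, the reflection principle applied to the running maximum gives the classical identity $\P(\bar{\tau}^x\leq s) = 2\P(\bar{X}^x_s \geq L) = 2\bigl(1-\Phi((L-x)/\sqrt{a(y)s})\bigr)$ for $x\leq L$. Differentiating in $s$ with the chain rule produces the stated Gaussian density. The boundary conditions $\bar{q}^y_t(L,z) = \bar{q}^y_t(x,L) = 0$ and $f^y_{\bar\tau}(L,s)=0$ follow by direct substitution: in $\bar{q}^y_t$ both specializations collapse the two Gaussians to the same expression (via evenness of $g$), and in $f^y_{\bar\tau}$ the prefactor $L-x$ vanishes.

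Finally, the two derivative identities are a direct computation. For $\partial_x f^y_{\bar{\tau}}(x,s) = 2\partial_s g(a(y)s, L-x)$, differentiating $f^y_{\bar{\tau}}$ in $x$ and $g(a(y)s,L-x)$ in $s$ both produce expressions of the form (normalizing constant) $\times\bigl((L-x)^2/(a(y)s) - 1\bigr)\times\exp(-(L-x)^2/(2a(y)s))$, and matching the normalization constants gives the factor of $2$; equivalently, one can invoke the heat equation $\partial_s g(a(y)s,w) = (a(y)/2)\partial^2_w g(a(y)s,w)$ and use that $\partial_x g(a(y)s,L-x) = -\partial_w g(a(y)s,w)|_{w=L-x}$. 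The relation $f^y_{\bar{\tau}}(x,s) = \partial_x g(a(y)s,L-x)$ is obtained in the same spirit by noting that $\partial_x g(a(y)s,L-x) = g(a(y)s,L-x)\cdot(L-x)/(a(y)s)$ and comparing with the explicit formula for $f^y_{\bar{\tau}}$. There is no substantive obstacle here: the whole lemma is a collection of explicit Gaussian computations, the only slight subtlety being to keep track of signs and to use evenness of $g$ where appropriate.
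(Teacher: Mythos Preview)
Your proposal is correct and follows essentially the same approach as the paper: both arguments reduce to the reflection principle for Brownian motion, with the paper computing the joint CDF $\P(\bar X^x_t\le z,\ \max_{v\le t}\bar X^x_v<L)$ from the known law of $(W_t,\max_{v\le t}W_v)$ and then differentiating in $z$, while you go directly to the density via the path-reflection bijection and subtraction---these are equivalent formulations of the same computation. The paper's written proof actually omits the verification of the boundary conditions and the two derivative identities, which you sketch; your outline there is fine, though a careful execution of the comparison for $f^y_{\bar\tau}(x,s)=\partial_x g(a(y)s,L-x)$ would reveal a missing factor of $a(y)$ in the lemma's statement (consistent with the form $\partial^r_x f^y_{\bar\tau}(x,s)=a(y)\,\partial^{r+1}_x g(a(y)s,L-x)$ used later in the paper).
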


\begin{proof}
In what follows, we may assume without loss of generality that $\sigma$ is positive on $(-\infty,L]$. We write 
\begin{align*}
\P(\bar X^{x}_{t}  \leq  z, \max_{v\in [0,t]} \bar X^{x}_{v} < L) & = \P\left(W_t  \leq \frac{z-x}{\sigma(y)}, \max_{v\in [0,t]} W_v< \frac{L-x}{\sigma(y)}\right) \\
& = \Phi\left(\frac{z-x}{\sigma(y)\sqrt{t}}\right) + \Phi\left(\frac{2(L-x)-(z-x)}{\sigma(y)\sqrt{t}}\right) -1
\end{align*}

\noindent where we used the joint distribution $(W_t, \max_{v\in [0,t]} W_v)$ coming from the reflection principle. Now differentiating w.r.t $z$ yields the result. The probability density function $s \mapsto f_{\bar{\tau}}^{y}(x,s)$ is then easily deduced from the identity $\P(\tau^x>s)=\P(\max_{v\in [0,s]} \bar{X}^x_v < L)=2\Phi\left(\frac{L-x}{\sigma(y)\sqrt{s}}\right)-1=1-\mathrm{erfc}\left(\frac{L-x}{\sqrt{2a(y)s}}\right)$.
\end{proof}

We introduce the following operators defined for $h\in \mathcal{B}_b(\mathbb{R}_+ \times (-\infty,L])$ and $y\in (-\infty,L]$ by
\begin{align*}
K^{y}_t h(u,x) &= \I_\seq{x<L} \int_{0}^{t} h(u+s,L) f_{\bar{\tau}}^{y}(x,s)  ds,   \\
S^{y}_t h(u,x)& =  \I_\seq{x<L}  \int_{-\infty}^{L} h(u+t,z) \bar{q}^{y}_t(x,z) dz 
\end{align*}

\noindent which respectively correspond to the exit time operator and the killed diffusion operator. As a consequence of Lemma \ref{two:kernels:expression} and the very definition of the Markov semigroup $(\bar{P}^{y}_t)_{t\geq0}$, we get the following results.

\begin{corol} For all $(u,x) \in \rr_+ \times \rr$, one has
\begin{align}
 \bar P^{y}_t h(u,x) & = \I_\seq{x\geq L} h(u,x) + S^{y}_t h(u,x) +  K^{y}_t h(u,x) \nonumber  \\
& = \I_\seq{x\geq L} h(u,x) + \I_\seq{x < L}  \int_u^{u+t}\int_{-\infty}^L h(s,z) \left[\delta_L(dz) f^{y}_{\bar{\tau}}(x,s-u) ds  +  \delta_{u+t}(ds) \bar{q}^{y}_t(x,z) dz\right] .\label{kernel:proxy}
\end{align}
\end{corol}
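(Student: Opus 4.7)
The plan is to decompose the expectation $\bar P^y_t h(u,x) = \E[h(u+\bar\tau^{y,x}_t, \bar X^{y,x}_{\bar\tau^{y,x}_t})]$ according to whether the proxy process has hit the barrier $L$ before time $t$ or not, and then identify the two resulting pieces with $S^y_t h$ and $K^y_t h$ via Lemma \ref{two:kernels:expression}.

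First I would dispose of the trivial case $x \geq L$: here $\bar\tau^{y,x}=0$ almost surely, so $\bar\tau^{y,x}_t = 0$ and $\bar X^{y,x}_0 = x$, giving $\bar P^y_t h(u,x) = h(u,x)$. Both $S^y_t h$ and $K^y_t h$ contain the indicator $\I_{\{x<L\}}$ and therefore vanish, which matches the right-hand side. For $x<L$, I would split
\[
\bar P^y_t h(u,x) = \E\bigl[h(u+t,\bar X^{y,x}_t)\,\I_{\{\bar\tau^{y,x}>t\}}\bigr] + \E\bigl[h(u+\bar\tau^{y,x},L)\,\I_{\{\bar\tau^{y,x}\leq t\}}\bigr],
\]
using that on $\{\bar\tau^{y,x}\leq t\}$ path continuity forces $\bar X^{y,x}_{\bar\tau^{y,x}}=L$, while on $\{\bar\tau^{y,x}>t\}$ one has $\bar\tau^{y,x}_t = t$ and $\{\bar\tau^{y,x}>t\} = \{\max_{v\in[0,t]} \bar X^{y,x}_v < L\}$.

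For the first term, Lemma \ref{two:kernels:expression} identifies the joint sub-law of $(\bar X^{y,x}_t,\max_{v\in[0,t]}\bar X^{y,x}_v)$ restricted to $\{\max<L\}$ as $\bar q^y_t(x,z)\,dz$, so that term equals $\int_{-\infty}^L h(u+t,z)\bar q^y_t(x,z)\,dz = S^y_t h(u,x)$. For the second term, the same lemma gives the density $f^y_{\bar\tau}(x,s)$ of $\bar\tau^{y,x}$, so it equals $\int_0^t h(u+s,L) f^y_{\bar\tau}(x,s)\,ds = K^y_t h(u,x)$. This establishes the first displayed equality.

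The second equality is then just a rewriting: for $x<L$, after the substitution $s'=u+s$ in $K^y_t h$, the two pieces become
\[
\int_u^{u+t}\!\!\int_{-\infty}^L h(s',z)\,\delta_L(dz)\,f^y_{\bar\tau}(x,s'-u)\,ds' \quad\text{and}\quad \int_u^{u+t}\!\!\int_{-\infty}^L h(s,z)\,\delta_{u+t}(ds)\,\bar q^y_t(x,z)\,dz,
\]
whose sum is exactly the integral on the right-hand side of \eqref{kernel:proxy}. There is no real obstacle here; the only points requiring a little care are the use of path continuity to place the process at $L$ at the exit time and the fact that, under \A{H1}, $\bar\tau^{y,x}$ is absolutely continuous so that the event $\{\bar\tau^{y,x}=t\}$ has zero measure and the two cases in the split are disjoint up to a null set.
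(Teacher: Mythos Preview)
Your proof is correct and follows exactly the approach the paper intends: the corollary is stated as an immediate consequence of Lemma~\ref{two:kernels:expression} and the definition of $(\bar P^y_t)_{t\geq 0}$, and your decomposition into the events $\{\bar\tau^{y,x}>t\}$ and $\{\bar\tau^{y,x}\leq t\}$ together with the identification of each piece via the lemma is precisely that consequence made explicit. The only minor remark is that the absolute continuity of $\bar\tau^{y,x}$ used to dismiss the null event $\{\bar\tau^{y,x}=t\}$ already follows from Lemma~\ref{two:kernels:expression} (which only needs $a(y)>0$), so there is no need to invoke the full assumption \A{H1}.
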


Hence, we remark that the density of the proxy process is composed of two singular measures. As it will appear clearly in the following analysis, this fact raises difficulties in establishing a parametrix expansion of the semigroup.

In order to simplify the expressions appearing in the parametrix series, we define the following two kernels
\begin{align*}
\bar{K}_t(x,L) &= \I_\seq{x<L} \frac{(a(L)-a(x))}{2}\frac{2(L-x)}{a(x)t}g(a(x)t, L-x) = \I_\seq{x<L} \frac{(a(L)-a(x))}{a(x)} f_{\bar{\tau}}^{x}(x,t) ,   \\
 \bar{S}_t(x,z)& =  \I_\seq{x<L}  \left\{ \frac12 \partial^{2}_{z}\left[(a(z) - a(x)) \bar{q}^{x}_t(x,z)\right] - \partial_z\left[b(z) \bar{q}^{x}_t(x,z)\right] \right\} 
\end{align*}

\noindent and also the corresponding operators for $h \in \mathcal{B}_b(\rr_+ \times (-\infty,L])$
\begin{align}
\bar{K}_t h(u,x) &= \bar{K}_t(x,L) h(u+t,L), \label{kernel:proxy:K} \\
\bar{S}_t h(u,x) &= \int_{-\infty}^{L} \,dz\,h(u+t,z) \bar{S}_{t}(x,z).\label{kernel:proxy:S}
\end{align}

\noindent Observe here the double use of $\bar{K}$ and $\bar{S}$ as an operator and a kernel. Moreover, let us note that the operator $\bar{K}_t$ is not standard compared to the diffusion setting since it does not involve the integral of a kernel. This operator comes from the very nature of the \emph{forward} parametrix method used in this section which require doing integration by parts and dealing with such boundary terms. In particular, let us remark that under \A{H1} using the {\it{space-time inequality}}\footnote{This inequality will be used at several places throughout the article and we will omit to refer to it.}: $\forall x\in \rr$, $|x|^p e^{-q x^2} \leq ((p/(2qe))^{p/2}$, valid for any $p,q>0$, one easily gets
\begin{equation}
\label{estimate:kernel:K}
 | \bar{K}_t (x,L)| \leq C \frac{|L-x|^2}{t}g(a(x)t, L-x) \I_\seq{x<L} \leq C \I_\seq{x<L}  g(c t,L-x) 
\end{equation}

\noindent Similarly, using Lemma \ref{estimate:kernel:proxy} and  the space-time inequality, for all $t\in (0,T]$ and all $(x,z)\in (-\infty,L]^2$, one gets
\begin{align}
| \bar{S}_t(x,z)| & \leq \I_\seq{x<L} \left| \frac12 \partial^{2}_{z}\left[(a(z) - a(x)) \bar{q}^{x}_t(x,z)\right] - \partial_z\left[b(z) \bar{q}^{x}_t(x,z)\right] \right|
 \nonumber \\
& \leq  C \I_\seq{x<L} \left( |a''|_{\infty,L} + \frac{(|a'|_{\infty,L}+|b|_{\infty,L})}{ t^{\frac12}} + |b'|_{\infty,L}\right) g(ct, z-x)  \nonumber \\
& \leq C_T   \I_\seq{x<L} \frac{1}{t^{\frac12}} g(ct, z-x) \label{estimate:kernel:S}
\end{align}

 In this section, in order to simplify the notations, we will write $(\bar{P}_t)_{t\geq0}$ for the semigroup with frozen coefficients at the starting point $x$ that is $\bar P_t h(u,x) = \bar{P}^{x}_t h(u,x) $ and also write $S_t h(u,x) = S^{x}_t h(u,x)$, $K_t h(u,x) = K^{x}_th(u,x)$ when there is no confusion.
 
The following proposition corresponds to a first order expansion of $(P_t)_{t\geq0}$ around $(\bar{P}_t)_{t\geq0}$ and is the keystone to build the forward parametrix expansion for the semigroup $(P_t)_{t\geq0}$. 
\begin{prop} \label{first:step:parametrix:forward:expansion}
Let $h \in   \mathcal{C}^{1,2}_b(\rr_+ \times (-\infty, L] )$. Assume that \textbf{(H1)} holds. Then, one has 
\begin{equation}
P_T h(u,x) - \bar P_T h(u,x)  = \int_0^T ds \left(\bar{K}_{T-s}P_{s} h(u,x) + \bar{S}_{T-s} P_{s}h(u,x) \right). \label{first:step:param:forward}
\end{equation}
\end{prop}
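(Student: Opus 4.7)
The plan is to use a Duhamel-type telescoping on the family $\bar P^x_{T-s} P_s h$, plug in the two generators computed in Lemma \ref{lemma:ito}, and then peel off the kernels $\bar K$ and $\bar S$ by spatial integration by parts against the explicit frozen kernel $\bar q^x$ of Lemma \ref{two:kernels:expression}.

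First I would write
\begin{equation*}
P_T h(u,x) - \bar P^x_T h(u,x) = \int_0^T \partial_s \bigl[\bar P^x_{T-s} P_s h\bigr](u,x)\, ds,
\end{equation*}
keeping the freezing point $x$ fixed, so $\bar P^x_t$ has no hidden $s$-dependence. Combining the forward equation $\partial_s P_s h = \mathcal L P_s h$ provided by the Remark following Lemma \ref{lemma:ito} with the backward equation $\partial_s \bar P^x_{T-s} g = -\bar P^x_{T-s}\,\bar{\mathcal L}^x g$, the $s$-derivative of the integrand becomes $\bar P^x_{T-s}\bigl(\mathcal L - \bar{\mathcal L}^x\bigr) P_s h(u,x)$. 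By Lemma \ref{lemma:ito},
\begin{equation*}
(\mathcal L - \bar{\mathcal L}^x)\phi(u,z) = \I_{\{z<L\}}\Bigl[b(z)\,\partial_2 \phi(u,z) + \tfrac{1}{2}(a(z)-a(x))\,\partial_2^2 \phi(u,z)\Bigr],
\end{equation*}
which vanishes at $z=L$. Consequently, when $\bar P^x_{T-s}$ is decomposed via \eqref{kernel:proxy}, the exit-time contribution $K^x_{T-s}$ drops out, and only the killed-diffusion kernel $\bar q^x_{T-s}(x,\cdot)$ contributes, leaving
\begin{equation*}
\I_{\{x<L\}}\int_{-\infty}^L \bar q^x_{T-s}(x,z)\Bigl[b(z)\,\partial_z P_s h(u+T-s,z) + \tfrac{1}{2}(a(z)-a(x))\,\partial_z^2 P_s h(u+T-s,z)\Bigr]\, dz.
\end{equation*}

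Next I would transfer both spatial derivatives from $P_s h$ onto the frozen kernel by integration by parts in $z$. The Gaussian decay kills every boundary contribution at $z=-\infty$, and the Dirichlet identity $\bar q^x_{T-s}(x,L)=0$ from Lemma \ref{two:kernels:expression} kills all boundary terms at $z=L$ except the one produced by the second integration by parts of the $\partial_z^2$ term, namely
\begin{equation*}
-\tfrac{1}{2}(a(L)-a(x))\,\partial_z \bar q^x_{T-s}(x,z)\big|_{z=L}\cdot P_s h(u+T-s,L).
\end{equation*}
Using the explicit expression for $\bar q^x_{T-s}$ from Lemma \ref{two:kernels:expression}, one computes $\partial_z \bar q^x_{T-s}(x,L) = -\tfrac{2(L-x)}{a(x)(T-s)}\,g(a(x)(T-s),L-x)$, so this surviving contribution is exactly $\bar K_{T-s}(x,L)\,P_s h(u+T-s,L) = \bar K_{T-s} P_s h(u,x)$ by \eqref{kernel:proxy:K}. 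The remaining interior part collects into
\begin{equation*}
\int_{-\infty}^L \Bigl[\tfrac{1}{2}\partial_z^2\bigl((a(z)-a(x))\,\bar q^x_{T-s}(x,z)\bigr) - \partial_z\bigl(b(z)\,\bar q^x_{T-s}(x,z)\bigr)\Bigr] P_s h(u+T-s,z)\, dz,
\end{equation*}
which by \eqref{kernel:proxy:S} equals $\bar S_{T-s} P_s h(u,x)$. Integrating in $s$ yields \eqref{first:step:param:forward}.

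The principal difficulty is the rigorous justification of the two integrations by parts, which a priori demand classical $\mathcal C^2$ spatial regularity of $z \mapsto P_s h(u+T-s,z)$ up to the boundary $z=L$. Under \A{H1} this regularity holds on $(-\infty,L)$ by standard interior parabolic regularity for the uniformly elliptic Cauchy-Dirichlet problem that $P_s h$ solves, while continuity up to $L$ and the identity $P_s h(\cdot,L) = h(\cdot,L)$ are automatic from the definition of the stopped process. Alternatively, one first proves the identity on the dense class of $h \in \mathcal C^\infty$ with $\mathrm{supp}(h)\subset \rr_+\times(-\infty,L)$, where full spatial smoothness is guaranteed, and then extends by density and dominated convergence using the pointwise bounds \eqref{estimate:kernel:K}--\eqref{estimate:kernel:S} on $\bar K$ and $\bar S$.
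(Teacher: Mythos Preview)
Your overall framework---Duhamel telescoping on $s\mapsto \bar P^x_{T-s}P_s h$, then two spatial integrations by parts against $\bar q^x_{T-s}$, isolating the boundary contribution as $\bar K_{T-s}$---is exactly the paper's route, and your computation of the surviving boundary term is correct. The gap is precisely the one you flag, and neither of your proposed fixes closes it.

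The expression $(\mathcal L-\bar{\mathcal L}^x)P_sh$ in differential form requires $z\mapsto P_sh(u+T-s,z)$ to be $\mathcal C^2$ on $(-\infty,L]$; the Remark after Lemma~\ref{lemma:ito} only gives $\mathcal LP_sh=P_s\mathcal Lh$ as an abstract semigroup limit. Your first fix (interior parabolic regularity) imports PDE estimates external to the parametrix program and in any case yields no control of $\partial_z P_sh$ up to $z=L$, which the intermediate boundary term of the second integration by parts needs. Your second fix (density on $h$ supported in $\rr_+\times(-\infty,L)$) fails in two ways: for such $h$ one still has no a priori $\mathcal C^2$ regularity of $P_sh$; and more seriously, on that class $\bar K_{T-s}P_sh(u,x)=\bar K_{T-s}(x,L)\,h(u+T-s,L)=0$ identically, while $P_Th_n$ and $\bar P_Th_n$ fail to converge to $P_Th$, $\bar P_Th$ whenever $h(\cdot,L)\neq 0$ (the law of $(\tau^x_T,X^x_{\tau^x_T})$ charges $\{L\}$), so the limiting procedure cannot manufacture the $\bar K$ term.

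The paper's resolution is to mollify $P_sh$ rather than $h$: introduce $\tilde P_\eta g(u,x)=\int_\rr g(u,z)\,g(\eta,z-x)\,dz$, so that $\tilde P_\eta P_sh\in\mathcal C^{1,2}_b(\rr_+\times\rr)$ with all spatial derivatives falling on the Gaussian kernel, perform the integration by parts on this smooth object, and then send $\eta\downarrow 0$. The passage to the limit requires only the \emph{continuity} of $z\mapsto P_sh(u+T-s,z)$, which follows from Proposition~\ref{semigroup:property} together with $\P(\tau^x=t)=0$ under \A{H1} (Pauwels~\cite{Pauwels:87} or Theorem~\ref{backmain}).
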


\begin{proof} 

For all $x\geq L$ and $u\geq 0$, one clearly has $P_T h(u,x) - \bar P_T h(u,x)=0$ and the right hand side of \eqref{first:step:param:forward} is also $0$. Hence, we restrict to the case $x<L$ for the rest of the proof. 
We now compute $\partial_s \bar P_{T-s} P_s h(u,x)$ as the limit of the following quantity
$$
\frac{1}{\varepsilon} \left(\bar{P}_{T-s-\varepsilon}P_{s+\varepsilon} h(u,x) - \bar{P}_{T-s}P_s h(u,x) \right) = \frac{\bar{P}_{T-s-\varepsilon}-\bar{P}_{T-s}}{\varepsilon}P_s h(u,x) + \bar{P}_{T-s-\varepsilon}\frac{P_{\varepsilon}-I}{\varepsilon}P_s h(u,x)
$$
\noindent as $\varepsilon \downarrow 0$. We will start with the second term appearing in the right-hand side of the above equality.  Let us note that Lemma \ref{lemma:ito} does not guarantee neither that $P_t h \in \mathcal{C}^{1,2}_b(\rr_+ \times (-\infty,L])$ nor that $\mathcal{L}P_t h$ can be written in a differential form. We use a regularization technique that we now explain. We introduce the smoothing operator $\tilde{P}_\eta h(u,x) = \int_\rr h(u,z) g(\eta,z-x) dz$ so that $(u,x) \mapsto \tilde{P}_\eta P_s h(u,x) = \int_\rr P_s h(u,z) g(\eta,z-x)dz  \in \mathcal{C}^{1,2}_b(\rr_+ \times \rr)$. In particular, note that $ \partial_1 P_s h(u,x)= (P_s \partial_1h)(u,x) $ due to time homogeneity and the fact that $h \in   \mathcal{C}^{1,2}_b(\rr_+ \times (-\infty, L] )$. We also remark that one has 
\begin{align*}
P_\varepsilon \tilde{P}_\eta P_s h(u,x) & = \E[\int_\rr P_s h(u+\tau^x_\varepsilon, X^x_{\tau^x_\varepsilon}+z) g(\eta,z) dz] \\
& = \int_\rr P_\varepsilon P_sh (., z+.)(u,x) g(\eta,z) dz
\end{align*}

\noindent so that, by dominated convergence, one gets
\begin{align*}
\frac{(P_\varepsilon - I)}{\varepsilon} \tilde{P}_\eta P_s h(u,x) & = \int_\rr \frac{P_\varepsilon - I}{\varepsilon} P_s h(.,z+.)(u,x) g(\eta,z) dz \\
& \rightarrow \int_\rr P_s \mathcal{L} h(.,z+.)(u,x) g(\eta,z) dz, \quad  \varepsilon \downarrow 0
\end{align*}

\noindent which clearly implies 
\begin{align*}
\frac{(P_\varepsilon - I)}{\varepsilon} (I-\tilde{P}_\eta) P_s h(u,x) & \rightarrow \int_\rr \left(P_s \mathcal{L} h(u,x) - P_s \mathcal{L} h(.,z+.)(u,x) \right) g(\eta,z) dz \\
& = \int_\rr (\E[\mathcal{L}h(u+\tau^x_s, X^x_{\tau^x_s})] - \E[\mathcal{L}h(u+\tau^x_s, z+ X^x_{\tau^x_s})]) g(\eta,z) dz, \quad \varepsilon \downarrow 0.
\end{align*}

However, putting the spatial derivatives on the smoothing kernel, one also gets
\begin{align*}
\frac{(P_\varepsilon - I)}{\varepsilon} \tilde{P}_\eta P_s h(u,x) & \rightarrow \int_\rr \left[ \partial_1 P_s h(u,z)  g(\eta,z-x) - P_s h(u,z) b(x) \partial_2 g(\eta,z-x) + \frac12 P_s h(u,z) a(x) \partial^2_2 g(\eta,z-x) \right] dz, \quad \varepsilon \downarrow 0.
\end{align*}

Now, combining the previous computations, we write the following decomposition as $\varepsilon \downarrow 0$
\begin{align*}
\bar{P}_{T-s-\varepsilon}\frac{P_{\varepsilon}-I}{\varepsilon}P_s h(u,x) & = \int_{-\infty}^L  \bar{q}^{x}_{T-s-\varepsilon}(x,z) \frac{P_{\varepsilon}-I}{\varepsilon}P_s h(u+T-s-\varepsilon,z) dz \\
& =  \int_{-\infty}^L  \bar{q}^{x}_{T-s-\varepsilon}(x,z) \frac{P_{\varepsilon}-I}{\varepsilon}(\tilde{P}_\eta + (I-\tilde{P}_\eta))P_s h(u+T-s-\varepsilon,z) dz \\
& \rightarrow \int_{-\infty}^L dz \int_\rr dy \bar{q}^{x}_{T-s}(x,z)  (  \partial_1 P_s  h(u+T-s,y)  g(\eta, y-z) - P_s h(u+T-s,y) b(z) \partial_2 g(\eta,y-z)  \\
& \quad + \frac12 P_s h(u+T-s,y) a(x) \partial^2_2 g(\eta,y-z) ) dz \\
& \quad + \int_\rr (\E[\mathcal{L}h(u+\tau^x_s, X^x_{\tau^x_s})] - \E[\mathcal{L}h(u+\tau^x_s, z+ X^x_{\tau^x_s})]) g(\eta,z) dz \\
& = \int_{-\infty}^{L} \int_{\rr} dz  dy \bar{q}^{x}_{T-s}(x,z)   \partial_1P_s h(u+T-s,y)  g(\eta, y-z) \\
& \quad +\int_{-\infty}^{L} \int_{\rr} dz  dy  \left[ \partial^2_z \left(\frac12 a(z) \bar{q}^{x}_{T-s}(x,z)\right) - \partial_z (b(z) \bar{q}^x_{T-s}(x,z)) \right]P_s h(u+T-s,y) g(\eta,y-z)\\
& \quad - \int_{\rr} dy P_{s}h(u+T-s,y) \partial_z(\frac{1}{2} a(z) \bar{q}^{x}_{T-s}(x,z))_{|z=L} g(\eta,y-L) \\ 
& \quad + \int_\rr (\E[\mathcal{L}h(u+\tau^x_s, X^x_{\tau^x_s})] - \E[\mathcal{L}h(u+\tau^x_s, z+ X^x_{\tau^x_s})]) g(\eta,z) dz 
\end{align*}

\noindent where we performed an integration by parts formula in the last equality and used the fact that $\bar{q}^{x}_{T-s}(x,L) = 0$ for all $x\in (-\infty,L]$. We now let $\eta$ goes to zero in the previous result. By dominated convergence theorem and the continuity of $z\mapsto \mathcal{L}h(u,z)$, one gets
$$
\int_\rr (\E[\mathcal{L}h(u+\tau^x_s, X^x_{\tau^x_s})] - \E[\mathcal{L}h(u+\tau^x_s, z+ X^x_{\tau^x_s})]) g(\eta,z) dz \rightarrow 0, \quad \eta\downarrow 0.
$$

Under \A{H1}, from Theorem 3.1 in Pauwels \cite{Pauwels:87} (see also Theorem \ref{backmain} in Section \ref{backward:section}), $\tau^x$ admits a positive density for $x<L$ so that in particular $\P(\tau^x=t) = 0$. By Proposition \ref{semigroup:property}, it follows that $z\mapsto P_s h(u+T-s,z)$ and $z\mapsto \partial_1P_s  h(u+T-s,z)= P_s \partial_1 h(u+T-s,z)$ are continuous on $\rr$. This in turn yields 
\begin{align*}
\int_{-\infty}^{L} \int_{\rr} dz  dy \bar{q}^{x}_{T-s}(x,z)   \partial_1P_s h (u+T-s,y)  g(\eta, y-z) & \rightarrow \int_{-\infty}^L dz \bar{q}^{x}_{T-s}(x,z) \partial_1 P_s h(u+T-s,z), \\
\int_{\rr} dy P_{s}h(u+T-s,y) \partial_z(\frac{1}{2} a(z) \bar{q}^{x}_{T-s}(x,z))_{|z=L} g(\eta,y-L) &\rightarrow P_{s}h(u+T-s,L) \partial_z(\frac{1}{2} a(z) \bar{q}^{x}_{T-s}(x,z))_{|z=L}
\end{align*}

\noindent and that $\int_{-\infty}^{L} \int_{\rr} dz  dy  \left[ \partial^2_z \left(\frac12 a(z) \bar{q}^{x}_{T-s}(x,z)\right) - \partial_z (b(z) \bar{q}^x_{T-s}(x,z)) \right]P_s h(u+T-s,y) g(\eta,y-z)$ converges to 
$$
\int_{-\infty}^L dz \left[ \partial^2_z \left(\frac12 a(z) \bar{q}^{x}_{T-s}(x,z)\right) - \partial_z (b(z) \bar{q}^x_{T-s}(x,z)) \right]P_s h(u+T-s,z)
$$

\noindent as $\eta \rightarrow 0$. Hence, one concludes that
\begin{align*}
\lim_{\varepsilon \rightarrow 0} \bar{P}_{T-s-\varepsilon}\frac{P_{\varepsilon}-I}{\varepsilon}P_s h(u,x) & = \bar{P}_{T-s} \mathcal{L} P_sh(u,x) \\
& = \int_{-\infty}^L dz \left[ \partial^2_z \left(\frac12 a(z) \bar{q}^{x}_{T-s}(x,z)\right) - \partial_z (b(z) \bar{q}^x_{T-s}(x,z)) \right]P_s h(u+T-s,z) \\
& \quad  + \int_{-\infty}^L dz \bar{q}^{x}_{T-s}(x,z) \partial_1 P_s h(u+T-s,z) \\
& \quad - P_{s}h(u+T-s,L) \partial_z(\frac{1}{2} a(z) \bar{q}^{x}_{T-s}(x,z))_{|z=L}
\end{align*}

From \eqref{kernel:proxy}, one has
\begin{align*}
 \frac{\bar{P}_{T-s-\varepsilon}-\bar{P}_{T-s}}{\varepsilon}P_s h(u,x)  & = \int_{-\infty}^{L} \frac{P_s h(u+T-s-\varepsilon,z)-P_s h(u+T-s,z)}{\varepsilon} \bar{q}^{x}_{T-s-\varepsilon}(x,z) dz \\
 & \quad + \int_{-\infty}^{L} P_s h(u+T-s,z) \frac{\bar{q}^{x}_{T-s-\varepsilon}(x,z) - \bar{q}^{x}_{T-s}(x,z)}{\varepsilon} dz \\
 & \quad - \frac{1}{\varepsilon} \int_{T-s-\varepsilon}^{T-s} P_s h(u+v,L) f^{x}_{\bar{\tau}}(x,v) dv \\
 & \rightarrow -\int_{-\infty}^{L} \partial_{1}P_s h(u+T-s,z) \bar{q}^{x}_{T-s}(x,z) dz - \int_{-\infty}^{L} P_s h(u+T-s,z) \partial_{t}\bar{q}^{x}_{T-s}(x,z) dz \\
 & \quad - P_s h(u+T-s,L) f^{x}_{\bar{\tau}}(x,T-s),\ \mbox{ as } \varepsilon \downarrow 0,
\end{align*}

\noindent where we used that $u\mapsto P_s h(u,x)$ is continuously differentiable for $h \in \mathcal{C}^{1,2}_b(\rr_+ \times (-\infty, L] )$, the differentiability of $t\mapsto \bar{q}^{x}_{t}(x,z)$ and the continuity of $v\mapsto P_s h(u+v,L) f^{x}_{\bar{\tau}}(x,v)$. Now, combining the two limits with the relation $\partial_t \bar{q}^{x}_{t}(x,z) = \frac12 a(x) \partial^{2}_z \bar{q}^{x}_{t}(x,z)$ and Lemma \ref{two:kernels:expression} finally yield
\begin{align*}
 \partial_s \left(\bar P_{T-s} P_s h(u,x) \right) & = \lim_{\varepsilon\rightarrow0} \frac{1}{\varepsilon} \left(\bar{P}_{T-s-\varepsilon}P_{s+\varepsilon}h(u,x) - \bar{P}_{T-s}P_s h(u,x) \right)   \\
& =    \bar{K}_{T-s} P_s h(u,x) + \bar{S}_{T-s} P_s h(u,x).
\end{align*}

From the continuity of $t\mapsto \bar{P}_t h(u,x),\ P_th(u,x)$, \eqref{estimate:kernel:K} and \eqref{estimate:kernel:S} (which implies that $s\mapsto \bar{K}_{T-s} P_s h(u,x), \  \bar{S}_{T-s} P_s h(u,x) \in L^{1}([0,T])$ and Fubini's theorem, one gets 
\begin{align*}
 P_{T}h(u,x) -  \bar{P}_T h(u,x) & = \int_{0}^{T} \partial_s \left(\bar P_{T-s} P_s h(u,x) du \right) ds \\
& = \int_0^T  \left\{ \bar{K}_{T-s} P_s h(u,x) + \bar{S}_{T-s} P_s h(u,x) \right\}  ds \\
&=  \int_0^T\left\{ \bar{K}_{T-s} P_s h(u,x) + \bar{S}_{T-s} P_s h(u,x) \right\} ds
\end{align*}

This concludes the proof.

\end{proof}

Given that $P_s h(u,L) = \mathbb{E}\big[ h(u + \tau^{L}_{s}, X^{L}_{\tau^{L}_{s}} )\big] = h(u,L )$, we observe the following important property
\begin{align*}
 \bar{K}_{T-s} P_{s} h(u,x) = \I_\seq{x<L}\frac{(a(L)-a(x))}{a(x)} f_{\bar{\tau}}^{x}(x,T-s)  P_{s} h(u+T-s,L) = \bar{K}_{T-s}h(u,x)
\end{align*}

\noindent and $ \bar{K}_{T-s} \bar{P}_{s} h(u,x) =  \bar{K}_{T-s}h(u,x)$. Hence, \eqref{first:step:param:forward} may be simplified as follows
$$
P_T h(u,x) - \bar P_T h(u,x)  = \int_0^T ds \left(\bar{K}_{T-s} h(u,x) + \bar{S}_{T-s} P_{s}h(u,x) \right).
$$

We also remark that for any bounded Borel function $h$, one has $\bar{K}_{s} h(u,L) = \bar{S}_s h(u,L) =0$ and therefore
\begin{align}\nonumber
\bar{K}_{T-s_1} \bar{K}_{s_1-s_2} h(u,x) & =  \I_\seq{x<L}\frac{(a(L)-a(x))}{a(x)} f_{\bar{\tau}}^{x}(x,T-s_1) \bar{K}_{s_1-s_2}h(u+T-s_1,L) = 0\\
\bar{K}_{T-s_1} \bar{S}_{s_1-s_2} h(u,x) & =  \I_\seq{x<L}\frac{(a(L)-a(x))}{a(x)} f_{\bar{\tau}}^{x}(x,T-s) \bar{S}_{s_1-s_2}h(u+T-s_1,L) = 0\label{eq:9.1}
\end{align}

\noindent so that, by induction, we get
\begin{align*}
P_{T} h(u,x)  					&= \bar{P}_T h(u,x) + \int^T_0 ds_1  \left\{ \bar{K}_{T-s_1} h(u,x)  + \bar{S}_{T-s_1} P_{s_1} h(u,x) \right\} \\
 	 						& = \bar{P}_T h(u,x) + \int_0^T ds_1 \left\{ \bar{K}_{T-s_1} h(u,x) + \bar{S}_{T-s_1} \bar P_{s_1} h(u,x) \right\} \\ 
							&\quad  + \int_0^T ds_1 \int_0^{s_1} ds_2 \left\{ \bar{K}_{T-s_1} \bar{K}_{s_1-s_2} P_{s_2} h(u,x) + \bar{K}_{T-s_1} \bar{S}_{s_1-s_2} P_{s_2}h(u,x) \right\} \\
							&\quad  + \int_0^T ds_1 \int_0^{s_1} ds_2 \left\{\bar{S}_{T-s_1} \bar{K}_{s_1-s_2} P_{s_2} f(x,u) + \bar{S}_{T-s_1} \bar{S}_{s_1-s_2} P_{s_2} h(u,x) \right\} \\
							& = \bar{P}_T h(u,x) + \sum_{r=1}^{N} \int_{\Delta_r(T)} d\s_r \bar{S}_{T-s_1} \bar{S}_{s_1-s_2} \cdots \bar{S}_{s_{r-2}-s_{r-1}} \bar{K}_{s_{r-1}-s_{r}} \bar P_{s_r} h(u,x) \\
							&\quad  + \sum_{r=1}^{N} \int_{\Delta_r(T)} d\s_r \bar{S}_{T-s_1} \bar{S}_{s_1-s_2} \cdots  \bar{S}_{s_{r-1}-s_{r}} \bar P_{s_r} h(u,x) \\
							&\quad  +  \int_{\Delta_{N+1}(T)} d\s_{N+1} \left\{ \bar{S}_{T-s_1} \bar{S}_{s_1-s_2} \cdots  \bar{S}_{s_{N-1}-s_{N}} \bar{K}_{s_{N}-s_{N+1}}  P_{s_{N+1}} h(u,x) \right\} \\
							& \quad +  \int_{\Delta_{N+1}(T)} d\s_{N+1} \left\{ \bar{S}_{T-s_1} \bar{S}_{s_1-s_2} \cdots \bar{S}_{s_{N}-s_{N+1}}  P_{s_{N+1}} h(u,x) \right\}.
\end{align*}

The idea now is to let $N\rightarrow +\infty$ in order to obtain an expansion of the semigroup $P_{T}$ as infinite series.

Using repeatedly \eqref{estimate:kernel:K} and \eqref{estimate:kernel:S} with Lemma \ref{beta:type:integral} as well as the asymptotic of the Gamma function at infinity, one gets
\begin{align*}
& \left|  \int_{\Delta_{N+1}(T)} d\s_{N+1}  \bar{S}_{T-s_1} \bar{S}_{s_1-s_2} \cdots  \bar{S}_{s_{N-1}-s_{N}}  \bar{K}_{s_{N}-s_{N+1}}  P_{s_{N+1}} h(u,x) \right|  \\
&  \leq   \int_{\Delta_{N+1}(T)} d\s_{N+1} \int_{(-\infty,L]^{N}}d\z_N  \frac{1}{\sqrt{T-s_1}} g(c(T-s_1),z_1-x) \cdots \frac{1}{\sqrt{s_{N-1}-s_{N}}}g(c(s_{N-1}-s_N),z_{N}-z_{N-1})   \\
&  \times g(c (s_{N}-s_{N+1}), L-z_{N})  |h(u+T-s_{N+1},L)| \\
& \leq |h|_{\infty} C^{N+1}_T \int_{\Delta_{N+1}(T)} d\s_{N+1} \frac{1}{\sqrt{T-s_1}} \cdots \frac{1}{\sqrt{s_{N-1}-s_{N}}} g(c (T-s_{N+1}), L-x)   \\
& \leq |h|_{\infty} C^{N+1}_T \int_{\Delta_{N+1}(T)} d\s_{N+1} \frac{1}{\sqrt{T-s_1}} \cdots \frac{1}{\sqrt{s_{N}-s_{N+1}}} g(c T, L-x) \\
& = |h|_{\infty} C^{N+1}_T T^{(N+1)/2} \frac{(\Gamma(1/2))^{N+1}}{\Gamma(1+(N+1)/2)} g(cT, L-x) \\
& \rightarrow 0, \ \mbox{ as } N \rightarrow + \infty.
\end{align*}
 
Now we investigate the second remainder term. Similarly to the previous term, we get
\begin{align*}
& \left| \int_{\Delta_{N+1}(T)} d\s_{N+1} \bar{S}_{T-s_1} \bar{S}_{s_1-s_2} \cdots  \bar{S}_{s_{N}-s_{N+1}}  P_{s_{N+1}} h(u,x) \right| \\
&  \leq \int_{\Delta_{N+1}(T)} d\s_{N+1} \int_{(-\infty,L]^{N+1}} d\z_{N+1}\frac{1}{\sqrt{T-s_1}} g(c(T-s_1),z_1-x) \cdots     \\
&  \quad \times \frac{1}{\sqrt{s_{N}-s_{N+1}}} g(c(s_{N}-s_{N+1}),z_{N+1}-z_{N}) \left|P_{s_{N+1}}h(u+T-s_{N+1},z_{N+1})  \right| \\  
& \leq  |h|_{\infty} C^{N+1}_T \int_{\Delta_{N+1}(T)} d\s_{N+1} \frac{1}{\sqrt{T-s_1}} \cdots \frac{1}{\sqrt{s_{N}-s_{N+1}}} \\
& \leq |h|_{\infty} C^{N+1}_T T^{(N+1)/2} \frac{(\Gamma(1/2))^{N+1}}{\Gamma(1+(N+1)/2)} \\
& \rightarrow 0, \ \mbox{ as } N \rightarrow + \infty.
\end{align*}
Hence, the two series converge absolutely and uniformly for $(t,u,x) \in [0,T] \times \rr_+ \times \rr$. 

In order to state the forward parametrix expansion for the semigroup $(P_t)_{t\geq0}$, we define for $(s_0,u,x)\in [0,T] \times \rr_+ \times (-\infty,L]$ and $h\in \mathcal{B}_b(\rr_+ \times \rr)$, the following family of operators
\begin{equation}
I^{n}_{s_0}h(u,x) = \left\{
    \begin{array}{ll}
        \int_{\Delta_{n}(s_0)} d\s_{n} \left\{ \left(\prod_{i=0}^{n-1} \bar{S}_{s_i -s_{i+1}} \right) \bar{P}_{s_n}h(u,x) +  \left(\prod_{i=0}^{n-2} \bar{S}_{s_i -s_{i+1}} \right) \bar{K}_{s_{n-1}-s_{n}} h(u,x)  \right\} & \mbox{ if } n\geq1, \\
        \bar{P}_{s_0}h(u,x) & \mbox{ if } n=0,
    \end{array}
\right.
\label{integrand:parametrix:forward}
\end{equation}

\noindent where we recall that we use the convention $\prod_{\emptyset} = 1$, and the operators $\bar{S}_{t}h(u,x)$, $\bar{K}_{t} h(u,x)$ and $\bar{P}_{t}h(u,x)$ have been defined in \eqref{kernel:proxy:S}, \eqref{kernel:proxy:K} and \eqref{kernel:proxy} respectively. As seen from the above discussion, we obtain the following expansion in infinite series of the Markov semigroup $(P_t)_{t\geq0}$ around $(\bar{P}_t)_{t\geq0}$. The transition density and the probabilistic representation will be obtained from this result in the following sections.

\begin{theorem} \label{forward:expansion:semigroup} Let $T>0$. Assume that \textbf{(H1)} holds. Then, for every $h \in   \mathcal{C}^{1,2}_b(\rr_+ \times (-\infty, L] )$, the series $\sum_{n\geq0} I^{n}_{T}h(u,x)$ converges absolutely and uniformly for $(u,x) \in \rr_+ \times \rr $ and one has
\begin{equation}
\label{expansion:forward:semigroup}
P_T h(u,x) = \sum_{n\geq0} I^{n}_T h(u,x).
\end{equation}
\end{theorem}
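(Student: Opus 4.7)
The plan is to prove Theorem \ref{forward:expansion:semigroup} by iterating Proposition \ref{first:step:parametrix:forward:expansion}, and then controlling the tail of the iteration via the kernel estimates \eqref{estimate:kernel:K} and \eqref{estimate:kernel:S}. As a first step, I would start from the identity
\[
P_T h(u,x) - \bar{P}_T h(u,x) = \int_0^T \bigl(\bar{K}_{T-s}P_s h(u,x) + \bar{S}_{T-s}P_s h(u,x)\bigr)\, ds,
\]
and use the crucial simplification $P_s h(u,L) = h(u,L)$ (since $\tau^L = 0$ a.s.) which implies $\bar{K}_{T-s}P_s h(u,x) = \bar{K}_{T-s} h(u,x)$. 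This gives the base step of the iteration.

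Next, I would iterate by substituting $P_{s}h$ into the identity above. The essential structural observation (already noted in \eqref{eq:9.1}) is that both $\bar{K}_t h(u,L) = 0$ and $\bar{S}_t h(u,L) = 0$ for any bounded Borel $h$, because the operator $\bar{K}_{T-s_1}$ evaluates its argument at $x=L$. Hence any composition of the form $\bar{K}_{T-s_1} \bar{K}_{s_1-s_2} \cdots$ or $\bar{K}_{T-s_1}\bar{S}_{s_1-s_2}\cdots$ vanishes identically. As a consequence, in the iterated expansion a factor $\bar{K}$ can appear at most once, and only as the \emph{last} operator before $\bar{P}_{s_n}$ (which reduces to $h$ at the boundary). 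After $N$ iterations I would obtain
\[
P_T h(u,x) = \sum_{n=0}^{N} I^{n}_T h(u,x) + R^{(1)}_{N}(u,x) + R^{(2)}_{N}(u,x),
\]
where $R^{(1)}_N$ is the $(N{+}1)$-fold integral containing a terminal $\bar{K}_{s_N - s_{N+1}}P_{s_{N+1}} h$ and $R^{(2)}_N$ the $(N{+}1)$-fold integral ending with $\bar{S}_{s_N-s_{N+1}}P_{s_{N+1}} h$, exactly as displayed in the paragraph preceding the theorem.

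The main technical step, and the step I expect to be the main obstacle in principle (though it is already essentially done in the excerpt), is to show $R^{(1)}_N, R^{(2)}_N \to 0$ as $N \to \infty$. For this I would combine: (a) the pointwise bounds \eqref{estimate:kernel:S}: $|\bar{S}_t(x,z)| \leq C_T t^{-1/2} g(ct,z-x)$, and \eqref{estimate:kernel:K}: $|\bar{K}_t(x,L)| \leq C g(ct, L-x)$; (b) the semigroup property $\int g(ct, z-y) g(ct', y-x)\, dy = g(c(t+t'), z-x)$, which collapses the spatial convolutions into a single Gaussian; and (c) the Beta-type integral identity of Lemma \ref{beta:type:integral}, yielding
\[
\int_{\Delta_{N+1}(T)} \prod_{i=0}^{N} (s_i - s_{i+1})^{-1/2}\, d\s_{N+1} = T^{(N+1)/2}\frac{\Gamma(1/2)^{N+1}}{\Gamma(1+(N+1)/2)}.
\]
The right-hand side tends to $0$ by Stirling's formula, since the Gamma function grows faster than any geometric factor $C_T^{N+1} T^{(N+1)/2}$. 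This produces a bound $|R^{(j)}_N(u,x)| \leq |h|_\infty \, C_T^{N+1} T^{(N+1)/2} \Gamma(1/2)^{N+1}/\Gamma(1+(N+1)/2)$, uniform in $(u,x)$, giving both the convergence of the truncated expansion to $P_T h(u,x)$ and the absolute/uniform convergence of $\sum_{n\geq 0} I^n_T h(u,x)$. An identical estimate applied directly to the generic term $I^n_T h$ establishes the absolute convergence of the series, completing the proof of \eqref{expansion:forward:semigroup}.
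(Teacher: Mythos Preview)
Your proposal is correct and follows essentially the same approach as the paper: iterate Proposition \ref{first:step:parametrix:forward:expansion}, exploit the boundary cancellations $\bar{K}_t h(u,L)=\bar{S}_t h(u,L)=0$ to kill all but the ``$\bar{K}$ last'' branches, and control the two remainder terms via \eqref{estimate:kernel:K}, \eqref{estimate:kernel:S}, the Gaussian semigroup property, and Lemma \ref{beta:type:integral}. The paper's argument is exactly this, with the same bounds and the same Gamma-function asymptotics; your sketch matches it step for step.
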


\subsection{Existence of a transition density, its expansion and related properties}\label{expansion:density:forward}\rule[-10pt]{0pt}{10pt}

In the previous section, we obtained an expansion in infinite series of the semigroup $(P_{t})_{t\geq0}$ on smooth test functions. In this section, we retrieve from \eqref{expansion:forward:semigroup} the expansion of the transition density function. With the convention that $t_0=0$, $z_0=x$, we introduce the following kernels 
\begin{align}
I^{K,n+1}(x,t) & := \int_{\Delta_n^*(t)}d\t_n
\int_{(-\infty,L]^n}  d\z_n \left( \prod_{i=0}^{n-1}\bar{S}_{t_{i+1}-t_{i}}(z_{i},z_{i+1}) \right) \bar{K}_{t-t_{n}}(z_n,L), \label{IIK}\\
J^{K,n+1}(x,t) &:= \int_{\Delta_{n+1}^*(t)}  d\t_{n+1}   
\int_{(-\infty,L]^{n+1}}  d\z_{n+1} \left( \prod_{i=0}^{n}\bar{S}_{t_{i+1}-t_{i}}(z_{i},z_{i+1}) \right) f^{z_{n+1}}_{\bar{\tau}}(z_{n+1},t-t_{n+1}),  \label{IJK}\\
 I^{D,n+1}_{s_0}(x,z) &:=  \int_{\Delta_{n+1}(s_0)} d\s_{n+1} \int_{(-\infty,L]^{n+1}}  d\z_{n+1} \prod^{n}_{i=0} \bar{S}_{s_{i}-s_{i+1}}(z_{i},z_{i+1})  \bar q^{z_{n+1}}_{s_{n+1}}(z_{n+1},z) \label{IID}
\end{align}

\noindent where the terms $\bar{S}_{t}h(u,x)$ and $\bar{K}_{t} h(u,x)$ are given in \eqref{kernel:proxy:S}, \eqref{kernel:proxy:K} and \eqref{kernel:proxy} respectively. As will become clear below, the two sequence of kernels $(I^{K,n})_{n\geq1}$ and $(J^{K,n})_{n\geq1}$ are related to the exit time whereas $(I^{D,n})_{n\geq1}$ correspond to the killed process. Using the change of variable $t_i = s_0 - s_i$, $i=0,\cdots,n$, ($t_0=0$ and $z_0=x$) and Fubini's theorem, we write
\begin{align*}
&I^{K,n+1}_{s_0}h(u,x) := \int_{\Delta_{n+1}(s_0)} d\s_{n+1} \left(\prod_{i=0}^{n-1} \bar{S}_{s_i -s_{i+1}} \right) \bar{K}_{ s_n - s_{n+1}}h(u,x)\\
& = \int_0^{s_0}\int_{t_1}^{s_0} \cdots \int_{t_n}^{s_0} d\t_{n+1} \int_{(-\infty,L]^{n}} d\z_n  \left( \prod_{i=0}^{n-1}\bar{S}_{t_{i+1}-t_{i}}(z_{i},z_{i+1}) \right) \bar{K}_{t_{n+1}-t_{n}}(z_n,L) h(u+t_{n+1},L)\\
& = \int_{0}^{s_0} \int_{0}^{t_{n+1}} \cdots \int_0^{t_2}d\t_{n+1}\int_{(-\infty,L]^{n}} d\z_n \left( \prod_{i=0}^{n-1} \bar{S}_{t_{i+1}-t_{i}}(z_{i},z_{i+1}) \right) \bar{K}_{t_{n+1}-t_{n}}(z_n,L) h(u+t_{n+1},L)  \\
& = \int^{s_0}_0 dt_{n+1} \left[\int_{\Delta_n^*(t)}d\t_n  \int_{(-\infty,L]^n}  d\z_n \left( \prod_{i=0}^{n-1} \bar{S}_{t_{i+1}-t_{i}}(z_{i},z_{i+1}) \right) \bar{K}_{t_{n+1}-t_{n}}(z_n,L) \right] h(u+t_{n+1},L) \\
& = \int_0^{s_0}  I^{K,n+1}(x,t) h(u+t,L) dt .
\end{align*}
Similarly, one gets
\begin{align*}
&J^{K,n+1}_{s_0}h(u,x) := \int_{\Delta_{n+1}(s_0)} d\s_{n+1} \left(\prod_{i=0}^{n} \bar{S}_{s_i -s_{i+1}} \right) K_{s_{n+1}}h(u,x)\\
& = \int_0^{s_0}\int_{t_1}^{s_0} \cdots \int_{t_{n+1}}^{s_0} d\t_{n+2} \int_{(-\infty,L]^{n+1}} d\z_{n+1}  \left( \prod_{i=0}^{n}\bar{S}_{t_{i+1}-t_{i}}(z_{i},z_{i+1}) \right) f^{z_{n+1}}_{\bar{\tau}}(z_{n+1},t_{n+2}-t_{n+1}) h(u+t_{n+2},L)\\
& = \int^{s_0}_0 dt_{n+2} \left[\int_{\Delta_{n+1}^*(t_{n+2})}d\t_{n+1} \int_{(-\infty,L]^{n+1}}  d\z_{n+1} \left( \prod_{i=0}^{n}\bar{S}_{t_{i+1}-t_{i}}(z_{i},z_{i+1}) \right) f^{z_{n+1}}_{\bar{\tau}}(z_{n+1},t_{n+2}-t_{n+1}) \right] \\
& \quad \times h(u+t_{n+2},L) \\
& = \int_0^{s_0}  J^{K,n+1}(x,t) h(u+t,L) dt.
\end{align*}

%
Inside the domain, one has
\begin{align*}
& I^{D,n+1}_{s_0}h(u,x) := \int_{\Delta_{n+1}(s_0)} d\s_{n+1}  \left(\prod_{i=0}^{n} \bar{S}_{s_i -s_{i+1}} \right)  S_{s_{n+1}} h(u,x) \\
& =\int_{\Delta_{n+1}(s_0)} d\s_{n+1} \int_{(-\infty,L]^{n+1}}  d\z_{n+1} \prod^{n}_{i=0} \bar{S}_{s_{i}-s_{i+1}}(z_{i},z_{i+1}) \left[\int_{(-\infty,L]} h(u+s_0, z_{n+2}) \bar{q}^{z_{n+1}}_{s_{n+1}}(z_{n+1},z_{n+2})  dz_{n+2} \right]\\
& = \int_{-\infty}^L dz_{n+2}\left[\int_{\Delta_{n+1}(s_0)} d\s_{n+1} \int_{(-\infty,L]^{n+1}}  d\z_{n+1} \prod^{n}_{i=0} \bar{S}_{s_{i}-s_{i+1}}(z_{i},z_{i+1})  \bar q^{z_{n+1}}_{s_{n+1}}(z_{n+1},z_{n+2}) \right]  h(u + s_0,z_{n+2})\\
& =  \int_{-\infty}^L  I^{D,n+1}_{s_{0}}(x,z) h(u+s_0,z) dz.
\end{align*}
Hence, we are naturally led to define the following kernels for $(s_0,x,z) \in (0,T] \times (-\infty,L]^2$ 
\begin{equation}\label{pk}
p^{K,n}(x,s_0) = \left\{
    \begin{array}{ll}
      I^{K,n}(x,s_0) + J^{K,n}(x,s_0) & \mbox{ if } n\geq1, \\
        f^{x}_{\bar{\tau}}(x,s_0) & \mbox{ if } n=0,
    \end{array}
\right.
\end{equation}
\noindent and
\begin{equation}\label{pd}
p^{D,n}_{s_0}(x,z) = \left\{
    \begin{array}{ll}
     I^{D,n}_{s_0}(x,z) & \mbox{ if } n\geq1, \\
        \bar{q}^{x}_{s_0}(x,z)  & \mbox{ if } n=0.
    \end{array}
\right.
\end{equation}
\noindent where the terms $I^{K,n}(x,s_0)$, $J^{K,n}(x,s_0)$ and $I^{D,n}_{s_0}(x,z)$ are defined in \eqref{IIK}, \eqref{IJK} and \eqref{IID} respectively. As one of the main results of this section, we present the forward parametrix expansion of the transition density of the process $(u+ \tau^x_t, X^x_{\tau^x_t})_{t\geq 0}$.

\begin{theorem} \label{theorem:expansion:density:forward}
Let $T>0$. Assume that \textbf{(H1)} holds. For all $(u,x)\in \rr_+ \times (-\infty,L]$, define the measure
\begin{align*}
\forall (t,z)\in (u,u+T] \times (-\infty,L],\qquad   p_T(u,x,dt,dz) & :=  p^{K}(x,t-u)  \delta_{L}(dz) dt +  p^{D}_T(x,z) \delta_{u+T}(dt) dz
\end{align*}

\noindent with 
$$
p^{K}(x,t):= \sum_{n\geq0} p^{K,n}(x,t) \ \mbox{ and } \ p^{D}_T(x,z) = \sum_{n\geq0} p^{D,n}_T(x,z).
$$ 

\noindent Then, the series defining $p^{K}(x,t)$  and $p^{D}_T(x,z)$ converge absolutely for $(x,t,z) \in \rr \times \rr^*_+ \times \rr$ and uniformly for $(x,t,z) \in  \rr \times \times K_T \times \rr$, where $K_T$ is any compact subset of $(0,T]$. Moreover, for $h\in \mathcal{B}_b(\rr_+\times \rr)$, one has
\begin{equation}
\label{expansion:forward:density}
\forall (u,x) \in \rr_+ \times \rr, \ P_T h(u,x) = h(u,x) \I_\seq{x\geq L} + \I_\seq{x< L} \int_{u}^{u+T}\int_{-\infty}^{L} h(t,z) p_{T}(u,x,dt,dz)
\end{equation}

\noindent and, for some positive $C,c>1$, for all $(t,z)\in (0,T] \times (-\infty,L]$, the following Gaussian upper-bounds hold
\begin{equation}
\label{gaussian:upper:bound:forward}
 p^{K}(x,t)   \leq C t^{-1/2} g(ct,L-x) \ \ \mbox{ and } \ \ p^{D}_T(x,z)   \leq  C g(c T,z-x).
\end{equation}

Therefore, for all $(u,x)\in \rr_+ \times (-\infty,L)$, $p_T(u,x,.,.)$ is the probability density function of the random vector $(u+\tau^{x}_{T}, X^{x}_{\tau^{x}_{T}})$. More precisely, the first hitting time $\tau^{x}_T$ has a mixed type law. That is for $t \in [u,u+T)$, $ \tau^x_T $ has the density $p^{K}(x,t)$ and at $t = u+T$,  $\P(u+\tau^{x}_{T}=u+T)=\int^L_{-\infty}dz  \,p^D_T(x,z)$. Similarly, the stopped process $X^{x}_{\tau^x_T}$ also has a mixed type law. That is, for $z \in (-\infty, L)$,  $X^{x}_{\tau^x_T}$ has the density $p^{D}_T(x,z)$ and at $z = L$, $\P(X^{x}_{\tau^x_T}=L)=\int^{u+T}_u dt  \,p^K(x,t-u)$.  Finally, $(x,z) \mapsto p^{D}_T(x,z)$ (resp. $(x,t)\mapsto p^{K}(x,t)$) is continuous on $(-\infty,L]^2$ (resp. on $(-\infty,L] \times (0,T]$) and satisfies $\lim_{z\uparrow L} p^{D}_T(x,z) = \lim_{x\uparrow L}  p^{D}_T(x,z) = \lim_{x\uparrow L} p^{K}(x,t)= 0$.
\end{theorem}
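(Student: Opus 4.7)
The plan is to derive everything from the semigroup expansion in Theorem \ref{forward:expansion:semigroup} together with iterated Gaussian estimates. First, I would establish Gaussian upper bounds on the individual kernels $p^{K,n}(x,t)$ and $p^{D,n}_T(x,z)$. Starting from the baseline estimates $\bar q^{y}_t(x,z) \leq C\, g(ct,z-x)$ and $f^y_{\bar\tau}(x,t)\leq C\, t^{-1/2} g(ct,L-x)$ (from the reflection-principle formulas in Lemma \ref{two:kernels:expression} together with the space-time inequality), combined with \eqref{estimate:kernel:K}--\eqref{estimate:kernel:S}, I would iterate using the Chapman--Kolmogorov convolution of Gaussian densities together with Lemma \ref{beta:type:integral} to control the simplex time-integrals. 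This should yield bounds of the form
\[
p^{D,n}_T(x,z) \leq \frac{C_T^{\,n+1}\, T^{n/2}\, \Gamma(1/2)^{n}}{\Gamma(1+n/2)}\, g(cT,z-x),
\]
and analogously $p^{K,n}(x,t) \leq C_T^{n+1} t^{n/2} \Gamma(1/2)^n / \Gamma(1+(n+1)/2) \cdot t^{-1/2} g(ct,L-x)$. Since the gamma-function denominators outgrow any geometric factor, absolute and uniform convergence on $\rr\times K_T\times\rr$ follow at once, together with the Gaussian upper-bounds \eqref{gaussian:upper:bound:forward}.

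Second, I would prove \eqref{expansion:forward:density} first for $h\in \mathcal{C}^{1,2}_b(\rr_+\times(-\infty,L])$. Each operator $I^n_T h(u,x)$ from \eqref{integrand:parametrix:forward} splits naturally into a $\bar S\cdots \bar S \bar K$ term (exit at the boundary at an intermediate time) and a $\bar S\cdots \bar S \bar P_{s_n}$ term, the latter further decomposing via \eqref{kernel:proxy} into a pure exit-time piece carrying $f^{x}_{\bar\tau}$ and an interior killed-diffusion piece carrying $\bar q^{x}_{s_n}$. The Fubini reorderings of the simplex integrals that were carried out just before the statement of the theorem exhibit these three components as integrals against $I^{K,n+1}(x,\cdot)$, $J^{K,n+1}(x,\cdot)$ and $I^{D,n+1}_T(x,\cdot)$ respectively. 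Summing over $n$ and regrouping $I^{K,n}+J^{K,n}=p^{K,n}$ and $I^{D,n}=p^{D,n}$, with the $n=0$ contribution $\bar P_T h$ producing $p^{K,0}=f^x_{\bar\tau}$ and $p^{D,0}=\bar q^x_{T}$, recovers \eqref{expansion:forward:density} for smooth $h$. Extension to $h\in\mathcal{B}_b(\rr_+\times\rr)$ is then a routine approximation argument: approximate $h$ by smooth bounded $h_k$ converging pointwise, use dominated convergence on the probabilistic side (bounded integrand against a probability measure) and on the expansion side (Gaussian majorant from the first step).

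Third, identity \eqref{expansion:forward:density} identifies $p_T(u,x,dt,dz)$ as the joint law of $(u+\tau^x_T, X^x_{\tau^x_T})$. Testing against $h(t,z)=\varphi(t)$ gives the mixed-type law of $\tau^x_T$ with absolutely continuous part $p^{K}(x,t-u)$ on $[u,u+T)$ and atom at $t=u+T$ of mass $\int_{-\infty}^{L} p^{D}_T(x,z)\,dz$; testing against $h(t,z)=\psi(z)$ gives the dual statement for $X^x_{\tau^x_T}$. For the regularity assertions, the continuity of each integrand in $(x,z)$ or $(x,t)$ combined with the uniform convergence on compact sets established in the first step transfers continuity termwise to the sums $p^{K}$ and $p^{D}_T$. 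The vanishing at the boundary follows from the explicit identities $\bar q^y_t(L,z)=\bar q^y_t(x,L)=f^y_{\bar\tau}(L,s)=0$ from Lemma \ref{two:kernels:expression} together with the factor $(L-x)$ present in $\bar K_t(x,L)$; these factors sit in the outermost convolution slot of every iterated kernel and propagate the vanishing to $p^{K,n}$ and $p^{D,n}$ uniformly in $n$.

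The main obstacle I expect is in the first step, specifically carrying out the iterated estimate with constants independent of $n$ while accommodating the $y$-dependent frozen variance $a(y)$ at each convolution step. The standard remedy is to bound the frozen Gaussian densities by a single Gaussian with variance $\overline a\, t$ (upward) or $\underline a\, t$ (downward) at the cost of worse constants, which is why \eqref{gaussian:upper:bound:forward} is stated with generic $c$. A secondary subtlety is verifying continuity and boundary vanishing up to the corner point $x=L$ (or $z=L$) uniformly over the series: one must check that the bound on the $n$-th term itself tends to $0$ at the boundary fast enough to interchange the boundary limit with the infinite sum, which follows from the factor $g(ct,L-x)$ (resp.\ the $(L-x)$ factor in $\bar K$) persisting in the uniform majorant.
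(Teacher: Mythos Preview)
Your proposal is correct and follows essentially the same route as the paper: term-by-term Gaussian bounds on $I^{D,n}$, $I^{K,n}$, $J^{K,n}$ via the Chapman--Kolmogorov convolution of Gaussians combined with Lemma~\ref{beta:type:integral}, then the identification of $p_T$ via Theorem~\ref{forward:expansion:semigroup}, and finally continuity and boundary vanishing by uniform convergence of continuous terms each of which vanishes at the boundary. The only minor difference is that the paper does not spell out an approximation step to pass from smooth $h$ to $h\in\mathcal{B}_b$: once \eqref{expansion:forward:density} is known for a determining class of test functions, $p_T(u,x,\cdot,\cdot)$ is identified as the law of $(u+\tau^x_T,X^x_{\tau^x_T})$, and the identity for general bounded measurable $h$ is then automatic since both sides are integrals of $h$ against the same probability measure.
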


\begin{proof}
We first remark that from
\eqref{integrand:parametrix:forward} and \eqref{kernel:proxy}, one has 
\begin{align*}
I^n_Th(u,x)=&   \int_{\Delta_{n}(T)} d\s_{n} \left\{ \left(\prod_{i=0}^{n-1} \bar{S}_{s_i -s_{i+1}} \right) \bar{P}_{s_n}h(u,x) +  \left(\prod_{i=0}^{n-2} \bar{S}_{s_i -s_{i+1}} \right) \bar{K}_{s_{n-1}-s_{n}} h(u,x) \right\} \\
   & = \int_{0}^{T} (I^{K,n}(x,t) + J^{K,n}(x,t)) h(u+t,L) dt + \int_{-\infty}^L I^{D,n}_{T}(x,z) h(u+T,z) dz \\
  & = \int_{u}^{u+T} h(t,z) p^{K,n}(x,t-u) \delta_{L}(z) dt dz + \int_{-\infty}^L h(t,z) p^{D,n}_{T}(x,z) \delta_{u+T}(t) dt dz.
\end{align*}

\noindent for $n\geq1$. Moreover, for $n\geq0$, from the semigroup property and Lemma \ref{beta:type:integral}, one easily gets the following estimates
\begin{align*}
|I^{D,n+1}_{s_0}(x,z)| & \leq 2 \int_{\Delta_{n+1}(s_0)} d\s_{n+1} \int_{(-\infty,L]^{n+1}} d\z_{n+1} (\prod_{i=0}^{n} \frac{C}{\sqrt{s_{i}-s_{i+1}}} g(c(s_{i}-s_{i+1}),z_{i+1}-z_i)) g(cs_{n+1},z-z_{n+1}) \\
& \leq 2 C^{n+1} \int_{\Delta_{n+1}(s_0)} d\s_{n+1} \prod_{i=0}^{n} \frac{1}{\sqrt{s_i-s_{i+1}}} g(c s_0,z-x) \\
& \leq 2 (C s^{\frac12}_0)^{n+1} \frac{(\Gamma(\frac12))^{n+1}}{\Gamma(1+\frac{n+1}{2})} g(c s_0,z-x).
\end{align*}

\noindent  Similar arguments yield
\begin{align*}
|I^{K,n+1}(x,t)| & \leq C^{n+1} \int_{\Delta_n^*(t)}d\t_n \int_{-]\infty,L]^{n}} d\z_n ( \prod_{i=0}^{n-1} \frac{1}{\sqrt{t_{i+1}-t_i}} g(c(t_{i+1}-t_i),z_{i+1}-z_{i}) )  \\
& \quad \times  g(c (t-t_n),L-z_n) \\
& = C^{n+1} t^{\frac{n+1}{2}} \frac{(\Gamma(1/2))^{n}}{\Gamma(n/2) (1+n/2)} g(c t, L-x) 
\end{align*}

\noindent and finally 
\begin{align*}
|J^{K,n+1}(x,t)| & \leq C^{n+2} \int_{\Delta_{n+1}^*(t)}d\t_{n+1} \int_{-]\infty,L]^{n+1}} d\z_{n+1} ( \prod_{i=0}^{n} \frac{1}{\sqrt{t_{i+1}-t_i}} g(c(t_{i+1}-t_i),z_{i+1}-z_{i}) )  \\
&\quad  \times \frac{1}{\sqrt{t-t_{n+1}}} g(c (t-t_{n+1}),L-z_{n+1}) \\
& = C^{n+2} t^{-\frac12 + \frac{n+1}{2}} \frac{(\Gamma(\frac12))^{n+2}}{\Gamma(1+\frac{n+2}{2})} g(c t, L-x).
\end{align*}

From the asymptotics of the Gamma function at infinity, we deduce that both series $\sum_{n\geq0}p^{K,n}(x,t)$ and $\sum_{n\geq0}p^{D,n}_T(x,z)$ converge absolutely and uniformly for $(t,x,z) \in \rr^{*}_+ \times \rr^{2}$. From equation \eqref{expansion:forward:semigroup}, we easily deduce \eqref{expansion:forward:density} and the Gaussian upper-bound \eqref{gaussian:upper:bound:forward} follows from the preceding computations. Now, from Theorem \ref{forward:expansion:semigroup} and the above discussion, for all $h \in   \mathcal{C}^{1,2}_b(\rr_+ \times (-\infty, L] ) $ and all $(u,x) \in \rr_+ \times (-\infty,L)$, one has
$$
P_Th(u,x) = \int_u^{u+T} \int_{-\infty}^L h(t,z) p_T(u,x,dt,dz)
$$

\noindent so that $p_T(u,x,.,.)$ is the probability density function of the random vector $(u+\tau^{x}_{T}, X^{x}_{\tau^{x}_{T}})$. As $(x,z) \mapsto p^{D,n}_T(x,z)$  (resp. $(x,t)\mapsto p^{K,n}(x,t-u)$) is continuous on $(-\infty,L]^2$ (resp. on $(-\infty,L] \times (u,u+T]$) and satisfies $\lim_{z\uparrow L}p^{D,n}_T(x,z) = \lim_{x\uparrow L}p^{D,n}_T(x,z)=0$ (resp. $\lim_{x\uparrow L} p^{K,n}(x,t-u)=0$), then $(x,z) \mapsto p^{D}_T(x,z)$ (resp. $(x,t)\mapsto p^{K}(x,t-u)$) is also continuous and satisfies $\lim_{z\uparrow L}p^{D}_T(x,z) = 0$ (resp. $\lim_{x\uparrow L} p^{K}(x,t-u)=0$).

\end{proof}

Now that we have obtained the parametrix expansion for the transition density, we will discuss its regularity properties. 

\begin{theorem}[Differentiability of the density] \label{differentiability:density:forward}
Following the notations introduced in Theorem \ref{theorem:expansion:density:forward}, let $T>0$ be fixed and assume that \A{H1} holds. Then for any $x\in (-\infty,L]$ and any $\alpha \in [0,1)$, $z \mapsto p^{D}_{T}(x,z) \in \mathcal{C}^{1+\alpha}((-\infty,L])$. In particular, one has
$$
\forall z\in (-\infty,L], \  \qquad \partial_2 p^{D}_T(x,z) = \sum_{n\geq0} \partial_2 p^{D,n}_T(x,z)
$$
\noindent with the following bound
\begin{equation}
\label{estimate:density:domain}
|\partial_2 p^{D}_T(x,z) | \leq \frac{C}{T^{1/2}} g(cT,z-x).
\end{equation}

\noindent Moreover, for any $\alpha\in [0,1)$, for any $(z,z') \in (-\infty,L]^2$, one has
\begin{equation}
\label{Holder:property:deriv}
|\partial_2 p^{D}_T(x,z) - \partial_2 p^{D}_T(x,z') | \leq \frac{C |z-z'|^{\alpha}}{T^{1- \frac{\gamma}{2}}} \left( g(c T, z-x) + g(c T, z'-x)  \right)
\end{equation}

\noindent with $\gamma=1-\alpha$.

Finally, for all $\eta \in [0,1/2)$, for all $(u,x)\in \rr_+ \times (-\infty,L]$, $t\mapsto  p^{K}(x,t-u)$ is $\eta$-H\"older continuous on $(u,u+T]$. In particular, for all $(t,t') \in (u,u+T]$, one has
$$
|p^{K}(x,t-u) -  p^{K}(x,t'-u)| \leq  C|t-t'|^{\eta}  \left(\frac{1}{(t-u)^{\frac12 + \eta}} g(c(t-u),L-x) + \frac{1}{(t'-u)^{\frac12 + \eta}} g(c(t'-u),L-x) \right). 
$$
\end{theorem}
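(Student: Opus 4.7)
The plan is to differentiate the series defining $p^D_T(x,z)$ in \eqref{pd} termwise with respect to $z$, obtaining
\[
\partial_2 p^{D,n+1}_T(x,z) = \int_{\Delta_{n+1}(T)} d\s_{n+1} \int_{(-\infty,L]^{n+1}} d\z_{n+1} \left(\prod_{i=0}^n \bar S_{s_i-s_{i+1}}(z_i,z_{i+1})\right) \partial_z \bar q^{z_{n+1}}_{s_{n+1}}(z_{n+1},z),
\]
and then to verify the convergence of the differentiated series. From the explicit formula in Lemma \ref{two:kernels:expression}, $\partial_z \bar q^y_t(x,z)$ is a linear combination of Hermite factors times Gaussians, and the space-time inequality gives $|\partial_z \bar q^y_t(x,z)| \leq C t^{-1/2} g(ct,z-x)$. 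Combined with the kernel estimate \eqref{estimate:kernel:S} and Lemma \ref{beta:type:integral} applied inductively on the simplex, this produces a termwise bound of order $C_T^{n+1} T^{n/2} (\Gamma(1/2))^{n+2}/\Gamma(1+(n+2)/2) \cdot T^{-1/2} g(cT,z-x)$, which is summable by the asymptotics of the Gamma function at infinity. Dominated convergence then legitimates the interchange and yields \eqref{estimate:density:domain}.

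For the H\"older bound \eqref{Holder:property:deriv}, I would apply the standard interpolation argument to the innermost kernel. The explicit form of $\bar q^y_t$ yields both the pointwise bound on $\partial_z \bar q^y_t$ just recalled and the mean value estimate
\[
|\partial_z \bar q^y_t(x,z) - \partial_z \bar q^y_t(x,z')| \leq \frac{C|z-z'|}{t}\left(g(ct,z-x) + g(ct,z'-x)\right),
\]
which comes from the pointwise control $|\partial^2_z \bar q^y_t(x,\cdot)| \leq Ct^{-1}g(ct,\cdot - x)$ via another application of the space-time inequality. Interpolating these two inequalities with weights $\alpha$ and $1-\alpha$ produces a factor $|z-z'|^\alpha t^{-(1+\alpha)/2}$ in front of the Gaussian, which, substituted into the simplex integral in place of $\partial_z \bar q$ and propagated through the $\bar S$-chain via Lemma \ref{beta:type:integral}, reproduces exactly the $T^{-(1+\alpha)/2} = T^{-(1-\gamma/2)}$ singularity with $\gamma = 1-\alpha$. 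Uniform convergence of the resulting series is again secured by the Gamma asymptotics as long as $\alpha <1$.

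The time-H\"older regularity of $p^K$ follows the same pattern applied to the kernels \eqref{IIK}-\eqref{IJK}, with $\bar K_t$ and $f^y_{\bar\tau}$ playing the role of the endpoint kernel. From the explicit form of $f^y_{\bar\tau}$ in Lemma \ref{two:kernels:expression}, I would establish the H\"older estimate
\[
|f^y_{\bar\tau}(x,t) - f^y_{\bar\tau}(x,t')| \leq C|t-t'|^\eta \left(\frac{1}{t^{1/2+\eta}} g(ct,L-x) + \frac{1}{(t')^{1/2+\eta}} g(ct',L-x) \right)
\]
for every $\eta \in [0,1/2)$ by interpolating the pointwise bound $Ct^{-1/2}g(ct,L-x)$ with a time-derivative bound of order $Ct^{-3/2}g(ct,L-x)$ obtained from $\partial_t f^y_{\bar\tau}$, and an entirely analogous H\"older estimate for $\bar K_t$. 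Inserting these bounds into \eqref{IIK} and \eqref{IJK}, carrying the H\"older factor through the time integrations via Lemma \ref{beta:type:integral}, and summing in $n$ using the Gamma asymptotic produces the announced bound.

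The main obstacle I expect lies in verifying that the additional time singularity introduced by the spatial differentiation (a factor $t^{-(1+\alpha)/2}$ on the innermost kernel) or by the time H\"older step (a factor $t^{-1/2-\eta}$ on the first-passage density) can still be absorbed by the remaining Beta integrals generated by the $\bar S$-chain. This restricts the finiteness of the outermost Beta integral $\int_0^T(T-s)^{-a}s^{-b}ds$ and forces the strict inequalities $\alpha<1$ and $\eta <1/2$ present in the statement. Beyond this finer book-keeping, no new ideas beyond those already used in Theorem \ref{theorem:expansion:density:forward} are required.
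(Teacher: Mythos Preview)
Your approach is correct and essentially matches the paper's: differentiate the endpoint kernel, bound via the space-time inequality, and propagate through the $\bar S$-chain using the Beta simplex integrals of Lemma~\ref{beta:type:integral}. The paper streamlines the bookkeeping by first resumming the $\bar S$-chain into a single Volterra kernel $\Psi_s(x,z)$ satisfying $|\Psi_s(x,z)|\le C s^{-1/2}g(cs,z-x)$, then writing $p^D_T(x,z)=\bar q^x_T(x,z) + \int_0^T\!\int \Psi_{t_1}(x,z_1)\,\bar q^{z_1}_{T-t_1}(z_1,z)\,dt_1\,dz_1$ (and an analogous convolution formula for $p^K$) before differentiating or taking increments; this is equivalent to your termwise argument but avoids carrying the index $n$ through every estimate. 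For the H\"older bound on $\partial_2 p^D_T$ the paper uses exactly the case distinction $|z-z'|^2< T-t_1$ versus $|z-z'|^2\ge T-t_1$ that underlies your interpolation.

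One point in your sketch for the time-H\"older estimate of $p^K$ deserves a little more care: the domain $\Delta_n^*(t)$ in \eqref{IIK}--\eqref{IJK} itself depends on $t$, so the increment $p^K(x,t')-p^K(x,t)$ is not purely the integral of an endpoint-kernel difference over a fixed simplex; there is an additional contribution from the strip where the outermost time variable lies in $(t,t')$. The paper's $\Psi$-representation makes this decomposition transparent, splitting into an $\int_t^{t'}$ piece bounded directly by the pointwise estimates on $\Psi$, $\bar K$ and $f_{\bar\tau}$, and an $\int_0^t$ piece where your interpolation bound on $f^{z}_{\bar\tau}$ and $\bar K$ applies. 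This is routine, and once carried out the restriction $\eta<1/2$ appears exactly where you predict, from the integrability of $s^{-1/2}(t-s)^{-(1/2+\eta)}$ on $(0,t)$.
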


\begin{proof} We first remark that by Fubini's theorem and the change of variable $t_i = T-s_i$, one has
$$
p^{D,n}_T(x,z) = I^{D,n}_{T}(x,z)  = \int_{\Delta_n^*(T)}d\t_n  \int_{(-\infty,L]^{n}} d\z_n  \prod_{i=0}^{n-1} \bar{S}_{t_{i+1}-t_i}(z_i,z_{i+1}) \bar{q}^{z_n}_{T-t _n}(z_n,z), \ n\geq1.
$$
Denote by $\Psi_{s}(x,z)$ the solution to the Volterra integral equation
$$
\Psi_{s}(x,z) = \bar{S}_{s}(x,z) + \int_0^s \int_{-\infty}^L \Psi_{t_1}(x,z_1) \bar{S}_{s-t_1}(z_1,z) dz_1 dt_1.
$$
 
From estimate \eqref{estimate:kernel:S}, we see that the kernel $\bar{S}_{s-t_1}(z_1,z)$ leads to an integrable singularity (in time) in the above space time integral so that the solution exists and is given by the (uniform) convergent series
$$
\Psi_s(x,z) = \bar{S}_s(x,z) + \sum_{n\geq1}\int_{\Delta_n(s)} d\s_n  \int_{(-\infty,L]^{n}} d\z_n \prod_{i=1}^{n} \bar{S}_{s_{i}-s_{i+1}}(z_i,z_{i+1}) \bar{S}_{s-s_1}(z,z_1)
$$

\noindent with the convention $z_{n+1}=x$, $s_{n+1}=0$. Furthermore, the inequality  
\begin{equation}
\label{estimate:phi}
\forall (s,x,z) \in (0,T] \times (-\infty,L]^2, \qquad |\Psi_s(x,z)| \leq \frac{C}{s^{1/2}} g(c s, z-x)
\end{equation}

\noindent is easily obtained. Moreover, plugging this expansion in the following equality, we observe that
$$
\forall (x,z) \in (-\infty,L]^2,\qquad  p^{D}_T(x,z) = \bar{q}^{x}_T(x,z) + \int_0^T \int_{-\infty}^L \Psi_{t_1}(x,z_1) \bar{q}^{z_1}_{T-t_1}(z_1,z) dt_1 dz_1.
$$

From the Lebesgue differentiation theorem, we get
$$
\partial_2 p^{D}_T(x,z) = \partial_2 \bar{q}^{x}_T(x,z) + \int_0^T \int_{-\infty}^L \Psi_{t_1}(x,z_1) \partial_2 \bar{q}^{z_1}_{T-t_1}(z_1,z) dt_1 dz_1
$$

\noindent and estimate \eqref{estimate:density:domain} follows from \eqref{estimate:phi} and $|\partial_2 \bar{q}^{z_1}_{T-t_1}(z_1,z)| \leq C (T-t_1)^{-1/2} g(c(T-t_1),z-z_1)$. It remains to prove \eqref{Holder:property:deriv}. First, let us assume that $|z'-z|^2< T-t_1$. Using the mean value theorem, the bound  $|\partial^2_{2} \bar{q}^{z_1}_{T-t_1}(z_1,z)| \leq C (T-t_1)^{-1} g(c(T-t_1),z-z_1)$ and noting that for any point $\zeta$ in the interval $(z,z')$, one has
$$
\exp\left(-\frac{|\zeta-z_1|^2}{c(T-t_1)}\right) \leq C \left\{ \exp\left(-\frac{|z'-z_1|^2}{c(T-t_1)}\right) + \exp\left(-\frac{|z-z_1|^2}{c(T-t_1)}\right) \right\}
$$

\noindent we get
\begin{align*}
|\partial_2 \bar{q}^{z_1}_{T-t_1}(z_1,z) - \partial_2 \bar{q}^{z_1}_{T-t_1}(z_1,z')| & \leq \frac{C|z'-z|}{T-t_1} \left\{ \exp\left(-\frac{|z'-z_1|^2}{c(T-t_1)}\right) + \exp\left(-\frac{|z-z_1|^2}{c(T-t_1)}\right) \right\} \\
& \leq \frac{C|z'-z|^{\alpha}}{(T-t_1)^{1-\gamma/2}} \left\{ \exp\left(-\frac{|z'-z_1|^2}{c(T-t_1)}\right) + \exp\left(-\frac{|z-z_1|^2}{c(T-t_1)}\right) \right\}
\end{align*}

\noindent for $|z'-z|^2< T-t_1$. Otherwise, one gets
$$
|\partial_2 \bar{q}^{z_1}_{T-t_1}(z_1,z)| \leq  \frac{C (T-t_1)^{\frac{\alpha}{2}}}{(T-t_1)^{1-\frac{\gamma}{2}}} g(c(T-t_1),z-z_1) \leq  \frac{C |z-z'|^{\alpha}}{(T-t_1)^{1-\frac{\gamma}{2}}} g(c(T-t_1),z-z_1)
$$

\noindent and similarly, 
$$
|\partial_2 \bar{q}^{z_1}_{T-t_1}(z_1,z')| \leq  \frac{C (T-t_1)^{\frac{\alpha}{2}}}{(T-t_1)^{1-\frac{\gamma}{2}}} g(c(T-t_1),z'-z_1) \leq  \frac{C |z-z'|^{\alpha}}{(T-t_1)^{1-\frac{\gamma}{2}}} g(c(T-t_1),z'-z_1)
$$

\noindent when $|z'-z|^2 \geq T-t_1$. Combining these estimates, \eqref{estimate:phi} and the equality
$$
\partial_2 p^{D}_T(x,z) - \partial_2 p^{D}_T(x,z') = \partial_2 \bar{q}^{x}_T(x,z)-\partial_2 \bar{q}^{x}_T(x,z') + \int_0^T \int_{-\infty}^L \Psi_{t_1}(x,z_1)( \partial_2 \bar{q}^{z_1}_{T-t_1}(z_1,z) - \partial_2 \bar{q}^{z_1}_{T-t_1}(z_1,z')) dt_1 dz_1
$$

\noindent we obtain \eqref{Holder:property:deriv}. We now prove the second part of the theorem. We first remark that for $0< t \leq T$ 
\begin{align}
p^{K}(x,t) & = f^{x}_{\bar{\tau}}(x,t) + \sum_{n\geq0} (I^{K,n+1}+J^{K,n+1})(x,t) \nonumber \\
& = f^{x}_{\bar{\tau}}(x,t)  + \bar{K}_{t}(x,L) \nonumber \\
& \quad + \sum_{n\geq0} \int_{\Delta_{n+1}^*(t)}d\t_{n+1} \int_{(-\infty,L]^{n+1}} d\z_{n+1} \prod_{i=0}^{n} \bar{S}_{t_{i+1}-t_i}(z_i,z_{i+1}) \nonumber \\
& \quad \times  \left\{ \bar{K}_{t-t_{n+1}}(z_{n+1},L) + f^{z_{n+1}}_{\bar{\tau}}(z_{n+1},t-t_{n+1}) \right\}  \nonumber \\
& = f^{x}_{\bar{\tau}}(x,t)  + \bar{K}_{t}(x,L)  + \int_0^{t}\int_{-\infty}^L \Psi_{s}(x,z_1) \left\{ \bar{K}_{t -s}(z_1,L) + f^{z_1}_{\bar{\tau}}(z_1,t-s) \right\} ds dz_1. \label{param:expansion:density:stopping:time}
\end{align}

Let $(t,t') \in ]0,T]$ and $0< \eta <1/2 $. We first prove the following bound
\begin{equation}
|f^{x}_{\bar{\tau}}(x,t') - f^{x}_{\bar{\tau}}(x,t)|  \leq C |t'-t|^{\eta} \left( \frac{1}{t'^{(1+2\eta)/2}} g(c t',L-x) +  \frac{1}{t^{(1+2\eta)/2}} g(c t ,L-x) \right).
\label{holderian:stoppingtime}
\end{equation}

Assume first that $|t'-t| < (L-x)^2$. By Lemma \ref{estimate:kernel:proxy} and the mean value theorem, one gets 
\begin{align*}
|f^{x}_{\bar{\tau}}(x,t') - f^{x}_{\bar{\tau}}(x,t)| & \leq C \int_0^1 \frac{|t'-t|}{|\lambda t + (1-\lambda) t'|^{3/2}} g(c (\lambda t + (1-\lambda) t'), L-x)  d\lambda \\
& \leq C |t-t'|^{\eta}\int_0^1 \frac{1}{|\lambda t + (1-\lambda) t'|^{(1+ 2 \eta)/2}} g(c (\lambda t + (1-\lambda) t'), L-x)  d\lambda .
\end{align*}

Now noting that for any point $\tilde{t} \in (t,t')$ which satisfies $|t'-\tilde{t}| \leq  (L-x)^2$, we deduce the inequality
$$
\frac{1}{\tilde{t}^{1+  \eta}} \exp(-\frac{(L-x)^2}{c \tilde{t}}) \leq C\left( \frac{1}{{t'}^{1+ \eta}} \exp(-\frac{(L-x)^2}{c' t'}) + \frac{1}{t^{1+\eta}} \exp(-\frac{(L-x)^2}{c' t}) \right)
$$

\noindent for some constants $C,c'>1$, from which we derive \eqref{holderian:stoppingtime} in this case. If $|t'-t| \geq (L-x)^2$, standard computations show that 
\begin{align*}
|f^{x}_{\bar{\tau}}(x,t) | & \leq \frac{|L-x|}{t} g(c t,L-x) \leq C |t'-t|^{\eta} \frac{|L-x|^{1-\eta}}{t} g(c t,L-x) \leq  \frac{C |t'-t|^{\eta}}{t^{(1+2\eta)/2}} g(c t,L-x)
\end{align*}

\noindent and \eqref{holderian:stoppingtime} is easily obtained in this case. Similar lines of reasonning show that 
\begin{equation}
|\bar{K}_{t'}(x,L) -  \bar{K}_t(x,L)|  \leq C |t'-t|^{(1+2\eta)/2} \left( \frac{1}{t'^{(1+2\eta)/2}} g(c t',L-x) +  \frac{1}{t^{(1+2\eta)/2}} g(c t ,L-x) \right)
\label{holderian:borderterm}
\end{equation}

\noindent for all $(t,t') \in ]0,T]$ and $0< \eta <1/2 $. Let $ 0 < t \leq t' \leq T$ and $x\in (-\infty,L]$. From \eqref{param:expansion:density:stopping:time}, we now write
\begin{align*}
 p^{K}(x,t') - p^{K}(x,t) & = \left( f^{x}_{\bar{\tau}}(x,t') - f^{x}_{\bar{\tau}}(x,t) \right)  + ( \bar{K}_{t'}(x,L) -\bar{K}_{t}(x,L)) \\
 &\quad +  \int_t^{t'} \int_{(-\infty,L]} \Psi_{s}(x,z_1) \left\{ \bar{K}_{t' -s}(z_1,L) + f^{z_1}_{\bar{\tau}}(z_1,t'-s) \right\} ds dz_1 \\
 & \quad + \int_0^t \int_{(-\infty,L]}  \Psi_{s}(x,z_1) \left\{ \bar{K}_{t' -s}(z_1,L) - \bar{K}_{t -s}(z_1,L) + f^{z_1}_{\bar{\tau}}(z_1,t'-s) - f^{z_1}_{\bar{\tau}}(z_1,t-s) \right\} ds dz_1
\end{align*}

\noindent and bound the first two terms of the above equality using \eqref{holderian:stoppingtime} and \eqref{holderian:borderterm}. From \eqref{estimate:phi}, \eqref{estimate:kernel:K}, \eqref{estimate:kernel:S}, Lemma \ref{estimate:kernel:proxy} and the semigroup property, we obtain
\begin{align*}
\left| \int_t^{t'} \int_{(-\infty,L]} \Psi_{s}(x,z_1) \left\{ \bar{K}_{t' -s}(z_1,L) + f^{z_1}_{\bar{\tau}}(z_1,t'-s) \right\} ds dz_1 \right| & \leq C \int_t^{t'} \left(\frac{1}{s^{1/2}} + \frac{1}{s^{1/2}} \frac{1}{(t'-s)^{1/2}} \right) g(c t', L-x) ds \\
& \leq C  \frac{(t'-t)^{\eta}}{t'^{\eta}} g(c t', L-x). 
\end{align*}
Similarly, from Lemma \ref{estimate:kernel:proxy}, \eqref{holderian:stoppingtime} and \eqref{holderian:borderterm}, we also get
\begin{align*}
& \left| \int_0^t \int_{(-\infty,L]}  \Psi_{s}(x,z_1) \left\{ \bar{K}_{t' -s}(z_1,L) - \bar{K}_{t -s}(z_1,L) + f^{z_1}_{\bar{\tau}}(z_1,t'-s) - f^{z_1}_{\bar{\tau}}(z_1,t-s) \right\} ds dz_1 \right| \\
& \leq C ( (t'-t)^{\frac12 + \eta} + (t'-t)^{\eta}) \int_0^{t} \left(\frac{1}{s^{\frac12}} \frac{1}{(t'-s)^{\frac12 + \eta}} g(c t', L-x)  + \frac{1}{s^{\frac12}} \frac{1}{(t-s)^{\frac12 + \eta}} g(c t, L-x)  \right) ds \\
& \leq C_T (t'-t)^{\eta}    \left( \frac{1}{t'^{\eta}} g(c t', L-x) + \frac{1}{t^{\eta}} g(c t, L-x) \right)
\end{align*}

\noindent for some positive constant $C_T$ (non-decreasing with respect to $T$). This completes the proof.
\end{proof}

\begin{remark}In order to investigate the differentiability of $t\mapsto p^{K}(x,t)$, one is naturally led to differentiate the representation formula \eqref{param:expansion:density:stopping:time} with respect to $t$. The first two terms appearing in the right-hand side of this formula can be readily differentiated. The difficulty comes when one tries to differentiate the time-space convolution with respect to $t$. Actually the singularity in time appearing in the density $f^{z}_{\bar{\tau}}(z,t-s)$ prevents us to do so unless additional smoothness assumptions on the coefficients $b$ and $\sigma$ are provided. This phenomenon does not appear in the standard diffusion framework because the density $f^{z}_{\bar{\tau}}(z,t-s)$ is replaced by a Gaussian density.  
\end{remark}

\subsection{Applications}\label{applications:forward}\rule[-10pt]{0pt}{10pt}\\

In this section we collect some applications of the results established in Theorem \ref{theorem:expansion:density:forward} and Theorem \ref{differentiability:density:forward}. From the Gaussian upper bounds satisfied by $p^{K}(x,t)$, $p^{D}_T(x,z)$ and their derivatives with respect to $x$, we claim:

\begin{corol}\label{bound:test:function} Let $T>0$ and $x\in (-\infty,L)$ be fixed. Then, the following bound
\begin{align*}
|\E[h(\tau^x_T, X^x_{\tau^x_T})]| &\leq \int^T_0\,ds  |h(s,L)|\frac{1}{\sqrt{s}}g(cs,x-L) + \int^L_{-\infty}dz \,|h(T,z)|g(cT,x-z)
\end{align*}

\noindent is valid for any Borel function function $h$ defined on $\rr_+ \times (-\infty,L]$ as soon as the above integrals are finite. Moreover, if $h \in \mathcal{C}^1((-\infty,L])$, the following bound is valid
\begin{align*}
|\E[\partial_2 h(\tau^x_T, X^x_{\tau^x_T})]| &\leq \int^T_0 \,ds  |\partial_2 h(s,L)|\frac{1}{s}g(cs,x-L) + \int^L_{-\infty}dz \,|h(T,z)|\frac{1}{\sqrt{T}} g(cT,x-z),
\end{align*}

\noindent as soon as the above integrals are finite. 
\end{corol}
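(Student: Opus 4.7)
The plan is to read off both bounds directly from the density representation of Theorem \ref{theorem:expansion:density:forward} together with the pointwise estimates of Theorem \ref{theorem:expansion:density:forward} and Theorem \ref{differentiability:density:forward}. Specializing the representation \eqref{expansion:forward:density} with $u=0$, for $x<L$ the law of $(\tau^x_T, X^x_{\tau^x_T})$ is the measure
\[
p_T(0,x,dt,dz) \;=\; p^K(x,t)\,\delta_L(dz)\,dt \;+\; p^D_T(x,z)\,\delta_T(dt)\,dz
\]
on $(0,T]\times(-\infty,L]$, and this decomposition underlies both estimates.

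For the first inequality, I would write
\[
\E[h(\tau^x_T, X^x_{\tau^x_T})] \;=\; \int_0^T h(s,L)\,p^K(x,s)\,ds \;+\; \int_{-\infty}^L h(T,z)\,p^D_T(x,z)\,dz,
\]
take absolute values inside the integrals, and plug in the two Gaussian bounds \eqref{gaussian:upper:bound:forward}, namely $p^K(x,s)\le C s^{-1/2} g(cs,L-x)$ and $p^D_T(x,z)\le C g(cT,z-x)$. The finiteness of the right-hand side of the statement is precisely what is needed to apply Fubini/dominated convergence so that the representation extends beyond $\mathcal{C}^{1,2}_b$ to the class of Borel functions $h$ for which the bound is finite.

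For the second inequality, the exit-time piece is bounded in the same way, simply using the Gaussian estimate on $p^K$ to control $\int_0^T \partial_2 h(s,L)\,p^K(x,s)\,ds$. The real content is in the interior piece
\[
\int_{-\infty}^L \partial_2 h(T,z)\,p^D_T(x,z)\,dz,
\]
to which I would apply integration by parts in $z$. The boundary term at $z=L$ vanishes thanks to $\lim_{z\uparrow L}p^D_T(x,z)=0$, a conclusion of Theorem \ref{theorem:expansion:density:forward}, and the boundary term at $-\infty$ vanishes from the Gaussian decay of $p^D_T(x,\cdot)$ together with the assumed integrability of $|h(T,z)|g(cT,x-z)$. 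What remains is $-\int_{-\infty}^L h(T,z)\,\partial_2 p^D_T(x,z)\,dz$, which is controlled by the derivative estimate \eqref{estimate:density:domain}, yielding the factor $T^{-1/2} g(cT,x-z)$.

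The only delicate step is justifying the integration by parts and the identification of the two expectations with the integrals above when $h$ is only $\mathcal{C}^1$ and has arbitrary growth. I would handle this by a standard truncation and mollification argument: first establish the two bounds for $h\in\mathcal{C}^{1,2}_b(\R_+\times(-\infty,L])$ where everything is rigorous via \eqref{expansion:forward:density}, then approximate a general admissible $h$ by a sequence $h_n\to h$ for which the representation applies, and pass to the limit using the explicit Gaussian majorants on the right-hand sides as the dominating integrable functions. No new estimate beyond those already proved is required.
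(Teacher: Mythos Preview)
Your approach is correct and is precisely what the paper intends: the text preceding the corollary says the bounds follow ``from the Gaussian upper bounds satisfied by $p^K(x,t)$, $p^D_T(x,z)$ and their derivatives'' and then explicitly omits the proof; moreover the integration-by-parts step you describe for the interior piece is exactly the computation carried out in Corollary~\ref{ibpz}, which uses $\lim_{z\uparrow L}p^D_T(x,z)=0$ and the bound \eqref{estimate:density:domain} in the same way.

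One small remark: for the exit-time piece in the second inequality, applying the bound $p^K(x,s)\le C s^{-1/2}g(cs,L-x)$ directly gives a factor $s^{-1/2}$, not the $s^{-1}$ that appears in the stated inequality. This is harmless since $s^{-1/2}=s^{1/2}\cdot s^{-1}\le T^{1/2}s^{-1}$ on $(0,T]$ and the extra $T^{1/2}$ can be absorbed into the generic constant $C$; you may want to say this explicitly so the reader does not wonder where the stronger singularity comes from.
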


The above bounds may be useful since combined with Theorem \ref{theorem:expansion:density:forward} and Theorem \ref{differentiability:density:forward} they allow to establish the continuity of the maps $x\mapsto \E[h(\tau^x_T, X^x_{\tau^x_T})]$ and $x\mapsto  \E[\partial_2 h(\tau^x_T, X^x_{\tau^x_T})]$ on $(-\infty,L)$ for a large class of test function. We omit its proof.

We now aim at giving a probabilistic representation of the transition density of the process $(u+\tau^{x}_t, X^{x}_{\tau^{x}_t})_{t\geq0}$ and $(P_t h(u,x))_{t\geq0}$ that may be useful for unbiased Monte Carlo path simulation or probabilistic infinite dimensional analysis.
First, for $z \in (-\infty,L)$, we write
\begin{align}
\bar{S}_t(x,z) & =  \I_\seq{x<L}  \left\{ \partial^{2}_{z}\left[\frac12 (a(z) - a(x)) \bar{q}^{x}_t(x,z)\right] - \partial_z\left[b(z) \bar{q}^{x}_t(x,z)\right] \right\} \nonumber \\
& =  \I_\seq{x<L} \left\{ (\frac12 a''(z)-b'(z)) \bar{q}^{x}_t(x,z) + (a'(z)-b(z)) \partial_z \bar{q}^{x}_t(x,z) + \frac12 (a(z)-a(x)) \partial^2_z\bar{q}^{x}_t(x,z) \right\} \nonumber \\
& = \theta_t(x,z) \bar{q}^{x}_t(x,z). \label{kernel:bar:s}
\end{align}

\noindent Here, using \eqref{first:derivative:q} and \eqref{second:derivative:q}, we explicitly have
\begin{align*}
\theta_t(x,z) & = \I_\seq{x<L} \left\{ \left(\frac12 a''(z)-b'(z)\right)  + (a'(z)-b(z)) \mu^{1}_t(x,z) + \frac12 (a(z)-a(x)) \mu^2_t(x,z) \right\} ,\\
\mu^{1}_t(x,z) & = H_1(a(x)t,z-x) - \frac{1}{a(x)t} \frac{2(L-x)}{(\exp(-\frac{2(z-L)(L-x)}{a(x)t}) - 1)}, \\
\mu^{2}_t(x,z) & = H_2(a(x)t,z-x) + \frac{1}{a^2(x)t^2} \frac{4(z-L)(L-x)}{(\exp(-\frac{2(z-L)(L-x)}{a(x)t})-1)}.
\end{align*}

We also use the distribution function of the supremum of the Brownian bridge 
$$
\Lambda_t(x,z) := \mathbb{P}\Big(\max_{0 \leq v \leq t}W_v \leq \frac{L-x}{\sigma(x)} \,\big |\, W_t = \frac{z-x}{\sigma(x)} \Big) = \left\{1 - \exp\left(- 2 \frac{(L-x)(L-x-(z-x))}{t a(x)}\right)\right\} \I_\seq{x\leq L} \I_\seq{z\leq L}
$$

\noindent and introduce the quantity 
$$
\bar{\theta}_t(x,z) = \theta_t(x,z) \Lambda_t(x,z).
$$

\noindent With these notations, from \eqref{kernel:bar:s}, for every bounded measurable function $h\in \mathcal{B}_b(\rr_+ \times (-\infty,L])$, one obtains
\begin{align*}
\bar{S}_T h(u,x) & = \mathbb{E}[h(u+T, \bar{X}^{x}_T) \theta_T(x,\bar{X}^{x}_T) \I_\seq{ \bar{\tau}^{x} \geq T}] \\
& =  \mathbb{E}[h(u+T, \bar{X}^{x}_T) \bar{\theta}_T(x,\bar{X}^{x}_T)  ].
\end{align*}

From \eqref{estimate:kernel:S}, we note that $\E[|\bar{\theta}_t(x,\bar X^{x}_t)|]\leq C_T t^{-1/2}$ for $t\in (0,T]$, which in particular implies that $\bar{\theta}_t(x,X^{x}_t) \in L^{1}(\P)$. We also remark that for $0\leq s_1 \leq s_2 \leq T$, one has 
\begin{align*}
\bar{S}_{s_1} \bar{S}_{s_2-s_1} h(u,x) & = \mathbb{E}[h(u+s_2, \bar{X}^{s_1, \bar{X}^x_{s_1}}_{s_2}) \theta_{s_{2}-s_1}(\bar{X}^x_{s_1}, \bar{X}^{s_1, \bar{X}^x_{s_1}}_{s_2}) \theta_{s_1}(x,\bar{X}^x_{s_1})   \I_\seq{ \bar{\tau}^{s_1, \bar{X}^x_{s_1}} \geq s_2 - s_1} \I_\seq{ \bar{\tau}^{x} \geq s_1}]  \\
& =  \mathbb{E}[h(u+s_2, \bar{X}^{s_1, \bar{X}^x_{s_1}}_{s_2}) \bar{\theta}_{s_{2}-s_1}(\bar{X}^x_{s_1}, \bar{X}^{s_1, \bar{X}^x_{s_1}}_{s_2}) \bar{\theta}_{s_1}(x,\bar{X}^x_{s_1})].
\end{align*}

\noindent In order to extend this probabilistic representation to the semigroup expansion obtained in \eqref{expansion:forward:semigroup}, we first apply Fubini's theorem and the change of variable $t_i = T- s_i$, $i=0,\cdots,n$, in order to write
\begin{align*}
\int_{\Delta_{n}(T)} d\s_{n}\left(\prod_{i=0}^{n-1} \bar{S}_{s_i -s_{i+1}} \right)  & \bar{P}_{s_n}h(u,x) = \int_{\Delta_n^*(T)}d\t_n \left(\prod_{i=0}^{n-1} \bar{S}_{t_{i+1} -t_{i}} \right) \bar{P}_{T-t_n}h(u,x).
\end{align*}

For a given time partition $\pi:0=t_0 < t_1 <\cdots < t_N < t_{N+1}=T$, we introduce the Euler scheme $\bar{X}^{\pi} = (\bar{X}^{\pi}_{t_i})_{0 \leq i \leq N+1}$ with the following dynamics
\begin{align*}
\bar{X}^{\pi}_{t_{i+1}} & = \bar{X}^{\pi}_{t_i} + \sigma(\bar{X}^{\pi}_{t_i}) (W_{t_{i+1}}-W_{t_i})\\
 \bar{X}^{\pi}_{t_{0}} & = x
\end{align*}

\noindent which in turn allows us to write the following (partial) probabilistic representation
\begin{align*}
 \left(\prod_{i=0}^{n-1} \bar{S}_{t_{i+1} -t_{i}} \right) \bar{P}_{T-t_n}h(u,x) & =  \mathbb{E}[h(u+T,\bar{X}^{\pi}_T) \I_\seq{t_{n}+ \bar{\tau}^{t_n, \bar{X}^{\pi}_{t_n}} > T} \bar{\theta}_{t_n -t_{n-1}}(\bar{X}^{\pi}_{t_{n-1}},\bar{X}^{\pi}_{t_n}) \cdots \bar{\theta}_{t_1}(x, \bar{X}^{\pi}_{t_1})] \\
& \quad +  \mathbb{E}[h(u+t_n+\bar{\tau}^{t_n,\bar{X}^{\pi}_{t_n}}, L)  \I_\seq{t_{n} + \bar{\tau}^{t_n, \bar{X}^{\pi}_{t_n}} \leq T}  \bar{\theta}_{t_n -t_{n-1}}(\bar{X}^{\pi}_{t_{n-1}},\bar{X}^{\pi}_{t_n}) \cdots \bar{\theta}_{t_1}(x, \bar{X}^{\pi}_{t_1})] \\
& = \E[h(u+(t_{n}+ \bar{\tau}^{t_n, \bar{X}^{\pi}_{t_n}}) \wedge T,\bar{X}^{\pi}_{(t_n+\bar{\tau}^{t_n, \bar{X}^{\pi}_{t_n}}) \wedge T}) \bar{\theta}_{t_n -t_{n-1}}(\bar{X}^{\pi}_{t_{n-1}},\bar{X}^{\pi}_{t_n}) \cdots \bar{\theta}_{t_1}(x, \bar{X}^{\pi}_{t_1})],
\end{align*}

\noindent for $\ n\geq0$, with the convention $t_0= 0$. Note that from the previous equation for $n=0$ and $x\geq L$, one gets $ \left(\prod_{i=0}^{n-1} \bar{S}_{t_{i+1} -t_{i}} \right) \bar{P}_{T-t_n}h(u,x) = h(u,x)$. 

Similarly, for the second term appearing in \eqref{integrand:parametrix:forward}, after noting that 
\begin{align*}
\int_0^{s_{n-1}}  \bar{K}_{s_{n-1}-s_{n}} h(u,x) ds_{n} & = \int_0^{s_{n-1}} h(u+s_{n},L) \frac{(a(L)-a(x))}{a(x)} f^{x}_{\bar{\tau}}(x,s_n) ds_n\\
							& = \mathbb{E}[h(u+\bar{\tau}^{x},L) \frac{(a(L)-a(x))}{a(x)}  \I_\seq{ \bar{\tau}^{x} \leq s_{n-1}} ]\\
							& =: \widecheck{K}_{s_{n-1}}h(u,x)
\end{align*}

\noindent we can write 
\begin{align*}
\int_{\Delta_{n}(T)} d\s_{n}\left(\prod_{i=0}^{n-2} \bar{S}_{s_i -s_{i+1}} \right)  \bar{K}_{s_{n-1}-s_{n}}h(u,x) & = \int_{\Delta_{n-1}(T)} d\s_{n-1}\left(\prod_{i=0}^{n-2} \bar{S}_{s_i -s_{i+1}} \right)  \widecheck{K}_{s_{n-1}}h(u,x)\\
& = \int_{\Delta_{n-1}^*(T)}d\t_{n-1} \left(\prod_{i=0}^{n-2} \bar{S}_{t_{i+1} -t_{i}} \right)  \widecheck{K}_{T-t_{n-1}}h(u,x).
\end{align*}

Similarly to the previous term, one gets
\begin{align*}
  \left(\prod_{i=0}^{n-2} \bar{S}_{t_{i+1} -t_{i}} \right)  \widecheck{K}_{T-t_{n-1}}h(u,x) &= \mathbb{E}[h(u+t_{n-1}+\bar{\tau}^{t_{n-1},\bar{X}^{\pi}_{t_{n-1}}}, L)  \I_\seq{ \bar{\tau}^{t_{n-1}, \bar{X}^{\pi}_{t_{n-1}}} \leq T-t_{n-1}} \frac{(a(L)-a(\bar{X}^{\pi}_{t_{n-1}}))}{a(\bar{X}^{\pi}_{t_{n-1}})} \\
  & \quad \times \bar{\theta}_{t_{n-1} -t_{n-2}}(\bar{X}^{\pi}_{t_{n-2}},\bar{X}^{\pi}_{t_{n-1}}) \cdots \bar{\theta}_{t_1}(x, \bar{X}^{\pi}_{t_1})].
\end{align*}

Now, in order to give a probabilistic representation of the time integral, we let $(N(t))_{t\geq0}$ be a simple Poisson process with intensity parameter $\lambda>0$ and define $N \equiv N(T)$. Let $\zeta_1 < \zeta_2 <  \cdots < \zeta_N$ be the jump times of the Poisson process and set $\zeta_0=0$, $\zeta_{N+1}=T$. We know that conditional on $N$, the event times are distributed as the uniform order statistics associated to a sequence of i.i.d. uniform $[0,T]$-valued random variables satisfying $\mathbb{P}(N=n, \zeta_1 \in dt_1, \cdots, \zeta_n \in dt_n) = \lambda^{n} e^{-\lambda T}  d\t_{n}$, on the set $\Delta^*_n(T)= \left\{ \t_n \in [0,T]^{n}: 0< t_1 < t_2 < \cdots < t_n < T \right\}$. 

We still denote by $\pi$  the random time partition $\pi: \zeta_0=0 < \zeta_1 < \cdots < \zeta_{N+1}= T$ and denote by $\bar{X}^{\pi} = (\bar{X}^{\pi}_{\zeta_i})_{0 \leq i \leq N+1}$ its associated Euler scheme. As a consequence, we may rewrite the time integral appearing in the above expressions in a probabilistic way as follows for $n\geq0$,
\begin{align*}
\int_{\Delta_{n}(T)} d\s_{n}\left(\prod_{i=0}^{n-1} \bar{S}_{s_i -s_{i+1}} \right) \bar{P}_{s_n}h(u,x) & = e^{\lambda T} \mathbb{E}[h(u+(\zeta_n+\bar{\tau}^{\zeta_n,\bar{X}^{\pi}_{\zeta_n}})\wedge T, \bar{X}^{\pi}_{(\zeta_n+\bar{\tau}^{\zeta_n,\bar{X}^{\pi}_{\zeta_n}})\wedge T}) \I_\seq{N = n} \prod_{j=0}^{n-1} \lambda^{-1} \bar{\theta}_{\zeta_{j+1} - \zeta_{j}}(\bar{X}^{\pi}_{\zeta_{j}}, \bar{X}^{\pi}_{\zeta_{j+1}})] 
\end{align*}

\noindent and for $n\geq1$
\begin{align*}
\int_{\Delta_{n}(T)} d\s_{n}\left(\prod_{i=0}^{n-2} \bar{S}_{s_i -s_{i+1}} \right)  \bar{K}_{s_{n-1}-s_{n}}h(u,x) & = e^{\lambda T} \mathbb{E}[ h(u+\zeta_{n-1}+\bar{\tau}^{\zeta_{n-1},\bar{X}^{\pi}_{\zeta_{n-1}}}, L) \I_\seq{N = n-1}  \I_\seq{ \bar{\tau}^{\zeta_{n-1}, \bar{X}^{\pi}_{\zeta_{n-1}}} \leq T- \zeta_{n-1}} \\
& \quad \times \frac{(a(L)-a(\bar{X}^{\pi}_{\zeta_{n-1}}))}{a(\bar{X}^{\pi}_{\zeta_{n-1}})} \prod_{j=0}^{n-2} \lambda^{-1}  \bar{\theta}_{\zeta_{j+1} - \zeta_{j}}(\bar{X}^{\pi}_{\zeta_{j}}, \bar{X}^{\pi}_{\zeta_{j+1}})]
\end{align*}
\noindent where we use the convention $\prod_{\emptyset} =1$. Given the above discussion, we obtain the final result of this section.

\begin{theorem}\label{theorem:probabilistic:representation:forward} Let $T>0$ and assume that \A{H1} holds. Define the two sequences $(\Gamma_N(x))_{N\geq0}$ and $(\bar{\Gamma}_N(x))_{N\geq0}$ as follows
\begin{equation*}
\Gamma_{N}(x)= \left\{
    \begin{array}{ll}
       \prod_{j=0}^{N-1} \lambda^{-1} \bar{\theta}_{\zeta_{j+1} - \zeta_{j}}(\bar{X}^{\pi}_{\zeta_{j}}, \bar{X}^{\pi}_{\zeta_{j+1}}) & \mbox{ if } N \geq1, \\
        1 & \mbox{ if } N = 0,
    \end{array}
\right.
\end{equation*}

\noindent and
\begin{equation*}
\bar{\Gamma}_{N}(x)= \left\{
    \begin{array}{ll}
      \frac{(a(L) - a(\bar{X}^{\pi}_{\zeta_{N-1}}))}{a(\bar{X}^{\pi}_{\zeta_{N-1}})} \prod_{j=0}^{N-2} \lambda^{-1} \bar{\theta}_{\zeta_{j+1} - \zeta_{j}}(\bar{X}^{\pi}_{\zeta_{j}}, \bar{X}^{\pi}_{\zeta_{j+1}}) & \mbox{ if } N \geq1, \\
        0 & \mbox{ if } N = 0.
    \end{array}
\right.
\end{equation*}

Then, for all $h\in \mathcal{B}_b(\rr_+ \times \rr)$, for all $(u,x) \in \rr_+ \times \rr$, the following probabilistic representation holds
\begin{align*}
  \E[h(\tau^{x}_T,X^x_{\tau^x_T})]  &= e^{\lambda T} \mathbb{E}\left[  h((\zeta_{N(T)}+ \bar{\tau}^{\zeta_{N(T)}, \bar{X}^{\pi}_{\zeta_{N(T)}}}) \wedge T, \bar{X}^{\pi}_{(\zeta_{N(T)}+ \bar{\tau}^{\zeta_{N(T)}, \bar{X}^{\pi}_{\zeta_{N(T)}}}) \wedge T})   \Gamma_{N(T)}(x)\right] \\
& \quad + e^{\lambda T} \mathbb{E}\left[h(\zeta_{N(T)-1}+\bar{\tau}^{\zeta_{N(T)-1},\bar{X}^{\pi}_{\zeta_{N(T)-1}}}, L) \I_\seq{ \bar{\tau}^{\zeta_{N(T)-1}, \bar{X}^{\pi}_{\zeta_{N(T)-1}}} \leq T- \zeta_{N(T)-1}} \bar{\Gamma}_{N(T)}(x) \right].
\end{align*}

Similarly, the following probabilistic representation for the density is satisfied
$$
\forall (t,x,z) \in (0,T] \times (-\infty,L]^2, \ p_T(0,x,dt,dz)  = \delta_{T}(dt) p^{D}_T(x,z) + \delta_L(dz) p^{K}(x,t)
$$

\noindent with, for all $(t,z) \in (0,T] \times (-\infty,L]$,
\begin{align*}
p^{D}_T(x, z) & =  e^{\lambda T}  \mathbb{E}\left[\bar{q}^{\bar{X}^{\pi}_{\zeta_{N(T)}}}_{T-\zeta_{N(T)}}(\bar{X}^{\pi}_{\zeta_{N(T)}},z) \Gamma_{N(T)}(x)\right], \\
p^{K}(x,t) & =  e^{\lambda T} \mathbb{E}\left[f^{\bar{X}^{\pi}_{\zeta_{N(T)}}}_{\bar{\tau}}(\bar{X}^{\pi}_{\zeta_{N(T)}},t-\zeta_{N(T)})  \I_\seq{t \geq \zeta_{N(T)}}  \Gamma_{N(T)}(x) + f^{\bar{X}^{\pi}_{\zeta_{N(T)-1}}}_{\bar{\tau}}(\bar{X}^{\pi}_{\zeta_{N(T)-1}},t -\zeta_{N(T)-1}) \I_\seq{ t \geq  \zeta_{N(T)-1}} \bar{\Gamma}_{N(T)}(x) \right].
\end{align*}
\end{theorem}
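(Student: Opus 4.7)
The plan is to assemble the probabilistic representation directly from the forward parametrix expansion of Theorem \ref{forward:expansion:semigroup}, using the building blocks already developed in the discussion preceding the statement. Concretely, Theorem \ref{forward:expansion:semigroup} gives
\[
P_T h(u,x) \;=\; \sum_{n\geq 0} I^{n}_T h(u,x),
\]
and each $I^n_T h(u,x)$ has two pieces (the $\bar S \cdots \bar S \bar P$ piece and the $\bar S \cdots \bar S \bar K$ piece). I would first rewrite a single iterated application
\[
\Bigl(\prod_{i=0}^{n-1}\bar S_{t_{i+1}-t_i}\Bigr)\bar P_{T-t_n}h(u,x)
\]
as the expectation against the Euler scheme $\bar X^\pi$ (associated with the deterministic partition $0=t_0<\cdots<t_n<t_{n+1}=T$), exactly as done in the displayed formulas above the theorem. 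This uses the identity $\bar S_t h(u,x)=\mathbb{E}[h(u+t,\bar X^x_t)\bar\theta_t(x,\bar X^x_t)]$ obtained from \eqref{kernel:bar:s} together with the explicit Brownian bridge formula for $\Lambda_t(x,z)$, iterated by the Markov property of the Euler scheme; the $(n-1)$-th step absorbs $\bar P_{T-t_n} h$ by writing it back as the joint expectation over $(\bar\tau^{t_n,\bar X^\pi_{t_n}},\bar X^\pi_{(t_n+\bar\tau)\wedge T})$.

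The second building block treats the terminating $\bar K$-operator. Since $P_s h(u,L)=h(u,L)$, one gets the integrated form
\[
\int_0^{s_{n-1}}\bar K_{s_{n-1}-s_n}h(u,x)\,ds_n \;=\; \widecheck K_{s_{n-1}}h(u,x) \;=\; \mathbb{E}\Bigl[h(u+\bar\tau^{x},L)\tfrac{a(L)-a(x)}{a(x)}\I_{\{\bar\tau^x\le s_{n-1}\}}\Bigr],
\]
which is the probabilistic counterpart of the boundary kernel. Chaining this with the iterated $\bar S$-representation above yields the probabilistic form for the second contribution to $I^n_T h(u,x)$, still over the deterministic partition $\t_{n-1}$.

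Next I would randomize the time partition using the Poisson process $N(\cdot)$ of intensity $\lambda$. The key observation is that the joint density of the ordered jump times $(\zeta_1,\dots,\zeta_n)$ conditional on $\{N(T)=n\}$ times $\P(N(T)=n)$ equals $\lambda^n e^{-\lambda T}\,d\t_n$ on $\Delta^{*}_n(T)$. Multiplying by $e^{\lambda T}\lambda^{-n}$ therefore converts each time-simplex integral in $I^n_T h$ into an expectation indexed by the event $\{N(T)=n\}$. Summing over $n\ge 0$ and identifying the Poisson jump-count random variable in the resulting sum gives the two stated expectations, the weights $\Gamma_N(x)$ and $\bar\Gamma_N(x)$ being precisely the products of $\lambda^{-1}\bar\theta$-factors (with the extra boundary factor $(a(L)-a(\bar X^\pi_{\zeta_{N-1}}))/a(\bar X^\pi_{\zeta_{N-1}})$ for $\bar\Gamma$), and the convention $\prod_\emptyset=1$ handles the $N=0$ term. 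The density statements for $p^{D}_T(x,z)$ and $p^{K}(x,t)$ follow by the same manipulation applied to the expansions \eqref{pd} and \eqref{pk} from Theorem \ref{theorem:expansion:density:forward}, replacing the final $\bar P_{T-t_n}h$ (resp.\ $\bar K$) by its density integrand $\bar q^{z_n}_{T-t_n}(z_n,z)$ (resp.\ $f^{z_n}_{\bar\tau}(z_n,t-t_n)$).

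The main obstacle is justifying the interchange of $\sum_{n\geq 0}$ and $\mathbb{E}$ that is implicit in recognizing the Poisson indicator $\I_{\{N(T)=n\}}$ as a random variable summed inside one expectation. This requires absolute integrability of the series
\[
\sum_{n\ge 0} e^{\lambda T}\,\mathbb{E}\Bigl[\,\bigl|\text{integrand}\bigr|\,\I_{\{N(T)=n\}}\Bigr],
\]
which, after conditioning on $N(T)=n$ and on the order statistics, reduces exactly to the uniform-in-$n$ Gaussian estimates \eqref{estimate:kernel:K}--\eqref{estimate:kernel:S} combined with the Beta-type bound from Lemma \ref{beta:type:integral} already used in the proof of Theorem \ref{forward:expansion:semigroup}; the factor $e^{\lambda T}\lambda^{-n}$ cancels against $\lambda^n e^{-\lambda T}$ so the net growth is controlled by $C^n T^{n/2}/\Gamma(1+n/2)$, which is summable. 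With Fubini thereby validated for bounded measurable $h$, the theorem follows.
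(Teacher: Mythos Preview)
Your proposal is correct and follows essentially the same approach as the paper: the paper's ``proof'' is precisely the discussion preceding the theorem statement, which rewrites each $\bar S$-iterate via $\bar S_t h(u,x)=\mathbb{E}[h(u+t,\bar X^x_t)\bar\theta_t(x,\bar X^x_t)]$, handles the boundary term through $\widecheck K$, and then randomizes the simplex integrals using the Poisson jump-time density $\lambda^n e^{-\lambda T}\,d\t_n$ on $\Delta^*_n(T)$. Your explicit Fubini justification via the bounds \eqref{estimate:kernel:K}--\eqref{estimate:kernel:S} and Lemma \ref{beta:type:integral} is in fact more carefully spelled out than in the paper, which simply states the result after the discussion.
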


\begin{remark}We observe that the probabilistic representation of $P_Th(0,x)$ has a natural interpretation. The first term can be decomposed into two expectations. The first one involves paths of the Euler scheme $\bar{X}^{\pi}$ that do not exit the domain $(-\infty,L)$ (note that $\Lambda_t$ is a factor in the definition of $\bar{\theta}_t$ so that $\bar{\theta}_t(x,z)=0$ for $(x,z) \notin (-\infty,L)$) on the interval $[0,T]$ whereas the second term involves paths of the Euler scheme that exit the domain on the last time interval of the Poisson process $[\zeta_{N(T)},T]$ by sampling according to the law of the exit time $\bar{\tau}^{{\zeta_{N(T)}},\bar{X}^{\pi}_{{\zeta_{N(T)}}}}$ on the last interval. The last term appearing in the probabilistic representation is an additional correction term which is due to the very nature of the forward parametrix method and comes from the integration by parts formula used in the proof of Proposition \ref{first:step:parametrix:forward:expansion}. It also involves paths of the Euler scheme that exit the domain on the last time interval $[\zeta_{N(T)-1},T]$.

\end{remark}

\begin{remark} An unbiased Monte Carlo method for evaluating $P_Th(0,x)$ or $p_T(0,x,dt,dz)$ stems from the probabilistic representations obtained in Theorem \ref{theorem:probabilistic:representation:forward}. The explosion of the variance may be an important issue that can induce poor convergence rate of the method as pointed out in \cite{Andersson:Kohatsu} for unbiased simulation of multi-dimensional diffusions. In these situations, an importance sampling method on the time steps using a Beta or Gamma distribution may be used. In short, it would seem that this approximation will work well in the case of small parameters. Although a very close analysis could be carried here, we do not intend to develop importance sampling schemes and refer the interested reader to \cite{Andersson:Kohatsu} for some developments in the diffusion case. From the above probabilistic representation, one may also infer the possibility of infinite-dimensional analysis based on the analysis of the corresponding approximation or the possibility of density expansions with respect to a small parameter as investigated in \cite{Frikha:Kohatsu}. These issues will be developed in a future work.
\end{remark}

We conclude this section by one simple corollary that provides a kind of integration by parts formula for the killed process.  
\begin{corol}\label{ibpz}  Let $T>0$ and assume that \A{H1} holds. Let $h\in \mathcal{C}^1((-\infty,L])$ satisfying: there exist $C,c>0$, such that for all $z\in (-\infty,L]$, $|h(z)| + |h'(z)|\leq C\exp(c |z|)$. Then, for all $x\in (-\infty,L)$, one has
$$
\E[h'(X^{x}_T) \I_\seq{\tau^x>T}] = - e^{\lambda T}\E\left[h(\bar{X}^{\pi}_T)\Lambda_{T-\zeta_{N(T)}}(\bar{X}^{\pi}_{\zeta_{N(T)}},\bar{X}^{\pi}_T)  \mu^{1}_{T-\zeta_{N(T)}}(\bar{X}^{\pi}_{\zeta_{N(T)}},\bar{X}^{\pi}_T) \Gamma_{N(T)}(x)\right].
$$
\end{corol}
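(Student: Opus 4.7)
My plan is to reduce the identity to an integration-by-parts in the space variable performed directly on the probabilistic representation of the density $p^D_T(x,z)$ supplied by Theorem \ref{theorem:probabilistic:representation:forward}. First I would observe that, because $\{\tau^x>T\}=\{X^x_{\tau^x_T}=X^x_T,\ X^x_T<L\}$, Theorem \ref{theorem:expansion:density:forward} yields
\[
\E[h'(X^{x}_T)\I_{\{\tau^{x}>T\}}]\;=\;\int_{-\infty}^{L} h'(z)\,p^{D}_T(x,z)\,dz.
\]
Then an integration by parts in $z$ reduces the problem to computing $\int h(z)\,\partial_z p^{D}_T(x,z)\,dz$ with a minus sign. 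The boundary term at $z=L$ vanishes by the continuity property $\lim_{z\uparrow L}p^{D}_T(x,z)=0$ established in Theorem \ref{theorem:expansion:density:forward}, and the boundary term at $-\infty$ vanishes because the Gaussian upper bound \eqref{gaussian:upper:bound:forward} beats the exponential growth $|h(z)|\le C e^{c|z|}$ of $h$.

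Next I would compute $\partial_z \bar{q}^{y}_{t}(y,z)$ explicitly. Using Lemma \ref{two:kernels:expression} one writes $\bar{q}^{y}_{t}(y,z)=g(a(y)t,z-y)\,\Lambda_t(y,z)\I_{\{y\le L\}}\I_{\{z\le L\}}$, and then a direct logarithmic differentiation identifies
\[
\partial_z\bar{q}^{y}_{t}(y,z)\;=\;\bar{q}^{y}_{t}(y,z)\bigl(H_1(a(y)t,z-y)+\partial_z\log\Lambda_t(y,z)\bigr)\;=\;\bar{q}^{y}_{t}(y,z)\,\mu^{1}_{t}(y,z),
\]
where the second equality is a short algebraic check matching the definition of $\mu^{1}_t$ given just before the statement (writing the quotient $\Lambda_t^{-1}\partial_z\Lambda_t$ in the form $-\frac{2(L-y)}{a(y)t}(e^{A}-1)^{-1}$ with $A=\frac{2(L-y)(L-z)}{a(y)t}$, and comparing with $\mu^{1}_t$).

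Armed with this, I apply the representation $p^{D}_T(x,z)=e^{\lambda T}\E[\bar{q}^{\bar X^{\pi}_{\zeta_{N(T)}}}_{T-\zeta_{N(T)}}(\bar X^{\pi}_{\zeta_{N(T)}},z)\,\Gamma_{N(T)}(x)]$ from Theorem \ref{theorem:probabilistic:representation:forward} and differentiate under the expectation:
\[
\partial_z p^{D}_T(x,z)=e^{\lambda T}\E\!\left[\bar{q}^{\bar X^{\pi}_{\zeta_{N(T)}}}_{T-\zeta_{N(T)}}(\bar X^{\pi}_{\zeta_{N(T)}},z)\,\mu^{1}_{T-\zeta_{N(T)}}(\bar X^{\pi}_{\zeta_{N(T)}},z)\,\Gamma_{N(T)}(x)\right].
\]
Substituting into the integration-by-parts formula and applying Fubini, the $dz$ integral against the frozen density $\bar{q}^{y}_{t}(y,\cdot)=g(a(y)t,\cdot-y)\Lambda_t(y,\cdot)$ is reabsorbed as an expectation over $\bar X^{\pi}_T$ conditional on $\bar X^{\pi}_{\zeta_{N(T)}}$, which is exactly the law of $y+\sigma(y)(W_T-W_{\zeta_{N(T)}})$ for $y=\bar X^{\pi}_{\zeta_{N(T)}}$ under the Euler scheme dynamics. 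This yields precisely the right-hand side of the claim.

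The main obstacle, and essentially the only nontrivial verification, is to justify both the swap of differentiation and expectation in the representation of $p^{D}_T$, and the use of Fubini together with the integration by parts under the mere exponential growth of $h$. For the former one invokes the Gaussian bound $|\partial_z\bar{q}^{y}_{t}(y,z)|\le C t^{-1/2}g(ct,z-y)$ together with the $L^1$ integrability of $\Gamma_{N(T)}(x)$, which can be obtained from \eqref{estimate:kernel:S} and the Poisson weighting (as used in the proof of Theorem \ref{forward:expansion:semigroup}). For the latter one combines the Gaussian tails of $g(c(T-\zeta_{N(T)}),\,\cdot-\bar X^{\pi}_{\zeta_{N(T)}})$ with the polynomial control $|\mu^{1}_{T-\zeta_{N(T)}}|\le C(T-\zeta_{N(T)})^{-1/2}(1+|\bar X^{\pi}_T-\bar X^{\pi}_{\zeta_{N(T)}}|)$ and the bound $\Lambda_t\le 1$, so that an absolute moment against $e^{c|\cdot|}$ is finite and the integration by parts is legitimate.
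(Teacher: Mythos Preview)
Your approach is correct and essentially the same as the paper's: integrate by parts in $z$ using Theorems \ref{theorem:expansion:density:forward} and \ref{differentiability:density:forward} (with the boundary term at $L$ vanishing by $\lim_{z\uparrow L}p^D_T(x,z)=0$), differentiate the probabilistic representation of $p^D_T$ from Theorem \ref{theorem:probabilistic:representation:forward} under the expectation to obtain $\partial_2 p^D_T(x,z)=e^{\lambda T}\E[\bar q^{\bar X^\pi_{\zeta_{N(T)}}}_{T-\zeta_{N(T)}}(\bar X^\pi_{\zeta_{N(T)}},z)\,\mu^1_{T-\zeta_{N(T)}}(\bar X^\pi_{\zeta_{N(T)}},z)\,\Gamma_{N(T)}(x)]$, and then reabsorb the $dz$ integral as the expectation over $\bar X^\pi_T$. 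The paper's proof is terser (it invokes ``Lebesgue differentiation theorem'' and then says the conclusion ``readily'' follows), whereas you spell out the factorisation $\bar q^{y}_t(y,z)=g(a(y)t,z-y)\Lambda_t(y,z)$, the logarithmic-derivative identification of $\mu^1_t$, and the integrability checks needed for Fubini and the boundary term at $-\infty$; these added details are welcome but do not change the argument.
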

\begin{proof} Combining theorems \ref{theorem:expansion:density:forward} and \ref{differentiability:density:forward} with an integration by parts formula yield
\begin{align*}
\E[h'(X^{x}_T) \I_\seq{\tau^x>T}] & = \int_{-\infty}^L h'(z) p^{D}_T(x,z) dz = -\int_{-\infty}^{L}h(z) \partial_2 p^{D}_T(x,z) dz, 
\end{align*} 

\noindent where we used the fact that $\lim_{z\uparrow L} p^{D}_T(x,z) = 0$. From Theorem \ref{theorem:probabilistic:representation:forward} and Lebesgue differentiation theorem, one obtains the following probabilistic representation formula  
\begin{align*}
\partial_2 p^{D}_T(x,z) = e^{\lambda T}  \mathbb{E}\left[\partial_2\bar{q}^{\bar{X}^{\pi}_{\zeta_{N(T)}}}_{T-\zeta_{N(T)}}(\bar{X}^{\pi}_{\zeta_{N(T)}},z) \Gamma_{N(T)}(x)\right] = e^{\lambda T}  \mathbb{E}\left[\bar{q}^{\bar{X}^{\pi}_{\zeta_{N(T)}}}_{T-\zeta_{N(T)}}(\bar{X}^{\pi}_{\zeta_{N(T)}},z) \mu^{1}_{T-\zeta_{N(T)}}(\bar{X}^{\pi}_{\zeta_{N(T)}},z) \Gamma_{N(T)}(x) \right]
\end{align*}

\noindent which with the previous computation readily concludes the proof.
\end{proof}
\vskip10pt
\section{Backward parametrix expansion}\label{backward:section}

In this section we apply the backward parametrix expansion using a semigroup approach in order to 
study the law of $ (u+\tau^x_t,X^x_{\tau^x_t}) $ with respect to $ x $ under H\"older continuity assumptions on the coefficients. Through this section, we will make the following assumptions on the coefficients $b$ and $\sigma$: \\

\noindent \textbf{Assumption (H2).}
\begin{enumerate}
\item[(i)] $\sigma: \rr \longrightarrow \rr$ is bounded on $\rr$ and $a=\sigma^2$ is uniformly elliptic. That is there exist $\underline{a}, \overline{a}>0$ such that for any $x\in \rr$, $\underline{a} \leq a(x) \leq \overline{a}$.
\item[(ii)] $b:\rr\longrightarrow \rr$ is bounded measurable and $a$ is $\eta$-H\"{o}lder continuous on $\rr$ for some $\eta \in (0,1]$ that is there exists a finite positive constant $C$ such that
$$
\sup_{x\in \rr}|b(x)| + \sup_{(x,y)\in \rr^2, x\neq y} \frac{|a(x)-a(y)|}{|x-y|^\eta} < C.
$$
\end{enumerate}
\noindent The results on weak existence and uniqueness of a Markovian solution under \A{H2} can be found in Stroock an Varadhan \cite{CPA:str:var}.

\subsection{Expansion for the semigroup}\label{expansion:semigroup:backward}\rule[-10pt]{0pt}{10pt}\\

In the forward case the kernel $ \bar{K} $ is never differentiated because of the cancelling property \eqref{eq:9.1}. 
In the backward setting this is not the case. The differentiation with respect to the time variable of the kernel associated to the density $ f_{\bar{\tau}} $ gives a degeneration which does not appear in the usual case.

Therefore, we first introduce a regularizing parameter $r>0$ in Lemma \ref{c112} and \ref{fstep:lemma} to avoid the singularity in time when deriving the first order expansion of the semigroup associated with the process $(\tau^x_t, X^x_{\tau_t})$ with respect to the parametrix process, whose coefficients are frozen at some point $y \in (-\infty,L]$. The strategy to deal with the time singularity is to take the limit as $r$ goes to zero, using the boundary conditions on the approximation processes as given in Lemma \ref{two:kernels:expression} and by choosing $h$ from an appropriate class of test functions, we show that the limits are well defined and the first order backward parametrix expansion is achieved in Lemma \ref{l4.2}.

To avoid confusion, we point out that in the rest of the paper, the support of a function $f:X\rightarrow \rr$ refers to the subset of its domain $X$, for which the function $f$ is non-zero and we do not take the topological closure, although $X$ is often a subset of a topological space.

\begin{lem}\label{c112}
Let $y\in (-\infty,L]$ and $r>0$ with $a(y)>0$. Suppose that either $h \in \mathcal{C}^{2,0}_b(\rr_+\times (-\infty,L])$ and $\mathrm{supp}{(h)} \subseteq \rr_+\times (-\infty,L)$ is satisfied or $h\in \mathcal{C}^{2,0}_b(\rr_+\times \rr)$ holds. Then the function $(t,u,x) \mapsto \bar P^y_{T-t+r}h(u,x)$ belongs to $\mathcal{C}^{1,1,2}_b([0,T]\times\rr_+\times (-\infty,L])$.

\end{lem}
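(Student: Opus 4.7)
Set $\theta := T - t + r \in [r, T + r]$. By Lemma \ref{two:kernels:expression}, for $x < L$,
\[
\bar P^y_\theta h(u, x) = \int_0^\theta h(u + s, L)\, f^y_{\bar\tau}(x, s)\,ds + \int_{-\infty}^L h(u + \theta, z)\, \bar q^y_\theta(x, z)\,dz,
\]
while $\bar P^y_\theta h(u, L) = h(u, L)$ thanks to the boundary vanishing $f^y_{\bar\tau}(L, \cdot) = \bar q^y_\theta(L, \cdot) = 0$. The role of the regularization $r > 0$ is to keep $\theta$ bounded away from the singular time origin, so the kernels $f^y_{\bar\tau}$ and $\bar q^y_\theta$ are smooth in $x$ (being explicit combinations of Gaussians) and their derivatives in $x$ and $\theta$ admit uniform Gaussian upper bounds on $[0, T] \times \rr_+ \times (-\infty, L)$ depending only on $r$, $T$, and $a(y) > 0$.

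\textbf{Regularity in $u$ and $x$.} Since $h \in \mathcal{C}^{2,0}_b$, $\partial_1 h$ is bounded and differentiation under the integral yields $\partial_u \bar P^y_\theta h = \bar P^y_\theta(\partial_1 h)$, continuous and bounded. For the $x$-derivatives of order $k \in \{1, 2\}$, Leibniz' rule is justified by the bounds
\[
|\partial_x^k \bar q^y_\theta(x, z)| \leq C \theta^{-k/2} g(c\theta, z - x), \qquad |\partial_x^k f^y_{\bar\tau}(x, s)| \leq C s^{-(k+1)/2} g(c s, L - x),
\]
obtained from the Hermite-polynomial identities for $\partial_z^k g$ together with the relation $f^y_{\bar\tau}(x, s) = \partial_x g(a(y)s, L - x)$ from Lemma \ref{two:kernels:expression}. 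For $x < L$ the exponential factor in $g(cs, L - x)$ makes the second bound integrable in $s$, and using $\theta \geq r$ the resulting $x$-derivatives are bounded uniformly in $(t, u, x)$.

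\textbf{Regularity in $t$.} Since $\partial_t = -\partial_\theta$, differentiating the upper limit in the first integral produces the term $h(u + \theta, L)\, f^y_{\bar\tau}(x, \theta)$. Under the support hypothesis this term vanishes identically; in the alternative case $h \in \mathcal{C}^{2,0}_b(\rr_+ \times \rr)$, it is bounded because $\theta \geq r$ makes $f^y_{\bar\tau}(x, \theta)$ uniformly bounded on $(-\infty, L]$. Differentiation of $\bar q^y_\theta$ in $\theta$ is handled via the heat equation $\partial_\theta \bar q^y_\theta = \tfrac{1}{2} a(y)\, \partial_z^2 \bar q^y_\theta$ and the bound $|\partial_\theta \bar q^y_\theta(x, z)| \leq C r^{-1} g(c r, z - x)$. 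Joint continuity of all derivatives in $(t, u, x)$ then follows from dominated convergence with these Gaussian majorants.

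\textbf{Main obstacle.} The delicate point is the continuous extension of $\partial_x$ and $\partial_x^2$ up to $x = L$. For $x < L$ Leibniz' rule gives explicit formulas for $F(x) := \bar P^y_\theta h(u, x)$ and its derivatives, but one must verify that these admit finite left-limits at $L$ matching a well-defined value of the derivative there. Using the identity $\partial_x f^y_{\bar\tau}(x, s) = 2 \partial_s g(a(y) s, L - x)$ from Lemma \ref{two:kernels:expression} and integrating by parts in $s$, the contribution from the exit-time integral reduces to a boundary term at $s = \theta$ together with a remainder involving $\partial_1 h(u + s, L)\, g(a(y) s, L - x)$: under the support hypothesis both terms vanish, while in the alternative case they are explicit bounded continuous functions of $x$ extending to $L$. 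An analogous integration-by-parts argument handles $\partial_x^2$, which completes the verification that $(t, u, x) \mapsto \bar P^y_\theta h(u, x) \in \mathcal{C}^{1,1,2}_b([0, T] \times \rr_+ \times (-\infty, L])$.
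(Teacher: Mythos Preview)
Your approach matches the paper's: decompose $\bar P^y_\theta h$ into the exit-time and killed-diffusion pieces, differentiate under the integral, and handle the behaviour near $x = L$ via the identity $\partial_x f^y_{\bar\tau}(x,s) = 2\partial_s g(a(y)s, L-x)$ followed by integration by parts in $s$. One organizational caution: the claim in your ``Regularity in $u$ and $x$'' paragraph that the bound $|\partial_x^k f^y_{\bar\tau}(x,s)| \leq C s^{-(k+1)/2} g(cs, L-x)$ already yields $x$-derivatives ``bounded uniformly in $(t,u,x)$'' is not correct---integrating that majorant over $s\in(0,\theta]$ produces a quantity of order $|L-x|^{-k}$, which blows up at the boundary. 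The uniform control up to $x=L$ genuinely requires the integration-by-parts rewriting you sketch in the ``Main obstacle'' paragraph (and which the paper carries out explicitly, producing a boundary term at $s=\theta$ plus an integral against $g(a(y)s,L-x)$ or $\mathrm{erfc}$, both manifestly bounded up to $x=L$); so that step is doing the essential work of the lemma, not merely checking continuity at a single endpoint.
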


\begin{proof}
We recall from \eqref{kernel:proxy} that the function $\bar P^y_{T-t+r}h(u,x)$ can be decomposed into
\begin{align}
\bar P^y_{T-t+r}h(u,x) & =  h(u,x)\I_\seq{x\geq L} + \I_\seq{x<L}\left\{ \int_{(-\infty, L)} h(u+T-t+r,z) \bar q^y_{T-t+r}(x,z) dz \right\}\nonumber\\
  		\label{eq:18.1}					& \quad + \I_\seq{x<L}\left\{ \int_{(0,T-t+r)}   h(u+s,L)f^{y}_{\bar \tau}(x,s) ds \right\}
\end{align}

\noindent so that it is sufficient to show that the last two terms satisfy the statements of the lemma. By using integration by parts and the relationship between the L\'evy distribution and the complementary error function, we have for $ x\leq L, $
\begin{align}
&\int_{(0,T-t+r)}  h(u+s,L)f^{y}_{\bar \tau}(x,s) ds \nonumber \\
& = h(u+T-t+r,L)\mathrm{erfc}\left(\frac{L-x}{\sqrt{2a(y)(T-t+r)}}\,\right) - \int_{(0,T-t+r)}  \partial_u h(u+s,L)\mathrm{erfc}\left(\frac{L-x}{\sqrt{2a(y)s}}\right) ds. \label{levy}\\
& \partial_u \int_{(0,T-t+r)}  h(u+s,L)f^{y}_{\bar \tau}(x,s) ds \nonumber \\
& = \partial_u h(u+T-t+r,L)\mathrm{erfc}\left(\frac{L-x}{\sqrt{2a(y)(T-t+r)}}\,\right) - \int_{(0,T-t+r)}  \partial^2_u h(u+s,L)\mathrm{erfc}\left(\frac{L-x}{\sqrt{2a(y)s}}\right) ds.
\end{align}
By dominated convergence theorem, we deduce that $(t,u,x) \mapsto \bar P^y_{T-t+r}h(u,x)$ is jointly continuous on $[0,T]\times\rr_+\times (-\infty,L)$ and that the left limit as $x\uparrow L$ is given by $h(u,L)$ for any $(t,u) \in [0,T] \times \rr_+$. Similar arguments show that $u \mapsto \bar P^y_{T-t+r}h(u,x)$ is continuously differentiable on $\R_+$, for $(t,x) \in [0,T] \times (-\infty,L)$ and that the left-limit as $x\uparrow L$ is equal to $\partial_1 h(u,L)$. Moreover, each term appearing in the right-hand side of the above equality is bounded uniformly on $[0,T]\times\rr_+\times (-\infty,L]$. 

Similarly, by dominated convergence theorem and integration by parts, one has for $x\in (-\infty,L)$
\begin{align*}
&\partial_x \int_{(0,T-t+r)}  h(u+s,L)f^{y}_{\bar \tau}(x,s) ds  = \int_{(0,T-t+r)}  h(u+s,L) \partial_x f^{y}_{\bar \tau}(x,s) ds \\
& = 2 h(u+T-t+r,L) g(a(y)(T-t+r), L-x) - 2 \int_{(0,T-t+r)}  \partial_1 h(u+s,L) g(a(y)s,L-x) ds 
\end{align*}
\noindent where we used the relation $\partial_x f^{y}_{\bar \tau}(x,s)  = 2 \partial_s g(a(y)s,L-x)$ (see Lemma \ref{two:kernels:expression}). Moreover, the two terms appearing in the right-hand side of the last equality are continuous and uniformly bounded on $[0,T] \times \R_+ \times (-\infty,L]$ when seen as functions of $(t,u,x)$. Similarly, for the second derivatives w.r.t. $x$, we get
\begin{align*}
\partial^2_x \int_{(0,T-t+r)}  h(u+s,L)f^{y}_{\bar \tau}(x,s) ds & = 2 h(u+T-t+r,L) H_1(a(y)(T-t+r),L-x) g(a(y)(T-t+r), L-x) \\
&\quad   -2 a^{-1}(y)\partial_1h(u+T-t+r,L) \mathrm{erfc}\left(\frac{L-x}{\sqrt{2(T-t+r)a(y)}}\right)  \\
& \quad + 2 a^{-1}(y) \int_{(0,T-t+r)}  \partial^2_1 h(u+s,L) \mathrm{erfc}\left(\frac{L-x}{\sqrt{2(T-s)a(y)}}\right) ds 
\end{align*}

\noindent which allows to conclude that the second derivative with respect to $ x $ is continuous and uniformly bounded on $[0,T] \times \R_+ \times (-\infty,L]$. For $x<L$, from the fundamental theorem of calculus, one has
\begin{align*}
\partial_t \int_{(0,T-t+r)}  h(u+s,L)f^{y}_{\bar \tau}(x,s) ds = -h(u+T-t+r,L)f^{y}_{\bar \tau}(x,T-t+r)
\end{align*}
which is clearly jointly continuous and uniformly bounded in $(t,u,x) \in [0,T]\times\rr_+\times (-\infty,L]$.

We consider now the function $(t,u,x) \mapsto \int_{(-\infty, L)} h(u+T-t+r,z) \bar q^y_{T-t+r}(x,z)dz,$
which is the integral against the difference of two Gaussian densities. By standard arguments for the Gaussian densities and the fact that $h \in C^{2,0}_b(\rr_+\times (-\infty,L])$, we can show that the first partial derivatives in $u$ and $t$, and the first and second partial derivative in $x$ can be taken under the integral and are continuous on $[0,T]\times \rr_+\times (-\infty,L)$ with finite left limit at $L$ and uniformly bounded for $(t,u,x)\in  [0,T]\times \rr_+\times (-\infty,L]$. We omit the remaining technical details.

From the proof we see that $x \mapsto \bar P^y_{T-t+r}h(u,x)$ is continuous, but not differentiable at $L$. However, since $\lim_{x\uparrow L} \partial^{r}_x \bar P^y_{T-t+r}h(u,x)$ is finite for $r=1,2$, we can set the left derivatives of $\bar P^y_{T-t+r}h(u,x)$ with respect to $x$ at $L$ to be equal to their respective left limits. We then work with this modification of the function $\bar P^y_{T-t+r}h(u,x)$, which belongs to $\mathcal{C}^{1,1,2}_b([0,T]\times\rr_+\times (-\infty,L])$.
\end{proof}


\noindent For the next result, we introduce the two following kernels 
\begin{align}
\mathcal{\hat{S}}^{y}_t (x,z) & := \left(\frac{1}{2}\left[a(x)-a(y)\right]\partial_{x}^2 \bar q^y_{t}(x,z) + b(x)\partial_{x}\bar q^y_t(x,z)\right)  \I_\seq{x < L}\I_\seq{z<L}, \label{SSy}\\
\mathcal{\hat K}^y_t(x,s) &:=  \left(\frac{1}{2}\left[a(x)-a(y)\right]\partial_{x}^2f^{y}_{\bar \tau}(x,s) + b(x)\partial_{x} f^{y}_{\bar \tau}(x,s) \right)\I_\seq{x < L}\I_\seq{s \leq t} \label{KK1}
\end{align}
and define for all $h\in \mathcal{C}^{2,0}_b(\rr_+ \times \rr)$
\bde
\mathcal{\hat S}^y_th(u,x)  := \int_{\rr} dz \,h(u+t,z)\mathcal{\hat{S}}_t^{y} (x,z) \quad  \mathrm{and}\quad \mathcal{\hat K}^y_t h(u,x) := \int_{\rr_+}ds\,  h(u+s,L) \mathcal{\hat K}^y_t(x,s).
\ede 
We also want to make an important remark concerning the linear maps defined above. For $t>0$, it is clear from the estimates of $\partial_{x}^2 \bar q^y_{t}(x,z)$ and $\partial_{x} \bar q^y_{t}(x,z)$ given in  Lemma \ref{estimate:kernel:proxy}
give together with the hypothesis {\bf (H2)} that by bounded convergence theorem,  $x \mapsto \mathcal{\hat{S}}^{y}_t h(u,x)$ is continuous and equal to zero at $x= L$. The continuity of $ \mathcal{\hat K}^y_t h $ is slightly more involved. In fact, from the indicator function in \eqref{KK1}, we see that the function $x\mapsto \mathcal{\hat K}^y_t h(u,x)$ is zero for $x\geq L$ and non-zero for $x<L$. We make use of integration by parts formula twice and Lemma \ref{two:kernels:expression} in order to write
\begin{align}
\mathcal{\hat K}^y_t h(u,x) & =  
 \frac{\left[a(x)-a(y)\right]}{a(y)} h(u+t,L) f^{y}_{\bar \tau}(x,t) - \frac{\left[a(x)-a(y)\right]}{a(y)}\int_{0}^t  ds  \, \partial_1 h(u+s,L)  f^{y}_{\bar \tau}(x,s)\nonumber \\
& \quad + 2 b(x)h(u+t,L) g(a(y)t,L-x) - 2 b(x)\int_{0}^t  ds  \, \partial_1 h(u+s,L)  g(a(y)s,L-x) \label{ibp:Shat1} \\
& = \frac{\left[a(x)-a(y)\right]}{a(y)} h(u+t,L) f^{y}_{\bar \tau}(x,t) - \frac{\left[a(x)-a(y)\right]}{a(y)} \partial_1h(u+t,L) \mathrm{erfc}\left(\frac{L-x}{\sqrt{2 a(y)t}}\right) \nonumber \\
& \quad +  \frac{\left[a(x)-a(y)\right]}{a(y)} \int_{0}^t  ds  \, \partial^2_1 h(u+s,L)  \mathrm{erfc}\left(\frac{L-x}{\sqrt{2 a(y)s}}\right) + 2 b(x)h(u+t,L) g(a(y)t,L-x)  \nonumber \\
&\quad - 2 b(x)\int_{0}^t  ds  \, \partial_1 h(u+s,L)  g(a(y)s,L-x) \nonumber
\end{align}
and from the second equality above, it is clear that $\mathcal{\hat K}^y_t h(u,x)$ has finite non-zero left limit at $x = L$ as $ h\in \mathcal{C}^{2,0}_b(\rr_+\times (-\infty,L]) $. Therefore, in general, $\mathcal{\hat K}^y_t h(u,L-)\neq \mathcal{\hat K}^y_t h(u,L)=0$ and for fixed $u, t> 0$, the map $x \mapsto \mathcal{\hat K}^y_t h(u,x)$ is right continuous with left limit at $x=L$. From \eqref{ibp:Shat1}, we also have the following estimate
\begin{align}
&   |\mathcal{\hat K}^y_t h(u,x)|
\label{ibp:Shat2}\\
& \leq  C\Big\{|h|_\infty f^{y}_{\bar \tau}(x,t) +  |\partial_1 h| _\infty
\mathrm{erfc}\left(\frac{L-x}{\sqrt{2a(y)t}}\right)
+ \frac{|b|_\infty |h(u+t,L)|}{\sqrt{t}} + |b|_\infty|\partial_1 h|_\infty \int_{0}^t  ds  \, g(a(y)s,L-x) \Big\}.\nonumber
\end{align}
 This bound will be used in future calculations.

\begin{lem} \label{fstep:lemma} 
Assume that \A{H2} holds and $b$ is continuous on $(-\infty,L]$. For any $r>0$, $y\in (-\infty,L]$ and $h\in \mathcal{C}^{2,0}_b(\rr_+\times \rr)$, 
\begin{align}
\forall (u,x) \in \rr_+ \times (-\infty,L), \quad   P_T\bar P^y_{r}h(u,x) - \bar P^y_{T+r}h(u,x)  = \int^T_0 ds\,  P_{s}(\mathcal{\hat S}_{T-s+r}^y h+ \mathcal{\hat K}_{T-s+r}^yh)(u,x). \label{fstep:equ}
\end{align}
\end{lem}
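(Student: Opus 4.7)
The strategy is the standard semigroup identity of the parametrix method, run backward compared with Proposition \ref{first:step:parametrix:forward:expansion}. Set $f_s(u,x):=\bar P^y_{T-s+r}h(u,x)$ and $\phi_s(u,x):=P_s f_s(u,x)$ for $s\in[0,T]$. The left-hand side of \eqref{fstep:equ} equals $\phi_T(u,x)-\phi_0(u,x)$, which the plan is to rewrite as $\int_0^T \partial_s\phi_s(u,x)\,ds$ and then identify the integrand term by term.

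To compute $\partial_s\phi_s$, I would split
\[
\tfrac{1}{\varepsilon}(\phi_{s+\varepsilon}-\phi_s) \;=\; P_{s+\varepsilon}\,\tfrac{f_{s+\varepsilon}-f_s}{\varepsilon} \;+\; \tfrac{P_{s+\varepsilon}-P_s}{\varepsilon}\,f_s,
\]
and send $\varepsilon\downarrow 0$. By Lemma \ref{c112}, $f_s\in\mathcal{C}^{1,1,2}_b([0,T]\times\rr_+\times(-\infty,L])$ with bounds uniform in $s\in[0,T]$, which is precisely where the regularization $r>0$ enters. The semigroup property of $(\bar P^y_t)$ and Lemma \ref{lemma:ito} give $\partial_s f_s=-\bar{\mathcal{L}}^y f_s$, so the first term converges to $-P_s\bar{\mathcal{L}}^y f_s$; the second term converges to $P_s\mathcal{L}f_s$ by Lemma \ref{lemma:ito} applied to $f_s\in\mathcal{C}^{1,2}_b$. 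Consequently
\[
\partial_s\phi_s \;=\; P_s\,(\mathcal{L}-\bar{\mathcal{L}}^y)f_s, \qquad (\mathcal{L}-\bar{\mathcal{L}}^y)f_s(u,x) \;=\; \mathbf{1}_{\{x<L\}}\Big(b(x)\,\partial_2 f_s(u,x) + \tfrac12(a(x)-a(y))\,\partial_2^2 f_s(u,x)\Big),
\]
where the $\partial_1$ parts of the two generators cancel.

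It then remains to identify $(\mathcal{L}-\bar{\mathcal{L}}^y)\bar P^y_{T-s+r}h(u,x)$ with $\hat{\mathcal{S}}^y_{T-s+r}h(u,x) + \hat{\mathcal{K}}^y_{T-s+r}h(u,x)$. Differentiating the explicit decomposition \eqref{eq:18.1} once and twice in $x$ under the integral sign -- legitimate because the Gaussian bounds of Lemma \ref{estimate:kernel:proxy} on $\partial_x^k\bar q^y_t(x,z)$ and $\partial_x^k f^y_{\bar\tau}(x,\cdot)$ are uniform for $t\geq r>0$ -- and matching the resulting terms against the definitions \eqref{SSy} and \eqref{KK1} yields exactly the desired splitting: the $z$-integral against $\bar q^y$ produces $\hat{\mathcal{S}}^y_{T-s+r}h$, while the boundary integral against $f^y_{\bar\tau}$ produces $\hat{\mathcal{K}}^y_{T-s+r}h$. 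Integrating $\partial_s\phi_s$ over $[0,T]$, with Fubini justified by the uniform bounds in $(u,x)\in\rr_+\times(-\infty,L)$ for each $s\in[0,T]$, gives \eqref{fstep:equ}.

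The main technical obstacle is the differentiation step: $f_s$ only has a \emph{left} derivative at $x=L$ (as built into Lemma \ref{c112}) and $P_s$ is not a smoothing operator, so a naive It\^o-type application is not immediate. The indicator $\mathbf{1}_{\{x<L\}}$ appearing in both generators makes the boundary value harmless, but to take the $\varepsilon\downarrow 0$ limits one may need to mimic the semigroup regularization used in the proof of Proposition \ref{first:step:parametrix:forward:expansion}. The role of $r>0$ is crucial at this stage: it suppresses the $t^{-3/2}$ singularity of $\partial_x^2 f^y_{\bar\tau}(x,t)$ at $t=0$ that would otherwise obstruct the uniform bounds. Letting $r\downarrow 0$ afterwards in the iterated expansion is the delicate step that distinguishes the backward case from the standard diffusive parametrix and must be treated separately.
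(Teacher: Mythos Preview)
Your proposal is correct and follows essentially the same route as the paper: differentiate $s\mapsto P_s\bar P^y_{T-s+r}h$ to obtain $P_s(\mathcal L-\bar{\mathcal L}^y)\bar P^y_{T-s+r}h$, then identify the integrand with $\hat{\mathcal S}^y_{T-s+r}h+\hat{\mathcal K}^y_{T-s+r}h$ via the explicit decomposition \eqref{eq:18.1}. The only remark is that your worry about needing an extra regularization \`a la Proposition \ref{first:step:parametrix:forward:expansion} is unfounded here: since Lemma \ref{c112} already places $f_s\in\mathcal C^{1,2}_b(\rr_+\times(-\infty,L])$ with bounds uniform in $s$, Lemma \ref{lemma:ito} and the remark following it apply directly, and the paper accordingly dispenses with any further smoothing.
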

\begin{proof}
From Lemma \ref{lemma:ito} we have for $y \in (-\infty,L]$, the explicit form of the generator of $(P_t)_{t\geq 0}$ and $(\bar P^y_t)_{t\geq 0}$ for functions $h \in \mathcal{C}^{1,2}_b(\rr_+\times (-\infty,L])$. 
For $h\in C^{2,0}_b(\rr_+\times \rr)$, we have from Lemma \ref{c112} that for $r>0$, the function $(u,x) \mapsto \bar P^y_{r}h(u,x)$ belongs to $C^{1,2}_b(\rr_+\times (-\infty,L]) $.
Therefore we have by differentiating the composition $(P_s \bar P^y_{T-s+r}h(u,x))_{0\leq s\leq T}$ with respect to the time variable $s$,
\begin{align}
P_T \bar P^y_{r}h(u,x) - \bar P_{T+r}^yh(u,x) &= \int^T_0 ds \,P_s(\mathcal{L} - \mathcal{\bar L}^y)\bar P^y_{T-s+r}h(u,x).\label{p}\\
(\mathcal{L}-\mathcal{\bar L}^y)h(u,x) &= \I_\seq{x<L}\left(b(x)\partial_x + \frac{1}{2}[a(x)-a(y)]\partial^{2}_{x}\right)h(u,x)\nonumber .
\end{align}
The integrability in time of the above expression follows from the estimates in Lemma \ref{estimate:kernel:proxy}.
 To obtain \eqref{fstep:equ}, we
 rewrite the above expression using \eqref{eq:18.1} for $ x<L $ which gives 
\begin{align*}
& (\mathcal{L}- \mathcal{\bar L}^y)\bar P^y_{T-s+r}h(u,x) \\
& =  \I_\seq{x<L}\left\{ \int_{(-\infty, L)} h(u+T-s+r,z) \left(\frac{1}{2}\left[a(x)-a(y)\right]\partial^2_{x} \bar q^y_{T-s+r}(x,z) + b(x)\partial_{x}\bar q^y_{T-s+r}(x,z)\right)\right\}\\
&  \quad + \I_\seq{x<L}\left\{ \int_{0}^{T-s+r}  ds\, h(u+s,L)\left(\frac{1}{2}\left[a(x)-a(y)\right]\partial_{x}^2f^{y}_{\bar \tau}(x,s) + b(x)\partial_{x} f^{y}_{\bar \tau}(x,s) \right) \right\}\\
& = (\mathcal{\hat S}_{T-s+r}^y h+ \mathcal{\hat K}_{T-s+r}^yh)(u,x).
\end{align*}
\end{proof}

In order to write the backward parametrix expansion of the Markov semigroup $(P_t)_{t\geq0}$, we need to define the following integral operators for any bounded measurable function $h:\rr_+\times \rr \rightarrow \rr$
\begin{align}
 \mathcal{S}_t h(u,x)  &:= \int_{-\infty}^L \,dz\, h(u+t,z) \bar q^z_{t}(x,z) , \  \ \ \ \ \ \mathcal{K}_th(u,x) := \int_{0}^t \,ds\,h(u+s,L)  f^{L}_{\bar \tau}(x,s),  \label{SS1}\\
 \hat{\mathcal{S}}_{t}h(u,x) & := \int_{-\infty}^L \,dz\, h(u+t,z)  \hat{\mathcal{S}}^z_{t}(x,z), \ \ \ \  \ \ \hat{\mathcal{K}}_{t}h(u,x) := \int_{0}^t \,ds\,  h(u+s,L) \,\hat{\mathcal{K}}^L_t(x,s)\label{SS2}.
\end{align}
where $\hat{\mathcal{S}}^z_{t}(x,z)$ and $\hat{\mathcal{K}}^L_t(x,s)$ are given in \eqref{SSy} and \eqref{KK1}.

We present first some auxiliary estimates and results on the above kernels and integral operators which can be useful later in proving the convergence of the backward parametrix expansion. Under \A{H2}, by using Lemma \ref{estimate:kernel:proxy} and H\"older continuity of $a = \sigma^2$, we have for any $\beta\in [0,1]$ and any $(t,x,z) \in (0,T] \times (-\infty,L]^2$
\begin{align}
|\hat{\mathcal{S}}^z_{t}(x,z)|  & \leq C\left( \frac{1}{t^{1-\frac{\eta}{2}}}\wedge \frac{|L-z|^\beta}{t^\frac{2+\beta-\eta}{2}}\right) g(c t, x-z), \label{xi}\\
|\hat{\mathcal{K}}^L_{t}(x,s)| &  \leq C\frac{1}{s^{\frac{3-\eta}{2}}}g(c s, L-x)\I_\seq{s<t} \label{xih}.
\end{align}
The exponent $\beta$ is appropriately chosen later on in Theorem \ref{backmain}, so that the asymptotic expansion of the transition density of $(u + \tau^x_t, X^x_{\tau^x_t})$ converges. For $h \in C^{2,0}_b(\rr_+\times \rr)$, the estimate \eqref{S1} below is obtained directly from $\eqref{xi}$
\begin{align}
|\hat{\mathcal{S}}_{t}h(u,x)| & \leq 	
C_T |h|_\infty\,\frac{1}{t^{1-\frac{\eta}{2}}},\label{S1}\\
 |\hat{\mathcal{K}}_{t}h(u,x)| &\leq 	C_{T}(|h|_{\infty}, |\partial_1h|_{\infty}) \,\frac{1}{t^{1-\frac{\eta}{2}}},\label{HS1}
\end{align}
while \eqref{HS1} can be obtained by applying Lemma \ref{estimate:kernel:proxy} and the inequality $g(s,x-y) \leq \frac{C}{\sqrt{s}}$ to \eqref{ibp:Shat1}.
Moreover, by combining \eqref{ibp:Shat1}, \eqref{xi}, \eqref{S1} and \eqref{HS1}, we see that if $h \in \mathcal{C}^{\infty,0}_b(\rr_+\times \rr)$ then $\hat {\mathcal{S}}_t h$ and $\hat{\mathcal{K}}_t h$ belongs to $\mathcal{C}^{\infty,0}_b(\rr_+\times (-\infty,L])$ and their support are contained in $\rr_+\times(-\infty,L)$. We point out to the reader that in order to obtain a convergent expansion of the semigroup $(P_t)_{t\geq 0}$, the above mentioned support property of $\hat {\mathcal{S}}_t h$ and $\hat{\mathcal{K}}_t h$ or more specifically the fact that $\hat {\mathcal{S}}_t h(u,L)= \hat{\mathcal{K}}_t h(u,L)= 0$ is crucial, and the non-zero left limit $\hat{\mathcal{K}}_t h(u,L-)$ does not play a role. For any fixed $t>0$, it is clear from \eqref{xi} and dominated convergence theorem that $\lim_{r\downarrow 0} \mathcal{\hat S}_{t+r} h(u,x) = \mathcal{ \hat S}_{t} h(u,x)$. From \eqref{ibp:Shat1} and the fact that $h \in \mathcal{C}^{2,0}_b(\rr_+\times \rr)$, we have $\lim_{r\downarrow 0}\hat{\mathcal{K}}_{t+r}h(u,x) = \hat{\mathcal{K}}_{t}h(u,x)$. Finally, by applying Lemma \ref{estimate:kernel:proxy}, equation \eqref{levy} and dominated convergence theorem we have $\lim_{r\downarrow 0}{\mathcal{S}}_{t+r}h(u,x) = {\mathcal{S}}_{t}h(u,x)$ and $\lim_{r\downarrow 0}{\mathcal{K}}_{t+r}h(u,x) = {\mathcal{K}}_{t}h(u,x)$. 
 
We are now in position to prove the first order expansion of the semigroup $(P_t)_{t \geq 0}$. Notice that we have proved two different expansions given in \eqref{l4.2e} and \eqref{l4.2e2} respectively. The difference in the two expansions is due to the assumption on the support of $h$, and the reason that $\mathcal{\hat K}_{T-s}h$ term does not appear in \eqref{l4.2e2} is precisely due to the fact that $h(u,L) = 0$. 

\begin{lem}\label{l4.2}
Assume that \A{H2} holds and that $b$ is continuous on $(-\infty, L]$. For $h \in \mathcal{C}^{2,0}_b(\rr_+\times \rr)$, the following first order expansion for the semigroup $ (P_t)_{t\geq 0} $ holds
\be
P_Th(u,x)= 
\begin{cases}
(\mathcal{S}_{T} + \mathcal{K}_{T})h(u,x) + \int^T_0 ds \,\,P_{s}(\hat{\mathcal{S}}_{T-s}+\hat{\mathcal{K}}_{T-s})h(u,x), & x < L,\\
h(u,x) & x\geq L.
\end{cases}\label{l4.2e}
\ee 
While, if $h \in \mathcal{C}^{2,0}_b(\rr_+\times (-\infty,L])$ and 
 $\mathrm{supp}(h) \subseteq \rr_+\times (-\infty,L)$, then
\begin{gather}
P_Th(u,x)= 
\begin{cases}
\mathcal{S}_{T}h(u,x) + \int^T_0 ds \,\,P_{s}\hat{\mathcal{S}}_{T-s}h(u,x), & x<L,\\
0 &  x \geq L.
\end{cases}
\label{l4.2e2}
\end{gather}
\end{lem}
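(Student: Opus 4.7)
The plan is to mirror the derivation of Lemma~\ref{fstep:lemma}, but with the fixed-freezing proxy $\bar{P}^y_t$ replaced by the variable-freezing proxy $\mathcal{P}_t := \mathcal{S}_t + \mathcal{K}_t$ from \eqref{SS1}, in which the diffusion coefficient is frozen at the terminal point of $h$ (namely $y = z$ in the interior kernel $\bar q^z_t(x, z)$ and $y = L$ in the exit-time kernel $f^{L}_{\bar\tau}$). A direct attempt to differentiate $s \mapsto P_s \mathcal{P}_{T-s} h(u, x)$ fails because $\partial_s \mathcal{K}_{T-s} h$ produces the singular Leibniz contribution $-h(u+T-s, L) f^{L}_{\bar\tau}(x, T-s)$ as $s \to T$, which prevents $\mathcal{P}_{T-s} h$ from lying in $\mathrm{dom}(\mathcal{L})$ near $s = T$; so, as in Lemma~\ref{fstep:lemma}, I would introduce the regularizing parameter $r > 0$ and work throughout with $\mathcal{P}_{T-s+r} h$.

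The first step is to adapt Lemma~\ref{c112} to show, for each $r > 0$, that $(t, u, x) \mapsto \mathcal{P}_{T-t+r} h(u, x)$ belongs to $\mathcal{C}^{1, 1, 2}_b([0, T] \times \rr_+ \times (-\infty, L])$. For $\mathcal{K}_{T-t+r} h$ this is literally Lemma~\ref{c112} with $y = L$; for $\mathcal{S}_{T-t+r} h$ one differentiates under the $z$-integral, using the Gaussian bounds of Lemma~\ref{estimate:kernel:proxy} and the uniform ellipticity $\underline{a} \leq a(z) \leq \overline{a}$ to control the dependence on the freezing variable. Setting $G^{(r)}(s) := P_s \mathcal{P}_{T-s+r} h(u, x)$ and applying Lemma~\ref{lemma:ito}, one obtains $(G^{(r)})'(s) = P_s[\mathcal{L} + \partial_s]\mathcal{P}_{T-s+r}h(u, x)$. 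The key algebraic identities are
\[
[\mathcal{L} + \partial_s]\mathcal{S}_{T-s+r} h = \hat{\mathcal{S}}_{T-s+r} h, \qquad [\mathcal{L} + \partial_s]\mathcal{K}_{T-s+r} h = \hat{\mathcal{K}}_{T-s+r} h.
\]
For $\mathcal{S}$, the heat equation $\partial_t \bar q^z_t(x, z) = \tfrac12 a(z) \partial_x^2 \bar q^z_t(x, z)$ cancels the $\partial_s$ contribution against the $\tfrac12 a(z) \partial_x^2$ piece from $\mathcal{L}$, leaving exactly the $\tfrac12[a(x) - a(z)]\partial_x^2 + b(x)\partial_x$ that defines $\hat{\mathcal{S}}^z_t$ in \eqref{SSy}. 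For $\mathcal{K}$, an integration by parts in $v$ inside $\int_0^{T-s+r} h(u+v, L) f^{L}_{\bar\tau}(x, v)\, dv$, combined with $\partial_v f^{L}_{\bar\tau} = \tfrac12 a(L) \partial_x^2 f^{L}_{\bar\tau}$ and the vanishing boundary value $f^{L}_{\bar\tau}(x, 0) = 0$, produces a boundary term at $v = T-s+r$ that exactly cancels the Leibniz term from differentiating the upper limit.

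Integrating $(G^{(r)})'$ over $[0, T]$ gives
\[
P_T \mathcal{P}_r h(u, x) - \mathcal{P}_{T+r} h(u, x) = \int_0^T P_s (\hat{\mathcal{S}}_{T-s+r} + \hat{\mathcal{K}}_{T-s+r}) h(u, x)\, ds,
\]
after which I would let $r \downarrow 0$. For $x < L$, concentration of the proxy kernels yields $\mathcal{S}_r h(u, x) \to h(u, x)$ and $\mathcal{K}_r h(u, x) \to 0$, so bounded convergence gives $P_T \mathcal{P}_r h(u, x) \to P_T h(u, x)$, while continuity in $r$ gives $\mathcal{P}_{T+r} h \to (\mathcal{S}_T + \mathcal{K}_T)h$; on the right, the uniform bounds \eqref{S1}--\eqref{HS1} provide the integrable majorant $C(T-s)^{-1+\eta/2}$ needed for dominated convergence. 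This establishes \eqref{l4.2e} for $x < L$; the case $x \geq L$ is immediate since $\tau^x = 0$. The second identity \eqref{l4.2e2} then follows from \eqref{l4.2e}, since the support assumption forces $h(\cdot, L) \equiv 0$ and hence $\mathcal{K}_t h \equiv \hat{\mathcal{K}}_t h \equiv 0$ identically. I expect the main obstacle to be the boundary-term cancellation for $\mathcal{K}$ described above: it is precisely what forces the freezing in $\mathcal{K}$ to be at $L$ (the unique point at which $f^{L}_{\bar\tau}$ solves the proxy's forward equation), and it explains why the unregularized differentiation cannot work.
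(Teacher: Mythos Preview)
Your approach has a genuine gap in the first identity \eqref{l4.2e}. The operator $\mathcal{P}_r:=\mathcal{S}_r+\mathcal{K}_r$ is \emph{not} an approximate identity on the closed half-line: both kernels $\bar q^z_r(L,z)$ and $f^{L}_{\bar\tau}(L,s)$ vanish (Lemma~\ref{two:kernels:expression}), so $\mathcal{P}_r h(u',L)=0$ for every $r>0$, while $h(u',L)$ is in general nonzero. Since the measure $P_T(u,x,\cdot,\cdot)$ charges the set $\{x'=L\}$ through the event $\{\tau^x\le T\}$, one actually gets
\[
\lim_{r\downarrow 0} P_T\mathcal{P}_r h(u,x)=\E\!\big[h(u+T,X^x_T)\I_{\{\tau^x>T\}}\big]=P_Th(u,x)-\E\!\big[h(u+\tau^x,L)\I_{\{\tau^x\le T\}}\big],
\]
so the limit you claim is off by exactly the boundary contribution. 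The same defect appears earlier: $x\mapsto\mathcal{P}_{T-s+r}h(u,x)$ has left limit $h(u,L)$ at $x=L$ but value $0$ there, hence it is not in $\mathcal{C}^{1,2}_b(\rr_+\times(-\infty,L])$ and Lemma~\ref{lemma:ito} does not apply as stated. Working with the left-continuous modification reintroduces, through $P_s$, the term $\E[h(u+\tau^x,L)\I_{\{\tau^x\le s\}}]$ whose $s$-derivative you have not controlled.

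This is precisely why the paper does \emph{not} differentiate $P_s\mathcal{P}_{T-s+r}h$ directly. It first proves the fixed-freezing expansion (Lemma~\ref{fstep:lemma}) using $\bar P^y_r$, which is a genuine semigroup satisfying $\bar P^y_r h(u',L)=h(u',L)$ by \eqref{kernel:proxy}; this is what makes $\bar P^y_{T-s+r}h\in\mathcal{C}^{1,2}_b(\rr_+\times(-\infty,L])$ and the limit $P_T\bar P^y_r h\to P_Th$ legitimate. The passage from fixed freezing $y$ to variable freezing is then achieved by applying Lemma~\ref{fstep:lemma} to $g(\varepsilon,y-\cdot)h$, integrating in $y$, and sending $\varepsilon\downarrow 0$ before $r\downarrow 0$; the mollifier is exactly the device that converts $\bar P^y$ into $\mathcal{S}+\mathcal{K}$ without losing the boundary mass. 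Your argument \emph{is} correct for \eqref{l4.2e2}, since there $h(\cdot,L)\equiv 0$ forces $\mathcal{K}_t h=\hat{\mathcal{K}}_t h\equiv 0$ and $\mathcal{P}_{T-s+r}h$ is continuous at $L$; but \eqref{l4.2e} needs the paper's detour.
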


\begin{proof}
The result is straightforward for $x\geq L$, so from now on we assume that $x < L$.
	We will do the proof for the first case only. For $y \in \rr$, we apply Lemma \ref{fstep:lemma} to the function $(u,x) \mapsto g(\varepsilon,y-x) h(u,x) \in \mathcal{C}^{2,0}_b(\rr_+\times \rr)$ and integrate both hand sides of \eqref{fstep:equ} with respect to $dy$. The goal now is to prove that we can take the limit as $ \varepsilon \downarrow 0$ first and then $ r\downarrow 0 $. To do this, each term in the expansion given in Lemma \ref{fstep:lemma} is analyzed. We first remark that for $x\in (-\infty,L)$ one has
\begin{align*}
& \int_\rr dy  \int P_T(u,x,du',dx')\,\bar P^y_{r}(g(\varepsilon,y-\cdot)h)(u',x')\\
& = \int_\rr dy \int P_T(u,x,du',dx')\,\Big\{ \int_{-\infty}^L   dz \ g(\varepsilon,y-z )h(u'+r,z) \bar{q}^{y}_r(x',z) +  \int_{0}^r ds \ g(\varepsilon,L-y) h(u'+s,L) f_{\bar{\tau}}^{y}(x',s)\Big\}
\end{align*}
It is clear from Lemma \ref{estimate:kernel:proxy} and the following $\int_{0}^r ds $ integrable bound 
\be
f^{y}_{\bar \tau}(x,s)  \leq C\frac{L-x}{\bar a s}g(\bar a s,L-x) = C\partial_x g(\bar a s, L-x)\label{sx}
\ee 
that the order of integration in the above integral can be freely interchanged using Fubini's theorem.

To take the limit as $\varepsilon\downarrow 0$, we see that by using Lemma \ref{estimate:kernel:proxy}, \eqref{sx} and $\int_{\rr}  dy \,\,g(\varepsilon,y-z)h(u'+r,z) \leq |h|_\infty$ to obtain
\begin{align*}
 \int_\rr dy \,g(\varepsilon,y-z )|h(u'+r,z)| \bar{q}^{y}_r(x',z) &\leq C|h|_\infty g(\bar a r, z-x') \\
 \int_\rr dy \ g(\varepsilon,L-y) |h(u'+s,L)| f_{\bar{\tau}}^{y}(x',s) &\leq C|h|_\infty \partial_x g(\bar a s, L-x'),
\end{align*}
which are $\int P_T(u,x,du',dx') \int_{-\infty}^L   dz$ and  $\int P_T(u,x,du',dx')\int_{0}^r ds $ integrable respectively. Therefore, by dominated convergence theorem, this shows that 
\begin{align*}
\int P_T(u,x,du',dx')\int_{-\infty}^L dz \lim_{\varepsilon \downarrow 0} \int_\rr dy \ g(\varepsilon,y-z )h(u'+r,z) \bar{q}^{y}_r(x',z) &  = \int P_T(u,x,du',dx')\int_{-\infty}^L dz \,h(u'+r,z)\bar q_{r}^z(x',z)\\
\int P_T(u,x,du',dx') \int_{0}^r ds \lim_{\varepsilon \downarrow 0} \int_\rr dy \ g(\varepsilon,L-y) h(u'+s,L) f_{\bar{\tau}}^{y}(x',s) & = \int P_T(u,x,du',dx') \int_{0}^r ds h(u'+s,L) f_{\bar{\tau}}^{L}(x',s).
\end{align*}
To take the limit as $r\downarrow 0$, we apply dominated convergence theorem by noticing that both inner integrals on the right hand of the above expressions are bounded by $|h|_\infty$. Then by Lemma \ref{dq} and the continuity of the integral we conclude that 
$$
\lim_{r \downarrow 0}\lim_{\varepsilon \downarrow 0} \int_\rr dy \,P_T\bar P^y_{r}(g(\varepsilon,y-\cdot )h)(u,x) = P_Th(u,x).
$$
We now consider the term $\lim_{r \downarrow 0}\lim_{\varepsilon \downarrow 0} \int_\rr dy \bar P^y_{T+r}(g(\varepsilon,y-\cdot)h)(u,x)$. We first apply Fubini's theorem by using the fact that $\int_{0}^{T+r} ds\,|h(u+s,L)| f^{y}_{\bar \tau}(x,s) \leq |h|_\infty$ to obtain
\begin{align*}
 \int_\rr \,\,dy\,\int_{0}^{T+r} ds\,\, \left\{  g(\varepsilon,L-y) h(u+s,L) f^{y}_{\bar \tau}(x,s) \right\}&  =  \int_{0}^{T+r} ds\int_\rr \,\,dy\,\, \left\{  g(\varepsilon,L-y) h(u+s,L) f^{y}_{\bar \tau}(x,s) \right\}.
\end{align*}
To take the limit as $\varepsilon\downarrow 0$, we again use the fact that  $\int_\rr \,\,dy\,\, g(\varepsilon,L-y) h(u+s,L)f^{y}_{\bar \tau}(x,s) \leq  C|h|_\infty\partial_{x} g(\bar a s, L-x)$, which is $\int_{0}^{T+r} ds$ integrable. Therefore by dominated convergence theorem, we obtain
\begin{align*}
\int_{0}^{T+r} ds\lim_{\varepsilon\downarrow 0}  \int_\rr \,\,dy\,\, \left\{  g(\varepsilon,L-y) h(u+s,L) f^{y}_{\bar \tau}(x,s) \right\} = \int_{0}^{T+r} ds \,\left\{h(u+s,L) f^{L}_{\bar \tau}(x,s) \right\} = \mathcal{K}_{T+r} h(u,x).
\end{align*}
Similarly, we note that from Lemma \ref{estimate:kernel:proxy}, Fubini's theorem can be applied to obtain 
\begin{align*}
\int_\rr dy \int_{-\infty}^L dz \left\{ g(\varepsilon,z-y)h(u+T+r,z)\bar q_{T+r}^y(x,z)\right\}  & =  \int_{-\infty}^L dz \int_\rr dy  \left\{ g(\varepsilon,z-y)h(u+T+r,z)\bar q_{T+r}^y(x,z)\right\}
\end{align*}
To take the limit as $r\downarrow 0$, we notice that again from Lemma \ref{estimate:kernel:proxy}
\bde
\int_\rr dy  \big| g(\varepsilon,z-y)h(u+T+r,z)\bar q_{T+r}^y(x,z)\big| \leq |h|_\infty g(c(T+r),x-z).
\ede 
which is $\int_{-\infty}^L dz$ integrable. Therefore by dominated convergence theorem 
\begin{align*}
\int_{-\infty}^L dz \lim_{\varepsilon \downarrow 0}\int_\rr dy  \left\{ g(\varepsilon,z-y)h(u+T+r,z)\bar q_{T+r}^y(x,z)\right\}
& =   \int_{-\infty}^L  dz \left\{ h(u+T+r,z)\bar q_{T+r}^z(x,z)\right\} \\
 & = \mathcal{S}_{T+r}h(u,x).
\end{align*}
It is clear that $|\mathcal{S}_{T+r}h(u',x')| \leq |h|_\infty$ and $|\mathcal{K}_{T+r}h(u',x')| \leq |h|_\infty$. Therefore by using fact that $\lim_{r\downarrow 0}\mathcal{K}_{t+r}h(u,x) = \mathcal{K}_{t}h(u,x)$ and $\lim_{r\downarrow 0} \mathcal{S}_{t+r} h(u,x) = \mathcal{S}_{t} h(u,x)$, we conclude that 
$$\lim_{r \downarrow 0}\lim_{\varepsilon \downarrow 0} \int dy \,\,\bar P^y_{T+r}(g(\varepsilon,y-\cdot )h)(u,x) = (\mathcal{K}_T + \mathcal{S}_T)h(u,x).$$  

To compute the right hand side of \eqref{fstep:equ}, we note that the strategy is also to first apply Fubini's theorem and then dominated convergence theorem. By using again Lemma \ref{estimate:kernel:proxy} to estimate $\partial_x^2\bar q^y_{T-s+r}(x,z)$ and $\partial_{x}\bar q^y_{T-s+r}(x,z)$, the term
\begin{align*}
& \int_\rr  dy \int_{0}^{T} ds \int P_s(u,x, du',dx')\,\hat{\mathcal{S}}^y_{T-s+r}  h(u',x')\\
 & = \int_\rr  dy \int_{0}^{T} ds \int P_s(u,x, du',dx')\int_{-\infty}^L dz\,  h(u'+T-s+r,z)\\
& \quad \times  g(\varepsilon,z-y)\left\{ \frac{1}{2}[a(x')-a(y)]\partial_{x'}^2\bar q^y_{T-s+r}(x',z)  + b(x')\partial_{x'} \bar q^y_{T-s+r}(x',z)\right\}\I_\seq{x'<L}
\end{align*}
is absolutely integrable and one can apply the Fubini's theorem to interchange the order of integration. By using Lemma \ref{estimate:kernel:proxy} we see that 
\bde
\int_{-\infty}^{L} dz\,\int_\rr  \,dy  \,\,g(\varepsilon,z-y) h(u'+T-s+r,z) \left\{ \frac{1}{2}[a(x')-a(y)]\partial_{x'}^2\bar q^y_{T-s+r}(x',z)  + b(x)\partial_{x'} \bar q^y_{T-s+r}(x',z)\right\} \leq \frac{|h|_\infty}{T-s+r}
\ede 
which is independent of $\varepsilon$ and $\int_{0}^{T} ds \int P_s(u,x, du',dx')$ integrable. Therefore, one can take the limit in $\varepsilon$ by dominated convergence theorem and it is sufficient to compute
\begin{align*}
& \lim_{\varepsilon\downarrow 0}\int_{-\infty}^L dz\,\int_\rr  \,dy  \,\,g(\varepsilon,z-y) h(u'+T-s+r,z) \left\{ \frac{1}{2}[a(x')-a(y)]\partial_{x'}^2\bar q^y_{T-s+r}(x',z)  + b(x')\partial_{x'} \bar q^y_{T-s+r}(x',z)\right\}\I_\seq{x'<L}\\
& = \int_{-\infty}^L  dz h(u'+T-s+r,z) \left\{ \frac{1}{2}[a(x')-a(z)]\partial_{x'}^2\bar q^z_{T-s+r}(x',z)  + b(x')\partial_{x'} \bar q^z_{T-s+r}(x',z)\right\}\I_\seq{x'<L}\\
& = \hat{\mathcal{S}}_{T-s+r} h(u',x').
\end{align*}
To take the limit as $r\downarrow 0$, we see that by \eqref{S1}, $|\hat{\mathcal{S}}_{T-s+r} h(u',x')| \leq C_T|h|_\infty \frac{1}{(T-s)^{1-\frac{\eta}{2}}}$, which is is independent of $r$ and $\int_{0}^{T} ds \int P_s(u,x, du',dx')$ integrable.

The arguments to prove that the limit as $ \varepsilon\downarrow 0 $ for the term associated with $\hat{\mathcal{K}}^y(g(\varepsilon,y-\cdot)h)$ in Lemma \ref{fstep:lemma} are more involved. In fact, in order to apply Fubini's theorem and take the limit as $\varepsilon\downarrow 0$, we need to apply inequality \eqref{ibp:Shat2} and Lemma \ref{estimate:kernel:proxy},
\begin{align*}
& |\hat{\mathcal{K}}^y_{T-s+r}(g(\varepsilon,y-\cdot)h)(u',x')| \\
& =\int_0^{T-s+r} \,dv  \,\,g(\varepsilon,L-y) |h(u'+v,L)|\,\big|\,\frac{1}{2}[a(x')-a(y)]\partial_{x'}^2f^{y}_{\bar \tau}(x',v) + b(x') \partial_{x'} f^{y}_{\bar \tau}(x',v)\big|\I_\seq{x'<L}\\
& \leq Cg(\varepsilon,L-y) \Big\{|h|_\infty \partial_{x'}g(\bar a(T-s+r),x'-L) + \, |\partial_1 h|_\infty + \frac{|b|_\infty |h|_\infty}{\sqrt{T-s+r}} + |b|_\infty\, |\partial_1 h|_\infty \sqrt{T-s+r}\Big\}\\
& \leq Cg(\varepsilon,L-y) \frac{1}{T-s+r}
\end{align*}
which is $\int_\rr  dy \int_{0}^{T} ds \int P_s(u,x, du',dx')$ integrable. It is clear from above that 
\begin{align*}
\int_\rr  dy\,\,|\hat{\mathcal{K}}^y_{T-s+r}(g(\varepsilon,y-\cdot)h)(u,x)| & \leq \frac{1}{T-s+r}
\end{align*}
which is $\int_{0}^{T} ds \int P_s(u,x, du',dx')$ integrable. Therefore we can take the limit as $\varepsilon\downarrow 0$ using dominated convergence and consider, where we use again \eqref{ibp:Shat1} and integration by parts 
\begin{align*}
 \lim_{\varepsilon \downarrow 0} \int_\rr  dy\,\, g(\varepsilon,L-y)\hat{\mathcal{K}}^y_{T-s+r}(g(\varepsilon,L-\cdot)h)(u',x') 
& =  \lim_{\varepsilon \downarrow 0} \int_\rr  dy\,g(\varepsilon,L-y)\,\,\frac{\left[a(x')-a(y)\right]}{a(y)} h(u'+T-s+r,L) f^{y}_{\bar \tau}(x',T-s+r) \\
& \quad - \lim_{\varepsilon \downarrow 0} \int_\rr  dy\,\, g(\varepsilon,L-y)\frac{\left[a(x')-a(y)\right]}{a(y)}\int_{0}^{T-s+r}  ds  \, \partial_s h(u'+s,L)  f^{y}_{\bar \tau}(x',s)\nonumber \\
& \quad + \lim_{\varepsilon \downarrow 0} \int_\rr  dy  \,\,g(\varepsilon,L-y)2 b(x')h(u'+T-s+r ,L) g(a(y)t,L-x)\\
& \quad - \lim_{\varepsilon \downarrow 0} \int_\rr  dy\,\,   g(\varepsilon,L-y)2 b(x')\int_{0}^{T-s+r}  ds  \, \partial_s h(u'+s,L)  g(a(y)s,x'-L) \\
& = \mathcal{\hat K}_{T-s+r}h(u',x')
\end{align*}
Finally, to take the limit as $r\downarrow 0$, it is sufficient to use \eqref{HS1}, to obtain $|\hat{\mathcal{K}}_{T-s+r}h(u',x')| \leq 	C_{T}(|h|_{\infty}, |\partial_1h|_{\infty}) \,\frac{1}{(T-s)^{1-\frac{\eta}{2}}}$ which is independent of $r$ and $\int_{0}^{T} ds \int P_s(u,x, du',dx')$ integrable. Therefore one can conclude that 
$$
\lim_{r \downarrow 0} \lim_{\varepsilon \downarrow 0} \int_0^T ds P_s \{(\mathcal{S}^y_{T-s+r} + \mathcal{K}^y_{T-s+r} )(g(\varepsilon,y-\cdot )h)\}(u,x) = \int_0^T \,ds \,P_s (\hat{\mathcal{S}}_{T-s}h + \hat{\mathcal{K}}_{T-s}h)(u,x).
$$
\end{proof}

Our aim now is to iterate the first order expansion formula \eqref{l4.2e} and \eqref{l4.2e2} in order to obtain an expansion in infinite series of the Markov semigroup $(P_t)_{t\geq0}$ in the spirit of Theorem \ref{forward:expansion:semigroup}. For $h \in \mathcal{C}^{\infty,0}_b(\rr_+\times \rr)$, we recall that the terms $\mathcal{S}_{t}h(u,x)$, $\mathcal{K}_{t}h(u,x)$, $\hat{\mathcal{S}}_{t}h(u,x)$ and $\hat{\mathcal{K}}_{t}h(u,x)$ are given in \eqref{SS1} and \eqref{SS2}, and we set
\bde
I^n_Th(u,x):= 
\begin{cases}
\int_{\Delta_n(T)} d\s_n \,\mathcal{S}_{s_n}\hat{\mathcal{S}}_{s_{n-1}-s_n} \dots \hat{\mathcal{S}}_{s_{1}-s_2}(\hat{\mathcal{S}}_{T-s_1}+\hat{\mathcal{K}}_{T-s_1})h(u,x) & n\geq 1,\\
(\mathcal{S}_{T} + \mathcal{K}_{T})h(u,x) & n= 0.
\end{cases}
\ede  
We point out that the operator $\mathcal{\hat K}$ only appears once in the above, because to study the transition density functions, one must take test functions $h$ with domain $\rr_+\times \rr$ or in particular, test functions which belong to $\mathcal{C}^{\infty,0}_b(\rr_+\times \rr)$. It is only after the first iteration, we notice that $\hat {\mathcal{S}}_t h$ and $\hat{\mathcal{K}}_t h$ belongs to $\mathcal{C}^{\infty,0}_b(\rr_+\times (-\infty,L])$ and their support are contained in $\rr_+\times(-\infty,L)$, and \eqref{l4.2e2} is used to obtain the expansion after the first iteration. We present in the following, one of the main results of this section.

\begin{theorem}\label{pexp}
Let $T>0$. Assume that \A{H2} holds and that $b$ is continuous on $(-\infty,L]$. Then, for every $h \in \mathcal{C}^{\infty,0}_b(\rr_+\times \rr)$, one has
\begin{align*}
P_T h(u,x)= 
h(u,x)\I_\seq{x\geq L} + \I_\seq{x< L}\sum_{n\geq 0}I^n_T h(u,x)
\end{align*}
where the series converges absolutely and uniformly for $(u,x) \in   \rr_+\times \rr$. 
\end{theorem}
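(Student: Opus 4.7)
The plan is to iterate the first-order expansion from Lemma \ref{l4.2}, treating the case $x \geq L$ separately as trivial (since $P_T h(u,x) = h(u,x)$ directly). For $x < L$, I would start from \eqref{l4.2e} to get
\[
P_T h(u,x) = (\mathcal{S}_T + \mathcal{K}_T)h(u,x) + \int_0^T ds_1\, P_{s_1}(\hat{\mathcal{S}}_{T-s_1}+\hat{\mathcal{K}}_{T-s_1})h(u,x),
\]
which identifies the $n=0$ and $n=1$ terms of the proposed series. The crucial observation is then that for $h \in \mathcal{C}^{\infty,0}_b(\rr_+\times \rr)$, the function $(\hat{\mathcal{S}}_{T-s_1}+\hat{\mathcal{K}}_{T-s_1})h$ belongs to $\mathcal{C}^{\infty,0}_b(\rr_+\times(-\infty,L])$ with support contained in $\rr_+\times(-\infty,L)$ (a fact noted in the excerpt right after \eqref{HS1}), which allows me to apply the second form \eqref{l4.2e2} to the inner term $P_{s_1}(\hat{\mathcal{S}}_{T-s_1}+\hat{\mathcal{K}}_{T-s_1})h$. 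This is precisely why no further $\mathcal{K}$-operator appears in $I^n_T h$ for $n \geq 1$: after the first iteration the functions on which $P$ acts vanish at the barrier.

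The iteration then proceeds by induction on $N$: assuming that
\[
P_T h(u,x) = \sum_{n=0}^{N-1} I^n_T h(u,x) + \int_{\Delta_N(T)} d\mathbf{s}_N\, P_{s_N} \hat{\mathcal{S}}_{s_{N-1}-s_N}\cdots \hat{\mathcal{S}}_{s_1-s_2}(\hat{\mathcal{S}}_{T-s_1}+\hat{\mathcal{K}}_{T-s_1})h(u,x),
\]
I apply \eqref{l4.2e2} to $P_{s_N}$ acting on the (smooth, compactly supported away from $L$) function $\hat{\mathcal{S}}_{s_{N-1}-s_N}\cdots (\hat{\mathcal{S}}_{T-s_1}+\hat{\mathcal{K}}_{T-s_1})h$. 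The $\mathcal{S}_{s_N}$-term in \eqref{l4.2e2} produces $I^N_T h$, while the integral remainder produces the next remainder term with $N$ replaced by $N+1$ (using Fubini and the identification $s_{N+1}$ as the new innermost time variable in $\Delta_{N+1}(T)$).

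The key estimate to close the argument is a bound on the remainder term $R_{N+1}$. Using the contraction property $\|P_s f\|_\infty \leq \|f\|_\infty$ together with the bounds \eqref{S1} and \eqref{HS1}, one obtains
\[
|R_{N+1}(u,x)| \leq C_T^{N+1} (|h|_\infty + |\partial_1 h|_\infty) \int_{\Delta_{N+1}(T)} (T-s_1)^{-1+\eta/2} \prod_{i=1}^{N}(s_i - s_{i+1})^{-1+\eta/2}\, d\mathbf{s}_{N+1}.
\]
Invoking Lemma \ref{beta:type:integral} (the iterated beta-type integral) identifies this quantity with $T^{(N+1)\eta/2}\, \Gamma(\eta/2)^{N+1}/\Gamma(1+(N+1)\eta/2)$, which tends to $0$ as $N\to\infty$ thanks to the factorial growth of the Gamma function at infinity. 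The same estimate applied to each $I^n_T h$ gives $|I^n_T h(u,x)| \leq C_T^{n+1}(|h|_\infty + |\partial_1 h|_\infty) T^{n\eta/2}\, \Gamma(\eta/2)^n / \Gamma(1 + n\eta/2)$, uniformly in $(u,x)$, so that $\sum_{n\geq 0} I^n_T h$ converges absolutely and uniformly on $\rr_+\times\rr$.

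The main obstacle I anticipate is not the convergence itself, which is mechanical once the bounds of the previous section are in place, but keeping the iteration clean: one must verify carefully that the space $\mathcal{C}^{\infty,0}_b(\rr_+\times(-\infty,L])$ with support in $\rr_+\times(-\infty,L)$ is preserved by the operators $\hat{\mathcal{S}}_t$ and $\hat{\mathcal{K}}_t$ (so that \eqref{l4.2e2} may be repeatedly invoked), and that Fubini's theorem applies at every step. The support-preservation is ensured by $\hat{\mathcal{S}}^z_t(x,z)|_{x=L} = 0$ and $\hat{\mathcal{K}}^L_t h(u,L) = 0$ (from the $\mathbf{1}_{x<L}$ factors in \eqref{SSy} and \eqref{KK1}), while the smoothness and boundedness follow from differentiating under the integral, using again the estimates in Lemma \ref{estimate:kernel:proxy}. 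Fubini is justified by the integrable majorants $(T-s_1)^{-1+\eta/2}\prod (s_i-s_{i+1})^{-1+\eta/2}$ obtained in the remainder estimate.
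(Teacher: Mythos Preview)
Your proposal is correct and follows essentially the same approach as the paper: iterate Lemma \ref{l4.2}, using \eqref{l4.2e} for the first step and \eqref{l4.2e2} thereafter (exploiting that $\hat{\mathcal{S}}_t h$ and $\hat{\mathcal{K}}_t h$ vanish at $x=L$), then bound the remainder via \eqref{S1}, \eqref{HS1}, the contraction property of $P_s$, and Lemma \ref{beta:type:integral}. One small slip: the first-order expansion \eqref{l4.2e} only identifies the $n=0$ term $I^0_T h = (\mathcal{S}_T+\mathcal{K}_T)h$ together with the remainder $R_1$; the $n=1$ term appears only after one further application of \eqref{l4.2e2}.
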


\begin{proof}
We know that for all $h \in \mathcal{C}^{\infty,0}_b(\rr_+\times \rr)$ and $t\in (0,T]$, $\hat{\mathcal{S}}_{t}h$ and $\hat{\mathcal{K}}_{t}h$ belongs to $\mathcal{C}^{\infty,0}_b(\rr_+\times (-\infty,L])$ and has support contained in $\rr_+\times (-\infty,L)$ (see the discussion after \eqref{S1}). Therefore, by replacing $h$ by $\hat{\mathcal{S}}_{T-t}h + \hat{\mathcal{K}}_{T-t}h$ in \eqref{l4.2e} of Lemma \ref{l4.2} and iterating using \eqref{l4.2e2}, we obtain
 \begin{align*}
P_Th & = (\mathcal{S}_{T} + \mathcal{K}_{T})h + \sum^{N-1}_{n=1} \int_{\Delta_n(T)} d\s_n \mathcal{S}_{s_n}\hat{\mathcal{S}}_{s_{n-1}-s_n} \dots \hat{\mathcal{S}}_{s_{1}-s_2} (\hat{\mathcal{S}}_{T-s_1} + \hat{\mathcal{K}}_{T-s_1})h +  \mathscr{R}^N_T h
\end{align*}
\noindent where the remainder term is given by
\begin{gather*}
\mathscr{R}^N_T h(u,x) := \int_{\Delta_{N}(T)} d\s_n \, P_{s_N}\hat{\mathcal{S}}_{s_{N-1}-s_N}\dots \hat{\mathcal{S}}_{s_{1}-s_2}(\hat{\mathcal{S}}_{T-s_1} + \hat{\mathcal{K}}_{T-s_1}) h(u,x) .
\end{gather*}

We first show that the remainder term converges to zero as $N \rightarrow \infty$. From estimates \eqref{S1} and \eqref{HS1}, for any $(u,x) \in \rr_+ \times \rr$, the remainder term is bounded by
\begin{align*}
|\mathscr{R}^N_Th(u,x)| & \leq C_T(|h(.,L)|_{\infty}, |\partial_1h(.,L)|_{\infty}) \int_{\Delta_{N}(T)} d\s_N \,\prod_{n=0}^{N-1} C_T (s_n-s_{n+1})^{-(1-\frac{\eta}{2})} \\
& =  C_T(|h(.,L)|_{\infty}, |\partial_1h(.,L)|_{\infty}) C^{N}_T T^{N \eta/2} \frac{\Gamma(\eta/2)^N}{\Gamma(1+N \eta/2)},
\end{align*}

\noindent where we used Lemma \ref{beta:type:integral} with $b=0$, $a=1- \eta/2$ and $t_0 = T$ for the last equality. Hence, from the asymptotics of the Gamma function at infinity, we clearly see that the remainder goes to zero uniformly in $(u,x)\in\rr_+\times \rr$ as $n\uparrow  \infty$. 
Similar estimates also give the absolute and uniform convergence of the infinite sum.
\end{proof}

\subsection{Existence of a transition density, its expansion and related properties}\label{expansion:density:backward}\rule[-10pt]{0pt}{10pt}\\

In this section, we retrieve from Theorem \ref{pexp} the existence and an expansion of the transition density function. In order to do this, one needs estimates on the series obtained in the previous theorem which do not involve the regularity of $h$. In particular, we have used $|\partial_1 h(\cdot,L)| < \infty$ in the previous proof (see also Section \ref{expansion:density:forward} in the forward case). Therefore we start by an examination of the $n$-th term of the series expansion in Theorem \ref{pexp} related to the killed diffusion process:
\begin{align*}
& \mathcal{S}_{s_n}\hat{\mathcal{S}}_{s_{n-1}-s_n} \dots \hat{\mathcal{S}}_{s_{1}-s_2}\hat{\mathcal{S}}_{T-s_1}h(u,x) \\
& =\int_{-\infty}^{L} dz_0 \,\,h(u+T,z_0) \left(\int_{(-\infty,L]^{n}} d\mathbf{z}_n\,\,\bar q_{s_n}^{z_n}(x,z_n)\hat{\mathcal{S}}_{T-s_{1}}^{z_0}(z_{1},z_0)
\prod^{n-1}_{i=1} \hat{\mathcal{S}}_{s_i-s_{i+1}}^{z_i}(z_{i+1},z_i)\right).
\end{align*}
Similarly, for the term associated to the exit time,  
\begin{align*}
&\mathcal{S}_{s_n}\hat{\mathcal{S}}_{s_{n-1}-s_n} \dots \hat{\mathcal{S}}_{s_{1}-s_2}\hat{\mathcal{K}}_{T-s_1}h(u,x) \\
& =\int_{(-\infty,L]^{n}} d\mathbf{z}_n\,\,\bar q_{s_n}^{z_n}(x,z_n) \int ds \,\I_{(0,T-s_1]}(s)\,\, h(u+s_1 + s,L)\,\hat{\mathcal{K}}^{L}_{T-s_1}(z_1,s)\prod^{n-1}_{i=1} \hat{\mathcal{S}}^{z_{i}}_{s_i-s_{i+1}}(z_{i+1},z_i)\\
& =\int^T_0 dt \,\I_\seq{s_1<t}\,h(u+t,L)\,\left(\int_{(-\infty,L]^{n}}d\mathbf{z}_n \,\,\bar q_{s_n}^{z_n}(x,z_n) \,\hat{\mathcal{K}}^L_{T-s_1}(z_1,t-s_1)
\prod^{n-1}_{i=1} \hat{\mathcal{S}}_{s_i-s_{i+1}}^{z_i}(z_{i+1},z_i)\right)
\end{align*}
\noindent where in the last equality, we have made the change of variable $t= s_1+ s$ and use the fact that $\I_{(s_1,T)}(t) = \I_\seq{0<t<T}\I_\seq{s_1<t}$. In the following, we write $z_{i+1} = x$ and $z_0= z$ which represents the initial point and terminal point respectively. To obtain a representation in terms of infinite series for the transition density, we apply Fubini's theorem to obtain
\begin{align}
I^{D,n}_{T}h(u,x) & := \int_{\Delta_{n}(T)} d\s_n  \, \mathcal{S}_{s_n}\hat{\mathcal{S}}_{s_{n-1}-s_n} \dots \hat{\mathcal{S}}_{s_{1}-s_2}\hat{\mathcal{S}}_{T-s_1}h(u,x) \nonumber \\
& = \int_{-\infty}^L dz_0 \,\,h(u+T,z_0) \left(\int_{\Delta_{n}(T)} d\s_n  \,\int_{(-\infty,L]^{n}} d\mathbf{z}_n\,\,\bar q_{s_n}^{z_n}(x,z_n)
\,\hat{\mathcal{S}}_{T-s_{1}}^{z_0}(z_{1},z_0)\prod^{n-1}_{i=1} \hat{\mathcal{S}}_{s_i-s_{i+1}}^{z_i}(z_{i+1},z_i)\right)\nonumber  \\
& = \int_{-\infty}^L dz_0 \,\,h(u+T,z_0) \,I^{D,n}_{T}(x,z_0) \label{4.15}
\end{align}
\noindent for the first term and the second term is given by 
\begin{align}
I^{K,n}_{T}h(u,x) & :=   \int_{\Delta_{n}(T)} d\s_n  \, \mathcal{S}_{s_n}\hat{\mathcal{S}}_{s_{n-1}-s_n} \dots \hat{\mathcal{S}}_{s_{1}-s_2}\hat{\mathcal{K}}_{T-t_1}h(u,x) \nonumber\\
& = \int_{0}^T dt \,h(u+t,L)\,\left(\int_{\Delta_{n}(t)} d\s_n  \int_{(-\infty,L]^{n}} d\mathbf{z}_n \,\,\bar q_{t_n}^{z_n}(x,z_n) 
\,\hat{\mathcal{K}}^L_{T-s_1}(z_1,t-s_1)
\prod^{n-1}_{i=1} \hat{\mathcal{S}}_{s_i-s_{i+1}}^{z_i}(z_{i+1},z_i)\right)\nonumber\\
& = \int_{0}^T dt \,h(u+t,L)\, I^{K,n}(x,t), \label{4.16}
\end{align}

\noindent where for an integer $n \geq 1$, we introduced the two kernels
\begin{align}
I^{D,n}_{T}(x,z)&:= \int_{\Delta_n(T)} d\s_n\,\int_{(-\infty,L]^{n}} d\mathbf{z}_n\,\,\bar q_{s_n}^{z_n}(x,z_n)
\hat{\mathcal{S}}_{T-s_{1}}^z(z_{1},z)\prod^{n-1}_{i=1} \hat{\mathcal{S}}_{s_i-s_{i+1}}^{z_i}(z_{i+1},z_i), \label{4.17}\\
I^{K,n}(x,t) & := \int_{\Delta_n(t)} d\s_n\,\int_{(-\infty,L]^{n}} d\mathbf{z}_n\,\,\bar q_{s_n}^{z_n}(x,z_n)
\,\hat{\mathcal{K}}^L_{T-s_1}(z_1,t-s_1)\prod^{n-1}_{i=1} \hat{\mathcal{S}}_{s_i-s_{i+1}}^{z_i}(z_{i+1},z_i) \label{4.18}.
\end{align}
In \eqref{4.18}, the dependence of the term $\hat{\mathcal{K}}^L_{T-s_1}(z_1,t-s_1)$ with respect to $T$ is only in the indicator function $\I_\seq{t-s_1< T- s_1} = \I_\seq{t< T}$. Therefore, we omit writing the dependence of $T$ in $I^{K,n}(x,t)$ as it is always understood that $t< T$.

From the above computations, we are naturally led to define for $(t,x,z) \in (0,T]\times (-\infty,L]^2$ the following kernels
\begin{align*}
p^{D,n}_T(x,z) := 
\begin{cases}
\,\, I^{D,n}_{T}(x,z) & \mathrm{if}\,\,n \geq 1  \\
\,\,\bar q^z_{T}(x,z)  &   \mathrm{if}\,\, n = 0
\end{cases}
\quad 
\mathrm{and}
\quad 
p^{K,n}(x,t) := 
\begin{cases}
\,\, I^{K,n}(x,t) & \mathrm{if}\,\,n \geq 1  \\
\,\, f^{L}_{\bar \tau}(x,t) &   \mathrm{if}\,\, n = 0.
\end{cases}
\end{align*}

We are ready to give the backward parametrix representation of the transition density of the process $(u+\tau^x_t, X^x_{\tau^x_t})_{t\geq 0}$. One must point out that the proof of the convergence of the asymptotic expansion of the transition density is not trivial in the current setting. In the standard diffusion setting, the parametrix expansion of the transition density converges since the order of singularity in time of the space integrals in \eqref{4.17} is $1-\frac{\eta}{2} <1$, where $\eta$ is the H\"older exponent of the diffusion coefficient. The situation here is much more delicate. At first glance, the order of the singularity in $I^{K,n}(x,t)$ due to the kernel $\mathcal{\wh K}^L$ (the third derivative of a Gaussian density) is of order $\frac{3-\eta}{2}\geq 1$ which can not be made smaller than one by using the H\"older continuity of the diffusion coefficient. Therefore the classical argument  does not guarantee the convergence of the integral. To overcome this difficulty and show that the parametrix expansion for the transition density converges, one has to make use of the estimate of the function $\bar q_{t_n}^{z_n}(x,z_n)$ for $ z_n $ close to $L $ in order to improve the order of the singularity in time.

\begin{theorem} \label{backmain} Let $T>0$. Assume that \A{H2} holds and that $b$ is continuous on $(-\infty,L]$. For $(u,x) \in \rr_+ \times (-\infty,L]$, define the measure
\begin{equation*}
p_T(u,x,dt, dz): = p^{K}(x,t-u)\delta_L(dz) dt +  p^{D}_T(x,z)\delta_{u+T}(dt) dz
\end{equation*} 
with 
\begin{align*}
p^{K}(x,t) & := \sum_{n\geq 0} p^{K,n}(x,t) \quad \mathrm{and} \quad p^{D}_T(x,z) := \sum_{n\geq 0} p^{D,n}_T(x,z).
\end{align*}
 Then, both series defining $p^{K}(x,t)$ and $p^{D}_T(x,z)$ converge absolutely for $(x,t,z)\in \rr \times \rr^{*}_+ \times \rr$ and uniformly for $(x,t,z) \in \rr \times K_T \times \rr$, where $K_T$ is any compact subset of $(0,T]$. Moreover for $h \in \mathcal{C}^{\infty,0}_b(\rr_+\times \rr)$, the following representation for the semigroup holds,
\begin{equation*}
P_Th(u,x) = h(u,x)\I_\seq{x\geq L} + \I_\seq{x<L}\int^{u+T}_u \int_{-\infty}^L  \,h(t,z) \, p_T(u,x,dt,dz).
\end{equation*}
Finally, for some positive $C,c>1$, for all $(t,z)\in (0,T] \times (-\infty,L]$, the following Gaussian upper-bounds hold
\begin{equation}
\label{gaussian:upper:bound:backward}
 p^{K}(x,t)   \leq C t^{-1/2} g(ct,L-x) \ \ \mbox{ and } \ \ p^{D}_T(x,z)   \leq  C g(c T,z-x).
\end{equation}
Therefore, for all $(u,x)\in \rr_+ \times (-\infty,L)$, $p_T(u,x,.,.)$ is the probability density function of the random vector $(u+\tau^{x}_{T}, X^{x}_{\tau^{x}_{T}})$. More precisely, the first hitting time $\tau^{x}_T$ has a mixed type law. That is, for $t \in [u,u+T)$, $ \tau^x_T $ has the density $p^{K}(x,t-u	)$ and at $t = u+T$,  $\P(u+\tau^{x}_{T}=u+T)=\int^L_{-\infty}dz  \,p^D_T(x,z)$ and $ p^K(L-,t)=0 $. Similarly, the stopped process $X^{x}_{\tau^x_T}$ also has a mixed type law. That is, for $z \in (-\infty, L)$,  $X^{x}_{\tau^x_T}$ the density $p^{D}_T(x,z)$ exists and at the boundary, we have $\P(X^{x}_{\tau^x_T}=L)=\int^{u+T}_u dt  \,p^K(x,t-u)$, $ p^D_T(L-,z)=p^D_T(x,L-)=0 $. 
\end{theorem}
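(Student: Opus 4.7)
The plan is to deduce the existence, the expansion and the Gaussian bounds for $p_T(u,x,dt,dz)$ directly from the semigroup expansion established in Theorem \ref{pexp}. For $h\in \mathcal{C}^{\infty,0}_b(\rr_+\times\rr)$ and $x<L$, the identities \eqref{4.15}--\eqref{4.18} rewrite each $I^n_T h(u,x)$ as $\int_{-\infty}^L h(u+T,z)\,p^{D,n}_T(x,z)\,dz + \int_0^T h(u+t,L)\,p^{K,n}(x,t)\,dt$ for $n\geq1$, together with the analogous expression $\mathcal{S}_T h + \mathcal{K}_T h$ for $n=0$. Exchanging the sum with the integrals will then yield the measure representation of $P_T h(u,x)$ stated in the theorem, provided the series defining $p^D_T(x,z)$ and $p^K(x,t)$ converge absolutely with Gaussian upper bounds that dominate the integrands uniformly.

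For $p^{D,n}_T$ the control is classical: using $|\hat{\mathcal{S}}^z_t(x,z)| \le C t^{-(1-\eta/2)} g(ct,x-z)$ from \eqref{xi}, the iterated space integrals are handled by Chapman--Kolmogorov convolutions of Gaussians, while the time integration over $\Delta_n(T)$ is handled via Lemma \ref{beta:type:integral}. This leads to an estimate of the form $p^{D,n}_T(x,z) \le C_T^n T^{n\eta/2}\, \Gamma(1+n\eta/2)^{-1}\, g(cT,z-x)$, whose summation over $n$ yields the global Gaussian bound $p^D_T(x,z) \le C g(cT,z-x)$ by the asymptotics of the Gamma function.

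The main obstacle is the control of $p^{K,n}$. A direct use of $|\hat{\mathcal{K}}^L_t(x,s)| \le C s^{-(3-\eta)/2} g(cs,L-x)$ from \eqref{xih} fails, since its time exponent $(3-\eta)/2 \ge 1$ is not integrable against the iterated $\hat{\mathcal{S}}$ kernels as $s_1 \uparrow t$. The remedy, announced in the discussion preceding the theorem, is to exploit the boundary vanishing of the innermost factor $\bar q^{z_n}_{s_n}(x,z_n) = g(a(z_n)s_n, z_n-x) - g(a(z_n)s_n, 2L-x-z_n)$, which provides the refined bound $|\bar q^{z_n}_{s_n}(x,z_n)| \le C (L-x)^\alpha (L-z_n)^\alpha s_n^{-\alpha} g(cs_n, z_n-x)$ for any $\alpha \in [0,1]$, together with the $|L-z|^\beta$ branch of \eqref{xi} used on the factor $\hat{\mathcal{S}}^{z_1}_{s_1 - s_2}(z_2,z_1)$ adjacent to $\hat{\mathcal{K}}^L$. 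Absorbing the resulting $|L-z_1|^\beta$ into the Gaussian $g(c(t-s_1),L-z_1)$ of $\hat{\mathcal{K}}^L$ through the space-time inequality $|L-z_1|^\beta g(c(t-s_1),L-z_1) \le C(t-s_1)^{\beta/2} g(c'(t-s_1),L-z_1)$ transfers a power $\beta/2$ from the singular $(t-s_1)$-exponent into regularity in $(s_1 - s_2)$. A balanced choice of $\alpha$ and $\beta$ (depending on $\eta$) makes every time exponent strictly smaller than one, and iterating Lemma \ref{beta:type:integral} yields $p^{K,n}(x,t) \le C_T^n T^{n\eta/2}\,\Gamma(1+n\eta/2)^{-1}\, t^{-1/2} g(ct,L-x)$ and, after summation, the claimed Gaussian bound $p^K(x,t) \le C t^{-1/2} g(ct,L-x)$. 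This bookkeeping is the technical crux of the theorem.

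Once the two series converge uniformly on compact subsets of $(0,T]$ and the pointwise Gaussian bounds are at hand, the identity $P_T h(u,x) = \int h\,dp_T(u,x,\cdot,\cdot)$ extends from $h \in \mathcal{C}^{\infty,0}_b(\rr_+\times\rr)$ to every bounded Borel $h$ by a standard mollification plus dominated convergence argument, the Gaussian bounds providing the integrable dominating function. Choosing $h\equiv 1$ and invoking $P_T 1 = 1$ forces $p_T(u,x,\cdot,\cdot)$ to be a probability measure, hence (by uniqueness) to coincide with the joint law of $(u+\tau^x_T, X^x_{\tau^x_T})$; the mixed atom-plus-density structure of the two marginals is then immediate from the support of $p_T$. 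Finally, the boundary values $p^D_T(L-,z) = p^D_T(x,L-) = 0$ and $p^K(L-,t) = 0$ are inherited term-by-term: each $p^{D,n}_T$ contains the factor $\bar q^{z_n}_{s_n}(x,z_n)$ (or $\bar q^z_T(x,z)$ for $n=0$) which vanishes at $x=L$ and at $z_n=L$, while its terminal-point factor $\hat{\mathcal{S}}^z_{T-s_1}(z_1,z)$ vanishes at $z=L$ through the refined bound of \eqref{xi} with $\beta>0$; each $p^{K,n}$ vanishes at $x=L$ by the same mechanism applied to $\bar q^{z_n}_{s_n}(x,z_n)$ (respectively $f^L_{\bar\tau}(x,t)$ for $n=0$). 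Uniform convergence then transfers these boundary properties to the full series.
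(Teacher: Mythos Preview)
Your overall architecture is the same as the paper's: pass from the semigroup expansion of Theorem \ref{pexp} to kernel identities \eqref{4.15}--\eqref{4.18}, bound each $p^{D,n}_T$ and $p^{K,n}$ by a Gaussian with a summable prefactor, then exchange sum and integral via Fubini--Tonelli. The treatment of $p^{D,n}_T$ and the boundary-value discussion are fine and match the paper.

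The gap is in your control of $p^{K,n}$. Your mechanism uses the $|L-z|^\beta$ branch of \eqref{xi} only on the single factor $\hat{\mathcal{S}}^{z_1}_{s_1-s_2}(z_2,z_1)$ adjacent to $\hat{\mathcal{K}}^L$, and then absorbs $|L-z_1|^\beta$ into the Gaussian of $\hat{\mathcal{K}}^L$. This turns the $(t-s_1)$-exponent into $(3-\eta-\beta)/2$ at the cost of pushing the $(s_1-s_2)$-exponent to $(2+\beta-\eta)/2$; simultaneous integrability of both requires $1-\eta<\beta<\eta$, hence $\eta>1/2$. For $\eta\le 1/2$ no admissible $\beta$ exists, and the extra $(L-x)^\alpha(L-z_n)^\alpha s_n^{-\alpha}$ you extract from $\bar q^{z_n}_{s_n}$ does not help: the factor $(L-z_n)^\alpha$ sits at the \emph{opposite} end of the chain from $\hat{\mathcal{K}}^L$ and cannot be absorbed into $g(c(t-s_1),L-z_1)$, nor into the neighbouring Gaussian $g(c(s_{n-1}-s_n),z_n-z_{n-1})$, since $|L-z_n|\neq|z_n-z_{n-1}|$. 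Your claimed final bound $C_T^n T^{n\eta/2}\Gamma(1+n\eta/2)^{-1}$ (a gain of $\eta/2$ at \emph{every} step) is therefore not what your one-step transfer actually produces.

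What the paper does instead is to start from $|\bar q^{z_n}_{s_n}(x,z_n)|\le C s_n^{-\beta/2}|L-z_n|^\beta g(cs_n,x-z_n)$ and then \emph{propagate} the factor $|L-z_n|^\beta$ through the whole chain by the subadditivity $|L-z_k|^\beta\le |L-z_{k-1}|^\beta+|z_k-z_{k-1}|^\beta$, proved by induction (see \eqref{indhyp}--\eqref{indc}). At each step, the piece $|z_k-z_{k-1}|^\beta$ is absorbed into the Gaussian of $\hat{\mathcal{S}}^{z_{k-1}}_{s_{k-1}-s_k}$ and the piece $|L-z_{k-1}|^\beta$ is passed on; at the last step it meets $\hat{\mathcal{K}}^L$ and is absorbed into $g(c(t-s_1),L-z_1)$. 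The outcome is a \emph{uniform} exponent $(3-(\eta+\beta))/2$ on every time increment, integrable for any $\eta\in(0,1]$ once $\beta\in(1-\eta,1]$, yielding the bound \eqref{whi1}. This inductive telescoping is the missing idea in your sketch; without it the argument does not cover the full H\"older range of \A{H2}.
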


\begin{proof}
To show the convergence of $\sum_{n\geq 0} |p^{D,n}_T(x,z)|$, it is sufficient to apply estimates \eqref{xi} and \eqref{ql} together with the semigroup property to obtain
\begin{align*}
|I^{D,n}_T(x,z)| &\leq C^{n}_T\left\{\int_{\Delta_{n}(T)} d\s_n  \prod_{i=0}^{n-1} (s_i-s_{i+1})^{-(1-\frac{\eta}{2})}\right\} \,g(c T , x-z)\\
			 &=  C^{n}_T T^{n \eta/2} \frac{\Gamma(\eta/2)^n}{\Gamma(1+n \eta/2)} g(c T , x-z)
\end{align*}

\noindent so that we see that the series $p^{D}_T(x,z) = \sum_{n\geq 0} p^{D,n}_T(x,z)$ is uniformly convergent for $(x,z) \in \rr^2$ and satisfies the mentioned Gaussian upper-bound. Proving the convergence of the series $\sum_{n\geq 0} |p^{K,n}(x,t)|$ requires greater effort. We proceed by induction. For $n = 2$, we apply estimate \eqref{ql} for any $\beta \in [0,1]$ and any $(x,z_1,z_2) \in (-\infty,L]^3$,
\begin{align*}
|\bar q^{z_2}_{s_2}(x,z_2) \hat{\mathcal{S}}_{s_1-s_{2}}^{z_1}(z_{2},z_1)\hat{\mathcal{K}}_{T-s_1}^L(z_1,t-s_1)| & \leq C s_2^{-\frac{\beta}{2}}|L-z_2|^\beta g(c s_2, x-z_2)
|\hat{\mathcal{S}}_{s_1-s_{2}}^{z_1}(z_2,z_1)||\hat{\mathcal{K}}^L_{T-s_1}(z_1,t-s_1)|.
\end{align*}

The key idea of the above inequality is to use the regularity of $\bar q^{z_2}_{s_2}$ in order to remove the singularity appearing in the kernel $\hat{\mathcal{K}}^L_{T-s_1}$. We now proceed by writing $|L-z_2|^\beta \leq  |L-z_1|^\beta + |z_2-z_1|^\beta$. For $n=2$, $s_0 = t$ and $z_0 = L$, we can bound the term with $L-z_1$ by using \eqref{xih}, \eqref{xi} and the  space-time inequality
\begin{align*}
 |L - z_1|^\beta|\hat{\mathcal{S}}_{s_1-s_{2}}^{z_1}(z_2,z_1)||\hat{\mathcal{K}}_{T-s_1}^L(z_1,t-s_1)|
 & \leq  C^2_T (s_1-s_{2})^{-(1-\frac{\eta}{2})}(t-s_{1})^{-\frac{3-(\eta+\beta)}{2}}  \\
&  	\quad \times g(c(t-s_{1}), L-z_1)g(2c(s_1-s_{2}), z_1-z_{2})\\
& \leq  C_T^2\prod^{n-1}_{i=0}(s_i-s_{i+1})^{-\frac{3-(\eta+\beta)}{2}}  g(2c(s_i-s_{i+1}), z_i-z_{i+1})
\end{align*}

\noindent where we require $\frac{3-(\eta+\beta)}{2} < 1$ and $\beta\in [0,1]$. To satisfy these conditions, $\beta$ is chosen such that  $1-\eta < \beta \leq 1$.  

For the term involving $|z_2-z_1|^\beta$, one first apply \eqref{xi} and the order of the singularity for $s_1-s_2$ can be improved using $|z_2-z_1|^\beta$ and the space-time inequality. Secondly, by using the $|L-z_1|^\beta$ term and space-time inequality, we improve the order of the singularity in the estimate of $\mathcal{\wh K}^L$ given in \eqref{xih}. That is 
\begin{align*}
|z_{2} - z_1|^\beta|\hat{\mathcal{S}}_{s_1-s_{2}}^{z_1}(z_2,z_1)||\hat{\mathcal{K}}_{T-s_1}^L(z_1,t-s_1)|
& \leq C_T  |L-z_1|^\beta (s_1-s_2)^{-(1-\frac{\eta}{2})}\\
& \quad \times   g(2c (s_1-s_2), z_2-z_1)|\hat{\mathcal{K}}_{T-s_1}^L(z_1,t-s_1)|\\
& \leq  C_T^2\prod^{n-1}_{i=0}(s_i-s_{i+1})^{-\frac{3-(\eta+\beta)}{2}}  g(2c(s_i-s_{i+1}), z_i-z_{i+1}).
\end{align*}
Hence we have shown the following estimates for $n=2$
\begin{align*}
|L-z_2|^\beta|\hat{\mathcal{S}}_{s_1-s_{2}}^{z_1}(z_2,z_1)||\hat{\mathcal{K}}_{T-s_1}^L(z_1,t-s_1)| 
& \leq 2 C_T^2 \prod^{n-1}_{i=0}(s_i-s_{i+1})^{-\frac{3-(\eta+\beta)}{2}} g(2c(s_i-s_{i+1}), z_i-z_{i+1}).
\end{align*}
In general, suppose the following induction hypothesis holds for $n-1$, that is
\begin{align}
& |L-z_{n-1}|^\beta|\hat{\mathcal{K}}_{T-s_1}^L(z_1,t-s_1)|\prod^{n-2}_{i=1} |\hat{\mathcal{S}}_{s_i-s_{i+1}}^{z_i}(z_{i+1},z_i)|\nonumber \\
& \leq  (2C_T)^{n-1} \prod^{n-2}_{i=0}(s_i -s_{i+1})^{-\frac{3-(\eta+\beta)}{2}}	g(2c (s_i-s_{i+1}), z_i-z_{i+1}).\label{indhyp}
\end{align}
To show that the above inequality holds for $n$ and obtain the estimate for $I^{K,n}$, we use the inequality 
$|L-z_n|^\beta \leq |L-z_{n-1}|^\beta + |z_{n} - z_{n-1}|^\beta$ valid for $\beta\in[0,1]$ and from the induction hypothesis \eqref{indhyp} and estimate \eqref{xih}, we have
\begin{align*}
& |L-z_{n}|^\beta|\hat{\mathcal{K}}_{T-s_1}^L(z_1,t-s_1)|\prod^{n-1}_{i=1} |\hat{\mathcal{S}}_{s_i-s_{i+1}}^{z_i}(z_{i+1},z_i)| \\
& \leq 2^{n-1}(C_T)^{n}\prod^{n-1}_{i=0}(t_i-t_{i+1})^{-\frac{3-(\eta+\beta)}{2}}  g(2c(s_i-s_{i+1}), z_i-z_{i+1}) + |z_n-z_{n-1}|^\beta|\hat{\mathcal{K}}_{T-t_1}^L(z_1,t-t_1)|\\
&  \quad \times \prod^{n-1}_{i=1} |\hat{\mathcal{S}}_{s_i-s_{i+1}}^{z_i}(z_{i+1},z_i)|.
\end{align*}
For the term associated with $|z_n - z_{n-1}|^{\beta}$, one applies \eqref{xi} to $|\hat{\mathcal{S}}_{s_{n-1}-s_{n}}^{z_{n-1}}(z_{n},z_{n-1})|$ and use the induction hypothesis in \eqref{indhyp} to obtain
\begin{align*}
&  |z_{n} - z_{n-1}|^\beta|\hat{\mathcal{K}}_{T-s_1}^L(z_1,t-s_1)|\Big[\prod^{n-1}_{i=1} |\hat{\mathcal{S}}_{s_i-s_{i+1}}^{z_i}(z_{i+1},z_i)|\Big] \\
& \leq C_T (s_{n-1}-s_n)^{-\frac{3-(\eta+\beta)}{2}}g(c (s_{n-1}-s_n), z_{n-1}-z_{n})  |L-z_{n-1}|^\beta|\hat{\mathcal{K}}^{L}_{T-s_1}(z_1,t-s_1)\Big[\prod^{n-2}_{i=1} 
|\hat{\mathcal{S}}^{z_{i}}_{s_i-s_{i+1}}(z_{i+1},z_i)|\Big]\\
& \leq 2^{n-1} (C_T)^{n}\prod^{n-1}_{i=0}(s_i-s_{i+1})^{-\frac{3-(\eta+\beta)}{2}}  g(2c(s_i-s_{i+1}), z_i-z_{i+1}).
\end{align*}
Therefore by combining the two terms we have shown that \eqref{indhyp} holds for $n$, that is 
\begin{align}
& |L-z_{n}|^\beta|\hat{\mathcal{K}}_{T-s_1}^L(z_1,t-s_1)|\Big[\prod^{n-1}_{i=1} \hat{\mathcal{S}}_{s_i-s_{i+1}}^{z_i}(z_{i+1},z_i)|\Big] \nonumber \\
&\leq (2C_T)^n\prod^{n-1}_{i=0}(s_i-s_{i+1})^{-\frac{3-(\eta+\beta)}{2}} g(2c(s_i-s_{i+1}), z_i-z_{i+1}).\label{indc}
\end{align}
We consider the integrand in $I^{K,n}$ and by applying \eqref{ql} to $\bar q_{s_n}^{z_n}(x,z_n)$ and \eqref{indc}
\begin{align*}
& |\bar q_{s_n}^{z_n}(x,z_n) \,\hat{\mathcal{K}}_{T-s_1}^L(z_1,t-s_1)\prod^{n-1}_{i=1} \hat{\mathcal{S}}_{s_i-s_{i+1}}^{z_i}(z_{i+1},z_i)|\\
&\leq  C s^{-\frac{\beta}{2}}_n g(c s_n, x-z_n) |L-z_{n}|^\beta|\hat{\mathcal{K}}_{T-s_1}^L(z_1,t-s_1)|\prod^{n-1}_{i=1} |\hat{\mathcal{S}}_{s_i-s_{i+1}}^{z_i}(z_{i+1},z_i)|  \\ 
& \leq  C (2C_T)^n g(c s_n, x-z_n)  s^{-\frac{\beta}{2}}_n \prod^{n-1}_{i=0}(s_i-s_{i+1})^{-\frac{3-(\eta+\beta)}{2}}  g(2c(s_i-s_{i+1}), z_i-z_{i+1}).
\end{align*}

From the semigroup property and Lemma \ref{beta:type:integral}, we derive
\begin{align}
|I^{K,n}(x,t)| & \leq  C (2C_T)^n  \left\{ \int_{\Delta_{n}(t)} d\s_n  \, s^{-\frac{\beta}{2}}_n \prod^{n-1}_{j=0}(s_i-s_{i+1})^{-\frac{3-(\eta+\beta)}{2}} \right\} g(2ct,L-x)  \nonumber \\
& = C (2C_T)^n  t^{-\frac{\beta}{2} + n (\frac{\eta+\beta}{2}-\frac12)} \frac{\Gamma^{n}(\frac{\eta+\beta}{2}-\frac12)\Gamma(1-\frac{\beta}{2})}{\Gamma(1-\frac{\beta}{2}+n(\frac{\eta+\beta}{2}-\frac12))} g(2ct,L-x) \label{whi1}.
\end{align}

The above  shows that the $n$-th term is finite and the series $\sum_n |p^{K,n}(x,t)|$ converges absolutely for every $(x,t)\in \rr\times \rr^{*}_{+}$ and uniformly in $(x,t) \in \rr \times K_T$ where $K_T$ is any compact set of $(0,T]$. The Gaussian upper-bound \eqref{gaussian:upper:bound:backward} also follows \eqref{whi1}.
%
%
%
%
%

To show that the infinite sum and the integral can be interchanged, we apply Fubini-Tonelli's theorem. Using the fact that $h$ is bounded and the series $\sum_{n\geq0} \,\,|p^{D,n}_{T}(x,y)|$ is convergent and satisfies the Gaussian upper bounded given in \eqref{gaussian:upper:bound:backward}.
\begin{align*}
\int_{-\infty}^L dy\, \,|h(u+T,y)|  \,  \sum_{n\geq0} \,|\,p^{D,n}_{T}(x,y)| <\infty 
\end{align*}
By using the fact that $h$ is bounded, one has
\begin{align*}
\sum_{n\geq 0}\, \int_{0}^T dt \,h(u+t,L) |p^{K,n}(x,t)| & \leq |h|_\infty \sum_{n\geq0}\int_{0}^T dt \, |p^{K,n}(x,t)|.
\end{align*}

To show that the infinite sum in the right-hand side above is finite, we use the estimate of $|I^{K,n}(x,t)|$ in \eqref{whi1} to show that for $n\geq 1$,
\begin{align*}
 \int_{0}^T dt\, |I^{K,n}(x,t)| & \leq \int_{0}^T \,dt\,\, t^{-\frac{\beta}{2}-\frac{1}{2}+n(\frac{\eta+\beta-1}{2})} \frac{\Gamma(1-\frac{\beta}{2})\Gamma(\frac{\eta+\beta-1}{2})^n}{\Gamma(1-\frac{\beta}{2}+n(\frac{\eta+\beta-1)}{2}))}\\
& = \left[\frac{T^{\frac{1-\beta}{2} + n(\frac{\eta+\beta-1}{2})}}{{\frac{1-\beta}{2}+ n\left(\frac{\eta+\beta-1}{2}\right)}}\right]\frac{\Gamma(1-\frac{\beta}{2})\Gamma(\frac{\eta+\beta-1}{2})^n}{\Gamma(1-\frac{\beta}{2}+ n(\frac{\eta+\beta-1}{2}))}
\end{align*}

\noindent which forms a convergent series since $\beta \in (1-\eta,1]$.
\end{proof}

By using an appropriate approximation argument, we can extend the statement of Theorem \ref{backmain} for bounded measurable drift coefficients. That is, we remove the continuity hypothesis of $b$ on $(-\infty,L]$.


\begin{theorem}\label{extension:bounded:measurable:drift}
Under assumption \A{H2}, all statements of Theorem \ref{pexp}, Theorem \ref{backmain} and Corollary \ref{tau:density} hold.
\end{theorem}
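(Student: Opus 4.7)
My plan is to prove Theorem \ref{extension:bounded:measurable:drift} by mollifying the drift and passing to the limit separately in the analytic parametrix series and in the probabilistic object. I would fix a smooth compactly supported mollifier $\rho$ with unit mass and set $b_n = b * \rho_n$ with $\rho_n(\cdot) = n\rho(n\cdot)$. Then $b_n \in \mathcal{C}^{\infty}_b(\rr)$, $|b_n|_\infty \leq |b|_\infty$, and by Lebesgue's differentiation theorem $b_n(x) \to b(x)$ for Lebesgue-a.e.\ $x \in \rr$. For each $n$ the pair $(b_n,\sigma)$ satisfies the hypotheses of Theorem \ref{pexp}, Theorem \ref{backmain} and Corollary \ref{tau:density}, so the associated process $X^{(n),x}$ and its exit time $\tau^{(n),x}$ enjoy all stated conclusions; denote the corresponding density components by $p^{K,(n)}$ and $p^{D,(n)}_T$.

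On the analytic side, I would observe from \eqref{SSy}, \eqref{KK1}, \eqref{4.17} and \eqref{4.18} that the frozen densities $\bar q^z_s, f^z_{\bar\tau}$ are drift-independent while the operators $\hat{\mathcal{S}}^z_t$ and $\hat{\mathcal{K}}^L_t$ depend on the drift only through pointwise evaluation $b(\cdot)$ at the starting space variable, which becomes an integration variable in the iterated parametrix. Hence at every fixed $(x,t,z)$, each integrand defining $p^{K,n,(n)}$ and $p^{D,n,(n)}_T$ converges pointwise (for Lebesgue-a.e.\ value of the integration variables) to the corresponding integrand for $b$, with the Gaussian dominants of Theorem \ref{backmain} and of estimate \eqref{whi1} being uniform in $n$ since they depend on $b$ only through $|b|_\infty$. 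Dominated convergence applied first termwise and then to the series yields $p^{K,(n)}(x,t) \to p^K(x,t)$ and $p^{D,(n)}_T(x,z) \to p^D_T(x,z)$, with the Gaussian upper bounds \eqref{gaussian:upper:bound:backward} preserved in the limit.

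To identify the limit as the true density of $(u+\tau^x_T, X^x_{\tau^x_T})$, I would invoke Girsanov's theorem on a reference space carrying a Brownian motion $W$ and the driftless process $\bar X^x$. The Radon--Nikodym densities
\begin{equation*}
Z^{(n)}_T := \exp\Bigl(\int_0^T (\sigma^{-1} b_n)(\bar X^x_s)\,dW_s - \tfrac{1}{2}\int_0^T (\sigma^{-1} b_n)^2(\bar X^x_s)\,ds\Bigr)
\end{equation*}
satisfy Novikov's criterion uniformly in $n$ because $|\sigma^{-1} b_n|_\infty \leq |b|_\infty/\sqrt{\underline{a}}$, and they converge to the analogous $Z_T$ in $L^2(\P)$ since $b_n \to b$ almost everywhere and the law of $\bar X^x_s$ admits a Gaussian density for every $s>0$. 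Under the new measures the process $\bar X^x$ has the law of $X^{(n),x}$, resp.\ $X^x$, by the weak uniqueness of \cite{CPA:str:var}, so for every $h \in \mathcal{C}^{\infty,0}_b(\rr_+\times \rr)$,
\begin{equation*}
\E[h(\tau^{(n),x}_T, X^{(n),x}_{\tau^{(n),x}_T})] = \E[Z^{(n)}_T\, h(\bar\tau^x_T, \bar X^x_{\bar\tau^x_T})] \longrightarrow \E[Z_T\, h(\bar\tau^x_T, \bar X^x_{\bar\tau^x_T})] = \E[h(\tau^x_T, X^x_{\tau^x_T})].
\end{equation*}
Combined with the analytic limit applied to the expansion of $P^{(n)}_T h$, and dominated convergence on the integrals against $h$ using the uniform Gaussian dominants, this identifies $p_T(u,x,dt,dz)$ as the law of $(u+\tau^x_T, X^x_{\tau^x_T})$. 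The extension of Theorem \ref{pexp} and Corollary \ref{tau:density} then follow from the same approximation scheme.

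The principal difficulty I anticipate is ensuring that the $L^2$ Girsanov convergence is genuine; one needs $\E[\int_0^T (b_n - b)^2(\bar X^x_s)\,ds] \to 0$, which I would derive from $(b_n - b)^2 \leq 4|b|_\infty^2$, almost everywhere convergence $b_n \to b$, and the absolute continuity of the law of $\bar X^x_s$ for $s>0$. A secondary technicality is to separately recover the interior and boundary components of $p_T$ by testing against $h$ supported away from $\{z=L\}\cup\{t=u+T\}$ and against $h$ concentrated near these sets, using the uniform Gaussian bounds to preclude mass loss in the limit.
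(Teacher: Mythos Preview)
Your approximation scheme and the analytic limit argument on the parametrix series are essentially identical to the paper's: both replace $b$ by continuous (or smooth) approximants with $\sup_n|b_n|_\infty\le|b|_\infty$ and $b_n\to b$ a.e., observe that all Gaussian dominants in \eqref{xi}, \eqref{xih} and the bounds of Theorem~\ref{backmain} depend on the drift only through $|b|_\infty$, and conclude by dominated convergence termwise and on the series.

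The genuine difference lies in the probabilistic identification step. The paper establishes $\E[h(u+\tau^{(n),x}_T,X^{(n),x}_{\tau^{(n),x}_T})]\to\E[h(u+\tau^x_T,X^x_{\tau^x_T})]$ via weak convergence of the laws $\P^{(n)}\Rightarrow\P$ on path space (Stroock--Varadhan), followed by a continuous-mapping argument that requires checking the stopping map $w\mapsto w_{\tau\wedge\cdot}$ is $\P$-a.s.\ continuous and that the first-passage time functional is continuous in the sense of Whitt. Your Girsanov route is more direct: by realizing all processes as the same driftless path $\bar X^x$ under varying equivalent measures, the functional $(\bar\tau^x_T,\bar X^x_{\bar\tau^x_T})$ is fixed and only the weight $Z^{(n)}_T$ changes, so the delicate continuity analysis of the stopping/first-passage maps disappears entirely. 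The price you pay is the need for absolute continuity of the law of $\bar X^x_s$ for $s>0$, which under \A{H2} is classical (e.g.\ parametrix on $\rr$ or Aronson estimates), and a clean argument for $Z^{(n)}_T\to Z_T$ in $L^1$; here your sketch is correct, and in fact Scheff\'e's lemma gives $L^1$ convergence immediately from convergence in probability together with $\E[Z^{(n)}_T]=\E[Z_T]=1$, so the $L^2$ claim is not even needed. Your secondary technicality about separating the interior and boundary parts is handled, as in the paper, simply by dominated convergence on $\int h\,p^{(n)}_T$ using the uniform Gaussian bounds, without any explicit splitting.
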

\begin{proof}
The proof is given in subsection \ref{boundeddrift} of the appendix.
\end{proof}

\begin{remark} A careful reading of the proofs of the main results obtained in this section show that we do not have to impose regularity assumptions of the coefficients $b$ and $\sigma$ on the whole real line but only on the interval $(-\infty,L]$. In particular, one may obtain similar results by only assuming that $b$ is bounded and continuous on $(-\infty,L]$ and that $a=\sigma^2$ is uniformly elliptic and $\eta$-H\"{o}lder continuous on $(-\infty,L]$. We introduced assumption \A{H2} in order to make the approximation argument of Theorem \ref{extension:bounded:measurable:drift} work properly, that is in order to construct a sequence of probability
measure $(\P^{N})_{N\geq1}$(on the path space) that converges to the probability measure $\P$ induced by $X$ the unique weak
solution to \eqref{sde:dynamics}. We do not know if such argument works if one only assumes that  $b$ is bounded measurable on $(-\infty,L]$ and $a=\sigma^2$ is uniformly elliptic and $\eta$-H\"{o}lder continuous on $(-\infty,L]$.  
\end{remark}

Now that we have obtained the parametrix expansion for the density, we study the differentiability of the functions $x \mapsto p^{D}_T(x,z)$ and $x\mapsto p^{K}_T(x,t)$, as well as Gaussian bounds for their first partial derivatives.

\begin{theorem}\label{backmaindiff}
Let $T>0$. Assume that \A{H2} holds. For any $(z,t) \in (-\infty,L] \times (0,T]$, the functions $p^{D}_T(x,z)$ and $p^{K}(x,t)$ given in Theorem \ref{backmain} are differentiable with respect to $x\in (-\infty,L]$. Moreover, for some positive $C,c>1$, for all $(t,z)\in (0,T] \times (-\infty,L]$, the following Gaussian upper-bounds hold
\begin{equation}
\label{derivative:gaussian:upper:bound:backward}
| \partial_x p^{K}(x,t)|  \leq \frac{C}{t} g(ct,L-x) \ \ \mbox{ and } \ \ |\partial_x p^{D}_T(x,z)|  \leq  \frac{C}{T^{1/2}} g(c T,z-x).
\end{equation}
Similarly, one has $\partial_x\P(u+\tau^{x}_{T}=u+T) = \int^L_{-\infty}dz  \,\partial_x p^D_T(x,z)$, $\partial_x \P(X^{x}_{\tau^x_T}=L) =  \int^{T}_0 dt  \,\partial_x p^K(x,t)$ and the following bounds hold
\begin{align*}
	|\partial_x\P(u+\tau^{x}_{T}=u+T)|& \leq Cg(cT,L-x),
\\
|\partial_x	\P(X^{x}_{\tau^x_T}=L)|& \leq
 \frac{C}{T^{1/2}}g(cT,L-x).
\end{align*}
 \end{theorem}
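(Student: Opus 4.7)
The plan is to differentiate the backward parametrix series of Theorem \ref{backmain} term by term. Since in the representations \eqref{4.17}--\eqref{4.18} the starting point $x$ appears only through the leftmost factor, namely $\bar q^{z_n}_{s_n}(x,z_n)$ for $n\geq 1$ in both series, and through $\bar q^z_T(x,z)$ or $f^L_{\bar\tau}(x,t)$ for $n=0$, the operator $\partial_x$ can be brought under the iterated space-time integrals and the sum once uniform convergence of the differentiated series has been established. The needed derivative bounds come from Lemma \ref{estimate:kernel:proxy}, namely $|\partial_x \bar q^y_t(x,z)|\leq C t^{-1/2}g(ct,z-x)$ together with its $|L-z|^\beta$ refinement analogous to \eqref{ql}, and from Lemma \ref{two:kernels:expression}, namely $\partial_x f^L_{\bar\tau}(x,t)=2\partial_t g(a(L)t,L-x)$, which is pointwise of order $t^{-1}g(ct,L-x)$.

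For the pointwise Gaussian bounds \eqref{derivative:gaussian:upper:bound:backward} one re-runs the convergence argument of Theorem \ref{backmain} with the sole modification being an extra $s_n^{-1/2}$ factor picked up by $\partial_x$ acting on the leftmost proxy density. For the $p^D$-series, combining this with the $\prod_i(s_i-s_{i+1})^{-(1-\eta/2)}$ bounds on the $\hat{\mathcal S}$-kernels and Lemma \ref{beta:type:integral} yields a contribution of order $T^{-1/2}\Gamma(\eta/2)^n/\Gamma(1/2+n\eta/2)\cdot g(cT,z-x)$, summable in $n$. For the $p^K$-series one reintroduces the $|L-z_n|^\beta$ trick with $\beta\in(1-\eta,1]$: the same induction as in Theorem \ref{backmain} produces $|\partial_x I^{K,n}(x,t)|\leq C(2C_T)^n t^{-(1+\beta)/2+n(\eta+\beta-1)/2}\Gamma^n/\Gamma\cdot g(ct,L-x)$, whose sum is controlled and, when combined with the explicit $n=0$ estimate $|\partial_x f^L_{\bar\tau}(x,t)|\leq C t^{-1}g(ct,L-x)$, delivers the stated $t^{-1}g(ct,L-x)$ bound.

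For the integrated derivatives of the boundary probabilities, observe that $\P(X^x_{\tau^x_T}=L)=\int_0^T p^K(x,t)dt=1-\P(\tau^x_T=T)$, so differentiation and integration may be exchanged and both quantities share the same magnitude. The $n=0$ contribution is handled by exact integration in $t$: using $\partial_x f^L_{\bar\tau}(x,t)=2\partial_t g(a(L)t,L-x)$ one gets $\int_0^T\partial_x f^L_{\bar\tau}(x,t)dt=2g(a(L)T,L-x)$ for $x<L$, of the desired Gaussian order. For $n\geq 1$ the time-exponent $-(1+\beta)/2+n(\eta+\beta-1)/2$ is strictly greater than $-1$ for $\beta\in(1-\eta,1]$, so $t$-integration of the pointwise bound yields a Gaussian of the form $g(c'T,L-x)$, whence both $Cg(cT,L-x)$ and the weaker $CT^{-1/2}g(cT,L-x)$ bounds follow. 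Extension to bounded measurable $b$ proceeds by the approximation argument of Theorem \ref{extension:bounded:measurable:drift}. The principal obstacle is the convergence of the differentiated $p^K$-series: the extra $s_n^{-1/2}$ from $\partial_x\bar q^{z_n}_{s_n}$ stacked on top of the $(t-s_1)^{-(3-\eta)/2}$ singularity of $\hat{\mathcal K}^L$ would destroy integrability in the innermost time variable, and it is precisely the $|L-z|^\beta$ space-improvement in \eqref{ql} that restores it via Lemma \ref{beta:type:integral}.
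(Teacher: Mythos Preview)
Your proposal is correct and follows essentially the same route as the paper: differentiate the series term by term, use Lemma \ref{estimate:kernel:proxy} for $\partial_x\bar q$, pick up the extra $s_n^{-1/2}$ (resp.\ $s_n^{-(1+\beta)/2}$) factor, and re-run the Beta/Gamma estimate via Lemma \ref{beta:type:integral}. One small correction: in the $p^K$ part you must take $\beta\in(1-\eta,1)$ strictly, since Lemma \ref{beta:type:integral} requires $b=-\tfrac{1+\beta}{2}>-1$; with $\beta=1$ the Gamma factor $\Gamma(\tfrac{1-\beta}{2})$ is not defined. Your treatment of the integrated bounds is in fact more careful than the paper's one-line remark: you correctly observe that the pointwise bound $t^{-1}g(ct,L-x)$ is not $dt$-integrable near $0$, so the $n=0$ term must be handled via the exact identity $\partial_x f^L_{\bar\tau}(x,t)=2\partial_t g(a(L)t,L-x)$, and you also note the useful identity $\partial_x\P(\tau^x_T=T)=-\partial_x\P(X^x_{\tau^x_T}=L)$, which immediately reconciles the two stated bounds.
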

 
\begin{proof}
By dominated convergence theorem, for $x \in (-\infty,L]$, one has
\bde
\partial_x I^{D,n}_{T}(x,z)= \int_{\Delta_n(T)} d\s_n\,\int_{(-\infty,L]^{n}}  d\mathbf{z}_n\, \partial_x\bar q_{t_n}^{z_n}(x,z_n)
\hat{\mathcal{S}}_{T-s_{1}}^{z}(z_{1},z)\prod^{n-1}_{i=1} \hat{\mathcal{S}}_{s_i-s_{i+1}}^{z_i}(z_{i+1},z_i) 
\ede

\noindent where we used the following estimate
\begin{align*}
\forall (x,z) \in (-\infty,L]^2, \  |\partial_x\bar q^z_t(x,z)| & \leq  \frac{C_T}{t^\frac{1}{2}}g(c t, x-z)
\end{align*}

\noindent and similarly to the proof of Theorem \ref{backmain}, using Lemma \ref{beta:type:integral}, we obtain the bound
\begin{align}
|\partial_x I^{D,n}_{T}(x,z)| & \leq C^{n}_T \left\{\int_{\Delta_n(T)} d\s_n \,\,s_n^{-\frac12} \prod_{i=0}^{n-1} (s_i-s_{i+1})^{-(1-\frac{\eta}{2})}\right\} g(cT,z-x) \nonumber \\
& =  C^{n}_T T^{-\frac12 + n \frac \eta 2} \frac{\Gamma^{n}(\frac \eta 2) \Gamma(\frac12)}{\Gamma(\frac12 + n \frac \eta 2)}g(cT,z-x) \label{partial:derivative:ID}
\end{align}
\noindent and, from the asymptotics of the Gamma function, the series $\sum_{n\geq0} \partial_x I^{D,n}_T(x,z)$ converges absolutely and uniformly for $(x,z)\in \rr^2$ and one has $\partial_x p^D_T(x,z) = \sum_{n\geq 0} \partial_x p^{D,n}_{T}(x,z)$. The Gaussian bound \eqref{derivative:gaussian:upper:bound:backward} also follows from \eqref{partial:derivative:ID}. Similarly, we have 	
\begin{align*}
\partial_x I^{K,n}(x,t)  = \int_{\Delta_n(t)} d\s_n\,\int_{(-\infty,L]^{n}} d\mathbf{z}_n\,\partial_x\bar q_{s_n}^{z_n}(x,z_n)
\hat{\mathcal{K}}_{T-s_{1}}^L(z_{1},t-s_1)\prod^{n-1}_{i=1} \hat{\mathcal{S}}_{s_i-s_{i+1}}^{z_i}(z_{i+1},z_i).
\end{align*}

\noindent For $\beta\in [0,1]$, we use the following estimate
\begin{align*}
|\partial_x\bar q^z_{t}(x,z)| \leq C \frac{|L-z|^\beta}{t^\frac{1+\beta}{2}}g(2\bar a t, x-z)
\end{align*}

\noindent and select $\beta \in (1-\eta,1)$. Now using the same proof as in Theorem \ref{backmain} (we omit the induction argument), one gets
\begin{align}
\partial_x I^{K,n}(x,t)  & \leq C^{n}_T \left\{\int_{\Delta_{n}(t)} d\s_n \,\, t^{-\frac{1+\beta}{2}} \prod^{n-1}_{i=0}(s_i-s_{i+1})^{-\frac{3-(\eta+\beta)}{2}}\right\} g(c t,L-x) \nonumber \\
& = C^{n}_T \,t^{-\frac{1+\beta}{2} + n \frac{\eta+\beta-1}{2}} \frac{\Gamma^n(\frac{\eta+\beta-1}{2}) \Gamma(\frac{1-\beta}{2})}{\Gamma(\frac{1-\beta}{2}+n (\frac{\eta+\beta-1}{2}))} g(ct,L-x) \label{partial:derivative:IK}
\end{align}

\noindent which shows that the series $\sum_{n\geq0}\partial_x I^{K,n}(x,t)$ converges absolutely for $(x,t)\in \rr \times \rr^{*}_+$ and uniformly for $(x,t)\in \rr \times K_T$, $K_T$ being any compact set of $(0,T]$, and that one has $\partial_x p^K(x,t) = \sum_{n\geq 0} \partial_x I^{K,n}(x,t)$. The Gaussian upper-bound \eqref{derivative:gaussian:upper:bound:backward} follows from \eqref{partial:derivative:IK}. The bounds for the derivatives of the probabilities are also obtained  from \eqref{derivative:gaussian:upper:bound:backward}.
\end{proof}


\begin{remark}Similarly to the forward method, in order to investigate the differentiability of $t\mapsto p^{K}(x,t)$, one is naturally led to differentiate the representation \eqref{4.18} with respect to $t$. The difficulty comes when one tries to differentiate the $\hat{\mathcal{K}}^L_{T-s_1}(z_1,t-s_1)$ term with respect to $t$ which involves the derivatives of $t\mapsto \partial^2_{z_1} f^{z}_{\bar{\tau}}(z_1,t-s)$. The singularity in time then prevents us to do so unless additional smoothness assumptions on the coefficients $b$ and $\sigma$ are provided. Again, this phenomenon does not appear in the standard diffusion framework because the density $f^{z}_{\bar{\tau}}(z,t-s)$ is replaced by a Gaussian density.  \\
\end{remark}

\subsection{Applications}\label{applications:backward}\rule[-10pt]{0pt}{10pt}\\

We conclude this section, by giving some applications of the results established in Theorem \ref{backmain} and Theorem \ref{backmaindiff}. From the Gaussian upper bounds satisfied by $p^{K}(x,t)$, $p^{D}_T(x,z)$ and their derivatives with respect to $x$, we claim:

\begin{corol}\label{bound:test:function} Let $T>0$ and $x\in (-\infty,L)$ be fixed. Then, the following upper bounds
\begin{align*}
|\E[h(\tau^x_T, X^x_{\tau^x_T})]| &\leq \int^T_0\,ds  |h(s,L)|\frac{1}{\sqrt{s}}g(cs,x-L) + \int^L_{-\infty}dz \,|h(T,z)|g(cT,x-z), \\
|\partial_x \E[h(\tau^x_T, X^x_{\tau^x_T})]| &\leq \int^T_0 \,ds  |h(s,L)|\frac{1}{s}g(cs,x-L) + \int^L_{-\infty}dz \,|h(t,z)|\frac{1}{\sqrt{T}} g(c T,x-z),
\end{align*}

\noindent hold for any Borel function $h$ defined on $\rr_+ \times (-\infty,L]$ as soon as the above integrals are finite. 
\end{corol}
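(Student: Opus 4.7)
The plan is to derive both bounds as immediate corollaries of the representation formula and Gaussian estimates already established in Theorems \ref{backmain} and \ref{backmaindiff}, taking $u=0$ throughout. First, specializing Theorem \ref{backmain} at $u=0$ and any $x\in(-\infty,L)$ gives
$$
\E[h(\tau^x_T,X^x_{\tau^x_T})] \;=\; \int_0^T h(s,L)\,p^{K}(x,s)\,ds \;+\; \int_{-\infty}^L h(T,z)\,p^{D}_T(x,z)\,dz.
$$
The first bound then follows directly by taking absolute values and applying the Gaussian upper estimates \eqref{gaussian:upper:bound:backward}, namely $p^{K}(x,s)\le C s^{-1/2}g(cs,L-x)$ and $p^{D}_T(x,z)\le C g(cT,z-x)$, the hypothesis being precisely that the resulting majorants are integrable.

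For the second bound, I plan to differentiate the above representation under the integral sign with respect to $x$. The justification uses the pointwise derivative bounds from Theorem \ref{backmaindiff}, namely $|\partial_x p^{K}(x,s)|\le C s^{-1}g(cs,L-x)$ and $|\partial_x p^{D}_T(x,z)|\le C T^{-1/2} g(cT,z-x)$. Concretely, for each $x_0\in(-\infty,L)$ one chooses a neighborhood $[x_0-\delta,x_0+\delta]\subset(-\infty,L)$; by a standard Gaussian change-of-constants, the bounds on the derivatives can be replaced by a single majorant $C' s^{-1}g(c's,L-y)$ and $C' T^{-1/2}g(c'T,z-y)$ uniformly in $y$ over this neighborhood. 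Multiplied by $|h(s,L)|$ and $|h(T,z)|$ respectively, these are integrable by hypothesis, and the dominated convergence theorem allows interchanging the derivative with the integrals. One then takes absolute values and applies the derivative estimates again to obtain the claimed bound.

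The argument is essentially mechanical; the only delicate point is producing a local-in-$x$ Gaussian dominant for the derivative densities so that dominated convergence applies. This is handled by the elementary observation that for $|x-y|\le \delta$, one has $g(cs,L-x)\le C_\delta \, g(c's,L-y)$ with a constant $C_\delta$ and a slightly inflated $c'>c$, which preserves the integrability of the majorant under the given hypothesis on $h$. No smoothness on $h$ is required since all the regularity is on the kernel side, which is precisely the advantage of the backward parametrix representation.
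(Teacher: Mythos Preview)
Your argument is correct and matches the intended approach. The paper itself omits the proof, noting only that the bounds follow from Theorems~\ref{backmain} and~\ref{backmaindiff} together with ``an approximation argument on the function $h$''. Your route---using the density representation (valid for arbitrary Borel $h$ once $p_T$ is identified as the law of the vector in Theorem~\ref{backmain}) and then differentiating under the integral via a local Gaussian dominant---is a legitimate alternative to first proving the bound for smooth bounded $h$ and then approximating, and is arguably more direct.

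One small point worth tightening: your inequality $g(cs,L-x)\le C_\delta\, g(c's,L-y)$ for $|x-y|\le\delta$ and $c'>c$ does hold, but only once $\delta$ is taken small relative to $(L-x_0)(c'-c)/c'$ (otherwise the exponent can blow up as $s\downarrow 0$). With that proviso the dominated-convergence step is justified, and the constant $c$ appearing in the corollary should be read as this inflated $c'$ rather than the constant coming directly from Theorem~\ref{backmaindiff}. This is consistent with the paper's convention that the constants $C,c$ are unspecified and may change from line to line.
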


Similarly to the forward case, the above bounds may be useful since combined with Theorem \ref{backmain} and Theorem \ref{backmaindiff} they allow to establish the continuity of the maps $x\mapsto \E[h(\tau^x_T, X^x_{\tau^x_T})]$ and $x\mapsto \partial_x \E[h(\tau^x_T, X^x_{\tau^x_T})]$ on $(-\infty,L)$ for a large class of test function. Although an approximation argument on the function $h$ is needed, we omit the proof of Corollary \ref{bound:test:function}.


\begin{corol}\label{tau:density}
For $x< L$, the first hitting time $\tau^x$ has a probability density function given by $t\mapsto p^K(x,t)$ defined on $(0,\infty)$ and an atom of size $\lim_{T\uparrow \infty} \int^L_{-\infty} \,dz\, p^D_T(x,z)$ at infinity.
\end{corol}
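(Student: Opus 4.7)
The plan is to exploit the identity $\tau^x_T = \tau^x \wedge T$ and let $T$ tend to infinity in Theorem \ref{backmain} (as extended by Theorem \ref{extension:bounded:measurable:drift}). For any fixed $t > 0$ and any $T > t$, the events $\{\tau^x \leq t\}$ and $\{\tau^x_T \leq t\}$ coincide, so by Theorem \ref{backmain} one has
$$\P(\tau^x \leq t) = \P(\tau^x_T \leq t) = \int_0^t p^K(x, s)\, ds.$$
Once this is established for every $t > 0$, the map $t \mapsto p^K(x, t)$ is the density of $\tau^x$ restricted to $(0, \infty)$.

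The crux is to check that $p^K(x, t)$ is intrinsic, i.e.\ independent of the horizon $T$ (as long as $t < T$). Inspecting the definition \eqref{4.18} of $I^{K,n}(x,t)$, the only place where $T$ enters is inside the kernel $\hat{\mathcal{K}}^L_{T-s_1}(z_1, t-s_1)$. From \eqref{KK1}, this kernel equals an expression depending only on $(z_1, t-s_1)$ multiplied by the indicator $\I_{t-s_1 \leq T-s_1} = \I_{t \leq T}$. Hence, for $t < T$, the indicator is $1$ and the value of each $p^{K,n}(x, t)$ (as well as of the series $p^K(x, t) = \sum_{n \geq 0} p^{K,n}(x, t)$) is purely a function of $(x, t)$. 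Together with the Gaussian bound \eqref{gaussian:upper:bound:backward}, which ensures $\int_0^t p^K(x, s)\, ds < \infty$, this yields the first half of the claim.

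For the atom at infinity, $\{\tau^x = \infty\} = \bigcap_{T > 0} \{\tau^x > T\}$ and the events $\{\tau^x > T\}$ coincide with $\{\tau^x_T = T\}$, so monotone convergence gives
$$\P(\tau^x = \infty) = \lim_{T \uparrow \infty} \P(\tau^x_T = T) = \lim_{T \uparrow \infty} \int_{-\infty}^L p^D_T(x, z)\, dz,$$
where the last equality is again the content of Theorem \ref{backmain}; the limit exists because $T \mapsto \P(\tau^x > T)$ is monotone. The only genuine (and mild) obstacle is the verification of the $T$-independence of $p^K(x, t)$ in the middle paragraph; beyond that, the statement reduces to a direct passage to the limit in Theorem \ref{backmain}, and the consistency relation $\int_0^T p^K(x, s)\, ds + \int_{-\infty}^L p^D_T(x, z)\, dz = 1$ (obtained by applying Theorem \ref{backmain} with $h \equiv 1$) shows that no mass is lost in the process.
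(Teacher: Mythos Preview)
Your proof is correct and follows essentially the same route as the paper: restrict to $[0,T)$, identify the law of $\tau^x$ with that of $\tau^x_T$ there, invoke Theorem~\ref{backmain}, and then let $T\uparrow\infty$. Your explicit check that $p^K(x,t)$ does not depend on the horizon $T$ (via inspection of \eqref{KK1} and \eqref{4.18}) is a useful addition; the paper states this fact just after \eqref{4.18} but does not reprise it in the corollary's proof. Conversely, the paper isolates the non-negativity of $p^K$ more explicitly before passing to the limit, whereas you leave it implicit in the identification of $p^K$ as the derivative of the nondecreasing function $t\mapsto\P(\tau^x\le t)$.
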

\begin{proof}
For every $T>0$, note that the law of $\tau^x$ restricted to $[0,T)$ is equal to the law of $\tau^x\wedge T$ restricted to $[0,T)$, since for any borel set $A \in \mathcal{B}(\rr_+)$
\begin{gather*}
\P(\tau^x\in A, 0\leq \tau^x< T)  = \P(\tau^x\wedge T \in A , 0\leq \tau^x\wedge T< T).
\end{gather*}
From Theorem \ref{backmain}, we have,
\bde
\int_{\rr_+}\I_{[0,T)}(t) \I_A(t) \P(\tau^x\in dt) =  \int_{\rr_+} \I_{[0,T)}(t)\I_A(t)p^K(x,t)dt 
\ede
which shows that $p^K(x,t)\I_{[0,T)}(t)$ is non-negative almost everywhere with respect to the Lebesgue measure. This implies that $t\mapsto p^K(x,t)$ is non-negative almost everywhere on $[0,\infty)$.
Therefore, letting $T\uparrow \infty$ by monotone convergence theorem, we have 
\begin{gather*}
\int_{\rr_+}\I_{[0,\infty)}(t) \I_A(t) \P(\tau^x\in dt) =  \int_{\rr_+} \I_{[0,\infty)}(t)\I_A(t) p^K(x,t)dt .
\end{gather*}
To compute the atom at infinity, we see that 
\begin{gather*}
\P(\tau^x \geq T) = \P(\tau^x\wedge T \geq T)
\end{gather*}
and by Theorem \ref{backmain}
\begin{gather*}
\P(\tau^x\geq T)  = \int^L_{-\infty}  dz \,\,p^D_T(x,z).
\end{gather*}
The left hand side in the above is non-negative and decreasing with respect to $T$, therefore the limit as $T\uparrow \infty$ exists and 
\begin{align*}
\P(\tau^x = \infty)  & = \lim_{T\uparrow \infty}\int^L_{-\infty}  dz \,\,p^D_T(x,z) = 1- \int^\infty_0 \,dt\,p^K(x,t) .
\end{align*}
\end{proof}

\begin{remark}
We make the two following remarks. Firstly, in general  given the solution $X$ to a SDE with inital condition $x$ and the corresponding hitting time $\tau^x$ of a level $L$, the existence/size of the atom $\P(\tau^x= \infty)$ depends on the form of the drift and is usually a non-trivial quantity. For example, in the case of Brownian motion with a negative constant linear drift, that is $X_t = bt + B_t$ for $b<0$, the atom is of size $1-e^{2|b|L}$.  Secondly, we see that for $T>0$, $ \P(\max_{ 0 \leq s \leq T} X_s < L) = \P(\tau^x > T) $ and therefore
\begin{align*}
\int^T_0 p^K(x,t)dt = \P(0\leq\tau^x < T) & = 1-\int^L_{-\infty}  dz \,\,p^D_T(x,z) 
\end{align*}
by differentiating with respect to $T$, we observe that for $t\in (0,\infty)$, $p^K(x,t) = -\partial_t \int^L_{-\infty} dz \, p^D_t(x,z)$.
\end{remark}

We now aim at providing a probabilistic representation of the transition density using the backward parametrix method. For $x<L$, we use the change of variables $s_i = T-t_i$ and $s_i = t- t_i$ for $i = 1,\dots, n$ for \eqref{4.15} and \eqref{4.16} respectively to obtain for any bounded measurable test function $h$,
\begin{align*}
I^{D,n}_{T}h(u,x)&= \int dz \,h(u+T,z) \int_{\Delta_n^*(T)}d\t_n\,\int_{(-\infty,L]^{n}} d\mathbf{z}_n\,\,\bar q_{T-t_n}^{z_n}(x,z_n)
\hat{\mathcal{S}}_{t_{1}}^z(z_{1},z)\prod^{n-1}_{i=1} \hat{\mathcal{S}}_{t_{i+1}-t_{i}}^{z_i}(z_{i+1},z_i),\\
I^{K,n}h(u,x)  & = \int ds \, h(u+s, L) \int_{\Delta_n^*(t)}d\mathbf{t}_n \,\,\int_{(-\infty,L]^{n}} d\mathbf{z}_n\,\,\bar q_{t-t_n}^{z_n}(x,z_n)
\,\hat{\mathcal{K}}_{t_1+T-t}^L(z_1,t_1)\prod^{n-1}_{i=1} \hat{\mathcal{S}}_{t_{i+1}-t_{i}}^{z_i}(z_{i+1},z_i).
\end{align*}
We first notice that $\bar q^z_t(x,z) = \bar q^z_t(z,x)$ and proceed similarly to the forward method except the role of $z$ and $x$ is reversed. 
%
We set 
\begin{align*}
\hat {\mathcal{S}}_t^z(x,z) & =\left\{\frac{1}{2}\left[a(x)-a(z)\right]\partial_x^2 \bar q^z_{t}(x,z) +
b(x)\partial_{x}\bar q^z_t(x,z)\right\}  \I_\seq{x < L}\I_\seq{z<L}\\
	       &:=\vartheta_t(z,x) \bar q^z_t(z,x)\I_\seq{x < L}\I_\seq{z<L},
\end{align*}
where 
\begin{align*}
\vartheta_t(z,x) & : = \frac{1}{2}(a(x) - a(z))\hat{\mu}^2_t(z,x)  +  b(x)\hat{\mu}^1_t(z,x),  \\
\hat{\mu}^{1}_t(x,z) & := H_1(a(z)t,z-x) - \frac{1}{a(z)t} \frac{2(L-z)}{(\exp(\frac{2(L-z)(L-x)}{a(z)t}) - 1)}, \\
\hat{\mu}^{2}_t(x,z) & := H_2(a(z)t,z-x) - \frac{1}{a^2(z)t^2} \frac{4(L-z)(L-x)}{(\exp(\frac{2(L-z)(L-x)}{a(x)t})-1)}.
\end{align*}
We can write for $n = 2$,
\begin{align*}
 \int_{-\infty}^{L} dz_1 \,\hat{\mathcal{S}}_{t_{1}}^z(z_1,z) \hat{\mathcal{S}}_{t_{2} - t_1}^{z_1}(z_2,z_1)&= \E[\vartheta_{t_1}(z,\bar{X}^{z}_{t_1})\hat{\mathcal{S}}_{t_{2} - t_1}^{\bar{X}^{z}_{t_1}}(z_2,\bar{X}^{z}_{t_1})
\I_\seq{\bar{\tau}^{z}> {s_1}}] \\
	   &= \E[\hat{\vartheta}_{t_1}(z,\bar {X}^{z}_{t_1}) \hat {\mathcal{S}}_{t_{2} - t_1}^{\bar {X}^{z}_{t_1}}(z_2,\bar{X}^{z}_{t_1})],
\end{align*}

\noindent where, similarly to the forward probabilistic representation, we introduced
\be
\hat 	\vartheta_{t}(z,x):= \vartheta_t(z,x) \Lambda_t(z,x).\label{theta}
\ee 
From \eqref{xi}, we note that $\E[|\hat{\vartheta}_t(z,\bar X^{z}_t)|]\leq C_T t^{-(1-\eta/2)}$ for $t\in (0,T]$, which in particular implies that $\hat{\vartheta}_t(z,\bar X^{z}_t) \in L^{1}(\P)$. For a given time partition $\pi:0=t_0 < t_1 <\cdots < t_n < t_{n+1}=T$ and $z\in \rr$, we define $\bar{X}^{\pi,z} = (\bar{X}^{\pi,z}_{t_i})_{0 \leq i \leq n+1}$ to be the following Euler scheme
\begin{align}
\bar{X}^{\pi,z}_{t_{i+1}} & = \bar{X}^{\pi,z}_{t_i} + \sigma(\bar{X}^{\pi,z}_{t_i}) (W_{t_{i+1}}-W_{t_i}) \label{euler1}\\
\bar X^{\pi,z}_{t_0} & = z.\nonumber 
\end{align}
Hence, by induction $(t_0 = 0)$, we obtain
\begin{align*}
\int_{(-\infty,L]^{n}} d\mathbf{z}_n\,\,\bar{q}_{T-t_n}^{z_n}(x,z_n)\hat {\mathcal{S}}_{t_{1}}^{z}(z_{1},z)\prod^{n-1}_{i=1} \hat {\mathcal{S}}_{t_{i+1}-t_{i}}^{z_i}(z_{i+1},z_i)
&= \mathbb{E}\Big[\bar q_{T-t_n}^{\bar  X^{\pi,z}_{t_n}}(x,\bar X^{\pi,z}_{t_n}) \prod^{n-1}_{i=0} \hat{\vartheta}_{t_{i+1}-t_i}(\bar  X^{\pi,z}_{t_n},\bar  X^{\pi,z}_{t_{n+1}}) \Big].
\end{align*}

Let $(N(t))_{t\geq 0}$ be a Poisson process with intensity parameter $\lambda>0$ independent from $\bar X^{\pi,z}$, and define $N:=N(T)$. Let $\zeta_1<\zeta_2<\dots \zeta_N$ be the event times of the Poisson process and we set $\zeta_0 = 0,\zeta_{N+1} = T$. We know that conditional on $N = n$, the distribution of the event times follows a uniform order statistic given by $\P(N_T = n, \zeta_1\in dt_1,\dots ,\zeta_n \in dt_n) = \lambda^n e^{-\lambda T} d\mathbf{t}_i$
on the set $\Delta^*_i(T) =\left\{ \t_n \in [0,T]^{n}: 0< t_1 < t_2 < \cdots < t_n < T \right\}$.  We still denote by $\pi$ the random time partition $\pi: \zeta_0=0 < \zeta_1 < \cdots < \zeta_{n+1}= T$ and $\bar{X}^{\pi,z} = (\bar{X}^{\pi,z}_{\zeta_i})_{0 \leq i \leq n+1}$ and its associated Euler scheme defined in \eqref{euler1}. As a consequence, given a random variable $Z$ independent from $\bar X^{\pi,z}$ and the Poisson process $N$ with density function $g$, we may rewrite $I^{D,n}_Th(u,x)$ in a probabilistic way as follows
\begin{align*}
I^{D,n}_{T}h(u,z) & =\mathbb{E}\Big[\int_\rr dz \,h(u+T,z) e^{\lambda T}\,\bar q_{T-\zeta_{N}}^{\bar X^{\pi,z}_{\zeta_{N}}}(x,\bar X^{\pi,z}_{\zeta_{N}}) \lambda^{-{N}}\prod^{N-1}_{i=0}  \hat \vartheta_{\zeta_{i+1}-\zeta_i}(\bar X^{\pi,z}_{\zeta_i},\bar X^{\pi,z}_{\zeta_{i+1}}) \I_\seq{N= n}\Big] , \ \  n \geq 0.\\
& =\mathbb{E}\Big[e^{\lambda T}\,h(u+T,Z)g(Z)^{-1}\bar q_{T-\zeta_{N}}^{\bar X^{\pi,Z}_{\zeta_{N}}}(x,\bar X^{\pi,Z}_{\zeta_{N}}) \lambda^{-{N}}\prod^{N-1}_{i=0}  \hat \vartheta_{\zeta_{i+1}-\zeta_i}(\bar X^{\pi,Z}_{\zeta_i},\bar X^{\pi,Z}_{\zeta_{i+1}}) \I_\seq{N= n}\Big] , \ \  n \geq 0.
\end{align*}

\noindent We now consider $I^{K,n}h(u,x)$. We note that the derivatives of $f^y_{\bar \tau}(x,t)$ are given by
\begin{align*}
 \partial_{x} f^{y}_{\bar  \tau}(x,t) &= \left[\frac{L-x}{a(y)t} - \frac{1}{L-x}\right]\ f^{y}_{\bar \tau}(x,t):= \bar  H_1(a(y)t,L-x)f^{y}_{\bar \tau}(x,t), \\
 \partial_x^2 f^{y}_{\bar \tau}(x,t) &= \left[\frac{(L-x)^2}{a(y)^2t^2} - \frac{3}{a(y)t}\right]\ f^{y}_{\bar \tau}(x,t):= \bar H_2(a(y)t,L-x)f^{y}_{\bar \tau}(x,t),
\end{align*}
\noindent and write $\bar {\mathcal{K}}_{t}(x,s) =\tilde\vartheta_L(x,s)f^{L}_{\bar \tau}(x,s) \I_\seq{s < t}$ with $\tilde \vartheta_L(x,s) := \frac12\left[a(x)-a(L)\right]\bar H_2(a(L)s,L-x) + b(x)\bar H_1(a(L)s,L-x)$. With the above notations, performing the change of variables $t-t_i = s_{n-i+1}$ and $z_i = y_{n-i+1}$ for $i = 0,\dots, n$, we can write
\begin{align*}
I^{K,n}(x,t)  
			  & = \int_{\Delta_n^*(t)}d\mathbf{s}_n \,\,\int_{(-\infty,L]^{n}} d\mathbf{y}_n\,\tilde	 \vartheta_L(y_n,t-s_n)f^{L}_{\bar \tau}(y_n,t-s_n) \bar q_{s_1}^{y_1}(x,y_1)\prod^{n-1}_{i=1} \vartheta_{s_{i+1}-s_i}(y_{i+1},y_i) \bar q^{y_{i+1}}_{s_{i+1}-s_i}(y_{i},y_{i+1})
\end{align*}
where, as convention, we set $s_0 = 0$, $s_{n+1} = t$, $y_0=x$ and $y_{n+1} = L$. We use the idea of importance sampling and rewrite the integrand in the above as the following, 
\begin{gather*}
\left\{\tilde\vartheta_L(y_n,t-s_n)f^{L}_{\bar \tau}(y_n,t-s_n)\frac{\bar q_{s_1}^{y_1}(x,y_1)}{\bar q_{s_1}^{x}(x,y_1)} \prod^{n-1}_{i=1} \vartheta_{s_{i+1}-s_i}(y_{i+1},y_i) \frac{\bar q^{y_{i+1}}_{s_{i+1}-s_i}(y_{i},y_{i+1})}{\bar q^{y_{i}}_{s_{i+1}-s_i}(y_{i},y_{i+1})}\right\} \prod_{i=0}^{n-1}\bar q^{y_{i}}_{s_{i+1}-s_i}(y_{i},y_{i+1})
\end{gather*}
and define the weights
\begin{gather*}
\tilde \vartheta^i_{t}(z,x) := 
\begin{cases}
\frac{\bar q^{z}_{t}(x,z)}{\bar q^{x}_{t}(x,z)}  & i  =0  \\
\hat \vartheta_{t}(z,x) \frac{\bar q^{z}_{t}(x,z)}{\bar q^{x}_{t}(x,z)} & i =1,\dots, n-1.
\end{cases}
\end{gather*}

\noindent where $\hat \vartheta_{t}(z,x)$ is defined in \eqref{theta}. Therefore by integrating against $d\mathbf{y}_n$, using the Euler scheme $\bar X^{\pi,z}$ defined in \eqref{euler1} except with initial condition $z= x$, and the conditional distribution of the ordered jump times of the Poisson process $N$, we can write for $n\geq 1$,
\begin{align*}
I^{K,n}h(u,x)  &  = \mathbb{E}\big[\int_{\rr_+} dt \,h(u+t,L)\int_{\Delta_n^*(t)}d\mathbf{s}_n \, \, f^{L}_{\bar \tau}(\bar X^{\pi,x}_{s_n},t-s_n)
\tilde\vartheta_{L}(\bar X^{\pi,x}_{s_n},t-s_n) \prod^{n-1}_{i=0} \tilde \vartheta^i_{s_{i+1}-s_i}(\bar X^{\pi,x}_{s_{i+1}},\bar X^{\pi,x}_{s_i}) \big]\\
			  &  = e^{\lambda T}\mathbb{E}\Big[h(u+\zeta_N + \bar \tau^{\bar X^{\pi,x}_{\zeta_N}},L)\lambda^{-N}\I_\seq{0<\bar \tau^{\bar X^{\pi,x}_{\zeta_N}}<T-\zeta_N}	
\tilde\vartheta_{L}(\bar X^{\pi,x}_{\zeta_N},\bar \tau^{\bar X^{\pi,x}_{\zeta_N}}) \prod^{N-1}_{i=0} \tilde \vartheta^i_{\zeta_{i+1}-\zeta_i}(\bar X^{\pi,x}_{\zeta_{i+1}},\bar X^{\pi,x}_{\zeta_i}) \I_\seq{N=n}\Big].
\end{align*}

We point out that the form of the above probabilistic representation for the $I^{K,n}h(u,x)$ is different from the one introduced in Bally and Kohatsu-Higa \cite{Bally:Kohatsu}, where the Euler scheme therein has initial value $z$, which represents the terminal value of the process $X$. In the current case, the change of variable and the use of importance sampling, effectively reversed the direction of the Euler scheme and similarly to the forward method, the initial value now is $x$, which represents the initial value of the process $X$. We believe that the final representation derived here is more intuitive from a simulation point of view.

\begin{theorem}\label{backprob}
Let $T>0$. Assume that \A{H2} holds. Define the two sequences $(\bar \Gamma^D_N(z))_{N\geq0}$ and $(\bar{\Gamma}^K_N(x))_{N\geq 0}$ as follows
\begin{equation*}
\bar {\Gamma}^D_{N}(z)= \left\{
    \begin{array}{ll}
        \lambda^{-{N}}\prod^{N-1}_{i=0}  \bar  \vartheta_{\zeta_{i+1}-\zeta_i}(\bar  X^{\pi,z}_{\zeta_i},\bar  X^{\pi,z}_{\zeta_{i+1}}) & \mbox{ if } N \geq 1, \\
        1 & \mbox{ if } N = 0
    \end{array}
\right.
\end{equation*}
and 
\begin{gather*}
\bar \Gamma^K_N(x): = 
\begin{cases}
\lambda^{-N}\I_\seq{0<\bar \tau^{\bar X^{\pi,x}_{\zeta_N}}<T-\zeta_N}	\tilde\vartheta_{L}(\bar X^{\pi,x}_{\zeta_N},\bar \tau^{\bar X^{\pi,x}_{\zeta_N}}) \prod^{N-1}_{i=0} \tilde \vartheta^i_{\zeta_{i+1}-\zeta_i}(\bar X^{\pi,x}_{\zeta_{i+1}},\bar X^{\pi,x}_{\zeta_i})  & N \geq 1\\
1 & N = 0.
\end{cases}
\end{gather*}
Then, the following probabilistic representation holds. Let $Z$ be a random variable independent from $\bar X^{\pi,x}$ and the Poisson process $N$ with positive density function $g$. Then, for any test function $h \in \mathcal{B}_b(\rr_+\times \rr)$, for all $x \in (-\infty,L]$, one has
\begin{align*}
\mathbb{E}[h(\tau^x_T, X^x_{\tau^x_T})] = e^{\lambda T}\mathbb{E}\big[h(T, Z)g(Z)^{-1}\bar q_{T-\zeta_{N}}^{\bar X^{\pi,Z}_{\zeta_{N}}}(x,\bar X^{\pi,Z}_{\zeta_{N}}) \bar {\Gamma}^D_N(Z)\big] + e^{\lambda T}\mathbb{E}\big[h(\zeta_N + \bar \tau^{\bar X^{\pi,x}_{\zeta_N}},L)\bar \Gamma^K_N(x)\big].
\end{align*}

Moreover, a probabilistic representation for the transition density holds, namely
$$
\forall x \in (-\infty,L], \quad  p_T(0,x,dt,dz)  = \delta_{T}(dt) p^{D}_T(x,z) + \delta_L(dz) p^{K}(x,t)
$$
\noindent with for all $(t,x) \in (0,T] \times (-\infty,L]$, 
\begin{align*}
p^{D}_T(x,z) &= 
e^{\lambda T}\mathbb{E}\Big[\bar q_{T-\zeta_{N}}^{\bar X^{\pi,z}_{\zeta_{N}}}(x,\bar X^{\pi,z}_{\zeta_{N}}) \bar {\Gamma}^D_N(z)\Big], \\
p^{K}(x,t)   &= e^{\lambda T}\mathbb{E}\Big[f^{L}_{\bar \tau}(\bar X^{\pi,x}_{\zeta_N},t-\zeta_N)\bar \Gamma^K_N(x)\Big].
\end{align*}
\end{theorem}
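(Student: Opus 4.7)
The plan is to combine the series representations for $P_T h(u,x)$, $p^K(x,t)$, and $p^D_T(x,z)$ obtained in Theorem~\ref{backmain} with the probabilistic rewriting of each iterated operator already carried out in the paragraphs preceding the statement. Indeed, conditioning on $\{N(T)=n,\zeta_1=t_1,\dots,\zeta_n=t_n\}$ gives the uniform order-statistic density $\lambda^n e^{-\lambda T}\,d\t_n$ on $\Delta_n^{*}(T)$, so that multiplying each integrand by $\lambda^{-n}e^{\lambda T}$ and summing over $n\geq 0$ converts each multiple integral over $\Delta_n(T)$ into an unconditional expectation with respect to the Poisson arrival times.

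For the killed-diffusion part, I would start from the identity derived just above the statement,
\[
I^{K,n}h(u,x)=e^{\lambda T}\mathbb{E}\!\left[h(u+\zeta_N+\bar\tau^{\bar X^{\pi,x}_{\zeta_N}},L)\,\bar\Gamma^K_N(x)\,\mathbf{1}_{\{N=n\}}\right],
\]
which was obtained through the change of variables $s_i = t - t_{n-i+1}$ followed by the importance-sampling trick $\bar q^{y_{i+1}}_{s_{i+1}-s_i}(y_i,y_{i+1})=\bar q^{y_i}_{s_{i+1}-s_i}(y_i,y_{i+1})\cdot\bar q^{y_{i+1}}_{s_{i+1}-s_i}(y_i,y_{i+1})/\bar q^{y_i}_{s_{i+1}-s_i}(y_i,y_{i+1})$; reading $\bar q^{y_i}_{s_{i+1}-s_i}(y_i,y_{i+1})$ as the one-step transition density of the frozen Euler scheme~\eqref{euler1} identifies the product of weights with $\bar\Gamma^K_N(x)$. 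For the killed part, starting from
\[
I^{D,n}_T h(u,x) =\int dz\,h(u+T,z)\,p^{D,n}_T(x,z),
\]
the computations preceding the statement identify $p^{D,n}_T(x,z)$ with $\mathbb{E}[\bar q^{\bar X^{\pi,z}_{t_n}}_{T-t_n}(x,\bar X^{\pi,z}_{t_n})\prod_{i=0}^{n-1}\hat\vartheta_{t_{i+1}-t_i}(\bar X^{\pi,z}_{t_i},\bar X^{\pi,z}_{t_{i+1}})]$ evaluated on the deterministic partition; introducing an auxiliary random variable $Z$ with positive density $g$ independent of the rest and inserting the factor $g(Z)^{-1}$ then gives the expectation form displayed in the statement.

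With these two pointwise identities in hand, I would sum over $n\ge 0$ and interchange $\sum$ and $\mathbb{E}$ using Fubini–Tonelli. The justification relies on the absolute bounds established in Theorem~\ref{backmain}: the Gaussian estimates \eqref{gaussian:upper:bound:backward} together with the explicit Gamma-function bounds on $|p^{K,n}|$ and $|p^{D,n}_T|$ (estimates \eqref{whi1} and the bound on $|I^{D,n}_T|$ in its proof) provide an integrable dominant, both with respect to $(t,z)$ and with respect to the underlying probability. Once the interchange is valid, the representations for $p^D_T(x,z)$ and $p^K(x,t)$ follow by comparing integrands pointwise, and the representation for $\mathbb{E}[h(\tau^x_T,X^x_{\tau^x_T})]$ follows by integrating against the joint law $p_T(0,x,dt,dz)=\delta_L(dz)p^K(x,t)\,dt+\delta_T(dt)p^D_T(x,z)\,dz$ given in Theorem~\ref{backmain}.

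The main technical obstacle is the interchange of the infinite sum and the expectation in the representation of $p^K(x,t)$. The weights $\tilde\vartheta^i_{t}$ are unbounded and involve the singular kernel $\tilde\vartheta_L(\cdot,\bar\tau^{\bar X^{\pi,x}_{\zeta_N}})$, so that a naive absolute-value estimate on a single realization is not integrable. The correct dominant is obtained only after taking the expectation over the Euler scheme and the exit time $\bar\tau$: the integrand, in expectation, reduces to $|p^{K,n}(x,t)|$, for which the estimate \eqref{whi1} (with $\beta\in(1-\eta,1]$) supplies a summable Gamma-type bound. Thus the interchange must be performed \emph{after} the inner expectation has produced the bound of Theorem~\ref{backmain}; equivalently one bounds $\mathbb{E}\sum_{n}|\cdots|\mathbf{1}_{\{N=n\}}$ by $e^{\lambda T}\sum_n |p^{K,n}(x,t)|$, which is finite by Theorem~\ref{backmain}. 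An analogous but easier argument handles the $p^D_T$ term, and the importance-sampling density $g$ is chosen so that $g(Z)^{-1}$ times the Gaussian factor remains integrable, for instance $g$ a standard Gaussian density shifted and scaled according to the diffusion bounds.
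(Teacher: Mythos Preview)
Your proposal is correct and follows essentially the same route as the paper: the theorem is simply the summary of the computations carried out in the paragraphs immediately preceding its statement, namely the change of variables in $I^{D,n}_T$ and $I^{K,n}$, the identification of the products of kernels as expectations along the Euler scheme $\bar X^{\pi,\cdot}$, the importance-sampling rewriting for the $K$ part, and the Poisson randomisation of the time simplex. You even make explicit the Fubini--Tonelli justification for the interchange of sum and expectation, which the paper leaves implicit.

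One small slip in wording: you label the $I^{K,n}$ formula as ``the killed-diffusion part'' and the $I^{D,n}$ formula as ``the killed part'', whereas in the paper's notation $K$ refers to the exit-time kernel and $D$ to the killed diffusion in the domain; the mathematics you wrote under each label is nonetheless correct.
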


\begin{corol}
From Theorem \ref{backprob}, for any $h\in \mathcal{B}_b(\rr)$, one has
\begin{align*}
\partial_x\E[h(X^{x}_T) \I_\seq{\tau^x>T}] & =e^{\lambda T}\mathbb{E}\big[h(Z)g(Z)^{-1}\hat{\mu}^{1}_{T-\zeta_{N}}(x,\bar X^{\pi,Z}_{\zeta_{N}})\bar q_{T-\zeta_{N}}^{\bar X^{\pi,Z}_{\zeta_{N}}}(x,\bar X^{\pi,Z}_{\zeta_{N}}) \bar {\Gamma}^D_N(Z)\big],
\end{align*} 
where the density of $Z$ is given by the positive function $g$.
\end{corol}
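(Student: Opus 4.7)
The strategy is to chain Theorem \ref{backmain}, Theorem \ref{backmaindiff}, and Theorem \ref{backprob} via differentiation under the integral and the expectation, and to convert the remaining $dz$-integral into an expectation by importance sampling. From Theorem \ref{backmain}, $p^D_T(x,\cdot)$ is the density of $X^x_T$ on the event $\seq{\tau^x>T}$, so I would first write
$$
\E[h(X^x_T)\I_\seq{\tau^x>T}] = \int_{-\infty}^L h(z)\,p^D_T(x,z)\,dz.
$$
The Gaussian bound $|\partial_x p^D_T(x,z)|\le C T^{-1/2}g(cT,z-x)$ of Theorem \ref{backmaindiff}, combined with the boundedness of $h$, provides a $z$-integrable majorant uniform for $x$ in a neighbourhood of the target point, so dominated convergence yields
$$
\partial_x \E[h(X^x_T)\I_\seq{\tau^x>T}] = \int_{-\infty}^L h(z)\,\partial_x p^D_T(x,z)\,dz.
$$

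The second step is to differentiate the probabilistic representation $p^D_T(x,z) = e^{\lambda T}\E[\bar q^{\bar X^{\pi,z}_{\zeta_N}}_{T-\zeta_N}(x,\bar X^{\pi,z}_{\zeta_N})\,\bar \Gamma^D_N(z)]$ from Theorem \ref{backprob} under the expectation. A direct computation from the explicit form of $\bar q^z_t$ recalled in Lemma \ref{two:kernels:expression} gives
$$
\partial_x \bar q^z_t(x,z) = \hat\mu^1_t(x,z)\,\bar q^z_t(x,z),
$$
while the Gaussian control on $\partial_x \bar q^y_t$ from Lemma \ref{estimate:kernel:proxy}, together with the $L^1$-integrability of the weight $\bar\Gamma^D_N(z)$ (inherited from the parametrix-series estimates underlying Theorem \ref{backmain}, in particular the $\lambda^{-N}$ normalisation and the pointwise Gaussian control of the individual $\hat\vartheta$-factors), licenses the exchange of $\partial_x$ with $\E$. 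This delivers
$$
\partial_x p^D_T(x,z) = e^{\lambda T}\,\E\!\left[\hat\mu^1_{T-\zeta_N}(x,\bar X^{\pi,z}_{\zeta_N})\,\bar q^{\bar X^{\pi,z}_{\zeta_N}}_{T-\zeta_N}(x,\bar X^{\pi,z}_{\zeta_N})\,\bar \Gamma^D_N(z)\right].
$$

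The final step is the importance-sampling identity $\int F(z)\,dz = \E[F(Z)/g(Z)]$ applied to $F(z) = h(z)\,\partial_x p^D_T(x,z)$, after which Fubini, again justified by the Gaussian bounds obtained above, allows swapping the $Z$-expectation with the expectation defining $\partial_x p^D_T$. This produces exactly the claimed formula. The most delicate point is the justification of the differentiation under the expectation in the middle step: one needs a dominating function for $|\hat\mu^1_{T-\zeta_N}(x,\bar X^{\pi,z}_{\zeta_N})\,\bar q^{\bar X^{\pi,z}_{\zeta_N}}_{T-\zeta_N}(x,\bar X^{\pi,z}_{\zeta_N})\,\bar\Gamma^D_N(z)|$ that is uniform for $x$ in a neighbourhood of the target, and this is obtained by combining the pointwise Gaussian estimate on $\partial_x \bar q^y_t$ with the series-convergence bounds on the $L^1$-norm of $\bar\Gamma^D_N$ coming from the proof of Theorem \ref{backmain}.
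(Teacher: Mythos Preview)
Your approach is correct and is precisely the one the paper would take: the corollary is stated without proof, and the argument is meant to parallel the proof of the forward analogue, Corollary~\ref{ibpz}. There one writes the expectation as an integral against $p^D_T$, differentiates under the integral using the Gaussian bound on the derivative of the density, and then differentiates the probabilistic representation of $p^D_T$ under the expectation; your proposal does exactly this with $\partial_x$ in place of $\partial_z$, invoking Theorems~\ref{backmain}, \ref{backmaindiff} and \ref{backprob} at the corresponding steps, and adds the importance-sampling step to pass from the $dz$-integral to the expectation in $Z$.

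One small point of care: the identity you use, $\partial_x \bar q^{z}_t(x,z) = \hat\mu^1_t(\cdot,\cdot)\,\bar q^{z}_t(x,z)$, should be read with the argument convention the paper adopts when defining $\vartheta_t(z,x)$ just before Theorem~\ref{backprob}; there $\partial_x \bar q^{z}_t(x,z) = \hat\mu^1_t(z,x)\,\bar q^{z}_t(z,x)$, and the symmetry $\bar q^{z}_t(x,z)=\bar q^{z}_t(z,x)$ reconciles this with the form appearing in the corollary. This is purely notational and does not affect the validity of your argument.
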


\section{Appendix}

\subsection{On some useful technical results}

\begin{lem} \label{dq}
 For all $(u,y) \in \rr_+ \times (-\infty, L)$ and $h \in \mathcal{C}_b(\rr_+ \times (-\infty,L])$, one has
\begin{align*}
\lim_{\varepsilon\rightarrow 0}\int_{-\infty}^L \,dz\, h(u+\varepsilon,z)  \bar{q}_{\varepsilon}^z(y,z) = h(u,y).
\end{align*}
\end{lem}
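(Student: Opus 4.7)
The plan is to use the explicit form of $\bar q_\varepsilon^z(y,z)$ provided by Lemma \ref{two:kernels:expression}, split the integral into its two Gaussian pieces, and treat them separately: a reflection part which vanishes because $y<L$, and a principal part which behaves as an approximate identity in $z$ at the point $y$.

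First I would write
\begin{equation*}
\int_{-\infty}^L h(u+\varepsilon,z)\,\bar q_\varepsilon^z(y,z)\,dz
= I_1(\varepsilon)-I_2(\varepsilon),
\end{equation*}
where $I_1(\varepsilon)=\int_{-\infty}^L h(u+\varepsilon,z)\,g(a(z)\varepsilon,z-y)\,dz$ and $I_2(\varepsilon)=\int_{-\infty}^L h(u+\varepsilon,z)\,g(a(z)\varepsilon,z+y-2L)\,dz$. For the reflection part $I_2$, I would use that for $z\in(-\infty,L]$ and $y<L$ one has $z+y-2L\le y-L<0$, hence $(z+y-2L)^{2}\ge (L-y)^{2}>0$. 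Combined with uniform ellipticity $\underline a\le a(z)\le\overline a$, the change of variable $w=z+y-2L$ gives
\begin{equation*}
|I_2(\varepsilon)|\le |h|_{\infty}\sqrt{\tfrac{\overline a}{\underline a}}\,\Phi\!\Big(\tfrac{y-L}{\sqrt{\overline a\,\varepsilon}}\Big)\longrightarrow 0\qquad\text{as }\varepsilon\downarrow 0,
\end{equation*}
since $y-L<0$.

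For the principal part $I_1$, I would perform the (standard) approximation-of-identity change of variable $w=(z-y)/\sqrt{\varepsilon}$, which turns $g(a(z)\varepsilon,z-y)\,dz$ into $g(a(y+w\sqrt{\varepsilon}),w)\,dw$ with the scale-one Gaussian $g(a,w)=(2\pi a)^{-1/2}\exp(-w^{2}/(2a))$, yielding
\begin{equation*}
I_1(\varepsilon)=\int_{-\infty}^{(L-y)/\sqrt{\varepsilon}} h(u+\varepsilon,y+w\sqrt{\varepsilon})\,g\!\bigl(a(y+w\sqrt{\varepsilon}),w\bigr)\,dw.
\end{equation*}
Uniform ellipticity gives the integrable dominating bound $|h|_{\infty}(2\pi\underline a)^{-1/2}\exp(-w^{2}/(2\overline a))$, and pointwise, by the continuity of $h$ and of $a$, the integrand converges to $h(u,y)g(a(y),w)$ as $\varepsilon\downarrow 0$; moreover $(L-y)/\sqrt{\varepsilon}\to+\infty$ since $y<L$. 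Dominated convergence then yields $I_1(\varepsilon)\to h(u,y)\int_{\mathbb R}g(a(y),w)\,dw=h(u,y)$, which combined with $I_2(\varepsilon)\to 0$ finishes the proof.

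The only delicate point is to ensure the upper limit $(L-y)/\sqrt{\varepsilon}$ does not create trouble; this is handled automatically by the hypothesis $y<L$ (strict), which provides uniform exponential control near the boundary in $I_2$ and pushes the upper cutoff to $+\infty$ in $I_1$. If $y=L$ were allowed the statement would fail: the reflection and Gaussian terms contribute equally at the boundary and the limit would be $0$ rather than $h(u,L)$, which is consistent with the boundary identity $\bar q_t^y(L,z)=0$ recorded in Lemma \ref{two:kernels:expression}.
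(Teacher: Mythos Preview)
Your proof is correct, but it proceeds along a genuinely different line from the paper's. The paper decomposes by \emph{freezing the diffusion coefficient}: it writes
\[
\int_{-\infty}^L h(u+\varepsilon,z)\,\bar q^z_\varepsilon(y,z)\,dz
= \int_{-\infty}^L h(u+\varepsilon,z)\,\bar q^{\,y}_\varepsilon(y,z)\,dz
+ \int_{-\infty}^L h(u+\varepsilon,z)\,[\bar q^z_\varepsilon(y,z)-\bar q^{\,y}_\varepsilon(y,z)]\,dz,
\]
recognizes the first integral as $\E[h(u+\varepsilon,\bar X^{y}_\varepsilon)\I_{\{\bar\tau^{y}\ge\varepsilon\}}]$ and passes to the limit by dominated convergence, then bounds the second integral by $C\varepsilon^{\eta/2}|h|_\infty$ using the H\"older regularity of $a$ together with the Gaussian estimate \eqref{ql}. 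You instead split along the \emph{two Gaussian summands} of $\bar q^z_\varepsilon$ (principal and reflected), kill the reflected piece using the strict inequality $y<L$, and treat the principal piece as an approximate identity via the scaling substitution $w=(z-y)/\sqrt\varepsilon$.

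Your argument is slightly more elementary---it avoids the probabilistic interpretation and needs only continuity of $a$ at $y$ rather than H\"older continuity---while the paper's freeze--unfreeze decomposition yields a quantitative rate $O(\varepsilon^{\eta/2})$ for the correction term and mirrors the central device of the parametrix method used throughout the article. Your closing remark on the necessity of $y<L$ is accurate and a nice complement.
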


\begin{proof}
It is sufficient to write 
\begin{align*}
& \int_{-\infty}^L \,dz\, h(u+\varepsilon,z) \bar{q}_{\varepsilon}^z(y,z) \\
&=  \int_{-\infty}^L \,dz\, h(u+\varepsilon,z) \bar{q}_{\varepsilon}^y(y,z)  + \int_{-\infty}^L \,dz\, h(u+\varepsilon,z) \left[ \bar{q}_{\varepsilon}^z(y,z)  -  \bar{q}_{\varepsilon}^y(y,z)\right]\\
&=  \mathbb{E}[h(u+\varepsilon, \bar{X}^{y}_\varepsilon)\I_\seq{ \bar{\tau}^{y} \geq \varepsilon}]  + \int_{-\infty}^L \,dz\, h(u+\varepsilon,z) \left[\bar{q}_{\varepsilon}^z(y,z)  -  \bar{q}_{\varepsilon}^y(y,z)\right].
\end{align*}
By the dominated convergence theorem and the fact that $\bar{\tau}^{y} >0$ $a.s.$ since $y< L$, one gets
\begin{align*}
\lim_{\varepsilon\downarrow 0}\mathbb{E}[h(u+\varepsilon, \bar{X}^{y}_\varepsilon)\I_\seq{\bar{\tau}^{y} \geq \varepsilon}]  = h(u,y).
\end{align*}

\noindent Hence, it is sufficient to show that the second term converges to zero. Using \eqref{ql} and the H\"older regularity of $a$, we can bound the second term by
\begin{align*}
|h|_\infty \int_{-\infty}^L \,dz\,\left| \bar{q}_{\varepsilon}^z(y,z)  -  \bar{q}_{\varepsilon}^y(y,z)\right| & \leq  C \varepsilon^{\eta/2} |h|_\infty 
\end{align*}
which converges to zero as $\varepsilon \downarrow 0$.
\end{proof}

In order to prove the convergence of the parametrix series, we need to study the two proxy kernels: the proxy killed diffusion kernel and the proxy exit time kernel. The density $\bar q^y_t$ and its derivatives are given by
\begin{align*}
\bar q^y_t(x,z) & = g(a(y)t, z-x) - g(a(y)t, z+x-2L), \\
\partial_x\bar  q^y_t(x,z) & = \left(\frac{z-x}{a(y)t}g(a(y)t, z-x) - \frac{L-x+L-z}{a(y)t}g(a(y)t, L-x + L-z)\right) \\
\partial^2_{x}\bar q^y_t(x,z)& =  \left(\left(\frac{(z-x)^2}{a(y)^2t^2} - \frac{1}{a(y)t}\right)g(a(y)t, z-x) - \left(\frac{(L-z+L-x)^2}{a(y)^2t^2} - \frac{1}{a(y)t}\right)g(a(y)t, L-x + L-z)\right)
\end{align*}

\noindent and
\begin{align}
\partial_z \bar{q}^{y}_t(x,z) 				& = \left(-\frac{z-x}{a(y)t}g(a(y) t, z-x) + \frac{z+x-2L}{a(y)t}g(a(y)t, L-x + L-z)\right)  \I_\seq{x \leq L} \I_\seq{z \leq L}, \label{first:derivative:q} \\
\partial^{2}_{z}\bar{q}^{y}_t(x,z) & =  \left(\left(\frac{(z-x)^2}{a(y)^2t^2} - \frac{1}{a(y)t}\right)g(a(x)t, z-x)   - \left(\frac{(L-z+L-x)^2}{a(y)^2t^2} - \frac{1}{a(y)t}\right)g(a(x)t, 2L-x -z) \right)  \I_\seq{x \leq L} \I_\seq{z \leq L}  \label{second:derivative:q}  
\end{align}

\noindent and at $x=L$ or $z=L$, it is understood that we are always taking left-hand derivatives. 

\begin{lem}\label{estimate:kernel:proxy}
Assume that \A{H1} (i) or \A{H2} (i) holds. For any $\beta\in [0,1]$, there exists $C,\ c>1$, such that for any $(x,z)\in (-\infty,L]^2$ and $r=0,1,2$, the following estimates hold:
\begin{align}
|\partial^{r}_x\bar q^y_t(x,z)| & \leq C\left(\frac{|L-z|^\beta}{t^\frac{r+\beta}{2}}\wedge \frac{1}{t^\frac{r}{2}} \right) g(c t, x-z), \mbox{ and } \ |\partial^{r}_x f^{y}_{\bar{\tau}}(x,t) | \leq \frac{C}{t^{\frac{r+1}{2}}} g(c t, L-x) \label{ql}.
\end{align}

\end{lem}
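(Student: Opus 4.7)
The plan is to prove both estimates from the explicit formulas for $\bar q^y_t$ and $f^y_{\bar\tau}$ given in Lemma \ref{two:kernels:expression}, combined with the space-time inequality $|w|^p e^{-\alpha w^2} \leq C_{p,\alpha}$ and one key algebraic identity that exposes the $|L-z|$ factor. The exit time bound is the easier one: starting from $f^y_{\bar\tau}(x,t) = (L-x)(a(y)t)^{-1} g(a(y)t, L-x)$ and differentiating $r$ times in $x$ produces a polynomial in $(L-x)$ and $t^{-1}$ (a rescaled Hermite-type expression) multiplying the Gaussian. The space-time inequality, applied in the form $|L-x|^p g(a(y)t, L-x) \leq C t^{p/2} g(ct, L-x)$ for any $c > a(y)$, absorbs the polynomial factor at the cost of a slight widening of the Gaussian, yielding the bound $C t^{-(r+1)/2} g(ct, L-x)$.

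For the proxy density, I would prove two separate estimates and combine them via the minimum. The crude bound $|\partial_x^r \bar q^y_t(x,z)| \leq C t^{-r/2} g(ct, x-z)$ follows by applying the same Hermite/space-time argument to each Gaussian component of $\bar q^y_t(x,z) = g(a(y)t, z-x) - g(a(y)t, 2L-x-z)$. The reflected Gaussian is dominated by the unreflected one because, for $x, z \leq L$, one has $|2L-x-z| = (L-x) + (L-z) \geq |(L-x) - (L-z)| = |z-x|$, so both terms are bounded by a multiple of $g(ct, x-z)$.

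For the refined bound with factor $|L-z|^\beta$, the starting point is the reformulation
\[
\bar q^y_t(x,z) = g(a(y)t, z-x)\left[1 - \exp\left(-\tfrac{2(L-x)(L-z)}{a(y)t}\right)\right],
\]
which follows from the algebraic identity $(2L-x-z)^2 - (z-x)^2 = 4(L-x)(L-z)$. The elementary estimate $1-e^{-u}\leq u$ for $u \geq 0$ then produces a factor of $(L-x)(L-z)/t$. Splitting $L-x = (L-z)+(z-x)$ and applying the space-time inequality to the $|z-x|$ piece transfers everything into a Gaussian with argument $x-z$, leaving an extra power of $|L-z|/\sqrt{t}$. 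Combining with the crude bound (which is the smaller of the two whenever $|L-z|\geq \sqrt{t}$) yields $|\bar q^y_t(x,z)| \leq C|L-z|\,t^{-1/2} g(ct,x-z)$ unconditionally, i.e.~the case $\beta=1$. The same manipulation for $\partial_x^r \bar q^y_t$, using the explicit formulas for $\partial_x^r g(a(y)t, z-x)$ and $\partial_x^r g(a(y)t, 2L-x-z)$, produces an additional $t^{-r/2}$ factor. The general $\beta \in [0,1]$ bound follows by taking the geometric mean of the endpoint estimates $\beta=0$ and $\beta=1$, using that the geometric mean of two Gaussians with parameters $c_1, c_2$ is again, up to constants, a Gaussian with an intermediate parameter.

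The only nontrivial step is the refined bound: the key is the identity $(2L-x-z)^2 - (z-x)^2 = 4(L-x)(L-z)$, which cleanly exposes the $|L-z|$ factor while keeping the Gaussian in the desired variable $z-x$. Everything else is Hermite-polynomial bookkeeping and repeated applications of the space-time inequality; the constants $c, C$ depend on $y$ only through $\underline a \leq a(y) \leq \overline a$, which is exactly what assumption \textbf{(H1)(i)} or \textbf{(H2)(i)} provides.
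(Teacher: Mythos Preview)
Your proposal is correct and essentially parallel to the paper's proof, but the refined bound on $\bar q^y_t$ is reached by a slightly different mechanism. The paper writes $\bar q^y_t(x,z)=g(a(y)t,x-z)-g(a(y)t,x-(2L-z))$, splits into the cases $4|L-z|^2\le t$ and $4|L-z|^2>t$, and in the first case applies the mean value theorem in the second argument between $z$ and $2L-z$; this produces a Gaussian evaluated at an intermediate point, which is then pushed back to $g(ct,x-z)$ via the auxiliary ``interval'' Lemma~\ref{interval}. You instead use the exact factorization $\bar q^y_t=g(a(y)t,z-x)\bigl(1-e^{-u}\bigr)$ with $u=2(L-x)(L-z)/(a(y)t)$, the inequality $1-e^{-u}\le u$, and then the splitting $L-x\le (L-z)+|z-x|$ together with the space--time inequality; general $\beta\in[0,1]$ follows by geometric interpolation between the crude $\beta=0$ bound and the $\beta=1$ bound. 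Both arguments use the same case split (your ``combining with the crude bound when $|L-z|\ge\sqrt t$'' is precisely the paper's second case) and the same algebraic identity $(2L-x-z)^2-(z-x)^2=4(L-x)(L-z)$. Your route avoids the need for Lemma~\ref{interval}, at the price of slightly more bookkeeping when differentiating the factored form in $x$ (the $\partial_x u$ terms); the paper's mean-value-theorem approach treats all orders $r$ uniformly since $\partial_x^r\bar q^y_t$ is again a difference $G^{(r)}(x-z)-G^{(r)}(x-(2L-z))$.
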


\begin{proof}
From the expression of $\bar{q}^y_t(x,z)$, the following estimates for $\bar{q}^y_t(x,z)$ and its derivatives hold
\begin{align*}
|\bar{q}_t(x,z)| & \leq C g(2\bar a t, z-x) + Cg(2\bar a t, z+x-2L)\\
|\partial_x  \bar{q}^y_t(x,z)| 
&\leq C \frac{1}{t^\frac{1}{2}}g(2\bar a t, z-x) + C\frac{1}{t^\frac12}g(2\bar a  t, L-x + L-z) \\
|\partial^2_{x}  \bar{q}^y_t(x,z)|& \leq   C\frac{1}{t}g(2\bar a  t, z-x) + C\frac{1}{t}g(2\bar a  t, L-x + L-z).
\end{align*}
Furthermore, for $(x,z)\in (-\infty,L]^2$, one has $  g(c t, z-x-2(L-x)) \leq  g(c t, z-x)$, since in the exponent
\begin{align*}
(z-x)^2 -4(L-x)(z-x) +4(L-x)^2 &=(z-x)^2 -4(L-x)(z -L + L-x) +4(L-x)^2\\
 &=(z-x)^2 + 4(L-x)(L -z) \\
 & \geq (z-x)^2.
\end{align*}

To derive the bound with the $|L-z|^\beta$ term, we consider first the case where $4|L-z|^2\leq  t$, to estimate $\bar q^y_t(x,z) = g(a(y)t, x-z) - g(a(y)t, x-(2L-z))$ one apply the mean value theorem to $g(a(y)y,x-z)$ with respect to the points $z$ and $2L-z$ to obtain for some $\theta \in [0,1]$,
\begin{align*}
|\bar q^y_t(x,z)| & = |\{z-(2L-z)\} \partial_x g(a(y)t, x-\theta z - (1-\theta)(2L-z))|\\
		   & \leq  C \frac{|z-L|^\beta }{t^\frac{\beta}{2}} g(2\bar a t, x-\theta z - (1-\theta)(2L-z))\\
   		   & \leq  C \frac{|z-L|^\beta }{t^{\frac{\beta}{2}}} g(4\bar a t, x-z)
\end{align*}
where in the second line we have used the space-time inequality and fact that $2^{1-\beta}|L-z|^{1-\beta}< t^\frac{1-\beta}{2}$ and the last line we have used Lemma \ref{interval} with $y^* = 2L-z$ and $y = z$.

For the case that $4|L-z|^2>  t$, by using triangular inequality we have $4|L-z|^2>  t$
\begin{align*}
 |\bar{q}^z_t(x,z)|\leq C\frac{|L-z|^\beta}{t^\frac\beta2}g(4\bar a t, x-z) .
\end{align*}
The proof of the first and second derivatives of $\bar{q}^z_t(x,z)$ as well as the estimates on 
$$\partial^r_x f^{y}_{\bar{\tau}}(x,s) = a(y)\partial^{r+1}_x g(a(y)s,L-x)\I_\seq{x<L}$$
follow similar arguments and details are omitted.

\end{proof}

\begin{lem}\label{interval}
Given $y,y^*\in \rr$ and $\bar y\in [y\wedge y^*,y\vee y^*]$, suppose ${|}y^*- y{|}^2 \leq v$, then for any $\varepsilon>0$
\begin{align*}
g(Cv,x-\bar y)&\leq \left(1+\frac{1}{\varepsilon}\right )^{1/2}e^{\frac{\varepsilon}{C}}g(C_\varepsilon v,x-y^*). 
\end{align*}
Here $C_\varepsilon:=C(1+\frac{1}{\varepsilon})$.
\end{lem}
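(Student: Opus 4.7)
The plan is to reduce the inequality to a pointwise comparison of exponents. Writing out the Gaussian densities explicitly using $g(ct,y)=(2\pi ct)^{-1/2}\exp(-y^2/(2ct))$, the normalizing constants contribute a factor $(C_\varepsilon/C)^{1/2}=(1+1/\varepsilon)^{1/2}$ on the right-hand side, which already accounts for the prefactor in the claimed bound. Taking logarithms of what remains, it therefore suffices to establish
\begin{equation*}
\frac{(x-y^*)^2}{2C_\varepsilon v} - \frac{(x-\bar y)^2}{2Cv} \leq \frac{\varepsilon}{C}
\end{equation*}
uniformly in $x$.

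To do so, I would decompose $x-y^* = (x-\bar y) + (\bar y-y^*)$ and apply the elementary Young-type inequality $(a+b)^2\leq (1+\delta)a^2 + (1+1/\delta)b^2$ with the specific choice $\delta=1/\varepsilon$ (so that $1+\delta$ matches the ratio $C_\varepsilon/C$). This yields
\begin{equation*}
(x-y^*)^2 \leq \frac{C_\varepsilon}{C}(x-\bar y)^2 + (1+\varepsilon)(\bar y-y^*)^2 .
\end{equation*}
Dividing by $2C_\varepsilon v$ produces the key cancellation: the first term becomes $(x-\bar y)^2/(2Cv)$, exactly matching the term we want to subtract.

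Then I would use the hypothesis $|y^*-y|^2\leq v$ together with the fact that $\bar y$ lies between $y$ and $y^*$, which gives $(\bar y-y^*)^2 \leq (y-y^*)^2 \leq v$. The remaining bound is thus $(1+\varepsilon)v/(2C_\varepsilon v) = (1+\varepsilon)/(2C(1+1/\varepsilon)) = \varepsilon/(2C)$, which is less than $\varepsilon/C$. Combining with the prefactor ratio identified in the first step yields the claimed inequality. There is no real obstacle here; the only subtle point is choosing $\delta=1/\varepsilon$ in Young's inequality so that the quadratic term in $x-\bar y$ is absorbed exactly, leaving only a constant in $\varepsilon/C$.
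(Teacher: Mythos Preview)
Your proof is correct and essentially identical to the paper's. Both use the same Young-type inequality: the paper writes it as $|x|^2-(1+\varepsilon)|y|^2\leq(1+1/\varepsilon)|x-y|^2$ applied with $x\to x-y^*$, $y\to\bar y-y^*$, which is exactly your $(a+b)^2\leq(1+\delta)a^2+(1+1/\delta)b^2$ with $\delta=1/\varepsilon$ after the substitution $a=x-\bar y$, $b=\bar y-y^*$; both then use $(\bar y-y^*)^2\leq(y-y^*)^2\leq v$ and arrive at the constant $\varepsilon/(2C)\leq\varepsilon/C$.
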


\begin{proof}
Using Young's inequality, we have that for any $\varepsilon>0$, $|x{|}^2 -(1+ \varepsilon ) |y|^2 \leq (1+\frac 1{\varepsilon})|x-y|^2$, we obtain that
\bde
(1+\frac 1{\varepsilon})|x-y^* - (\bar y- y^*)|^2 \geq |x-y^*|^2 - (1+ \varepsilon ) |\bar y - y^*|^2 \geq |x-y^*|^2 -(1+\varepsilon )  |y- y^*|^2
\ede
On the set $|y^*- y|^2 \leq v$, we have that $(1+\frac 1{\varepsilon})|x-y^* - (\bar y- y^*)|^2 \geq |x-y^*|^2 - (1+ \varepsilon ) v$ and therefore $e^{-\frac{|x-\bar y|^2}{2Cv}} \leq e^{-(1+\frac 1{\varepsilon})^{-1}\frac{|x-y^*|^2}{2Cv}}e^{\frac{\varepsilon}{C}}$.
\end{proof}

\subsection{On some Beta type integral}

\begin{lem}\label{beta:type:integral}Let $b>-1$ and $a\in [0,1)$. Then for any $t_0>0$,
$$
\int_{\Delta_n(t_0)}\,d\mathbf{t}_n  \,\,t^{b}_n \prod_{j=0}^{n-1} (t_j-t_{j+1})^{-a} = \frac{t_0^{b+n(1-a)} \Gamma^{n}(1-a)\Gamma(1+b)}{\Gamma(1+b+n(1-a))}
$$
\end{lem}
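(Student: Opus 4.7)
The plan is to compute the iterated integral from the innermost variable outward, using a single elementary Beta integral at each step, so that the successive integrations telescope into the stated product of Gamma functions.

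First I would observe that a scaling substitution $t_i = t_0 s_i$ immediately reduces the problem to the case $t_0 = 1$ (it produces the factor $t_0^{b+n(1-a)}$ and replaces $\Delta_n(t_0)$ by $\Delta_n(1)$). So without loss of generality the claim reduces to identifying $\int_{\Delta_n(1)} d\mathbf{t}_n\, t_n^b \prod_{j=0}^{n-1}(t_j-t_{j+1})^{-a}$ with the stated Gamma quotient, but keeping the general $t_0$ is just as easy, so I will work directly with the unscaled integral.

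The key ingredient is the one-variable Beta identity
\begin{equation*}
\int_0^{s} (s-u)^{-a} u^{c}\, du \;=\; s^{c+1-a}\, B(c+1,1-a) \;=\; s^{c+1-a}\,\frac{\Gamma(c+1)\Gamma(1-a)}{\Gamma(c+2-a)},
\end{equation*}
valid for any $c>-1$ and $a\in[0,1)$, which follows from the change of variable $u = sv$. I would then perform the iterated integration over $\Delta_n(t_0)$ starting with the innermost variable $t_n\in(0,t_{n-1})$, which by the Beta identity contributes $B(b+1,1-a)$ and yields a factor $t_{n-1}^{b+1-a}$. Proceeding inductively, after integrating $t_{n}, t_{n-1}, \ldots, t_{n-k+1}$, the remaining integrand is, up to constants, $t_{n-k}^{b+k(1-a)}$ (multiplied by the $(t_{j}-t_{j+1})^{-a}$ factors with $j<n-k$), so the next integration in $t_{n-k}$ applies the Beta identity again with $c = b + k(1-a) > -1$.

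The main (but purely bookkeeping) obstacle is keeping track of the exponents, so I would carry out this induction explicitly. Putting the successive Beta factors together gives
\begin{equation*}
\int_{\Delta_n(t_0)} d\mathbf{t}_n\, t_n^{b} \prod_{j=0}^{n-1}(t_j-t_{j+1})^{-a} \;=\; t_0^{b+n(1-a)} \prod_{k=1}^{n} B\bigl(b+(k-1)(1-a)+1,\,1-a\bigr).
\end{equation*}
Expanding each Beta function as a ratio of Gamma functions gives
\begin{equation*}
\prod_{k=1}^{n} \frac{\Gamma\bigl(b+(k-1)(1-a)+1\bigr)\,\Gamma(1-a)}{\Gamma\bigl(b+k(1-a)+1\bigr)} \;=\; \Gamma^{n}(1-a)\;\frac{\Gamma(b+1)}{\Gamma\bigl(b+n(1-a)+1\bigr)},
\end{equation*}
by telescoping, which yields the announced formula. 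The hypotheses $b>-1$ and $a\in[0,1)$ are precisely what is needed to guarantee that $c = b + k(1-a) > -1$ at every step, so each Beta integral is finite and Fubini's theorem applies at each stage; this legitimizes the iterated-integral computation.
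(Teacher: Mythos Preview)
Your proof is correct and follows essentially the same approach as the paper: iterate the one-variable Beta identity $\int_0^t s^c(t-s)^{-a}\,ds = t^{c+1-a}B(c+1,1-a)$ from the innermost variable $t_n$ outward, then telescope the resulting Gamma quotients. Your write-up is in fact more detailed than the paper's, which simply records the single Beta identity and says ``using this equality repeatedly, we obtain the statement''; your explicit bookkeeping of the exponents and the remark that $b+k(1-a)>-1$ at every step (justifying each Beta integral and Fubini) are welcome clarifications.
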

\begin{proof}
Using the change of variables $s=ut$, one has
$$
\int_0^t s^{b} (t-s)^{-a} ds = t^{b+1-a} \int_0^1 u^{b} (1-u)^{-a} du = t^{b+1-a} B(1+b,1-a)
$$

\noindent where $B(x,y)=\int_0^1 t^{x-1}(1-t)^{y-1} dt$ stands for the standard Beta function. Using this equality repeatedly, we obtain the statement.
\end{proof}

\subsection{Markov semigroup property}\rule[-10pt]{0pt}{10pt}\label{Markov:semigroup:appendix}\\
We will assume that there exists a unique weak solution to \eqref{sde:dynamics} for all $x\in \rr$ that satisfies the strong Markov property and our goal is to prove that $(\tau^{x}_{t}, X^{x}_{\tau^{x}_{t}} )_{t\geq0}$ is a Markov process. The main result is given in Proposition \ref{semigroup:property}. We first need the following preparative lemma.

\begin{lem}\label{flow:hittingtime}
On the set $\seq{\tau^{x}_{s} \geq s}$, one has
\bde
\tau_{s+t}^{x} = s + \tau_{s+t}^{s,X_{s}^{x}}.
\ede 
\end{lem}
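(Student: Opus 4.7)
The plan is to reduce the statement to the pathwise definition of $\tau^x$ combined with the flow property of the SDE. First, I would unpack the notation: $\tau^x_s = \tau^x \wedge s$, so the event $\{\tau^x_s \geq s\}$ coincides (up to a null set, using right-continuity of the paths and the fact that $x<L$) with $\{\tau^x \geq s\}$, i.e., the event that the process has not hit $L$ on $[0,s)$. Similarly, $\tau^{s,X^x_s}_{s+t} = \tau^{s,X^x_s}\wedge t$.

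The key step is to establish the flow identity $X^x_{s+v} = X^{s,X^x_s}_{s+v}$ for all $v\geq 0$, almost surely. This is a direct consequence of the weak uniqueness hypothesis (or a pathwise uniqueness/strong solution statement if one uses that framework) combined with the time-homogeneous Markov structure of \eqref{sde:dynamics}: both processes solve the same SDE driven by the shifted Brownian increments $(W_{s+v}-W_s)_{v\geq 0}$ starting from $X^x_s$ at time $s$. Once this flow identity is in hand, the infimum defining $\tau^x$ can be split into $\{v<s\}$ and $\{v\geq s\}$; on $\{\tau^x\geq s\}$ the first part gives no contribution, so
\begin{equation*}
\tau^x \;=\; \inf\{v\geq s:\, X^x_v\geq L\} \;=\; s+\inf\{w\geq 0:\, X^x_{s+w}\geq L\} \;=\; s+\inf\{w\geq 0:\, X^{s,X^x_s}_{s+w}\geq L\} \;=\; s+\tau^{s,X^x_s}.
\end{equation*}

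Finally, intersecting both sides with $(s+t)$ and using the identity $(s+a)\wedge(s+t)=s+(a\wedge t)$ yields, on $\{\tau^x_s\geq s\}$,
\begin{equation*}
\tau^x_{s+t} \;=\; \tau^x\wedge(s+t) \;=\; \bigl(s+\tau^{s,X^x_s}\bigr)\wedge(s+t) \;=\; s+\bigl(\tau^{s,X^x_s}\wedge t\bigr) \;=\; s+\tau^{s,X^x_s}_{s+t},
\end{equation*}
which is the desired conclusion. The only mildly subtle point is the identification of the null sets on which the flow identity and the event $\{\tau^x_s\geq s\}=\{\tau^x\geq s\}$ fail; this is handled by the assumed weak existence and uniqueness (which gives a canonical version of $X$ on a single probability space) together with the continuity of the paths, and I do not expect any real obstacle here.
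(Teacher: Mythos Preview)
Your proposal is correct and follows essentially the same approach as the paper's proof: both identify $\{\tau^x_s\geq s\}=\{\tau^x\geq s\}$, then use the flow identity $X^x_{s+\cdot}=X^{s,X^x_s}_{s+\cdot}$ to rewrite $\tau^x=s+\tau^{s,X^x_s}$ on that event, and finally intersect with $s+t$. You are somewhat more explicit about the flow identity and the handling of null sets, but the argument is the same.
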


\begin{proof}
We just have to notice that on the set $\seq{\tau^{x} \wedge s \geq s} = \seq{\tau^{x} \geq s}$, the process $(X^{x}_{t})_{t\geq0}$ never crosses the level $L$ before time $s$. Therefore, on the set $\seq{\tau^{x}_{s} \geq s}$, one has
\begin{align*}
\tau^{x} & = \inf\seq{v \geq 0 ,  X^{x}_{v} \geq L}\\
		   & = s+ \inf\seq{v \geq 0,  X^{s,X^{x}_{s}}_{s+v} \geq L} \\
   		   & = s+ \tau^{s,X^{x}_{s}}
\end{align*}
\noindent which in turn implies
\begin{align*}
\tau^{x}_{s+t} & = (s+ \tau^{s,X^{x}_{s}} )\wedge (s + t) \\
& = s + (\tau^{s,X^{x}_{s}} \wedge (t+s - s) )\\
& = s + \tau^{s,X^{x}_{s}}_{t}.
\end{align*}
\end{proof}
We are now in position to prove the Markov property.
\begin{prop}
\label{semigroup:property}
 The collection of positive linear maps $(P_t)_{t\geq0}$ given by \eqref{linear:map} defines a Markov semigroup. Assume that $b,\sigma$ are bounded and Lipschitz continuous functions on $\rr$ and that $\P(\tau^{x}=t)=0$ for all $t>0$ and $x<L$. Then, $(P_t)_{t\geq0}$ is a strongly continuous Feller semigroup.
\end{prop}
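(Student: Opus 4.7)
The plan is to establish first the semigroup identity
\[
P_{s+t}h(u,x) = \E\bigl[P_th(u+\tau^x_s, X^x_{\tau^x_s})\bigr]
\]
by conditioning on $\F_s$ and splitting $\Omega=\{\tau^x>s\}\cup\{\tau^x\le s\}$. On $\{\tau^x>s\}$, Lemma~\ref{flow:hittingtime} yields $\tau^x_{s+t}=s+\tau^{s,X^x_s}_{s+t}$ and $X^x_{\tau^x_{s+t}} = X^{s,X^x_s}_{s+\tau^{s,X^x_s}_{s+t}}$; the strong Markov property of the underlying SDE then identifies the conditional expectation with $P_t h(u+s, X^x_s)$, which equals $P_th(u+\tau^x_s, X^x_{\tau^x_s})$ on that set (since $\tau^x_s=s$ and $X^x_{\tau^x_s}=X^x_s$ there). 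On $\{\tau^x\le s\}$ the process $(u+\tau^x_r,X^x_{\tau^x_r})$ is absorbed at $(u+\tau^x,L)$ for all $r\ge\tau^x$; since $\tau^L\equiv 0$ and hence $P_th(u',L)=h(u',L)$, the integrand again matches $P_th(u+\tau^x_s, X^x_{\tau^x_s})$. Running the same computation pathwise rather than in expectation delivers the Markov property of $(u+\tau^x_t, X^x_{\tau^x_t})_{t\ge 0}$, and positivity plus the identity $P_t\mathbf{1}=1$ is immediate from the definition.

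For the Feller property under the Lipschitz/no-atom hypotheses, the main step is continuity of $(u,x)\mapsto P_th(u,x)$ for $h\in\mathcal{C}_0(\rr_+\times\rr)$. I would fix $x_n\to x$, invoke the classical moment bound $\E[\sup_{r\le t}|X^{x_n}_r-X^x_r|^2]\to 0$ available under Lipschitz coefficients, and combine it with path continuity to conclude that $\tau^{x_n}\wedge t\to \tau^x\wedge t$ in probability, the key point being that the assumption $\P(\tau^x=t)=0$ removes the only candidate discontinuity of the functional $\omega\mapsto \tau^x(\omega)\wedge t$ at $\omega$. Bounded convergence then yields $P_th(u,x_n)\to P_th(u,x)$, and continuity in $u$ is trivial. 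Vanishing at infinity splits into three easy regimes: $x\ge L$ (where $P_th(u,x)=h(u,x)$), $x\to-\infty$ (where the bounded coefficients force $X^x_r\to-\infty$ in probability uniformly on $[0,t]$, so that with high probability $\tau^x>t$ and $X^x_{\tau^x_t}=X^x_t$ is very negative), and $u\to\infty$ (since $u+\tau^x_t\to\infty$).

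Strong continuity at $t=0$ would then be obtained by pointwise convergence followed by a standard upgrade. For $x<L$, $\tau^x>0$ a.s., so $\tau^x_t=t\downarrow 0$ and $X^x_t\to x$ by path continuity, giving $P_th(u,x)\to h(u,x)$ by dominated convergence; for $x\ge L$, $P_th(u,x)=h(u,x)$ identically. Uniformity in $(u,x)$ follows from the contractivity $\|P_t\|\le 1$ together with the already established mapping $P_t:\mathcal{C}_0\to\mathcal{C}_0$ and pointwise convergence to the identity, which for a Feller semigroup is equivalent to strong continuity (cf.~\cite{Ethier:Kurtz}). The principal technical obstacle throughout lies in handling $\tau^{x_n}$ when $x_n\to L^-$: there $\tau^x=0$ while each $\tau^{x_n}$ is strictly positive, and convergence in law of $(\tau^{x_n}_t, X^{x_n}_{\tau^{x_n}_t})$ hinges on the pathwise coupling together with the no-atom property $\P(\tau^x=t)=0$, which is precisely why the regularity conditions of \cite{hayashi} are invoked and why the Feller statement is segregated from the semigroup statement.
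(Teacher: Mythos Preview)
Your proposal is correct and follows essentially the same three-step architecture as the paper: the semigroup identity via the decomposition $\{\tau^x>s\}\cup\{\tau^x\le s\}$, Lemma~\ref{flow:hittingtime}, and the Markov property; Feller continuity via $L^2$-stability of the flow under Lipschitz coefficients combined with the no-atom hypothesis; and vanishing at infinity by the same regime split. The only cosmetic differences are that the paper conditions on $\F_{\tau^x_s}$ rather than $\F_s$ (equivalent here since $\tau^x_s=s$ on $\{\tau^x\ge s\}$) and deduces $\tau^{x_n}\Rightarrow\tau^x$ from $L^2$-convergence of the running maximum rather than from your pathwise coupling; both routes lead to the same conclusion.
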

\vskip10pt
\begin{proof}
\noindent \textit{Step 1: Semigroup property:}
Let $h$ be a bounded continuous function. We first prove the semigroup property: $P_{t+s}h(u,x) = P_sP_th(u,x)$. For $x\geq L$, one has $\tau^{x}=0$ so that the semigroup property reduces to $P_{t+s}h(u,x) = h(u,x) = P_sP_t h(u,x)$. For now on, we assume that $x<L$. By the tower property of conditional expectation, it is sufficient to show that
\begin{align}
\label{conditional:expectation:markov}
\mathbb{E}\big[ h( u + \tau^{x}_{s+t},X^{x}_{\tau^{x}_{s+t}})\big| \F_{\tau^{x}_{s}} \big] = 
P_t h(u+\tau^{x}_{s}, X^{x}_{\tau^{x}_{s}}).
\end{align}

The computation is done on the sets $\seq{\tau^{x}_{s} < s}$ and $\seq{\tau^{x}_{s} \geq s}$ separately. Firstly, on the set $\seq{\tau^{x}_{s} < s}=\seq{\tau^{x} < s}$, the process $(X^{x}_{v})_{v\geq0}$ hits the barrier $L$ strictly before time $s$, therefore
\begin{align*}
\mathbb{E}\big[ h(u + \tau^{x}_{s+t},X^{x}_{\tau^{x}_{s+t}})\big| \F_{\tau^{x}_{s}} \big]\I_\seq{\tau^{x}_{s} < s}& =  h(u + \tau^{x}_s,L)\I_\seq{\tau^{x}_{s} < s}.
\end{align*}
\noindent On the other hand, on the set $\seq{\tau^{x}_{s} \geq s} = \seq{\tau^{u,x} \geq s}$, and using Lemma \ref{flow:hittingtime}, we have
\begin{align*}
 \mathbb{E}\big[ h(  u + \tau^{x}_{s+t},X^{x}_{\tau^{x}_{s+t}} )\big| \F_{\tau^{x}_{s}} \big]\I_\seq{\tau^{x}_{s} \geq s} & =\mathbb{E}\big[ h( u + \tau^{x}_{s+t},X^{x}_{\tau^{x}_{s+t}} )\,\big| \,\F_{s} \big]\I_\seq{\tau^{x}_{s} = s}\\
& =\mathbb{E}\big[ h( u + s + \tau^{s,X^{x}_{s}}_{t}, X^{s,X^{x}_{s}}_{{s + \tau^{s,X^{x}_{s}}_{t}}} )\,\big| \,\F_{s} \big]\I_\seq{\tau^{x}_{s} = s} 
\end{align*}

\noindent and by using the Markov property and time homogeneity, the above is equal to 
\begin{align*}
& \mathbb{E}\big[ h( u+s+ \tau^{s,y}_{t},X^{s,y}_{s+ \tau^{s,y}_{t}} )\big]{\big|_{y= X^{x}_{s}}}  \I_\seq{\tau^{x}_{s} = s} = P_t h( u+s,X^{x}_{s})\I_\seq{\tau^{x}_{s} = s}  .
\end{align*}
Putting the above computations together, we obtain
\begin{align*}
\mathbb{E}\big[ h(u + \tau^x_{s+t},X^{x}_{\tau^x_{s+t}})\big| \F_{\tau^x_{s}} \big] =  h(u + \tau^{x}_{s},L )\I_\seq{\tau^{x}_{s} < s} + P_t h( u+s,X^{x}_{s}) \I_\seq{\tau^{x}_{s} = s}
\end{align*}
\noindent Finally, on the set $\seq{\tau^{x}_{s} < s}$, the process $(X^{x}_{v})_{v\geq0}$ hits the barrier $L$ before time $s$, therefore
\begin{align*}
P_t h(u+\tau_{s}^{x}, X^{x}_{\tau^{x}_{s}})\I_\seq{\tau^{x}_{s} < s} &= P_t h(u+\tau_{s}^{x},L)\I_\seq{\tau^{x}_{s} < s} \\
&  = h(u+\tau^{x}_{s},L)\I_\seq{\tau^{x}_{s} < s}. 
\end{align*}

This completes the proof of \eqref{conditional:expectation:markov} and therefore of the Chapman-Kolmogorov relation for $(P_t)_{t\geq0}$ follows. Moreover, by dominated convergence, one has $P_{t}h(u,x) \rightarrow h(u,x)$ as $t \downarrow 0$. It now remains to prove that $P_t \mathcal{C}_0(\rr_+, \rr) \subset \mathcal{C}_0(\rr_+, \rr)$.\\

\noindent \textit{Second step: Continuity of $(u,x) \mapsto P_t h(u,x)$ }

Let $(u_n,x_n)\rightarrow (u,x)$. From the Lipschitz continuity of the coefficients $b, a$, we deduce that $\max_{ 0 \leq t \leq T} X^{x_n}_t  \rightarrow \max_{ 0 \leq t \leq T} X^{x}_t$ in $L^{2}(\P)$ as $n\rightarrow + \infty$ which in turn implies the convergence $\tau^{x_n} \rightarrow \tau^{x}$ in distribution. Moreover, since $\tau^{x}\neq t$ \textit{a.s.}, by the continuous mapping theorem we obtain
$$
\mathbb{E}[h(u_n+\tau^{x_n},L) \I_\seq{\tau^{x_n}\leq t}] \rightarrow \mathbb{E}[h(u+\tau^{x},L) \I_\seq{\tau^{x}\leq t}], \ n\rightarrow +\infty.
$$

By similar arguments, one gets
$$
\mathbb{E}[h(u_n+t,X^{x_n}_t) \I_\seq{\tau^{x_n}\geq t}] \rightarrow \mathbb{E}[h(u+t,X^{x}_t) \I_\seq{\tau^{x}\geq t}], \ n\rightarrow +\infty.
$$

Hence we conclude that $(u,x) \mapsto P_th(u,x)$ is continuous.

\noindent \textit{Third step: $\lim_{|(u,x)|\rightarrow +\infty} P_t h(u,x) =0$}

For $u\geq0$, $x\geq L$, one has $P_t h(u,x) = h(u,x)$ so that $\lim_{u \rightarrow + \infty, x\rightarrow + \infty} P_t h(u,x) = \lim_{u\rightarrow +\infty} P_t h(u,x) = \lim_{x\rightarrow +\infty} P_t h(u,x) = 0 $ for $u\geq0$ and $x\geq L$. By dominated convergence theorem, one gets $\lim_{u \rightarrow +\infty} P_th(u,x) = 0$, for all $x\in \rr$. Hence it remains to prove that $\lim_{u\rightarrow + \infty, x\rightarrow - \infty} P_t h(u,x) = \lim_{x\rightarrow - \infty} P_t h(u,x) = 0$ for $u\geq0$. Standard estimates on \eqref{sde:dynamics} shows that $\sup_{x\in \rr}\mathbb{E}[\max_{0 \leq t\leq K}|X^{x}_t-x|] < \infty$, for every $K>0$. Now, for every $K>0$ and $x<0$, one gets
$$
\mathbb{P}(\tau^{x}\leq K) = \mathbb{P}(\max_{0\leq t\leq K} X^{x}_t-x \geq L-x) \leq \frac{\sup_{x\in \rr}\mathbb{E}[\max_{0 \leq t\leq K}|X^{x}_t-x|]}{L-x}
$$

\noindent from which we deduce using standard inequalities and the fact that the coefficients $b$ and $\sigma$ are bounded that $\tau^{x}\rightarrow + \infty$ in probability as $x\rightarrow -\infty$ in the sense that $\lim_{x\rightarrow -\infty}\mathbb{P}(\tau^{x}\geq K)=1$ for every $K>0$. Since $h$ is bounded, we easily get
\begin{equation}
\label{estimate:semigroup}
|\mathbb{E}[h(u+\tau^{x},L) \I_\seq{\tau^{x}\leq t}] | \leq |h|_{\infty} \mathbb{P}(\tau^{x}\leq t) \rightarrow 0, \ x \rightarrow -\infty.
\end{equation}

Moreover, using the decomposition $\mathbb{E}[h(u+t, X^{x}_t)\I_\seq{\tau^{x}\geq t}] = \mathbb{E}[h(u+t,X^{x}_t)] - \mathbb{E}[h(u+t,X^{x}_t) \I_\seq{\tau^{x} \leq t}]$ with \eqref{estimate:semigroup}, we see that it remains to prove that $\mathbb{E}[h(u+t,X^{x}_t)]$ goes to zero as $x \rightarrow -\infty$, $u\rightarrow +\infty$ or $x \rightarrow -\infty$ and $u\geq0$.

Let $K>0$. We decompose this term as follows $ \mathbb{E}[h(u+t,X^{x}_t)] =  \mathbb{E}[h(u+t,x + X^{x}_t-x) \I_\seq{|X^{x}_t - x| \leq K }] +  \mathbb{E}[h(u+t,X^{x}_t) \I_\seq{|X^{x}_t - x| > K }] $. By dominated convergence, for $u\geq0$, one has 
$$
\lim_{x\rightarrow -\infty,  u \rightarrow +\infty} \mathbb{E}[h(u+t, X^{x}_t) \I_\seq{|X^{x}_t - x| \leq K }]  = \lim_{x\rightarrow -\infty} \mathbb{E}[h(u+t, X^{x}_t) \I_\seq{|X^{x}_t - x| \leq K }] = 0
$$

\noindent which combined with 
$$
\mathbb{E}[h(u+t,X^{x}_t) \I_\seq{|X^{x}_t-x| \geq K}] \leq |h|_{\infty} \mathbb{P}(|X^{x}_t-x|\geq K) \leq |h|_{\infty} \frac{\sup_{x\in \rr}\mathbb{E}[|X^{x}_t-x|]}{K}
$$

\noindent yield
$$
\limsup_{x\rightarrow -\infty, u\rightarrow +\infty} |\mathbb{E}[h(u+t,X^{x}_t)]| \leq  |h|_{\infty} \frac{\sup_{x\in \rr}\mathbb{E}[|X^{x}_t-x|]}{K}.
$$

Letting $K\rightarrow +\infty$ allows to conclude $\lim_{x\rightarrow -\infty, u\rightarrow +\infty} \mathbb{E}[h(u+t,X^{x}_t)] = 0$ and the same argument gives $\lim_{x\rightarrow -\infty} \mathbb{E}[h(u+t,X^{x}_t)] = 0$ for $u\geq0$. This completes the proof.
\end{proof}

\vskip 30pt

\subsection{Proof of Theorem \ref{extension:bounded:measurable:drift}}\label{boundeddrift}

In this section, we will adopt the notation which appears in \cite{whitt2002stochastic}.
We will prove that Theorem \ref{backmain} is true under \A{H2}. Similar arguments also gives Theorem \ref{pexp}. By Theorem 174 of Kestelman \cite{kestelman} p.111, there exists a sequence of continuous functions $(b_N)_{N\geq1}$ such that
\begin{align}
\lim_{N\rightarrow \infty} b_N & = b, \ a.e. \label{convergence:bN}\\
\sup_{N\geq1}|b_N|_{\infty}&  \leq |b|_{\infty} \label{uniform:bound:bN}. 
\end{align}

Let $X^{N} = (X^{N}_{t})_{t\geq0}$ be the unique weak solution to the following one-dimensional SDE
\begin{align*}
X^{N}_t & = x + \int_0^t b_N(X^N_s) ds + \int_0^t \sigma(X^N_s) dW_s, \ t \geq 0
\end{align*}
and $\tau^N$ be its first hitting time of the barrier $L$. Let $T>0$. We first prove that for any $h \in \mathcal{C}^{\infty,0}_b(\rr_+\times \rr)$
\begin{align}
\label{convergence:approximate:semigroup}
P^{N}_T h(u,x) := \E[h(u+\tau^N \wedge T, X^{N}_{\tau^N \wedge T})] \rightarrow \E[h(u+\tau \wedge T, X_{\tau \wedge T})], \ N\rightarrow \infty.
\end{align}

We remark that $\E[h(u+T,X^N_{\tau^N \wedge T})] = \E[h(u+T,X^N_T) \I_\seq{\tau^N>T}]+h(u+T,L)\P(\tau^N\leq T)$ and $\E[h(u+\tau^N \wedge T,L)]= \E[h(u+\tau^N,L) \I_\seq{\tau^N \leq T}] + h(u+T,L) \P(\tau^N>T)$ so that adding the two decompositions we obtain $E[h(u+T,X^N_{\tau^N \wedge T})] + \E[h(u+\tau^N \wedge T,L)] = \E[h(u+\tau^N \wedge T,X^{N}_{\tau^N \wedge T})] + h(u+T,L)$. Consequently, we can make use of the following decomposition: $\E[h(u+\tau^N \wedge T,X^{N}_{\tau^N \wedge T})]= \E[h(u+T,X^N_{\tau^N \wedge T})] + \E[h(u+\tau^N \wedge T,L)] - h(u+T,L)$ and prove the convergence of both terms: $\E[h(u+T,X^N_{\tau^N \wedge T})]$ and $\E[h(u+\tau^N \wedge T,L)]$.  \\

\noindent $\bullet$ Step 1: Convergence of $(\E[h(u+T,X^N_{\tau^N \wedge T})])_{N\geq1}$.\\
 Let $\Omega=\mathcal{C}([0,\infty),\mathbb{R})$ equipped with the topology of uniform convergence on bounded intervals and $X_t(w)=w(t)$. If $(\mathbb{P}^{N})_{N\geq1}$ denotes the sequence of probability measures on $\Omega$ induced by the sequence $(X^{N})_{N\geq1}$, we know from Theorem 11.3.3 of Stroock and Varadhan \cite{stro:vara:79} that $(\mathbb{P}^{N})_{N\geq1}$ converges weakly to the measure $\mathbb{P}$ (unique) solution of the martingale problem, induced by $X$ the (unique) weak solution to the SDE with drift coefficient $b$. 
Define the mapping $g: \Omega \rightarrow \Omega$ by $g(w)(t) = X_{\tau \wedge t}(w) = X_t \mbox{1}_{\left\{t <\tau \right\}} + L \mbox{1}_{\left\{ t \geq \tau \right\}}$. Then, $g$ is discontinuous at $w$ if and only if $w$ leaves $[L,\infty)$ after $\tau$ without visiting $(L,\infty)$, that is if $T_{L} \circ \theta_{\tau}(\omega)>0$, where $T_L$ is the first hitting time associated to $ X $ of the set $(L,\infty)$ and $\theta$ denotes the shift operator (see Bass \cite{bass:97}, p.66 for a similar argument). By the strong Markov property, one has
$$
\mathbb{P}_x(T_L \circ \theta_{\tau}>0) = \mathbb{E}_x[ \mathbb{P}_{L}(T_L>0)] = \mathbb{P}_L(T_L>0).
$$
Since $b,\sigma$ are bounded on $\rr$ and $a=\sigma^2$ is uniformly elliptic, one has $\mathbb{P}_L(T_L>0)=0$. Hence, if $C_g$ is the set of discontinuities of $g$ one has $\mathbb{P}(C_g)=0$ so that by the continuous mapping theorem: $(\mathbb{P}_N \circ g^{-1})_{N\geq1}$ converges weakly to $\mathbb{P} \circ g^{-1}$. As a consequence, $\lim_{N\rightarrow +\infty} \mathbb{E}[h(u+T, X^{N}_{\tau^{N} \wedge T})] = \mathbb{E}[h(u+T,X_{\tau \wedge T})]$. This completes the first step of the proof. \\

\noindent $\bullet$ Step 2: Convergence of $(\E[h(u+\tau^N \wedge T,L)])_{N\geq1}$.\\
Without loss of generality, we assume that the initial condition $x_0$ and the barrier $L$ satisfy $0\leq x_0 < L$. For the case that $L$ is negative, we consider the hitting time of $|L|$ for the process $-X$. 
Let $D_u$ be the subspace of $D[0,\infty)$ (the set of all $\rr$-valued functions on $[0,\infty)$ that are c\`adl\`ag for all $t\in [0,\infty)$) that are unbounded above and have non-negative initial value. (Definitions given on p.532, section 13.6 in Whitt \cite{whitt2002stochastic}).\\

We consider the maps  $\wt T,T:\mathcal{C}[0,\infty)\rightarrow \mathcal{C}[0,\infty)$, where 
\begin{align*}
T_t(w) := \min \seq{s >0 : w_s > t} \\
\wt T_t(w) := \min \seq{s >0 : w_s \geq  t} 
\end{align*}
and the map $S:w\rightarrow S_\cdot(w) = \max_{0\leq s \leq \cdot} w_s$. 
The map $S$ is continuous on $D[0,\infty)$ in the $J_1$ topology (we refer to section 3.3 in \cite{whitt2002stochastic} for the definition of the $J_1$ metric). For the definition of the $M_2$ topology, we refer to p.504 of \cite{whitt2002stochastic}. Our aim is to apply Theorem 13.6.4 \cite{whitt2002stochastic} that we now recall. 
\begin{theorem}
(continuity of first-passage-time-functions) Let $w \in D_u$ that is not equal to $z>0$ throughout the interval $(T_z(w)-\varepsilon, T_z(w))$ for any $\varepsilon >0$. If $w_n \rightarrow w$ in $(D,M_2)$ then  as $n\rightarrow \infty$,
\begin{equation*}
T_z(w_n) \rightarrow T_z(w).
\end{equation*}
\end{theorem}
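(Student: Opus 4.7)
The plan is to establish $T_z(w_n) \to T_z(w)$ by proving the two one-sided inequalities $\limsup_n T_z(w_n) \leq T_z(w)$ and $\liminf_n T_z(w_n) \geq T_z(w)$ separately, working through the standard characterization of the $M_2$ topology on $D[0,\infty)$ via Hausdorff convergence on compact intervals of the completed (thick) graphs $\Gamma^*(w_n) \to \Gamma^*(w)$, where the completed graph fills in vertical segments at jump times so that $(t,z) \in \Gamma^*(w)$ whenever $w$ crosses level $z$ at time $t$ (continuously or by a jump). Set $t^\star := T_z(w)$; this is finite because $w \in D_u$ is unbounded above, and by definition of $T_z$ one has $w(s) \leq z$ for every $s < t^\star$.

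For the upper bound, fix $\delta > 0$. The first-passage definition combined with right-continuity of $w$ forces either $w(t^\star) > z$ (a jump crossing, in which case $w$ stays above $z$ on a short interval $[t^\star, t^\star + \eta]$) or, in the case of a continuous crossing, the existence of a time $s^\circ \in (t^\star, t^\star + \delta)$ and $\eta > 0$ with $w(s^\circ) > z + \eta$. In either case we obtain a point $(s^\circ, z+\eta) \in \Gamma^*(w)$ with $s^\circ \in [t^\star, t^\star + \delta)$. Hausdorff convergence then provides $(s_n, y_n) \in \Gamma^*(w_n)$ with $(s_n, y_n) \to (s^\circ, z+\eta)$, so that $y_n > z$ and $s_n \leq t^\star + \delta$ for all large $n$, whence $T_z(w_n) \leq t^\star + \delta$. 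For the lower bound, fix $\delta \in (0, t^\star)$ and assume toward a contradiction that $T_z(w_{n_k}) \leq t^\star - \delta$ along a subsequence. Each $\Gamma^*(w_{n_k})$ contains a point $(s_k, y_k)$ with $s_k \leq t^\star - \delta$ and $y_k \geq z$; extracting a further subsequence yields $(s_k, y_k) \to (s_\infty, y_\infty) \in \Gamma^*(w)$ with $s_\infty \leq t^\star - \delta < t^\star$ and $y_\infty \geq z$. Since the completed graph of $w$ cannot strictly exceed $z$ on $[0, t^\star)$, we must have $y_\infty = z$.

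The main obstacle is precisely this last step: the $M_2$ topology is coarse enough that a plateau of $w$ at level $z$, or an oscillation of $w$ merely touching $z$, could be approximated by functions $w_n$ that strictly cross $z$ somewhat earlier than $t^\star$, and without some extra assumption $T_z$ fails to be continuous there. The non-constancy hypothesis, that $w$ is not identically equal to $z$ on any left neighborhood of $t^\star$, is exactly what rules out this pathology. The rigorous conclusion is reached by appealing to the parametric representation of $M_2$ convergence (completed graphs as images of continuous parametrizations with the time coordinate weakly monotone) and tracing how the strict crossings of $w_{n_k}$ at times $s_k < t^\star$ lift, in the limit, either to a strict excursion of $w$ above $z$ before $t^\star$, contradicting $T_z(w) = t^\star$, or to $w \equiv z$ on some left neighborhood of $t^\star$, contradicting the non-constancy hypothesis.
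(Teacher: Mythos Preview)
The paper does not supply its own proof of this statement; it is Theorem 13.6.4 of Whitt, quoted verbatim and used as a black box in the approximation argument of the appendix. So there is no in-paper proof to compare yours against.

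Your upper bound via Hausdorff convergence of completed graphs is fine, but the lower bound has a genuine gap that cannot be repaired under your reading of the hypothesis. You interpret the condition as ``$w$ is not \emph{identically} equal to $z$ on any left neighborhood of $t^\star$''. The paper's own Remark immediately after the theorem shows the intended reading is stronger: $w$ does not take the value $z$ \emph{anywhere} in $(0,t^\star)$, equivalently $\widetilde T_z(w)=T_z(w)$. Under your weaker reading the conclusion is actually false: take a continuous $w$ with $w(s_0)=z$ at a single point $s_0<t^\star$, $w<z$ elsewhere on $[0,t^\star)$, and set $w_n=w+n^{-1}\phi(n(\cdot-s_0))$ for a smooth nonnegative bump $\phi$ with $\phi(0)=1$. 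Then $w_n\to w$ uniformly (hence in $M_2$) while $T_z(w_n)\le s_0\not\to t^\star$. This is precisely the ``third alternative'' missing from your dichotomy --- $w$ touches $z$ early without either strictly exceeding it or being constant there --- so no amount of parametric-representation tracing can close the argument as you have set it up. With the correct (stronger) hypothesis your Hausdorff-graph route does go through for the continuous paths relevant to the paper's application, since the extracted limit point $(s_\infty,z)\in\Gamma^*(w)$ with $s_\infty<t^\star$ then forces $w(s_\infty)=z$, an immediate contradiction; alternatively one passes to the running supremum $S_t(w)=\sup_{s\le t}w_s$, uses continuity of $w\mapsto S(w)$, and invokes the standard continuity of inverses of nondecreasing functions at points of strict increase.
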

\begin{remark} Note that the set of paths which does not take the value $L$ through the interval $(T_L(w)-\varepsilon, T_L(w))$ for any $\varepsilon >0$ is the complement of the set 
\begin{equation*}
A := \seq{w: \wt T_L(w) < T_L(w)} = \seq{w: w \mbox{ leaves $[L,\infty)$ immediately upon hitting $L$}}
\end{equation*}
and using a similar argument as in Step 1, the probability that the path of $X$ or $X^N$ is in $A$ is zero. This implies that $T_L(X) = \wt T_L(X)$ and for all $N\geq1$, $T_L(X^N) = \wt T_L(X^N)$.
\end{remark}

As explained on page 460 of \cite{whitt2002stochastic}, $X^N\Rightarrow X$ in $(\mathcal{C}[0,\infty),M_2)$ is equivalent to $X^N\Rightarrow X$ in $(\mathcal{C}[0,\infty),J_1)$. Therefore, in the following, all convergence in $\mathcal{C}[0,\infty)$ means convergence in the $J_1$ topology.
At this point, it is not clear that the process $X \in D_u$. To overcome this issue, we assume without loss of generality that the probability space $(\Omega, \P)$ is rich enough to support an independent Brownian motion $W$. We consider the processes
\begin{align*}
\tilde X^N_t & = X^N_{T\wedge t}+ W_t - W_{T\wedge t}, \\
	\tilde X_t & = X_{T\wedge t}+ W_{t} - W_{T\wedge t}.
\end{align*}
The processes $\tilde X^N$ and $\tilde X$ induce a family of probability measures $\tilde \P^N$ and $\tilde \P$ on $\mathcal{C}[0,\infty)$.

\begin{lem} The family of measures $\tilde \P$ and $\tilde \P^N$ satisfies the following properties \hfill\break 
(i) $\tilde \P^N \Rightarrow \tilde \P$ or equivalently $\tilde X^N \Rightarrow \tilde X$.\hfill\break 
(ii) Under $\tilde \P$, the set $D_u$ is of measure one.\hfill\break 
(iii) Under $\tilde \P$, the set $A$ is of measure zero, where $A = \seq{w: \tilde T_L(w) < T_L(w)}$.
\end{lem}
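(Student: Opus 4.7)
The plan is to handle the three assertions separately, each leaning on the weak convergence $X^N \Rightarrow X$ in $(\mathcal{C}[0,\infty), J_1)$ established in Step 1 via Stroock--Varadhan \cite{stro:vara:79}, together with the independence of the auxiliary Brownian motion $W$ from the approximating family. For (i), I would first upgrade the marginal convergence $X^N \Rightarrow X$ to the joint convergence $(X^N, W) \Rightarrow (X, W)$ on the product space $\mathcal{C}[0,\infty) \times \mathcal{C}[0,\infty)$: the independence of $W$ from each $X^N$ makes the finite-dimensional distributions factor, while tightness of the two marginals is inherited by the joint laws. The map $\Phi(x,w)(t) := x(T \wedge t) + w(t) - w(T \wedge t)$ is continuous from $\mathcal{C}[0,\infty)^2$ to $\mathcal{C}[0,\infty)$ with respect to uniform convergence on compact sets, so the continuous mapping theorem yields $\tilde X^N = \Phi(X^N, W) \Rightarrow \Phi(X, W) = \tilde X$.

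Part (ii) is essentially immediate. By construction $\tilde X_0 = X_0 = x_0 \geq 0$, so the initial value is non-negative. For unboundedness above, for $t \geq T$ one writes $\tilde X_t = X_T + (W_t - W_T)$, where $(W_t - W_T)_{t \geq T}$ is a standard Brownian motion (independent of $X_T$). The recurrence property $\limsup_{t \to \infty}(W_t - W_T) = +\infty$ almost surely then forces $\limsup_{t \to \infty} \tilde X_t = +\infty$ almost surely, so $\tilde X \in D_u$ with $\tilde\P$-probability one.

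The main obstacle is (iii). I would partition the analysis according to whether $\tilde X$ first reaches $L$ before or after time $T$. On the event $\{T_L(\tilde X) \leq T\}$ the paths of $\tilde X$ and $X$ agree on $[0, T_L(\tilde X)]$, so it suffices to show that the diffusion $X$ itself satisfies $\P_L(T_L(X) > 0) = 0$; this is precisely the regularity statement already invoked in Step 1, and can be obtained by removing the bounded measurable drift $b$ via a Girsanov transformation on a finite horizon and then exploiting the uniform ellipticity of $a$ together with a Blumenthal $0$--$1$ argument (effectively reducing, after a time change, to the standard regularity of Brownian motion at any level). The strong Markov property of $X$ at the stopping time $T_L(\tilde X)$ then gives $\tilde T_L(\tilde X) = T_L(\tilde X)$ on this event. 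On the complementary event $\{T_L(\tilde X) > T\}$, the restriction of $\tilde X$ to $[T, \infty)$ is of the form $X_T + (W_\cdot - W_T)$, i.e.\ a Brownian motion started from $X_T < L$, so the same regularity property at the level $L$ applies directly. Combining the two cases, $A = \{\tilde T_L(\tilde X) < T_L(\tilde X)\}$ has $\tilde\P$-measure zero.
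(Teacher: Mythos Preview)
Your argument for (i) and (ii) is correct and matches the paper's own proof almost verbatim: joint weak convergence $(X^N,W)\Rightarrow(X,W)$ from independence plus marginal convergence, continuity of the map $(x,w)\mapsto x^T+(w-w^T)$, and unboundedness above from the law of iterated logarithm (you invoke recurrence of Brownian motion, which is equivalent here).

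For (iii) the paper simply refers back to the Step~1 argument and Bass~\cite{bass:97}, p.~66, so your explicit case split according to $\{T_L(\tilde X)\le T\}$ versus $\{T_L(\tilde X)>T\}$ is more detailed than what the authors wrote. The idea is right, but two small points deserve care. First, the strong Markov property must be applied at the stopping time $\tilde T_L$ (the first time $\tilde X$ \emph{reaches} $L$), not at $T_L$; applying it at $T_L$ to conclude $\tilde T_L=T_L$ would be circular. Second, on the event $\{T_L(\tilde X)>T\}$ you tacitly assume $\tilde T_L(\tilde X)>T$ as well, so that the first contact with $L$ occurs in the Brownian regime. This is true almost surely, but it is itself a consequence of the Step~1 regularity of $X$: if $\tilde T_L(\tilde X)\le T$ then $\tilde T_L(X)\le T$ (since $\tilde X=X$ on $[0,T]$), whence $T_L(X)=\tilde T_L(X)\le T$ by Step~1, forcing $T_L(\tilde X)\le T$ and contradicting the hypothesis. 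With these two clarifications your argument is complete and equivalent to the paper's.
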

\begin{proof}
\noindent Given a path in $w \in C[0,\infty)$ and $T>0$, we denote by $w^T$ the path stopped at the terminal time $T$, that is for all $t\geq 0$, $w^T_t = w_{t\wedge T}$. (i) It is clear that the map $(x,y) \rightarrow (x^T,y-y^T)$ is continuous map from the space $(\mathcal{C}[0,\infty)\times \mathcal{C}[0,\infty),d_1\vee d_2)$ to itself, where $d_i$ for $i=1,2$ are the uniform metric on $\mathcal{C}[0,\infty)$. It is know this metric also induces the product $J_1$-topology on $\mathcal{C}[0,\infty)\times \mathcal{C}[0,\infty)$. By Corollary 12.7.1 in \cite{whitt2002stochastic}, we have that the addition map $(x,y) \rightarrow x+y$ is continuous. This shows that the map $(x,y) \rightarrow x^T + y-y^T$ is a continuous map (in the $J_1$-topology) from $\mathcal{C}[0,\infty)\times \mathcal{C}[0,\infty)$ to $\mathcal{C}[0,\infty)$.
Using the fact that $W$ is independent of $X^N$ and $X$, we have $(X^N, W) \Rightarrow (X, W)$ (see p.26 in Billingsley \cite{billingsley1999convergence}). Therefore by continuous mapping theorem, we have $\tilde X^N \Rightarrow \tilde X$, or equivalently $\tilde \P^N\Rightarrow \tilde \P$. 
\noindent (ii) We show that under $\tilde \P$, the set of paths for which the supremum increases to infinity as time goes to infinity is of probability one. That is 
\begin{align*}
\tilde \P(\seq{w: S_\infty(w)= \infty}) & = \P(S_\infty(\tilde X)= \infty) \\
								  & = \P(S_\infty(\tilde X - \tilde X^T)= \infty)\\	
								 & = \P(\max_{s>T}(W_s-W_T) = \infty) = 1
\end{align*}
where the last equality, follows from the law of iterated logarithm. (iii) The proof is similar to that of Step 1 or Bass \cite{bass:97}, p.66.
\end{proof}

From Theorem 13.6.4 in \cite{whitt2002stochastic} and the continuous mapping theorem, one obtain for any $f \in \mathcal{C}_b(\rr_+)$,
\begin{align*}
\lim_{N}\mathbb{E}(f(T_L(X^N)\wedge T)) & = \lim_{N}\mathbb{E}(f(T_L(\tilde X^N)\wedge T)) \\
& = \mathbb{E}(f(T_L(\tilde X)\wedge T)) \\
& = \mathbb{E}(f(T_L(X)\wedge T)).
\end{align*}
One can also replace $T_L$ by $\wt T_L$ since these two times coincide for $X$ and $X^N$. Hence, we conclude that \eqref{convergence:approximate:semigroup} is valid. \\

\noindent Now, from Theorem \ref{backmain}, the following representation holds
\begin{equation*}
P^N_Th(u,x) = h(u,x)\I_\seq{x\geq L} + \I_\seq{x<L}\int^{u+T}_u \int_{-\infty}^L  \,h(t,z) \, p^N_T(u,x,dt,dz).
\end{equation*}
\noindent with 
\begin{equation*}
p^N_T(u,x,dt, dz): = p^{K,N}(x,t-u)\delta_L(dz) dt +  p^{D,N}_T(x,z)\delta_{u+T}(dt) dz
\end{equation*} 
\noindent and 
\begin{align*}
p^{K,N}(x,t) & := \sum_{n\geq 0} p^{K,n,N}(x,t) \quad \mathrm{and} \quad p^{D,N}_T(x,z) := \sum_{n\geq 0} p^{D,n,N}_T(x,z).\\
\end{align*}
Here, $p^{D,1,N}_T(x,z)=\bar{q}^{z}_T(x,z)$, $p^{K,1,N}(x,t) = f^{L}_{\bar{\tau}}(x,t)$,  and 
\begin{align*}
p^{D,n,N}_T(x,z) = I^{D,n,N}_{T}(x,z)&:= \int_{\Delta_n(T)} d\s_n\,\int_{(-\infty,L]^{n}} d\mathbf{z}_n\,\,\bar q_{s_n}^{z_n}(x,z_n)
\hat{\mathcal{S}}_{T-s_{1}}^{N,z}(z_{1},z)\prod^{n-1}_{i=1} \hat{\mathcal{S}}_{s_i-s_{i+1}}^{N,z_i}(z_{i+1},z_i),\\
p^{K,n,N}(x,t) = I^{K,n,N}(x,t) & := \int_{\Delta_n(t)} d\s_n\,\int_{(-\infty,L]^{n}} d\mathbf{z}_n\,\,\bar q_{s_n}^{z_n}(x,z_n)
\,\hat{\mathcal{K}}^{L,N}_{T-s_1}(z_1,t-s_1)\prod^{n-1}_{i=1} \hat{\mathcal{S}}_{s_i-s_{i+1}}^{N,z_i}(z_{i+1},z_i)
\end{align*}  
and $\hat{\mathcal{K}}^{L,N}$ and $\hat{\mathcal{S}}^{z,N}$ are the kernels defined by \eqref{KK1} and \eqref{SSy} with drift coefficient $b_N$ instead of $b$. \\
Moreover, for a fixed $N$ the series defining $p^{K,N}(x,t)$, $p^{D,N}_T(x,z)$ converge absolutely and uniformly for $(x,t,z) \in  \rr \times K_T \times \rr$, where $K_T$ is any compact subset of $(0,T]$. Importantly, note that from \eqref{uniform:bound:bN}, the positive constants $C,c>1$ appearing in \eqref{xi} and \eqref{xih} (with $b_N$ instead of $b$) do not depend on $N$. Consequently, for all $(t,z)\in (0,T] \times (-\infty,L]$, the following Gaussian upper-bounds hold
\begin{equation*}
 p^{K,N}(x,t)   \leq C t^{-1/2} g(ct,L-x) \ \ \mbox{ and } \ \ p^{D,N}_T(x,z)   \leq  C g(c T,z-x).
\end{equation*}
Now, from \eqref{convergence:bN} and by using dominated convergence theorem, one derives
\begin{align*}
 p^{D}_T(x,z) & := \lim_{N\rightarrow \infty}  p^{D,N}_T(x,z) = \lim_{N\rightarrow \infty} \sum_{n\geq 0} p^{D,n,N}_T(x,z) =  \sum_{n\geq 0} p^{D,n}_T(x,z), \\
 p^{K}(x,t-u) & := \lim_{N\rightarrow \infty} p^{K,N}(x,t-u) = \lim_{N\rightarrow \infty}\sum_{n\geq 0} p^{K,n,N}(x,t-u) = \sum_{n\geq 0} p^{K,n}(x,t-u) 
\end{align*}

\noindent and
\begin{align*}
P_Th(u,x) := \lim_{N\rightarrow \infty} P^{N}_T h(u,x) & = h(u,x) \I_\seq{x\geq L} + \I_\seq{x<L}\int^{u+T}_u \int_{-\infty}^L  \,h(t,z) \, \lim_{N\rightarrow \infty} p^N_T(u,x,dt,dz), \\
& = h(u,x) \I_\seq{x\geq L} + \I_\seq{x<L}\int^{u+T}_u \int_{-\infty}^L  \,h(t,z) \, p_T(u,x,dt,dz).
\end{align*}
The proof is now complete.

\bibliographystyle{alpha}
\bibliography{bibli}

\end{document}